\numberwithin{equation}{section}
\theoremstyle{plain}
\newtheorem{theorem}{Theorem}[section]
\newtheorem{lemma}[theorem]{Lemma}
\newtheorem{prop}[theorem]{Proposition}
\theoremstyle{definition}
\newtheorem{defn}[theorem]{Definition}
\newtheorem{remark}[theorem]{Remark}
\newtheorem*{ex}{Example}
\newcommand{\N}{\mathbb{N}}
\newcommand{\R}{\mathbb{R}}
\newcommand{\Z}{\mathbb{Z}}
\newcommand{\xmid}{\parallel} 
\newcommand{\ep}{\varepsilon}
\renewcommand{\mod}[1]{{\ifmmode\text{\rm\ (mod~$#1$)}\else\discretionary{}{}{\hbox{ }}\rm(mod~$#1$)\fi}}
\DeclareMathOperator{\sdif}{\bigtriangleup} 
\DeclareMathOperator{\lcm}{lcm} 
\DeclareMathOperator{\erf}{erf} 
\DeclareMathOperator{\erfi}{erfi} 
\newcommand{\Ex}{\mathbf{E}} 
\newcommand{\Mn}{\mathbf{Mn}} 
\newcommand{\Mx}{\mathbf{Mx}} 
\newcommand{\den}{\mathfrak{d}} 
\newcommand{\inv}{\mathfrak{I}} 
\newcommand{\pri}{\mathfrak{P}} 
\newcommand{\prm}{\mathfrak{Q}} 
\newcommand{\iV}{\mathfrak{V}} 
\newcommand{\iW}{\mathfrak{W}} 
\newcommand{\lV}{\log\mathfrak{V}} 
\newcommand{\OT}{\mathbf{T}} 
\newcommand{\SF}{\mathbf{S}} 
\newcommand{\UF}{\mathbf{U}} 
\newcommand{\msg}{\mathfrak{S}} 
\newcommand{\sif}{\mathfrak{f}} 
\newcommand{\siF}{\mathfrak{F}} 
\newcommand{\TQ}{\mathcal{Q}} 
\newcommand{\tq}{\mathfrak{q}} 
\newcommand{\cD}{\mathcal D}
\newcommand{\cE}{\mathcal E}
\newcommand{\cH}{\mathcal H}
\newcommand{\theoremoverall}{
Given $m\in\N$, let $f_1(m), f_2(m), \ldots$ be the distinct invariant factor orders of $(\Z/m\Z)^\times$ listed in increasing order. For any positive integer~$D$, almost all numbers~$m$ satisfy $f_i(m) \in \sif_i$ for $i = 1,\ldots,D$. In other words, the set of numbers~$m$ for which $f_i(m) \notin \sif_i$ has density~$0$ for any $i\in\N$, as does the set of numbers~$m$ for which the length of the invariant factor decomposition of $(\Z/m\Z)^\times$ has fewer than~$D$ distinct orders.}
\newcommand{\theoremexgeneral}{%
If $d \in \sif_i$ for some $i\in\N$, then
\[ \Ex_n(\inv(m;d)) = \mu_i\log\log n + O_d \bigl( (\log\log n)^{1/2} \bigr) . \]}
\newcommand{\theoremexA}{%
If $i>1$ is an integer such that $\#\sif_{i-1} = \#\sif_i = \#\sif_{i+1} = 1$, then for $d \in \sif_i$,
\[ \Ex_n(\inv(m;d)) = \mu_i\log\log n + O_d(1) . \]}
\newcommand{\theoremexB}{%
If $i>1$ is an integer such that $\#\sif_{i-1} = 2$ and $\#\sif_i = \#\sif_{i+1} = 1$, then for $d \in \sif_i$,
\[ \Ex_n(\inv(m;d)) = \mu_i\log\log n-\bigg(\frac{\sigma_{i-1}}{\sqrt{2\pi}}+o(1)\bigg)(\log\log n)^{1/2} . \]}
\newcommand{\theoremexC}{%
If $i$ is an integer such that  $\#\sif_i = 2$, then for $d \in \sif_i$,
\[ \Ex_n(\inv(m;d)) = \bigg(\frac{\sigma_i}{\sqrt{2\pi}}+o(1)\bigg)(\log\log n)^{1/2} . \]}
\newcommand{\theoremexD}{%
If $i>1$ is an integer such that $\#\sif_{i-1} = 2$ and $\sif_i=1$ and $\#\sif_{i+1} = 2$, then for $d \in \sif_i$,
\[ \Ex_n(\inv(m;d)) = \mu_i\log\log n-\bigg(\frac{\sigma_{i-1}+\sigma_{i+1}}{\sqrt{2\pi}}+o(1)\bigg)(\log\log n)^{1/2} . \]}
\newcommand{\theoremexE}{%
If $i>1$ is an integer such that $\#\sif_{i-1} = \#\sif_i = 1$ and $\#\sif_{i+1} = 2$, then for $d \in \sif_i$,
\[ \Ex_n(\inv(m;d)) = \mu_i\log\log n-\bigg(\frac{\sigma_{i+1}}{\sqrt{2\pi}}+o(1)\bigg)(\log\log n)^{1/2} . \]
The same statement holds when $i=1$ (so that $d=2$).}
\newcommand{\theoremexrare}{%
If $d\notin\siF$, then for any $k \in \N$ and $r > 0$,
\[ \Ex_n(\inv(m;d)^k) \ll_{d,r,k} (\log\log n)^{-r} . \]}
\newcommand{\theoremdistgeneral}{%
Let $d \in \N$. If $d\in\siF$ is a universal factor order,
the limiting distribution of $\inv(m;d)$ under appropriate rescaling is
fully characterized by the number of elements in $\sif_{i-1}$, $\sif_i$, and $\sif_{i+1}$,
except for the special cases $d=2\in\sif_1$ and $d=12\in\sif_3$.
If $d\notin\siF$ is a rare factor order,
then the limiting distribution is fully characterized by this fact.}
\newcommand{\theoremdistA}{%
If $i>1$ is an integer such that $\#\sif_{i-1} = \#\sif_i = \#\sif_{i+1} = 1$, then for $d \in \sif_i$,
\[ \lim_{n\to\infty} P_n\bigg(\frac{\inv(m;d) - \mu_i\log\log n}{\sigma_i(\log\log n)^{1/2}} \le x\bigg) = \Phi(x) , \]
where $\Phi(x)$ is the standard normal cumulative distribution function.}
\newcommand{\theoremdistB}{%
If $i>1$ is an integer such that $\#\sif_{i-1} = 2$ and $\#\sif_i = \#\sif_{i+1} = 1$, then for $d \in \sif_i$,
\[
\lim_{n\to\infty} P_n\bigg(\frac{\inv(m;d) - \mu_i\log\log n}{\sigma_i(\log\log n)^{1/2}} \le x\bigg) = \Phi\biggl(x;-\frac{\nu_i}{\sqrt{\sigma_i^2+\sigma_{i;2,1}^2}}\biggr),
\]
a skew-normal cumulative distribution function as in Definition~\ref{defn:skew-normal}.
The characteristic function of this distribution is $e^{-t^2/2}\big(1-\eta(\frac{\nu_i}{2\sigma_i} t)\big)$.}
\newcommand{\theoremdistC}{%
If $i$ is an integer such that  $\#\sif_i = 2$, then for $d \in \sif_i$,
\[ \lim_{n\to\infty} P_n\bigg(\frac{\inv(m;d)}{\sigma_i(\log\log n)^{1/2}} \le x\bigg)
= \begin{cases} \Phi(x), & \text{if } x \ge 0, \\ 0, & \text{otherwise}. \end{cases} \]
Furthermore,
\[ \lim_{n\to\infty} P_n\bigg(\frac{\sum_{d \in \sif_i} \inv(m;d)}{\sigma_i(\log\log n)^{1/2}} \le x\bigg) = \Phi_+(x) . \]}
\newcommand{\theoremdistD}{%
If $i>3$ is an integer such that $\#\sif_{i-1} = 2$ and $\sif_i=1$ and $\#\sif_{i+1} = 2$, then for $d \in \sif_i$,
\begin{multline*}
\lim_{n\to\infty} P_n\bigg(\frac{\inv(m;d) - \mu_i\log\log n}{\sigma_i(\log\log n)^{1/2}} \le x\bigg) \\
= \UF\biggl( 4x;
  2\frac{\sqrt{\sigma_i^2+\sigma_{i;1,2}^2+\sigma_{i;2,1}^2+\sigma_{i;2,2}^2}}{\sigma_i},
  2\frac{\sqrt{\sigma_i^2+\sigma_{i;1,2}^2-\sigma_{i;2,1}^2-\sigma_{i;2,2}^2}}{\sigma_i},
  2\frac{\sqrt{\sigma_i^2-\sigma_{i;1,2}^2+\sigma_{i;2,1}^2-\sigma_{i;2,2}^2}}{\sigma_i} \biggr),
\end{multline*}
where $\UF(x;\sigma_1,\sigma_2,\sigma_3)$ is given in Definition~\ref{defn:U_function} below. 
The characteristic function of this distribution is
\begin{align*}
e^{-t^2/2}\bigg(1 - \eta\bigg(\frac{\nu_i}{2\sigma_i} t\bigg) \bigg) \bigg( 1 - \eta\bigg(\frac{\nu_{i+1}}{2\sigma_i} t\bigg) \bigg) .
\end{align*}}
\newcommand{\theoremdistDalt}{%
If $i = 3$ (so that $d=12$), then 
\[ \lim_{n\to\infty} P_n\bigg(\frac{\inv(m;12) - \frac14\log\log n}{(\frac 12\log\log n)^{1/2}}
\le x\bigg) = \Mn_F(x) , \] 
where~$F$ is the singular four-variable normal distribution centred at the origin with covariance matrix
\[ M = \left[ \begin{array}{cccc}
1 & 5/8 & 1/2 & 1/8 \\ 5/8 & 1 & -1/8 & 1/4 \\ 1/2 & -1/8 & 1 & 3/8 \\ 1/8 & 1/4 & 3/8 & 1/2 
\end{array} \right] , \]
and $\Mn$ represents the minimum of those four random variables (see Definition~\ref{defn:min_max_distributions}).
}
\newcommand{\theoremdistE}{%
If $i>1$ is an integer such that $\#\sif_{i-1} = \#\sif_i = 1$ and $\#\sif_{i+1} = 2$, then for $d \in \sif_i$,
\[
\lim_{n\to\infty} P_n\bigg(\frac{\inv(m;d) - \mu_i\log\log n}{\sigma_i(\log\log n)^{1/2}} \le x\bigg) = \Phi\biggl(x;-\frac{\nu_{i+1}}{\sqrt{\sigma_i^2+\sigma_{i;1,2}^2}}\biggr).
\]
The characteristic function of this skew-normal distribution is
$e^{-t^2/2}\big(1-\eta(\frac{\nu_{i+1}}{2\sigma_i} t)\big)$.
The same statement holds when $i=1$ (so that $d=2$).}
\newcommand{\theoremdistrare}{%
If $d\notin\siF$, then almost all integers $m$ satisfy $\inv(m;d)=0$.}
\newcommand{\definitionphisequence}{%
We define the {\em total $\varphi$-sequence} to be the infinite sequence
\[
(\tq_i) = (2,3,4,5,8,7,9,16,11,13,17,32,19,27,25,23,29,\ldots)
\]
which consists of every prime power, ordered by the values the Euler $\varphi$-function takes on them, with ties broken by their size.
More precisely, the total $\varphi$-sequence is ordered so that the following two conditions are satisfied:
\begin{enumerate}
\item	If $i<j$ then $\varphi(\tq_i) \le \varphi(\tq_j)$;
\item	If $i < j$ and $\varphi(\tq_i) = \varphi(\tq_j)$, then $\tq_i < \tq_j$. 
\end{enumerate}
We define a {\em (finite) $\varphi$-sequence} to be a finite contiguous section of the total $\varphi$-sequence, that is, a finite sequence of the form $(\tq_i, \tq_{i+1}, \dots, \tq_{j-1}, \tq_j)$ for some positive integers $i\le j$.
}
\newcommand{\firsttenUFOsets}{%
(\sif_i) = \bigl( \{2\}, \{4,6\}, \{12\}, \{24,60\}, \{120\}, \{360,840\}, \{2520\}, \\
\{5040\}, \{55440\}, \{720720\}, \ldots \bigr)
} 
\newcommand{\definitionUFOsets}{%
We define the \emph{universal factor order sets} $(\sif_i)_{i=1}^\infty$ to be
\[ \sif_i = \begin{cases}
\{ \lcm[\tq_1, \tq_2, \dots, \tq_i ] \},
& \text{if } \varphi(\tq_i) < \varphi(\tq_{i+1}) , \\
\{ \lcm[ \tq_1, \tq_2, \dots, \tq_i ], \lcm[ \tq_1, \tq_2, \dots, \tq_{i-1}, \tq_{i+1} ] \}, & 
\text{if } \varphi(\tq_i) = \varphi(\tq_{i+1}) . \end{cases} \]
We set $\siF = \bigcup_{i=1}^\infty \sif_i$.
A positive integer~$d$ is said to be a \emph{universal factor order} if $d \in \siF$
and is otherwise a \emph{rare factor order}.
}
\newcommand{\definitioncountingfunctionsinv}{%
Define $\inv(m)$ to be the total number of invariant factors (counted with multiplicity) of the multiplicative group $(\Z/m\Z)^\times$. For any positive integer~$d$, define $\inv(m;d)$ to be the number of times $\Z_d$ appears as an invariant factor of $(\Z/m\Z)^\times$.
}
\newcommand{\definitioncountingfunctionspri}{%
Let $\pri(m;p^\alpha)$ count the number of times $p^\alpha$ appears as an elementary divisor of the multiplicative group $(\Z/m\Z)^\times$, so that $\prm(m;p^\alpha)= \sum_{\beta=\alpha}^\infty \pri(m;p^\beta)$.
}
\newcommand{\definitioncountingfunctionsprm}{%
Let $\prm(m;p^\alpha)$ count the number of elementary divisors of the multiplicative group $(\Z/m\Z)^\times$ that are of the form $p^\beta$ where $\beta\ge\alpha$. In particular, $\prm(m;p)$ is the number of $p$-groups in the elementary divisor decomposition of $(\Z/m\Z)^\times$.
}
\newcommand{\definitionS}{%
We define
\[ \SF(x;\sigma_1,\sigma_2,\sigma_3) = \lim_{y\to-\infty} \lim_{T\to\infty} \frac 1{2\pi}
\int_{-T}^T \frac{e^{-ity}-e^{-itx}}{it} e^{-(\sigma_1^2+\sigma_2^2+\sigma_3^2)t^2/2} 
\eta\bigg(\frac{\sigma_2 t}{\sqrt 2}\bigg)\eta\bigg(\frac{\sigma_3 t}{\sqrt 2}\bigg) \, dt . \]
We remark that a formula for the derivative of this function with respect to $x$ can be found in~\cite[Lemma~2.2]{skew_normal_sum}.
}
\newcommand{\definitionU}{%
We define $\Sigma = \sqrt{\sigma_1^2+\sigma_2^2+\sigma_3^2}$ and
\[
\UF(x;\sigma_1,\sigma_2,\sigma_3) = \Phi\Big(\frac x\Sigma\Big) 
+ 2\OT\Big(\frac x\Sigma, \frac{\sigma_2}{\sqrt{\sigma_1^2+\sigma_3^2}}\Big)
+ 2\OT\Big(\frac x\Sigma, \frac{\sigma_3}{\sqrt{\sigma_1^2+\sigma_2^2}}\Big)
+ \SF(x;\sigma_1,\sigma_2,\sigma_3),
\]
where $\OT$ is as in Definition~\ref{Owen T def}.
}
\newcommand{\definitionnu}{%
Given positive integers $i$, $k$, and $\ell$, define
\begin{align*}
\nu_i = \sqrt{ \frac1{\varphi(\tq_i)} - \frac1{\varphi(\tq_i)^2} }
\quad\text{and}\quad
\sigma_{i;k,\ell} = \sqrt{ \frac1{\varphi(\tq_i)^k} + \frac1{\varphi(\tq_{i+1})^\ell} - \frac2{\varphi(\tq_i)\varphi(\tq_{i+1})} } .
\end{align*}
}
\newcommand{\definitionmusigma}{%
The mu-sequence
\[
( \mu_i ) = \bigg( \frac 12,0,\frac 14,0,\frac 1{12},0,\frac 1{24},\frac 1{40},\frac 1{60},\frac 1{48},0,\frac 1{144},0,\frac 1{180},\frac 1{220},\frac 3{308},\ldots \bigg)
\]
and the sigma-sequence
\[
( \sigma_i ) = \bigg( \sqrt{\frac 12}, \sqrt{\frac 12} ,\sqrt{\frac 12}, \sqrt{\frac 38}, \sqrt{\frac 13}, \sqrt{\frac 5{18}}, \sqrt{\frac 14}, \sqrt{\frac 15}, \sqrt{\frac 16}, \sqrt{\frac{13}{96}}, \sqrt{\frac{15}{128}}, \sqrt{\frac 19}, \sqrt{\frac{17}{162}}, \ldots \bigg) 
\]
are defined, for all $i\in\N$, by
\begin{align*}
\mu_i = \frac1{\varphi(\tq_i)} - \frac1{\varphi(\tq_{i+1})}
\quad\text{and}\quad
\sigma_i = \sqrt{ \frac1{\varphi(\tq_i)} + \frac1{\varphi(\tq_{i+1})} - \frac2{\varphi(\tq_i)\varphi(\tq_{i+1})} } ,
\end{align*}
where $\varphi$ is Euler's totient function.
}
\newcommand{\Enotation}{%
We use of the general notation
\[
\Ex_n(f(m)) = \frac 1n \sum_{m=1}^n f(m)
\]
to denote the expectation of the function $f(m)$ when $m$ is chosen uniformly at random from $\{1,2,\dots,n\}$.
}
\newcommand{\definitionMnMx}{%
For any $n$-dimensional probability distribution $F$, define two functions
\begin{align*}
\Mx_F(x) &= F(x,\ldots,x) , \\
\Mn_F(x) &= \sum_{k=1}^n (-1)^{k-1} \sum_{\substack{S \subset \{1,\ldots,n\} \\ \# S = k}} F(\vec x_S) ,
\end{align*}
where for any $S \subset \{1,\ldots,n\}$ and $x \in \R$,
the vector $\vec x_S \in \bar\R^n$ has $i$th component equal to $x$ if $i \in S$
has $i$th component equal to $\infty$ otherwise.
}
\newcommand{\definitionomegafunctions}{%
For any set~$S$ of primes, we define $\omega(m;S) = \#\{ p\mid m \colon p\in S \}$. We use a shorthand notation for the important special case
\[
\omega(m;q,a) = \omega \bigl( m; \{ p \equiv a \mod q \} \bigr) = \#\{ p\mid m\colon p\equiv a\mod q\}.
\]
Further, for any two sets~$S$ and~$T$ of primes, we define $\omega(m;S-T) = \omega(m;S)-\omega(m;T)$.
}
\newcommand{\definitiontypical}{%
Given a real number $y \ge 1$, an integer $m \in \N$ is said to be {\em $y$-typical} if for any two prime powers~$q_1$ and~$q_2$:
\begin{enumerate}
\item	If $\varphi(q_1) < \varphi(q_2) \le y$, then $\prm(m;q_1) \ge \prm(m;q_2)$;
\item	If $\varphi(q_1) \le y < \varphi(q_2)$, then $\prm(m;q_1) > \prm(m;q_2)$.
\end{enumerate}
}
\newcommand{\definitiontwototient}{%
A {\em two-totient $\varphi$-sequence} is a $\varphi$-sequence $\{\tq_i,\dots,\tq_j\}$ satisfying:
\begin{itemize}
\item $\{\varphi(\tq_i),\dots,\varphi(\tq_j)\}$ has exactly two distinct elements;
\item If $k<i$ then $\varphi(\tq_k)<\varphi(\tq_i)$;
\item If $k>j$ then $\varphi(\tq_k)>\varphi(\tq_j)$.
\end{itemize}
In other words, for every two consecutive prime-power totients, there is a unique two-totient $\varphi$-sequence that is the inverse image of those two totient values under the Euler $\varphi$-function.
}
\newcommand{\definitionVW}{%
We define
\[
\iV(x) = \lcm\{ p^\alpha \colon \varphi(p^\alpha) \le x \} \quad\text{and}\quad \iW(x) = \#\{ p^\alpha \colon \varphi(p^\alpha) \le x \} .
\]
}
\newcommand{\definitionABCD}{%
For any real-valued additive function $f \colon \N \to \R$, define
\begin{align*}
A_f(x) &= \sum_{p \le x} \frac{f(p)}p, & 
C_f(x) &= \max\{ |f(p^\alpha)| \colon p^\alpha \le x \}, \\
B_f(x) &= \bigg(\sum_{p \le x} \frac{|f(p)|^2}p\bigg)^{1/2}, &
D_f(x) &= \bigg(\sum_{p^\alpha \le x} \frac{f(p^\alpha)^2}{p^\alpha}\bigg)^{1/2}.
\end{align*}
}
\newcommand{\definitionDd}{%
The \emph{Dirichlet density} of a set~$S$ of primes is
\[ \den(S) = \lim_{x\to\infty} \frac 1{\log\log x} \sum_{p \le x} \frac{1_S(p)}p \]
provided the limit exists.
}
\newcommand{\definitionEk}{%
To avoid excessive nested parentheses, we use the convention that $\Ex_n(f(m))^k$ is to be interpreted as $\Ex_n \bigl( (f(m))^k \bigr)$, the expectation of the $k$th power, as opposed to $\bigl( \Ex_n (f(m)) \bigr)^k$, the $k$th power of the expectation.
}
\newcommand{\definitionPn}{%
If $S(m)$ is any assertion about its argument~$m$, and~$n$ is any positive integer, we let $P_n(S(m)) = \frac 1n \#\{ m \in \N,\, m \le n \colon  S(m)$ is true$\}$ denote the probability that an integer~$m$ chosen uniformly from the set $\{1,\ldots,n\}$ satisfies $S(m)$.
}
\title{The universal profile of the invariant factors of~$(\Z/m\Z)^\times$}
\author{Greg Martin}
\address{University of British Columbia\\ Department of Mathematics \\ Room 121\\ 1984 Mathematics Road\\Vancouver, BC Canada V6T 1Z2}
\email{gerg@math.ubc.ca}
\author{Reginald~M.~Simpson}
\address{University of British Columbia\\ Department of Mathematics \\ Room 121\\ 1984 Mathematics Road\\Vancouver, BC Canada V6T 1Z2}
\email{regsim@math.ubc.ca}
\begin{document}

\maketitle
\tableofcontents

\section{Introduction}

The {\em multiplicative group} of the positive integer~$m$ is the group of units $(\Z/m\Z)^\times$ in the ring $\Z/m\Z$, which encodes the structure of multiplication among the invertible residue classes modulo~$m$. Many classical arithmetic functions can be defined in terms of this group, such as Euler's totient function $\varphi(m)$ (the order of the multiplicative group of~$m$), the Carmichael lambda-function $\lambda(m)$ (the maximum order of the elements of the multiplicative group of~$m$), and the Dirichlet characters $\chi(m)$ (arithmetic lifts of the irreducible representations of the multiplicative group of~$m$).

As a finite abelian group, the multiplicative group of~$m$ can be uniquely decomposed into a direct product
of cyclic groups $\Z_{\lambda_1} \times \cdots \times \Z_{\lambda_K}$ for some integer~$K$, with orders $\lambda_1,\ldots,\lambda_K$ such that $\lambda_1 \mid \lambda_2 \mid \cdots \mid \lambda_K$. This representation, which is called the {\em invariant factor decomposition} of the multiplicative group, is also connected to well-known arithmetic functions. In particular, the order~$\lambda_K$ of the largest cyclic group in the invariant factor decomposition of $(\Z/m\Z)^\times$ is precisely $\lambda(m)$. Furthermore, it is not hard to show (see Lemma~\ref{lemma:total_invariant_factors} below) that the length $K$ of the invariant factor decomposition of $(\Z/m\Z)^\times$ is essentially equal to $\omega(m)$, the number of distinct prime factors of~$m$. The limiting distribution of various statistics of this family of multiplicative groups is therefore closely connected to classical number theory results such as the Hardy--Ramanujan theorem and the Erd\H os--Kac theorem.

In this article, we will demonstrate that the invariant factors of $(\Z/m\Z)^\times$ are strongly restricted in their possible orders, at least for almost all numbers~$m$ (that is, on a subset of~$\N$ of natural density~$1$). These special orders are defined by a specific sequence of sets $(\sif_i)$, each one having either one or two elements, which we call the \emph{sequence of universal factor order sets}. The first ten such sets are
\begin{multline} \label{first ten UFOs}
\firsttenUFOsets
\end{multline}
When we define this sequence precisely (Definition~\ref{defn:standard_invariant_sequence} below), we will see that the elements of each universal factor order set always divide the elements of its successor. In particular, we note that the first even number not found in any $\sif_i$ is~$8$.

Chang and the first author~\cite{SIFMG} showed that for almost all~$m$, the multiplicative group $(\Z/m\Z)^\times$ has $\Z_2$ as an invariant factor (indeed, they gave an asymptotic formula for the counting function of the exceptions). Our first theorem is a vast generalization of this result, precisely classifying the orders of cyclic groups appearing in almost all multiplicative groups.

\begin{theorem}
\label{theorem:order_restriction_finite}
\theoremoverall
\end{theorem}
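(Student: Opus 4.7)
The plan is to express each $f_i(m)$ combinatorially in terms of the elementary divisor counts $\prm(m;q)$ and then show that these counts are arranged in a predictable order for almost all~$m$. I begin by unpacking the invariant factor decomposition via the identity $v_p(\lambda_j) = \max\{\alpha : \prm(m;p^\alpha) \ge K - j + 1\}$, where $\lambda_1 \mid \cdots \mid \lambda_K$ are the invariant factors in increasing order and $K$ is their total number. This holds because the number of invariant factors divisible by a prime power~$q$ equals $\prm(m;q)$, and those factors must occupy the top $\prm(m;q)$ positions in the increasing divisibility chain. Consequently, $f_i(m)$ is the product over exactly those prime powers~$q$ whose $\prm(m;q)$ sits in the $i$-th highest ``tier'' of the multiset $\{\prm(m;q)\}_q$.

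The next step is to show that for almost all~$m$, the profile $(\prm(m;q))_q$ is reverse-ordered to the $\varphi$-sequence $(\varphi(q))_q$ up to some threshold, with the only possible ties occurring between prime powers whose $\varphi$-values coincide. Walking through the total $\varphi$-sequence $(\tq_i)$ then identifies the successive distinct invariant factor orders: when $\varphi(\tq_i) < \varphi(\tq_{i+1})$, the $i$-th tier contains exactly $\tq_1,\dots,\tq_i$, yielding $f_i(m) = \lcm[\tq_1,\dots,\tq_i]$, the unique element of~$\sif_i$; when $\varphi(\tq_i) = \varphi(\tq_{i+1})$, either of the two possible orderings of $\prm(m;\tq_i)$ versus $\prm(m;\tq_{i+1})$ produces one of the two elements of $\sif_i$. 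In either case $f_i(m) \in \sif_i$. Since this argument shows the first $D$ profile entries are distinct, the invariant factor decomposition has at least $D$ distinct orders; Lemma~\ref{lemma:total_invariant_factors} supplies $K \ge D$ for almost all~$m$, giving the second assertion of the theorem.

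The analytic heart of the argument reduces to a Tur\'an--Kubilius estimate. A routine unpacking of $(\Z/m\Z)^\times = \prod_{q^\gamma \| m} (\Z/q^\gamma\Z)^\times$ shows $\prm(m;p^\alpha) = \omega(m;p^\alpha,1) + O(1)$, where $\omega(m;q,a)$ counts prime divisors of~$m$ congruent to~$a$ modulo~$q$; the bounded error absorbs the contribution of $p^\gamma \| m$ itself and the minor anomaly at $p=2$. For fixed prime powers $q_1, q_2$ with $\varphi(q_1) < \varphi(q_2) \le y$, the additive function $\omega(m;q_1,1) - \omega(m;q_2,1)$ has mean $\bigl( \frac 1{\varphi(q_1)} - \frac 1{\varphi(q_2)} \bigr) \log\log n + O(1)$ and variance $O(\log\log n)$ by Dirichlet's theorem and Tur\'an--Kubilius, so Chebyshev forces this difference to be positive outside a density-zero set of~$m$; analogous variance considerations handle pairs with $\varphi(q_1) = \varphi(q_2)$, forcing $\prm(m;q_1) \ne \prm(m;q_2)$ almost always. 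Unioning over the finitely many pairs with $\max\{\varphi(q_1),\varphi(q_2)\} \le y$ closes the argument.

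The main obstacle will not be any single analytic estimate but rather the combinatorial bookkeeping of extracting $f_i(m)$ from the profile $(\prm(m;q))_q$ in the presence of both $\varphi$-jumps and $\varphi$-ties. Once the algorithm is made explicit --- walk up the $\tq$-sequence, and at each $\varphi$-tie choose which of the two equal-totient prime powers got the higher $\prm$-count --- the inductive matching with $\sif_i$ is essentially forced. But the bookkeeping is fiddly, particularly at the small indices and at the shift caused by the $p=2$ anomaly $(\Z/2^\gamma\Z)^\times \cong \Z_2 \times \Z_{2^{\gamma-2}}$; one should write out the first few cases ($\tq_1=2,\ \tq_2=3,\ \tq_3=4$ producing $\sif_1=\{2\}$ and $\sif_2=\{4,6\}$) to calibrate the general argument.
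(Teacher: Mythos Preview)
Your overall strategy coincides with the paper's: reduce to showing that the profile $(\prm(m;q))_q$ is sorted in reverse-$\varphi$ order (the paper's ``$y$-typicality'', Definition~\ref{defn:y_typical}), and then read off $f_i(m)$ from the sorted profile via Proposition~\ref{prop:y_typical}. Your route is in one respect more economical than the paper's, which invokes all of the limiting-distribution theorems of Section~\ref{distribution intro section} just to certify that every $\sif_i$ is populated; your direct Chebyshev/Erd\H os--Kac argument would avoid that detour.

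There is, however, a genuine gap. You propose to close by ``unioning over the finitely many pairs with $\max\{\varphi(q_1),\varphi(q_2)\} \le y$'', but the identification of the $i$-th tier with $\{\tq_1,\dots,\tq_i\}$ (or its doubleton variant) requires more: you must also know that $\prm(m;\tq_N) > \prm(m;q)$ for \emph{every} prime power $q$ with $\varphi(q)>y$, an infinite family of comparisons (this is part~(b) of Definition~\ref{defn:y_typical}). Otherwise a single prime power with large totient could intrude into the top $D$ tiers and corrupt the lcm defining $f_i(m)$. A second-moment Chebyshev bound summed over all such $q$ does not converge, so Tur\'an--Kubilius alone is insufficient; the paper handles this tail by combining higher-moment estimates with a separate crude bound for large $q$ (Lemma~\ref{lemma:few_large_omega new} and Proposition~\ref{prop:y_typical_is_typical}).

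A smaller imprecision: in the doubleton case your ``analogous variance considerations'' are meant to force $\prm(m;\tq_i)\ne\prm(m;\tq_{i+1})$ when $\varphi(\tq_i)=\varphi(\tq_{i+1})$, but Chebyshev gives no anti-concentration at a point for a random variable of mean $O(1)$. What is actually needed is that $X=\omega(m;\tq_i,1)-\omega(m;\tq_{i+1},1)$ satisfies an Erd\H os--Kac law with variance $\asymp\log\log n$, so that $P_n(X=0)\le P_n(|X|\le\epsilon\sqrt{\log\log n})\to 2\Phi(\epsilon)-1$ for every $\epsilon>0$; the paper gets this implicitly from Theorem~\ref{theorem:dist_2}.
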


For a concrete implication of this theorem, we see that the set of positive integers whose multiplicative group has at least one invariant factor $\Z_8$ has density~$0$, while the set of positive integers whose multiplicative group has at least one invariant factor $\Z_{720720}$ (indeed, for which $\Z_{720720}$ is the tenth-smallest distinct invariant factor order!)~has density~$1$. We also see that almost all integers have one of $\Z_{360}$ and $\Z_{840}$ as an invariant factor (and no integer has both, since neither of $360$ and $840$ divides the other); we will see later in Theorem~\ref{theorem:dist_2} that each of these two possibilities occurs on a set of density~$\frac12$.

Theorem~\ref{theorem:order_restriction_finite} already reveals a startling uniformity in the invariant factor decompositions within the family of multiplicative groups; but much more can be said. For instance, beyond asking simply which invariant factors arise, we can also ask about the multiplicities of these invariant factors:

\begin{defn}
\label{defn:group_counting_functions_inv}
\definitioncountingfunctionsinv
\end{defn}

We can compute statistics of the distribution of these functions $\inv(m;d)$. For example, we will show in Theorem~\ref{theorem:ex_general} below that the average order (or expected value) of the number of invariant factors $\Z_{720720}$ is
\begin{equation} \label{720720 example}
\frac 1x \sum_{m \le x} \inv(m;720720) \sim \frac 1{48}\log\log x.
\end{equation}
By comparison, the average order of the total number of invariant factors is the average order of $\omega(m)$ (see Lemma~\ref{lemma:total_invariant_factors} below), which is $\log\log x$.
Thus on average, a proportion $\frac1{48}$ of the invariant factor decomposition of a multiplicative group consists of copies of $\Z_{720720}$. Section~\ref{expectation intro section} contains many such expected-value results for the quantities $\inv(m;d)$.

Remarkably, we can go even farther than examining the average orders of these counting functions $\inv(m;d)$: we can establish their precise limiting distributions. In Section~\ref{distribution intro section}, we show that normalized versions of the $\inv(m;d)$ have limiting distributions that follow Erd\H{o}s--Kac-type laws---but with the very unusual property that while some universal factor orders~$d$ yield normal distributions, others yield skew-normal or even more complicated distributions.

\subsection{Definitions of important sequences}

We need several definitions and pieces of notation to state these stronger theorems precisely. Before we can define the full sequence $\{\sif_i\}$ of universal factor order sets, we must first define a particular ordering of the set of prime powers. (We remark that all of the definitions and notation in Section~1 are gathered in Appendix~\ref{appendix:notation} for the reader's ease of reference.)

\begin{defn}
\label{defn:phi_sequence}
\definitionphisequence
\end{defn}

(We remark that all of the notation in Section~1 is gathered in Appendix~\ref{appendix:notation} for the reader's ease of reference.)

We can now present the definition of the sequence $\{\sif_i\}$ whose first elements we saw in equation~\eqref{first ten UFOs}.
By definition, each set $\sif_i$ is either a singleton or a doubleton.

\begin{defn}
\label{defn:standard_invariant_sequence}
\definitionUFOsets
\end{defn}

\noindent
It turns out that doubletons are never adjacent in the sequence $(\sif_i)_{i=1}^\infty$ (see Lemma~\ref{lemma:equal_totient2} below); we will take this fact for granted in the introduction for the sake of exposition.

It will also be helpful to define two often-used sequences of constants derived from the total $\varphi$-sequence:

\begin{defn}
\label{defn:mu_sigma_sequences}
\definitionmusigma
We note that $\sum_{i=1}^\infty \mu_i$ is a telescoping series whose value is $1/\varphi(\tq_1) = 1/\varphi(2) = 1$.
\end{defn}

\subsection{Expectations of invariant factor counting functions}  \label{expectation intro section}

In this section we are interested in the average orders (or expectations) of the counting functions $\inv(m;d)$ defined in Definition~\ref{defn:group_counting_functions_inv}.
\Enotation
We can describe the average orders of $\inv(m;d)$ for every universal factor order~$d$ in one succinct statement (which includes equation~\eqref{720720 example} as a special case):

\begin{theorem}
\label{theorem:ex_general}
\theoremexgeneral
\end{theorem}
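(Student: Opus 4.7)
The plan is to express $\inv(m;d)$ in terms of the counting functions $\prm(m;q)$, which are essentially additive arithmetic functions of~$m$, and then apply Mertens' theorem in arithmetic progressions to compute the expectation.

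First I would analyze the elementary-divisor picture. For each prime $p$ and $k \ge 1$, the $k$th largest $p$-elementary divisor of $(\Z/m\Z)^\times$ is $p^\alpha$ where $\alpha$ is the largest exponent satisfying $\prm(m;p^\alpha) \ge k$, and the $k$th largest invariant factor is the product of these $p$-parts over all primes $p$. If $m$ is $y$-typical for $y$ somewhat larger than $\varphi(\tq_{i+1})$, the typicality ordering pins down the relative sizes of the relevant values $\prm(m;\tq_j)$. In the singleton case $\#\sif_i = 1$ with $d=\lcm[\tq_1,\ldots,\tq_i]$, one reads off
\[
\inv(m;d) = \prm(m;\tq_i)-\prm(m;\tq_{i+1}),
\]
since the $k$th largest invariant factor equals $d$ precisely when $\prm(m;\tq_{i+1})<k\le\prm(m;\tq_i)$. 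In the doubleton case $\varphi(\tq_i)=\varphi(\tq_{i+1})$ (where $\mu_i=0$), $\inv(m;d)$ is instead an expression of the form $\max\bigl(0,\prm(m;\tq_j)-\prm(m;\tq_{j'})\bigr)$ for certain indices with $\varphi(\tq_j)=\varphi(\tq_{j'})$.

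Second I would compute $\Ex_n(\prm(m;p^\alpha))$. Up to a bounded contribution from the prime $p$ itself (and from $2$), each odd prime $\ell \ne p$ dividing~$m$ contributes a $p$-elementary divisor $p^{v_p(\ell-1)}$, so $\prm(m;p^\alpha)=\omega(m;p^\alpha,1)+O_d(1)$. Mertens' theorem for primes in arithmetic progressions then gives
\[
\Ex_n\bigl(\omega(m;p^\alpha,1)\bigr) = \sum_{\substack{\ell\le n \\ \ell\equiv 1\,(\bmod\,p^\alpha)}}\frac{1}{\ell}+O(1) = \frac{\log\log n}{\varphi(p^\alpha)}+O_d(1).
\]
Substituting into the singleton formula yields $\Ex_n(\inv(m;d))=\mu_i\log\log n+O_d(1)$, even stronger than the claim. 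In the doubleton case $\mu_i=0$ and $\inv(m;d) \le |\prm(m;\tq_j)-\prm(m;\tq_{j'})|$; the Cauchy--Schwarz inequality combined with the Turán--Kubilius variance bound $\Ex_n\bigl((\prm(m;\tq_j)-\prm(m;\tq_{j'}))^2\bigr) \ll \log\log n$ then yields $\Ex_n(\inv(m;d)) \ll (\log\log n)^{1/2}$, matching the stated error.

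To dispose of the $y$-typicality hypothesis I would use another Cauchy--Schwarz bound:
\[
\Bigl|\Ex_n\bigl(\inv(m;d)\,\mathbf 1_{m\text{ not }y\text{-typical}}\bigr)\Bigr| \le \sqrt{\Ex_n(\inv(m;d)^2)}\;\sqrt{P_n(m\text{ not }y\text{-typical})}.
\]
The trivial bound $\inv(m;d)\le\omega(m)+O_d(1)$ together with $\Ex_n(\omega(m)^2)\ll(\log\log n)^2$ controls the first factor, while Turán--Kubilius applied to each relevant additive function $\prm(m;q_1)-\prm(m;q_2)$ shows the density of atypical~$m$ decays rapidly enough (with $y$ taken as a constant depending only on $d$) that its contribution is absorbed in the error. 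The main obstacle I anticipate is the bookkeeping for the doubleton case: expressing $\inv(m;d)$ cleanly in terms of $\prm$ values requires tracking how the typicality ordering interacts with ties in the $\varphi$-sequence, but once the expression is in hand, the rest is standard Erdős--Kac/Turán--Kubilius machinery.
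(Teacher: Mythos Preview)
Your approach is sound in outline and more direct than the paper's (which first establishes the limiting distributions via Kubilius's multivariate theory and only then deduces the expectations by a convergence-of-moments argument). However, there is a genuine gap in your singleton case.

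You claim that when $\#\sif_i=1$ and $m$ is $y$-typical, one has $\inv(m;d)=\prm(m;\tq_i)-\prm(m;\tq_{i+1})$. This is only correct when the \emph{neighbouring} sets $\sif_{i-1}$ and $\sif_{i+1}$ are also singletons. If, say, $\#\sif_{i-1}=2$ (so $\varphi(\tq_{i-1})=\varphi(\tq_i)$), then $y$-typicality does not determine the relative order of $\prm(m;\tq_{i-1})$ and $\prm(m;\tq_i)$; the $i$th position in the $m$-dependent ordering $q_1(m),\ldots,q_N(m)$ is occupied by whichever of $\tq_{i-1},\tq_i$ has the smaller $\prm$-value. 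The correct formula in that case is
\[
\inv(m;d)=\min\bigl(\prm(m;\tq_{i-1}),\prm(m;\tq_i)\bigr)-\prm(m;\tq_{i+1}),
\]
and analogously a $\max$ appears on the right when $\#\sif_{i+1}=2$. Consequently your assertion that the singleton case yields $\Ex_n(\inv(m;d))=\mu_i\log\log n+O_d(1)$ is false in general: the paper's Theorems~\ref{theorem:ex_21}, \ref{theorem:ex_12}, and~\ref{theorem:ex_22} show that there is a genuine secondary term of order $(\log\log n)^{1/2}$ whenever an adjacent set is a doubleton.

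The fix is easy and uses exactly the tool you already deploy for the doubleton case: since
\[
\bigl|\min(\prm(m;\tq_{i-1}),\prm(m;\tq_i))-\prm(m;\tq_i)\bigr|\le\bigl|\prm(m;\tq_{i-1})-\prm(m;\tq_i)\bigr|,
\]
Cauchy--Schwarz together with the Tur\'an--Kubilius variance bound gives the missing $O_d((\log\log n)^{1/2})$ correction. With this amendment your direct argument goes through and proves Theorem~\ref{theorem:ex_general}; you should just drop the ``even stronger'' claim.
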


\noindent
(We remark that all of the theorems in Section~1 are gathered in Appendix~\ref{appendix:theorems} for the reader's ease of reference.)

In contrast, we can show that rare factor orders~$d$ indeed rarely occur as invariant factors of multiplicative groups. Theorem~\ref{theorem:order_restriction_finite} implies, for example, that almost no multiplicative groups include~$\Z_8$ as an invariant factor; we can improve this statement by showing that $\Ex_n(\inv(m;8)) = o(1)$, so that this family of groups contain vanishingly few invariant factors~$\Z_8$ on average as well. Even more strongly, we can show the same for all moments of $\inv(m;8)$, and even give a quantitative rate of decay of these expectations. The following theorem exhibits such a bound, indeed for all rare factor orders.

%
%
\begin{theorem}
\label{theorem:ex_rare}
\theoremexrare
\end{theorem}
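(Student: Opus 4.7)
The plan is to quarantine the contribution from non-typical integers and then control the remainder by moment inequalities for additive functions. Since by Lemma~\ref{lemma:total_invariant_factors} the total number of invariant factors of $(\Z/m\Z)^\times$ equals $\omega(m)+O(1)$, I have the crude pointwise bound $\inv(m;d)^k \ll_k \omega(m)^k + 1$, which will be sufficient once combined with a density bound for the exceptional set.

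For a fixed rare factor order $d \notin \siF$, I would use the mechanism underlying Theorem~\ref{theorem:order_restriction_finite} to identify a threshold $y_0 = y_0(d)$ with the property that every $y_0$-typical integer $m$ satisfies $\inv(m;d) = 0$. Concretely, $\Z_d$ occurring as an invariant factor forces strict inequalities of the form $\prm(m; p^{v_p(d)}) > \prm(m; p')$ for certain pairs of prime powers $(p^{v_p(d)}, p')$ with $p \mid d$, $p' \nmid d$ that encode the ``obstruction'' preventing $d$ from lying in $\siF$; such inequalities violate $y_0$-typicality once $y_0$ majorizes the relevant totients. Setting $E_n = \{m \leq n : m \text{ is not } y_0\text{-typical}\}$ and applying Cauchy--Schwarz together with the Hardy--Ramanujan moment estimate $\Ex_n(\omega(m)^{2k}) \ll_k (\log\log n)^{2k}$ gives
\[
\Ex_n(\inv(m;d)^k) \ll_{d,k} \Ex_n(\omega(m)^{2k})^{1/2}\, P_n(E_n)^{1/2} + P_n(E_n) \ll_k (\log\log n)^k\, P_n(E_n)^{1/2} + P_n(E_n) .
\]

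The main obstacle is then the estimate $P_n(E_n) \ll_{d,s} (\log\log n)^{-s}$ for arbitrary $s > 0$, since then choosing $s = 2(r+k)$ concludes the theorem. Failure of $y_0$-typicality amounts to violation of one of the inequalities $\prm(m;q_1) \geq \prm(m;q_2)$ (with $\varphi(q_1) < \varphi(q_2) \leq y_0$) or $\prm(m;q_1) > \prm(m;q_2)$ (with $\varphi(q_1) \leq y_0 < \varphi(q_2)$). For any such pair, the difference $\prm(m;q_1) - \prm(m;q_2)$ is an additive arithmetic function whose mean is $\bigl(\tfrac{1}{\varphi(q_1)} - \tfrac{1}{\varphi(q_2)}\bigr) \log\log n + O(1)$---positive and of size $\gg_{q_1,q_2} \log\log n$---while its centred $2s$-th moment is $O_s((\log\log n)^s)$ by Tur\'an--Kubilius and its higher-moment extensions. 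Markov's inequality then bounds the probability of the difference being nonpositive by $O_s((\log\log n)^{-s})$; summing over the finitely many pairs relevant to condition~(i), together with a tail estimate to cover the infinitely many pairs in condition~(ii) (for which $\prm(m;q_2)$ itself is typically zero), yields the required bound on $P_n(E_n)$.
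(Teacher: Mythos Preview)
Your proposal is correct and follows essentially the same strategy as the paper: identify a threshold $y_0$ (depending on $d$) such that $y_0$-typical integers have $\inv(m;d)=0$, then bound the contribution of the exceptional set using $\inv(m;d)\le\omega(m)+1$ together with the quantitative typicality bound $P_n(E_n)\ll_{d,s}(\log\log n)^{-s}$ of Proposition~\ref{prop:y_typical_is_typical}. The paper packages the last step as Lemma~\ref{lemma:expectation_error}, which splits on whether $\omega(m)<3s\log\log n$ rather than applying Cauchy--Schwarz with the raw moment bound $\Ex_n(\omega(m)^{2k})\ll_k(\log\log n)^{2k}$; both routes reach the same conclusion. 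Your third paragraph re-derives Proposition~\ref{prop:y_typical_is_typical} by the same Markov/centred-moment argument the paper uses in Section~\ref{sec:typicality}, so this is not a genuinely different approach either. One small remark: your ``concrete'' description of the obstruction in terms of $\prm(m;p^{v_p(d)})>\prm(m;p')$ is imprecise as stated---the clean statement is that for $y_0$-typical $m$ with $\iV(y_0)\ge d$, every invariant factor up to $\iV(y_0)$ lies in some $\sif_i$ (Proposition~\ref{prop:y_typical}\eqref{yt5}--\eqref{ytnew})---but you already signal that you would use ``the mechanism underlying Theorem~\ref{theorem:order_restriction_finite}'', which is exactly that.
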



We can compare these expectations to that of the total number $\inv(m)$ of invariant factors, which is asymptotically $\log\log m$ (see Lemma~\ref{lemma:total_invariant_factors} below). The result is that on average, a proportion $\mu_i$ of the invariant factors of multiplicative groups are equal to $\Z_d$ whenever $d\in\sif_i$. Moreover, since $\sum_{i=1}^\infty \mu_i = 1$, these represent almost all of the invariant factors present. Theorems~\ref{theorem:ex_general} and~\ref{theorem:ex_rare} therefore describe a ``limiting histogram'' of invariant factor orders among the family of multiplicative groups $(\Z/m\Z)^\times$. Figure~\ref{bar chart} displays this limiting histogram; we will see in the next section that it describes the limiting distribution of invariant factors in a far stronger sense than merely ``on average''.

\begin{figure}[ht]
\includegraphics[width=4in]{./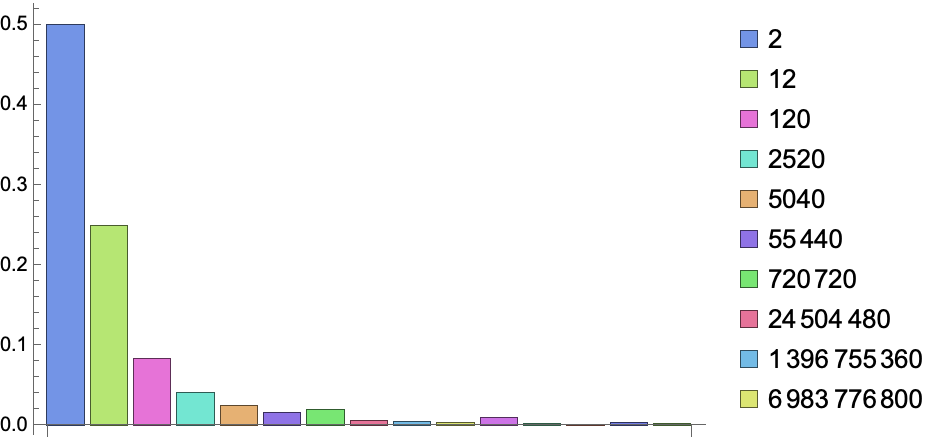}
\caption{The universal profile of invariant factors for the multiplicative groups $(\Z/m\Z)^\times$.}
\label{bar chart}
\end{figure}

We can see that some of the universal factor orders are missing from this histogram; indeed, it follows easily from Definitions~\ref{defn:standard_invariant_sequence} and~\ref{defn:mu_sigma_sequences} that $\mu_i$ equals~$0$ precisely when $\sif_i$ is a doubleton set. Thus Theorem~\ref{theorem:ex_general} is true but somewhat misleading in these cases: since $\mu_i = 0$, the ``error term'' is actually the main term (see Theorem~\ref{theorem:ex_2} below). In other cases, the error term can be improved significantly and second-order main terms appear.

We now describe these refinements to Theorem~\ref{theorem:ex_general}. The specifics turn out to depend on whether the adjacent sets $\sif_{i-1},\sif_i,\sif_{i+1}$ are singletons or doubletons, so there are several cases (enumerated in Proposition~\ref{prop:two-totient sequences} below) to consider. We will preface each case with a specific numerical example to help orient the reader. In our first example, the error term in the average order is improved dramatically:

\begin{ex}
The number of invariant factors of order $5040$ is on average
\[ \Ex_n(\inv(m;5040)) = \frac 1{40}\log\log n + O(1) . \]
\end{ex}

\noindent
(As a special case of the observations made before Figure~\ref{bar chart}, we see that on average, the cyclic group $\Z_{5040}$ comprises $2.5\%$ of the total number of invariant factors.) The general statement, which involves the notation $\mu_i$ from Definition~\ref{defn:mu_sigma_sequences}, concerns the case where all the sets $f_i$ involved are singletons.


\begin{theorem}
\label{theorem:ex_11}
\theoremexA
\end{theorem}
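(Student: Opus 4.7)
My plan is to reduce $\inv(m;d)$ to a simple difference of additive counting functions on a large set of ``typical'' integers, compute the expectation of this difference via Mertens-type asymptotics, and control the error from atypical integers using higher-moment concentration.

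For the structural step, I will use that the invariant factor decomposition of $(\Z/m\Z)^\times$ is determined by the multiplicities of prime powers among its elementary divisors: the $k$th-largest invariant factor equals $\lcm\{\tq_j : \prm(m;\tq_j) \ge k\}$. The hypothesis $\#\sif_{i-1} = \#\sif_i = \#\sif_{i+1} = 1$ forces the strict chain $\varphi(\tq_{i-1}) < \varphi(\tq_i) < \varphi(\tq_{i+1}) < \varphi(\tq_{i+2})$, so for $y = \varphi(\tq_{i+1})$ and every $y$-typical $m$, an elementary verification using the two typicality conditions shows that for each $k \in (\prm(m;\tq_{i+1}), \prm(m;\tq_i)]$ the set $\{j : \prm(m;\tq_j) \ge k\}$ equals exactly $\{1,\dots,i\}$, yielding the identity
\[ \inv(m;d) = \prm(m;\tq_i) - \prm(m;\tq_{i+1}), \qquad d = \lcm[\tq_1,\dots,\tq_i] . \]
This is precisely where the singleton hypotheses enter: a doubleton at $\sif_{i-1}$ (respectively $\sif_{i+1}$) would replace one of the $\prm$-terms by a $\min$ (respectively $\max$) of two quantities of equal mean, which is the source of the $(\log\log n)^{1/2}$ correction in weaker results like Theorem~\ref{theorem:ex_general}.

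Next, since each $\prm(m;p^\alpha)$ differs from the strongly additive $\omega(m;p^\alpha,1)$ (counting primes $\ell\mid m$ with $\ell\equiv 1\pmod{p^\alpha}$) by $O(1)$ uniformly in $m$, Mertens's theorem for primes in arithmetic progressions yields $\Ex_n(\prm(m;\tq_j)) = \frac{1}{\varphi(\tq_j)}\log\log n + O(1)$, and subtracting the $j=i+1$ expectation from the $j=i$ one gives the main term $\mu_i\log\log n + O_d(1)$. To handle the contribution from atypical $m$, for each relevant pair $(q_1,q_2)$ with $\varphi(q_1) < \varphi(q_2)$, the random variable $X := \prm(m;q_2) - \prm(m;q_1)$ has negative mean $-c\log\log n + O(1)$ with $c > 0$ and central moments $\Ex_n((X-\Ex_n X)^{2k}) \ll_k (\log\log n)^k$ by Tur\'an--Kubilius-type bounds, so Markov gives $P_n(\prm(m;q_2) \ge \prm(m;q_1)) \ll_k (\log\log n)^{-k}$ for every fixed $k$. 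Summing over the finitely many pairs defining $y$-typicality and combining with the easy bound $\Ex_n(\inv(m;d)^2) \le \Ex_n(\omega(m)^2) \ll (\log\log n)^2$ via Cauchy--Schwarz (with $k=2$) bounds the atypical contribution to $\Ex_n(\inv(m;d))$ by $O_d(1)$, completing the proof.

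The principal obstacle is the atypicality bound: improving the $O((\log\log n)^{1/2})$ error of Theorem~\ref{theorem:ex_general} down to $O(1)$ requires concentration of the $\prm$-type additive functions strictly stronger than Chebyshev. Higher-moment Tur\'an--Kubilius estimates provide what is needed, exploiting the $\log\log n$-sized mean gap between adjacent $\prm$-values that the singleton hypothesis guarantees.
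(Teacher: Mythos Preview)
Your overall strategy is the same as the paper's: on the set of $y$-typical integers you have the exact identity $\inv(m;d)=\prm(m;\tq_i)-\prm(m;\tq_{i+1})$ (this is the paper's Proposition~\ref{prop:y_typical}\eqref{yt6}), after which Lemmas~\ref{lemma:omega_finite_mean_variance} and~\ref{lemma:sum_arithmetic_progression_pnt} give $\mu_i\log\log n+O_d(1)$; and you control the atypical contribution by Cauchy--Schwarz against a bound for $P_n(m\text{ not $y$-typical})$. The paper does exactly this, citing Proposition~\ref{prop:y_typical_is_typical} (via Lemma~\ref{lemma:expectation_error}) for the atypical bound.

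There is, however, a genuine gap in your treatment of atypicality. You write ``summing over the finitely many pairs defining $y$-typicality'', but Definition~\ref{defn:y_typical}(b) is a condition on \emph{every} prime power $q$ with $\varphi(q)>y$, of which there are infinitely many; and your structural identity genuinely requires this, since a single prime power $q$ with $\varphi(q)>y$ and $\prm(m;q)>\prm(m;\tq_{i+1})$ would insert an invariant factor strictly between $d$ and $\lcm[d,\tq_{i+1}]$ and destroy the count. Your Tur\'an--Kubilius/Markov bound $P_n(\prm(m;q)\ge\prm(m;\tq_{i+1}))\ll_k(\log\log n)^{-k}$ is fine for each \emph{fixed} $q$, but the implied constant depends on $q$ (through the mean gap $\frac{1}{\varphi(\tq_{i+1})}-\frac{1}{\varphi(q)}$ and through the moment constants), and is not summable over all prime powers. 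The paper's Proposition~\ref{prop:y_typical_is_typical} handles this by splitting: for $q$ in a bounded range the moment argument works uniformly, while for large $q$ one instead uses the elementary estimate $P_n(\omega(m;q,1)\ge 2)\ll(\log\log n+\log q)^2/q^2$ (Lemma~\ref{lemma:few_large_omega new pre}(c)), which is summable in~$q$. You need some version of this tail argument; once you supply it, your proof is essentially the paper's.
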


In our next example, a secondary main term arises:

\begin{ex}
The number of invariant factors of order $2520$ is on average
\begin{equation}  \label{2520 example}
\Ex_n(\inv(m;2520)) = \frac 1{24}\log\log n-\bigg(\sqrt{\frac5{36\pi}}+o(1)\bigg)(\log\log n)^{1/2} .
\end{equation}
\end{ex}

The general statement, of which this example is a special case, concerns the case where the set preceding $f_i$ is a doubleton but the two other relevant sets are singletons:
The main term still involves $\mu_i$ (as is consistent with Theorem~\ref{theorem:ex_general}), but the secondary term involves the notation $\sigma_i$ from Definition~\ref{defn:mu_sigma_sequences}.

\begin{theorem}
\label{theorem:ex_21}
\theoremexB
\end{theorem}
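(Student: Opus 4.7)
The plan is to reduce $\inv(m;d)$, on a set of density one, to a simple expression involving the elementary-divisor counting functions $\prm(m;\tq_j)$, and then compute the expectation via Mertens-type estimates together with an Erd\H os--Kac-style control on a mean absolute deviation.

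The first step is to establish, for every $y$-typical $m$ with $y \ge \varphi(\tq_{i+1})$ (or slightly larger, to rule out some small-prime-power contributions), the identity
\[
\inv(m;d) = \min\bigl(\prm(m;\tq_{i-1}),\,\prm(m;\tq_i)\bigr) - \prm(m;\tq_{i+1}).
\]
This follows from the standard description of the invariant factors of a finite abelian group: the condition $\lambda_j = d = \lcm[\tq_1,\ldots,\tq_i]$ translates into simultaneous two-sided inequalities $\prm(m;p^{\alpha+1}) < K+1-j \le \prm(m;p^{\alpha})$ across the prime powers $p^{\alpha}$ appearing in the factorization of $d$, together with exclusion inequalities $\prm(m;p) < K+1-j$ for primes not dividing $d$. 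Under the hypothesis $\varphi(\tq_{i-1})=\varphi(\tq_i) < \varphi(\tq_{i+1}) < \varphi(\tq_{i+2})$, the $y$-typicality relations from Definition~\ref{defn:typical} force the maximum of all the ``upper-bound'' $\prm$-values to be realized by $\prm(m;\tq_{i+1})$, and force the minimum of all the ``lower-bound'' $\prm$-values (which all come from prime powers in $\{\tq_1,\ldots,\tq_i\}$) to be realized by one of the two tied quantities $\prm(m;\tq_{i-1})$ or $\prm(m;\tq_i)$, since these have the largest admissible totients. Contributions to $\Ex_n(\inv(m;d))$ from non-$y$-typical $m$ are controlled via the trivial bound $\inv(m;d) \ll \omega(m)+1$ together with the smallness of the density of non-$y$-typical integers (as in the proof of Theorem~\ref{theorem:order_restriction_finite}), yielding an $o\bigl((\log\log n)^{1/2}\bigr)$ error.

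Given this identity, I would use $\min(X,Y) = \tfrac12(X+Y)-\tfrac12|X-Y|$ to obtain
\[
\Ex_n\bigl(\inv(m;d)\bigr) = \tfrac12\Ex_n\bigl(\prm(m;\tq_{i-1})+\prm(m;\tq_i)\bigr) - \Ex_n\bigl(\prm(m;\tq_{i+1})\bigr) - \tfrac12\Ex_n\bigl(|g(m)|\bigr) + o\bigl((\log\log n)^{1/2}\bigr),
\]
where $g(m) := \prm(m;\tq_{i-1}) - \prm(m;\tq_i)$. Standard Mertens-type estimates for primes in arithmetic progressions give $\Ex_n\bigl(\prm(m;\tq_j)\bigr) = \log\log n/\varphi(\tq_j)+O(1)$, so the hypothesis $\varphi(\tq_{i-1})=\varphi(\tq_i)$ collapses the three $\prm$-expectation contributions to the main term $\mu_i\log\log n+O(1)$ by the definition of $\mu_i$.

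The crux is estimating $\Ex_n(|g(m)|)$. The function $g$ agrees (up to a bounded additive error from exceptional small-prime-power factorizations) with a strongly additive function whose mean is $O(1)$ and whose variance equals $\sigma_{i-1}^2\log\log n+O(1)$; the precise constant $\sigma_{i-1}^2$ arises from the cross-covariance term $2/\bigl(\varphi(\tq_{i-1})\varphi(\tq_i)\bigr)$ appearing in Definition~\ref{defn:mu_sigma_sequences}, reflecting the density of primes lying in both residue classes. A quantitative Erd\H os--Kac / Kubilius theorem then yields convergence in distribution of $g(m)/\bigl(\sigma_{i-1}\sqrt{\log\log n}\bigr)$ to the standard normal law, and uniform integrability (via an elementary second-moment bound on $g$) upgrades this to convergence of the first absolute moment, producing $\Ex_n\bigl(|g(m)|\bigr) = \sqrt{2/\pi}\,\sigma_{i-1}\sqrt{\log\log n}+o\bigl((\log\log n)^{1/2}\bigr)$. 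Halving this gives the claimed secondary term $-\sigma_{i-1}(\log\log n)^{1/2}/\sqrt{2\pi}$. The main obstacle is Step~1: verifying the combinatorial identity on $y$-typical $m$ requires careful case analysis to track the prime factorizations of $\tq_{i-1},\tq_i,\tq_{i+1}$ (which can share a prime) and to confirm that no other prime power realizes the extrema in the min/max; the remaining analytic ingredients are now-standard first-moment computations and normal approximations for bounded additive functions.
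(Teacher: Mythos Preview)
Your proposal is correct and follows a genuinely different route from the paper's own proof. The paper first establishes the full limiting \emph{distribution} of $\bigl(\inv(m;d)-\mu_i\log\log n\bigr)/\bigl(\sigma_i(\log\log n)^{1/2}\bigr)$ as a skew-normal law (Theorem~\ref{theorem:dist_21}), by identifying $\inv(m;d)$ on $y$-typical $m$ with the minimum of two additive functions whose joint limit is bivariate normal (Lemmas~\ref{lemma:type_phi(2{:}1)} and~\ref{lemma:bivariate_min_max}); it then obtains the expectation by showing that moments converge to those of the limit (Proposition~\ref{prop:exa}, using the uniform bound of Lemma~\ref{lem:exa}) and reading off the mean of the skew-normal. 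You instead apply the identity $\min(X,Y)=\tfrac12(X+Y)-\tfrac12|X-Y|$ \emph{at the finite level}, which decouples the expectation into (i) linear $\prm$-terms handled by Mertens-type estimates, and (ii) $\Ex_n|g(m)|$ for a single additive function $g$ of mean $O(1)$, handled by the univariate Erd\H os--Kac law plus uniform integrability. Your route is more direct for the expectation alone and needs only a one-dimensional central limit input; the paper's route is more work up front but delivers the full distributional theorem, from which all moments (not just the first) follow uniformly. One minor point: your worry that $\tq_{i-1},\tq_i,\tq_{i+1}$ might share a prime is unnecessary in this case --- Proposition~\ref{prop:two-totient sequences}(b) guarantees they are pairwise coprime, which also cleanly gives your variance constant $\sigma_{i-1}^2$.
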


We encourage the reader to begin forming the intuition that the presence of the doubleton $\sif_{i-1}$ to the left of the singleton $\sif_i$ ``steals" a few of the invariant factors from the sequence of copies of~$\Z_d$. This intuition will be connected to Theorem~\ref{theorem:ex_2} below and will also be significant in the next section, where we discuss limiting distributions of these counting functions $\inv(m;d)$.


A similar outcome occurs when the relevant doubleton is on the other side:

\begin{ex}
The number of invariant factors of order $720720$ and $2$ are on average
\begin{align*}
\Ex_n(\inv(m;720720)) &= \frac 1{48}\log\log n-\bigg( \sqrt{\frac{15}{256\pi}}+o(1)\bigg)(\log\log n)^{1/2} \\
\Ex_n(\inv(m;2)) &= \frac 12\log\log n-\bigg(\frac1{2\sqrt\pi}+o(1)\bigg)(\log\log n)^{1/2}.
\end{align*}
\end{ex}
\noindent
We remark that the first of these two formulas is a refinement of the example~\eqref{720720 example} we have already seen, while the second formula implies that $\Z_2$ is the most common invariant factor (indeed comprising nearly half of the invariant factors) on average.

These two examples are special cases of the following general theorem, concerning the case where the set following $\sif_i$ is a doubleton but the two other relevant sets are singletons:

\begin{theorem}
\label{theorem:ex_12}
\theoremexE
\end{theorem}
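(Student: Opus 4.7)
The plan is to express $\inv(m;d)$ as a combination of the elementary-divisor counts $\prm(m;\tq_k)$ using the $y$-typicality framework, and then to invoke Tur\'an--Kubilius and Erd\H{o}s--Kac-type estimates. First I would argue that for $y$-typical $m$ with $y$ chosen to exceed $\varphi(\tq_{i+2})$, the invariant factor decomposition is forced into a ``staircase'' shape in which
\[
\inv(m;d) = \prm(m;\tq_i) - \max\bigl(\prm(m;\tq_{i+1}),\prm(m;\tq_{i+2})\bigr).
\]
The reason is that each elementary-divisor row reaching to $\tq_i$-level but not matched by either $\tq_{i+1}$ or $\tq_{i+2}$ assembles into an invariant factor exactly equal to $d=\lcm[\tq_1,\dots,\tq_i]$. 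The contribution of atypical $m$ can be absorbed into a negligible error via the crude pointwise bound $\inv(m;d)\le\omega(m)$ combined with Cauchy--Schwarz against the density-zero set of atypical integers. The hypothesis on $\sif_{i-1}$ plays no role in this reduction, so the $i=1$ case is handled identically.

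Using the identity $\max(a,b)=\tfrac12(a+b)+\tfrac12|a-b|$, I would split the expectation as
\[
\Ex_n(\inv(m;d)) = \Ex_n(\prm(m;\tq_i)) - \tfrac12\Ex_n\bigl(\prm(m;\tq_{i+1})+\prm(m;\tq_{i+2})\bigr) - \tfrac12\Ex_n\bigl|\prm(m;\tq_{i+1})-\prm(m;\tq_{i+2})\bigr|.
\]
Each $\prm(m;\tq_k)$ agrees, up to a bounded correction arising from the case $\tq_k\mid m$, with the strongly additive function $\omega(m;\tq_k,1)$ counting prime divisors of $m$ congruent to $1\pmod{\tq_k}$. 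Since the Dirichlet density of such primes is $1/\varphi(\tq_k)$, a Landau-type estimate gives $\Ex_n(\prm(m;\tq_k)) = \tfrac{1}{\varphi(\tq_k)}\log\log n + O(1)$. Because $\varphi(\tq_{i+1})=\varphi(\tq_{i+2})$, the first two summands of the split combine to produce $\mu_i\log\log n+O(1)$, reproducing the main term of the theorem.

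The secondary term arises from the absolute-value summand. Setting $X_m = \prm(m;\tq_{i+1})-\prm(m;\tq_{i+2})$, this strongly additive function has mean zero (the two residue densities coincide), and a standard variance calculation---using that the distinct prime powers $\tq_{i+1}$ and $\tq_{i+2}$ are coprime, so that the density of primes $p\equiv 1$ modulo both is $1/(\varphi(\tq_{i+1})\varphi(\tq_{i+2}))$---gives that the variance of $X_m$ is asymptotic to $\sigma_{i+1}^2\log\log n$. An Erd\H{o}s--Kac-type central limit theorem then shows that $X_m/\bigl(\sigma_{i+1}(\log\log n)^{1/2}\bigr)$ converges in distribution to a standard normal $Z$. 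Since $\Ex|Z|=\sqrt{2/\pi}$, one obtains $\Ex_n|X_m|\sim\sigma_{i+1}\sqrt{2/\pi}\,(\log\log n)^{1/2}$, and halving yields precisely the advertised secondary term $-\frac{\sigma_{i+1}}{\sqrt{2\pi}}(\log\log n)^{1/2}$.

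The main obstacle I anticipate is upgrading this distributional convergence to convergence of the first absolute moment, since weak convergence alone does not suffice: one must verify uniform integrability of $X_m/(\log\log n)^{1/2}$. I would handle this via a uniform bound on a higher moment, for instance applying a Tur\'an--Kubilius-type inequality to $X_m$ to obtain an $L^{2k}$ bound of order $(\log\log n)^{k}$ for some $k\ge 2$, which comfortably implies uniform integrability. With these moment bounds in hand, the three expectation estimates combine to establish the theorem in both the $i>1$ and $i=1$ regimes.
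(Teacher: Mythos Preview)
Your proposal is correct and takes a genuinely different route from the paper. The paper first establishes the full limiting distribution of the normalized quantity $I_n=(\inv(m;d)-\mu_i\log\log n)/(\sigma_i(\log\log n)^{1/2})$ as a skew-normal law (Theorem~\ref{theorem:dist_12}), using Kubilius's bivariate joint distribution of $\omega(m;\TQ_i-\TQ_{i+1})$ and $\omega(m;\TQ_i-\TQ_{i+2})$ and then taking the minimum; only afterwards does it extract the expectation by proving uniform integrability of $I_n$ (Proposition~\ref{prop:exa}) and reading off the mean of the skew-normal. Your approach bypasses the bivariate machinery entirely: the identity $\max(a,b)=\tfrac12(a+b)+\tfrac12|a-b|$ reduces everything to the \emph{univariate} additive function $X_m=\prm(m;\tq_{i+1})-\prm(m;\tq_{i+2})$, whose variance is indeed $\sigma_{i+1}^2\log\log n$ (since $\tq_{i+1},\tq_{i+2}$ are coprime by Lemma~\ref{lemma:equal_totient}), and whose first absolute moment you recover via a one-dimensional Erd\H os--Kac law plus a Tur\'an--Kubilius $L^{2k}$ bound for uniform integrability. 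Your argument is therefore more elementary and more direct for the expectation theorem itself; the paper's route has the compensating advantage that the distributional theorem is the real content, with the expectation falling out as a corollary. One trivial correction: the crude pointwise bound should be $\inv(m;d)\le\omega(m)+1$ (Lemma~\ref{lemma:total_invariant_factors}), not $\omega(m)$, though this does not affect the argument.
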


\noindent
Again it is beneficial to think that the presence of the doubleton $\sif_{i+1}$ to the right of the singleton $\sif_i$ results in the loss of some of the copies of $\Z_d$.

This phenomenon is exacerbated if there are doubletons on both sides:

\begin{ex}
The number of invariant factors of order $120$ and $12$ are on average
\begin{align} \label{120 example}
\Ex_n(\inv(m;120)) &= \frac 1{12}\log\log n-\bigg(\sqrt{\frac 3{16\pi}}+\sqrt{\frac5{36\pi}}+o(1)\bigg)(\log\log n)^{1/2} \\
\Ex_n(\inv(m;12)) &= \frac 14\log\log n-\bigg(\frac1{2\sqrt\pi}+\sqrt{\frac 3{16\pi}}+o(1)\bigg)(\log\log n)^{1/2}.
\label{12 example}
\end{align}
\end{ex}

\noindent
These two examples are special cases of the following result, where (informally) doubletons on both sides of $\sif_i$ each steal some of the copies of $\Z_d$ that would otherwise appear:

\begin{theorem}
\label{theorem:ex_22}
\theoremexD
\end{theorem}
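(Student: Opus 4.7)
The plan is to express $\inv(m;d)$ in terms of the counting functions $\prm(m;\cdot)$ and then evaluate the expectation by splitting into a linear part (producing the main term) and two absolute-value terms (producing the secondary terms). Under the given hypotheses, $d = \lcm[\tq_1,\ldots,\tq_i]$ and $\varphi(\tq_{i-1}) = \varphi(\tq_i) < \varphi(\tq_{i+1}) = \varphi(\tq_{i+2})$. For any $y$-typical $m$ with $y \ge \varphi(\tq_{i+2})$, the $j$th invariant factor (counted from the largest) equals $\lcm\{\tq_k : \prm(m;\tq_k) \ge j\}$; combined with the $y$-typical inequalities and the fact (coming from the ``no adjacent doubletons'' lemma) that $\varphi(\tq_{i-2}) < \varphi(\tq_{i-1})$ and $\varphi(\tq_{i+2}) < \varphi(\tq_{i+3})$, this lcm equals $d$ precisely when both $\tq_{i-1}$ and $\tq_i$ contribute at level $j$ but neither $\tq_{i+1}$ nor $\tq_{i+2}$ does. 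We therefore obtain the combinatorial identity
\[ \inv(m;d) = \min\!\bigl(\prm(m;\tq_{i-1}),\prm(m;\tq_i)\bigr) - \max\!\bigl(\prm(m;\tq_{i+1}),\prm(m;\tq_{i+2})\bigr) \]
on the set of $y$-typical $m$; the contribution of atypical $m$ to $\Ex_n(\inv(m;d))$ is $o((\log\log n)^{1/2})$ by the quantitative density bounds underlying Theorem~\ref{theorem:order_restriction_finite}, the crude estimate $\inv(m;d) \le \omega(m)+O(1)$, and Cauchy--Schwarz.

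Using $\min(a,b)=\tfrac12(a+b-|a-b|)$ and $\max(a,b)=\tfrac12(a+b+|a-b|)$ transforms the identity into
\begin{align*}
\inv(m;d) =\ &\tfrac12\bigl(\prm(m;\tq_{i-1}) + \prm(m;\tq_i) - \prm(m;\tq_{i+1}) - \prm(m;\tq_{i+2})\bigr) \\
&- \tfrac12\bigl|\prm(m;\tq_{i-1}) - \prm(m;\tq_i)\bigr| - \tfrac12\bigl|\prm(m;\tq_{i+1}) - \prm(m;\tq_{i+2})\bigr|.
\end{align*}
The standard Mertens-type estimate $\Ex_n(\prm(m;q)) = \log\log n/\varphi(q) + O(1)$ (already employed in the proof of Theorem~\ref{theorem:ex_general}), combined with the equal-totient hypotheses $\varphi(\tq_{i-1})=\varphi(\tq_i)$ and $\varphi(\tq_{i+1})=\varphi(\tq_{i+2})$, collapses the expectation of the linear part to $\mu_i\log\log n + O(1)$.

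It remains to evaluate the expectation of each absolute value. Setting $X = \prm(m;\tq_{i-1}) - \prm(m;\tq_i)$, a direct covariance computation---using that $\tq_{i-1}$ and $\tq_i$ are coprime prime powers, so primes $\ell\equiv 1\pmod{\tq_{i-1}\tq_i}$ appear with density $1/(\varphi(\tq_{i-1})\varphi(\tq_i))$---shows that $X$ has mean $O(1)$ and variance asymptotic to $\sigma_{i-1}^2\log\log n$. Since $X$ is, up to $O(1)$ corrections from the prime-power parts of $m$, a signed strongly additive function, the Erd\H os--Kac/Kubilius machinery already in use in this paper yields $X/\bigl(\sigma_{i-1}(\log\log n)^{1/2}\bigr) \Rightarrow N(0,1)$ together with the uniform second-moment bound needed to transfer distributional convergence to convergence of $L^1$ norms. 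Using $\Ex|Z|=\sqrt{2/\pi}$ for $Z \sim N(0,1)$, we conclude $\Ex_n|X| = \sigma_{i-1}\sqrt{2/\pi}\,(\log\log n)^{1/2} + o\bigl((\log\log n)^{1/2}\bigr)$, and the analogous statement with $\sigma_{i+1}$ holds for $|\prm(m;\tq_{i+1}) - \prm(m;\tq_{i+2})|$. Assembling the three pieces and using $\tfrac12\sqrt{2/\pi} = 1/\sqrt{2\pi}$ gives the claim. The principal obstacle is exactly this last passage from the CLT to a first-moment asymptotic, requiring a uniform integrability argument that should parallel the ones appearing in the proofs of Theorems~\ref{theorem:ex_21} and~\ref{theorem:ex_12}; the present theorem is morally the sum of those two, with the two doubletons acting independently in the limit to produce the $\sigma_{i-1}+\sigma_{i+1}$ coefficient.
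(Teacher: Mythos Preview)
Your argument is correct. It differs from the paper's route in organization rather than in substance, but the reorganization is worth noting.

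The paper derives Theorem~\ref{theorem:ex_22} by first proving the full limiting distribution of $I_n=(\inv(m;d)-\mu_i\log\log n)/(\sigma_i\sqrt{\log\log n})$ (Theorems~\ref{theorem:dist_22} and~\ref{theorem:dist_22alt}), then invoking a uniform-integrability transfer (Proposition~\ref{prop:exa}) to pass from distributional convergence to convergence of the first moment, and finally computing the mean of the limiting law. Inside the proof of Lemma~\ref{lemma:4_variable_min_max} the paper uses exactly your identity $4\min\{X,Y,Z,W\}=X+Y+Z+W-|X+Y-Z-W|-|X-Y+Z-W|$, which unwinds to your $\min(\prm(\tq_{i-1}),\prm(\tq_i))-\max(\prm(\tq_{i+1}),\prm(\tq_{i+2}))$ decomposition; but it applies this identity only \emph{after} passing to the joint Gaussian limit.

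You instead apply the decomposition at the level of $m$ and take expectations term by term. This bypasses the four-variable joint distribution entirely: you only need the univariate CLT for each of $\prm(m;\tq_{i-1})-\prm(m;\tq_i)$ and $\prm(m;\tq_{i+1})-\prm(m;\tq_{i+2})$, together with the same $L^2$ bound (your uniform integrability step) that the paper packages into Lemma~\ref{lemma:ek_sieve_big_O new} and Proposition~\ref{prop:exa}. A pleasant side effect is that your proof treats the case $i=3$ uniformly: you only use that $\tq_{i-1},\tq_i$ are coprime and that $\tq_{i+1},\tq_{i+2}$ are coprime (both guaranteed by Lemma~\ref{lemma:equal_totient}), never that $\tq_i$ and $\tq_{i+2}$ are coprime, which is precisely the obstruction that forces the paper to route $i=3$ through Theorem~\ref{theorem:dist_22alt}. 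One small point to tighten: your Cauchy--Schwarz bound on the atypical contribution gives only $O((\log\log n)^{1/2})$ if you use the $r=1$ case of Proposition~\ref{prop:y_typical_is_typical}; take $r\ge2$ (or cite Lemma~\ref{lemma:expectation_error} directly) to get $o((\log\log n)^{1/2})$.
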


\noindent
Taken together, Theorems~\ref{theorem:ex_11}--\ref{theorem:ex_22} cover all of the ``singleton cases'', namely the universal factor orders $d\in\sif_i$ where $\#\sif_i=1$.


The last refinement of Theorem~\ref{theorem:ex_general} concerns a situation where the expected value is truly of size $(\log\log n)^{1/2}$ rather than $\log\log n$:

\begin{ex}
The number of invariant factors of order $360$ and $840$ are on average
\begin{align} \label{360 840 example}
\Ex_n(\inv(m;360)) &= \bigg(\sqrt{\frac5{36\pi}}+o(1)\bigg)(\log\log n)^{1/2} \\
\Ex_n(\inv(m;840)) &= \bigg(\sqrt{\frac5{36\pi}}+o(1)\bigg)(\log\log n)^{1/2} . \notag
\end{align}
\end{ex}

\noindent
We can check that this example is consistent with Theorem~\ref{theorem:ex_general}: we have $\sif_6 = \{360,840\}$ and $\mu_6 = \varphi(\tq_6)^{-1} - \varphi(\tq_7)^{-1} = \frac 16 - \frac 16 = 0$. Therefore Theorem~\ref{theorem:ex_general}, while correctly asserting that $\Ex_n(\inv(m;360)) = O\bigl( (\log\log n)^{1/2} \bigr)$, hides a more precise average order in this case (and similarly for $d=840$). The fact that $\mu_6=0$ also explains why neither $d=360$ nor $d=840$ appears in the limiting histogram in Figure~\ref{bar chart}, which depicts proportions of invariant factors after division by $\log\log n$.

It is also worth remarking that the doubleton $\sif_6 = \{360,840\}$ really can be seen here to be stealing some of the invariant factors from its neighbours $\sif_7=\{2520\}$ and $\sif_5 = \{120\}$: note that the two expressions in equation~\eqref{360 840 example} are exactly the same size as negative secondary main terms in equations~\eqref{2520 example} and~\eqref{120 example}.

There is a corresponding general statement which covers all ``doubleton cases'', namely the universal factor orders $d\in\sif_i$ where $\#\sif_i=2$:

\begin{theorem}
\label{theorem:ex_2}
\theoremexC
\end{theorem}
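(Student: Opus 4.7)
The plan is to reduce $\inv(m;d)$ on $y$-typical integers~$m$ to the positive part of a difference of two prime-power counting functions, and then invoke a bivariate Erd\H os--Kac theorem together with standard uniform-integrability arguments. Write $\sif_i=\{d_1,d_2\}$ with $d_1=\lcm(\tq_1,\dots,\tq_i)$ and $d_2=\lcm(\tq_1,\dots,\tq_{i-1},\tq_{i+1})$; because $\varphi$ is strictly monotonic on powers of a single prime, $\tq_i$ and $\tq_{i+1}$ necessarily involve distinct primes, say $\tq_i=p^\alpha$ and $\tq_{i+1}=q^\beta$.

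The central combinatorial identity, which I claim holds on every $y$-typical $m$ with $y=\varphi(\tq_{i+1})$, is
\[
\inv(m;d_1)=\max\bigl(0,\prm(m;\tq_i)-\prm(m;\tq_{i+1})\bigr),
\]
with the symmetric formula for $\inv(m;d_2)$. To verify it, I would use that the $j$th largest invariant factor of $(\Z/m\Z)^\times$ equals $d_1$ precisely when, for each prime $r$, the $j$th largest $r$-power elementary divisor is $r^{v_r(d_1)}$. These constraints split into ``lower-bound'' requirements $\prm(m;r^{v_r(d_1)})\ge j$ and ``upper-bound'' requirements $\prm(m;r^{v_r(d_1)+1})<j$. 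The strict-inequality part of $y$-typicality forces every count $\prm(m;r^s)$ with $\varphi(r^s)>y$ to be strictly below $\prm(m;\tq_{i+1})$, while the weak-inequality part makes every count $\prm(m;r^s)$ with $\varphi(r^s)<y$ at least $\prm(m;\tq_i)$. The only constraints with totient exactly $y$ come from $r=p$ and $r=q$, and after inspection they collapse to $\prm(m;\tq_i)\ge j$ and $\prm(m;\tq_{i+1})<j$; counting admissible $j$ yields the stated maximum.

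For the non-typical $m$, Tur\'an--Kubilius-type moment bounds---which underlie the utility of $y$-typicality throughout the paper---give density $(\log\log n)^{-A}$ for any fixed $A>0$; combined with the crude bound $\inv(m;d)\ll\omega(m)\ll\log m/\log\log m$, their contribution to $\Ex_n(\inv(m;d))$ is $o\bigl((\log\log n)^{1/2}\bigr)$. Setting $D_m=\prm(m;\tq_i)-\prm(m;\tq_{i+1})$, I would rewrite each $\prm(m;\tq_j)$ as $\omega(m;\varphi(\tq_j),1)+O(1)$, so that $D_m$ is essentially an additive function of~$m$ with mean $\bigl(1/\varphi(\tq_i)-1/\varphi(\tq_{i+1})\bigr)\log\log n=0$ and variance $\sim\sigma_i^2\log\log n$; the cross-covariance term $-2\bigl(\varphi(\tq_i)\varphi(\tq_{i+1})\bigr)^{-1}\log\log n$ arises from primes congruent to $1\pmod{\tq_i\tq_{i+1}}$. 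A bivariate Erd\H os--Kac theorem---needed in any case for the distributional companion Theorem~\ref{theorem:dist_2}---yields $D_m/\bigl(\sigma_i\sqrt{\log\log n}\bigr)\xrightarrow{d}N(0,1)$, and matching power-moment bounds deliver uniform integrability of $\max(0,\pm D_m)/\bigl(\sigma_i\sqrt{\log\log n}\bigr)$.

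Combining these ingredients gives
\[
\Ex_n(\inv(m;d))=\sigma_i\sqrt{\log\log n}\,\Ex\bigl(\max(0,Z)\bigr)+o\bigl(\sqrt{\log\log n}\bigr)
\]
with $Z\sim N(0,1)$, and the elementary evaluation $\Ex(\max(0,Z))=1/\sqrt{2\pi}$ completes the proof. The main obstacle I anticipate is the combinatorial reduction on $y$-typical $m$: one must pick $y$ large enough that every ``next-power-up'' totient arising from primes in $\lcm(\tq_1,\dots,\tq_{i-1})$ exceeds $y$, while keeping the density of non-$y$-typical $m$ sufficiently small to dominate the trivial $O(\log m)$ contribution. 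The Erd\H os--Kac input and the final $\max(0,\cdot)$-moment calculation are essentially standard once the additive structure has been exposed.
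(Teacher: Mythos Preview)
Your proposal is correct and follows essentially the same route as the paper: reduce $\inv(m;d)$ to $\max(0,D_m)$ on $y$-typical integers via Proposition~\ref{prop:y_typical}, invoke the Erd\H os--Kac law for $D_m$, and pass from distributional convergence to expectations via moment bounds and uniform integrability (the paper packages these last steps as Lemma~\ref{lem:exa} and Proposition~\ref{prop:exa}, deriving Theorem~\ref{theorem:ex_2} from Theorem~\ref{theorem:dist_2}). Two minor points: the additive function is $\omega(m;\tq_j,1)$, not $\omega(m;\varphi(\tq_j),1)$; and your closing worry about tuning~$y$ is unnecessary, since a fixed $y=\varphi(\tq_{i+1})$ (the paper uses $\varphi(\tq_{i+2})$) already handles all ``next-power-up'' constraints through condition~(b) of Definition~\ref{defn:y_typical}.
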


\noindent
As mentioned earlier, it follows easily from Definitions~\ref{defn:standard_invariant_sequence} and~\ref{defn:mu_sigma_sequences} that $\#\sif_i = 2$ if and only if $\mu_i = 0$; thus Theorem~\ref{theorem:ex_2} is also consistent with, but more precise than, Theorem~\ref{theorem:ex_general} in this case.

With the intuition we have been forming, it is beneficial to recast the sequence~\eqref{first ten UFOs} of universal factor order sets as a flowchart where the singletons and doubletons play different roles, as in Figure~\ref{flowchart figure}. The invariant factor decomposition of a typical multiplicative group $(\Z/m\Z)^\times$ begins with about $\frac12\log\log m$ copies of~$\Z_2$ (as Figure~\ref{bar chart} indicates); however, before continuing with about $\frac14\log\log m$ copies of~$\Z_{12}$, there is a smaller transition zone (on the $(\log\log m)^{1/2}$ scale) consisting of copies of either~$\Z_4$ or~$\Z_6$. The same phenomenon occurs every time there is a doubleton set $\sif_i$ between two singleton sets. When there is no such doubleton set, the transition is immediate---for example, the roughly $\frac1{24}\log\log m$ copies of~$\Z_{2520}$ are followed immediately by roughly $\frac1{40}\log\log m$ copies of~$\Z_{5040}$ for this typical multiplicative group $(\Z/m\Z)^\times$.

\begin{figure}[ht]
\tikzstyle{singleton} = [rectangle, minimum width=10mm, minimum height=18pt, text centered, draw=black, fill=green!20]
\tikzstyle{doubleton} = [rectangle, minimum width=5mm, minimum height=12pt, text centered, draw=black, fill=blue!20, font=\scriptsize]
  \begin{tikzpicture}
    \node (2) [singleton] at (0.2,0) {2};
    \node (4) [doubleton] at (1.3,0.3) {4};
    \node (6) [doubleton] at (1.3,-0.3) {6};
    \node (12) [singleton] at (2.43,0) {12};
    \node (24) [doubleton] at (3.6,0.3) {24};
    \node (60) [doubleton] at (3.6,-0.3) {60};
    \node (120) [singleton] at (4.77,0) {120};
    \node (360) [doubleton] at (6.0,0.3) {360};
    \node (840) [doubleton] at (6.0,-0.3) {840};
    \node (2520) [singleton] at (7.25,0) {2520};
    \node (5040) [singleton] at (8.7,0) {5040};
    \node (55440) [singleton] at (10.25,0) {55440};
    \node (720720) [singleton] at (12,0) {720720};
    \node (and) at (13.2,0.3) {};
    \node (so) at (13.5,0) {$\cdots$};
    \node (on) at (13.2,-0.3) {};
    
    \draw[->] (2) -- (4); \draw[->] (2) -- (6);
    \draw[->] (4) -- (12); \draw[->] (6) -- (12);
    \draw[->] (12) -- (24); \draw[->] (12) -- (60);
    \draw[->] (24) -- (120); \draw[->] (60) -- (120);
    \draw[->] (120) -- (360); \draw[->] (120) -- (840);
    \draw[->] (360) -- (2520); \draw[->] (840) -- (2520);
    \draw[->] (2520) -- (5040);
    \draw[->] (5040) -- (55440);
    \draw[->] (55440) -- (720720);
    \draw[->] (720720) -- (and); \draw[->] (720720) -- (on);
  \end{tikzpicture}
  \caption{The universal factor orders in the doubleton sets~$\sif_i$ serve as transitions between the more numerous universal factor orders in the singleton~sets.}
  \label{flowchart figure}
\end{figure}
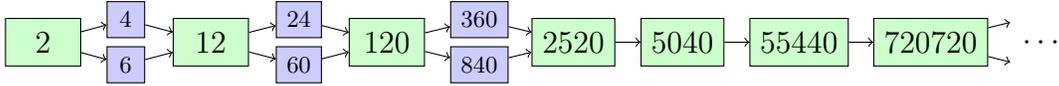

While all of the theorems in this section have concerned the {\em average} behaviour of the invariant factor order counting functions $\inv(m;d)$, we have begun describing these phenomena as the {\em typical} behaviour. This shift in language is intentional: in the next section, we will see that the functions $\inv(m;d)$ have limiting distribution laws for every universal factor order~$d$. Strengthening this section's results in that way is quite analogous to strengthening the average order of the function $\omega(m)$ to the Erd\H os--Kac theorem which gives its normalized limiting distribution. We will indeed prove theorems of Erd\H os--Kac type for each of these counting functions $\inv(m;d)$, with one unexpected development: the limiting distributions that arise are not always normal distributions---they are sometimes skew-normal distributions or even more complicated~ones.

%

\subsection{Limiting distributions of invariant factor counting functions}  \label{distribution intro section}

In this section we exhibit all of the limiting distributions of the counting functions $\inv(m;d)$ where $d\in\siF$ is a universal factor order (as well as the trivial limiting distributions where~$d\notin\siF$ is a rare factor order). As with the theorems above concerning average orders, the theorems in this section depend on whether the adjacent sets $\sif_{i-1},\sif_i,\sif_{i+1}$ are singletons or doubletons. Fortunately, the shape of the limiting distribution depends only upon the sizes of these adjacent sets, except in the special cases $i=1$ (so that $d=2$) and $i=3$ (so that $d=12$). We will again preface each case with a specific numerical example for the reader's benefit.


For these theorems, some consistent notation for common probability distributions will be necessary.

\begin{defn} \label{defn:normal}
We use the usual error function $\erf(x) = \frac2{\sqrt\pi} \int_0^x e^{-t^2} \,dt$ and define its relative $\eta(x) = \erf(ix) = i\erfi(x)$. We then define the standard normal probability density function
\[
\phi(x) = \frac{e^{-{x^2}/2}}{\sqrt{2\pi}}
\]
and the standard normal cumulative distribution function
\[
\Phi(x) = \int_{-\infty}^x \phi(t) \, dt = \frac12 \biggl( \erf\biggl( \frac x{\sqrt2}  \biggr) + 1 \biggr).
\]
The standard normal distribution has characteristic function $e^{-t^2/2}$.
\end{defn}

\begin{remark}\
\begin{enumerate}
\item To clarify a possible notational confusion: in this paper $\varphi$ always denotes the Euler phi-function and $\varphi$-sequences,
while $\phi$ and $\Phi$ always denote the density function and the cumulative distribution function, respectively, of the standard normal distribution (or, when given a second argument, of the skew-normal distribution in Definition~\ref{defn:skew-normal} below).
\item Typically the characteristic function of a generic random variable~$X$ is denoted by~$\varphi_X$; however, to avoid further overloading this Greek letter, in this paper we denote such characteristic functions by~$\chi_X$.
\item All the probability-related functions in this section are collected in one place in Appendix~\ref{appendix:notation} for ease of reference.
\end{enumerate}
\end{remark}

We will use notation for probabilities themselves when sampling uniformly from an initial segment of the positive integers.
\definitionPn

With this notation in hand, we can now give a much stronger version of the example before Theorem~\ref{theorem:ex_11}:

\begin{ex}
The number of invariant factors of order $5040$ follows an Erd\H os--Kac law with mean $\frac1{40}\log\log n$ and variance $\frac15\log\log n$. More precisely, for every $x\in\R$,
\[
\lim_{n\to\infty} P_n\bigg(\frac{\inv(m;5040) - \frac 1{40}\log\log n}{(\frac 15\log\log n)^{1/2}} \le x\bigg) = \Phi(x).
\]
\end{ex}

\noindent
The general statement, which strengthens Theorem~\ref{theorem:ex_11}, involves the notation~$\mu_i$ and~$\sigma_i$ from Definition~\ref{defn:mu_sigma_sequences} and concerns the case where all the sets~$f_i$ involved are singletons.

\begin{theorem}
\label{theorem:dist_11}
\theoremdistA
\end{theorem}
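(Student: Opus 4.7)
The plan is to pass from $\inv(m;d)$ to a strongly additive function of $m$ to which the classical Erd\H os--Kac theorem applies. The starting point is a structural identity (which should be established earlier in the paper via the typicality framework of Definition~\ref{defn:typical}): for $m$ that is $y$-typical with $y = \varphi(\tq_{i+1})$,
\[
\inv(m;d) = \prm(m;\tq_i) - \prm(m;\tq_{i+1}).
\]
This uses all three singleton hypotheses on $\sif_{i-1}, \sif_i, \sif_{i+1}$: the middle one makes $d = \lcm(\tq_1,\ldots,\tq_i)$ unambiguous, and the flanking ones guarantee that no nearby prime power produces a different invariant factor at any of the ``levels'' $j$ with $\prm(m;\tq_{i+1}) < j \le \prm(m;\tq_i)$. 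A Hardy--Ramanujan-style estimate (via a Tur\'an--Kubilius inequality applied to each $\prm(m;\tq_k)$) then shows that non-typical $m$ form a set of density zero with enough quantitative saving to be discarded in the limit.

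Next, for each $\tq_k = r^\alpha$ I would write $\prm(m;\tq_k) = \omega(m; S_{\tq_k}) + O(1)$ on typical $m$, where $S_q = \{p \text{ prime} : q \mid p-1\}$; the bounded error accounts for the ramified prime $r$ and the prime-power divisors $p^\beta \xmid m$ with $\beta \ge 2$, whose total moment contribution is $O_d(1)$. Combining the cases $k = i$ and $k = i+1$ gives
\[
\inv(m;d) = \omega\bigl(m; S_{\tq_i} - S_{\tq_{i+1}}\bigr) + O(1)
\]
for typical $m$. The function on the right is strongly additive, taking values in $\{-1,0,1\}$ on primes, so the Kubilius/Erd\H os--Kac theorem applies: it is asymptotically normal after centering by $A(n) = \sum_{p\le n}\bigl(1_{S_{\tq_i}}(p) - 1_{S_{\tq_{i+1}}}(p)\bigr)/p$ and scaling by $B(n) = \bigl(\sum_{p\le n}\bigl(1_{S_{\tq_i}}(p) - 1_{S_{\tq_{i+1}}}(p)\bigr)^2/p\bigr)^{1/2}$. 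Dirichlet's theorem on primes in arithmetic progressions gives $\den(S_{\tq_k}) = 1/\varphi(\tq_k)$, hence $A(n) = \mu_i \log\log n + O(1)$; an analogous computation of the square, using $\den(S_{\tq_i} \cap S_{\tq_{i+1}}) = 1/\bigl(\varphi(\tq_i)\varphi(\tq_{i+1})\bigr)$ (valid because $\tq_i$ and $\tq_{i+1}$ are coprime in the singleton case---two powers of the same prime cannot be adjacent in the total $\varphi$-sequence without other intervening prime powers), gives $B(n)^2 = \sigma_i^2 \log\log n + O(1)$.

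The main obstacle will be the quantitative bookkeeping for all the $O(1)$ disturbances: the non-typical exception, the additive corrections in $\prm - \omega$, and the remainder in the Dirichlet--Mertens asymptotics for $A(n)$ and $B(n)$. Each disturbance is $O(1)$ and hence vanishes after division by $\sigma_i(\log\log n)^{1/2}$; the cleanest route is to phrase everything inside the Kubilius probabilistic model and invoke a Lindeberg-type criterion, so that the same central limit conclusion applies to $f(m) + O(1)$ with the same normalization. A minor but worthwhile subtlety is verifying rigorously that every singleton case forces $\gcd(\tq_i,\tq_{i+1}) = 1$, which in turn is exactly what makes the $\sigma_i^2$ formula from Definition~\ref{defn:mu_sigma_sequences} emerge from the variance computation.
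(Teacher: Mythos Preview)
Your proposal is correct and follows essentially the same route as the paper: reduce $\inv(m;d)$ to $\omega(m;\TQ_i-\TQ_{i+1})+O(1)$ on $y$-typical $m$ via Proposition~\ref{prop:y_typical}\eqref{yt6} (with the singleton hypotheses forcing $q_i(m)=\tq_i$, $q_{i+1}(m)=\tq_{i+1}$), discard the atypical set using Proposition~\ref{prop:y_typical_is_typical}, and apply the univariate Erd\H os--Kac law (Lemma~\ref{lemma:omega_diff_univariable}) together with the coprimality from Proposition~\ref{prop:two-totient sequences}(a) to identify $\mu_i$ and~$\sigma_i$. The paper packages the last step as Lemma~\ref{lemma:type_phi(1{:}1)} but the content is exactly what you describe.
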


\noindent
Under the assumptions of Theorem~\ref{theorem:dist_11}, where all relevant sets~$\sif_i$ are singletons, it turns out that the invariant factor order counting function $\inv(m;d)$ can be closely approximated by a single additive function of~$m$ (see equation~\eqref{J as an additive function} below). Thus the Erd\H os--Kac statement can be proved in a standard way in this case.

However, when some relevant sets~$\sif_i$ are doubletons, it turns out that $\inv(m;d)$ has a more complicated behaviour that involves multiple additive functions---it might, for example, be approximately the minimum of two additive functions of~$m$. In such cases, while the limiting distributions of the individual additive functions follow Erd\H{o}s--Kac laws, the joint distribution of the additive functions on which $\inv(m;d)$ depends will be either a nonsingular two-variable normal distribution or a singular four-variable normal distribution. As a consequence, the resulting distribution for $\inv(m;d)$ will be more complicated than a normal distributon.

To describe one such result, we need the following definitions.

\begin{defn} \label{Owen T def}
Define Owen's $T$-function
\[
\OT(h,a) = \frac1{2\pi} \int_0^a e^{-(1+t^2)h^2/2} \frac{dt}{1+t^2}.
\]
If~$h$ and~$a$ are nonnegative, then $\OT(h,a)$ is the probability that $X\ge h$ and $0\le Y\le aX$, where~$X$ and~$Y$ are independent standard normal variables (see~\cite{owen_T}). We note the obvious symmetries $\OT(-h,a) = \OT(h,a)$ and $\OT(h,-a)=-\OT(h,a)$.
\end{defn}

\begin{defn} \label{defn:skew-normal}
Define the skew-normal distribution with parameter~$\alpha$ (see~\cite{skew_normal_class}) by its probability density function
\[
\phi(x;\alpha) = 2\phi(x)\Phi(\alpha x)
\]
and its cumulative distribution function
\[
\Phi(x;\alpha) = \Phi(x) - 2\OT(x,\alpha).
\]
This family of distributions includes the standard normal distribution since $\phi(x;0)=\phi(x)$ and $\Phi(x;0) = \Phi(x)$. The mean of the distribution corresponding to $\phi(x;\alpha)$ is $\alpha\sqrt{2/(\alpha^2+1)\pi}$, and its characteristic function is $e^{-t^2/2} \bigl( 1 + \eta\bigl( \alpha t/\sqrt{2(\alpha^2+1)} \bigr) \bigr)$.
\end{defn}

This skew-normal distribution appears in our next example:

\begin{ex}
The number of invariant factors of order $2520$ follows a ``skew-Erd\H os--Kac law'':
\[ \lim_{n\to\infty} P_n\bigg(\frac{\inv(m;2520) - \frac 1{24}\log\log n}{\frac 12(\log\log n)^{1/2}} \le x\bigg)
= \Phi\bigg(x;-\sqrt{\frac5{13}}\bigg) . \]
The characteristic function of this distribution is $e^{-t^2/2}\big(1-\eta(\frac{\sqrt5}6 t)\big)$.
\end{ex}

As with Theorem~\ref{theorem:ex_21}, this example represents those universal factor orders $d\in\sif_i$ where the set preceding $f_i$ is a doubleton but the two other relevant sets are singletons. The general theorem for this situation (and later ones) requires two more pieces of notation:

\begin{defn}  \label{defn:nu}
\definitionnu
Note that $\sigma_{i;1,1}$ is the same as $\sigma_i$ from Definition~\ref{defn:mu_sigma_sequences}.
\end{defn}

\begin{theorem}
\label{theorem:dist_21}
\theoremdistB
\end{theorem}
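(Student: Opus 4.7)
The plan is to identify $\inv(m;d)$ with an explicit function of three $\prm$-counting quantities on the relevant prime powers, establish the joint limiting distribution of those quantities via a multidimensional Erd\H os--Kac theorem, and then recognize the resulting one-dimensional limit as the claimed skew-normal. Specifically, for $y$-typical $m$ with $y$ growing slowly enough that the exceptional set has density $o(1)$, I would show the identity
\[
\inv(m;d) = \min\bigl(\prm(m;\tq_{i-1}),\,\prm(m;\tq_i)\bigr) - \prm(m;\tq_{i+1}).
\]
The $j$-th invariant factor of $(\Z/m\Z)^\times$ equals $\Z_d$ precisely when $\prm(m;q)\ge j$ for every prime power $q$ with $\varphi(q)\le\varphi(\tq_i)$ and $\prm(m;q')<j$ for every $q'$ with $\varphi(q')>\varphi(\tq_i)$; by $y$-typicality the first block collapses to the two conditions $\prm(m;\tq_{i-1})\ge j$ and $\prm(m;\tq_i)\ge j$ (these two $\tq$'s furnish the minimum $\prm$-values in the list since $\varphi(\tq_{i-1})=\varphi(\tq_i)$ is the maximum totient among $\tq_1,\dots,\tq_i$), while the second block collapses to $\prm(m;\tq_{i+1})<j$. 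Summing $1$ over valid $j$ yields the identity.

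Next, each $\prm(m;\tq_k)$ differs by an $O(1)$ correction from the additive function $\omega(m;\tq_k,1)$ that counts prime factors $p\mid m$ with $p\equiv 1\pmod{\tq_k}$. A multidimensional Erd\H os--Kac theorem (provable by joint moments of indicator-style additive functions, or via a vector-valued Kubilius model) then gives joint Gaussian convergence of the normalized triple
\[
\frac{1}{\sqrt{\log\log n}}\biggl(\prm(m;\tq_{i-1})-\frac{\log\log n}{\varphi(\tq_i)},\;\prm(m;\tq_i)-\frac{\log\log n}{\varphi(\tq_i)},\;\prm(m;\tq_{i+1})-\frac{\log\log n}{\varphi(\tq_{i+1})}\biggr)
\]
to a centered Gaussian vector $(G_1,G_2,H)$. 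Each pairwise covariance equals the density of primes satisfying both relevant congruences simultaneously; since $\tq_{i-1}$, $\tq_i$, $\tq_{i+1}$ are pairwise coprime prime powers (the equality $\varphi(\tq_{i-1})=\varphi(\tq_i)$ forces $\tq_{i-1},\tq_i$ to be powers of distinct primes), Dirichlet's theorem yields $\mathrm{Var}(G_1)=\mathrm{Var}(G_2)=1/\varphi(\tq_i)$, $\mathrm{Var}(H)=1/\varphi(\tq_{i+1})$, $\mathrm{Cov}(G_1,G_2)=1/\varphi(\tq_i)^2$, and $\mathrm{Cov}(G_1,H)=\mathrm{Cov}(G_2,H)=1/(\varphi(\tq_i)\varphi(\tq_{i+1}))$.

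Finally, writing $\min(G_1,G_2)=\tfrac12(G_1+G_2)-\tfrac12|G_1-G_2|$ and setting $S=G_1+G_2-2H$ and $T=G_1-G_2$, the symmetry $\mathrm{Var}(G_1)=\mathrm{Var}(G_2)$ together with $\mathrm{Cov}(G_1,H)=\mathrm{Cov}(G_2,H)$ forces $\mathrm{Cov}(S,T)=0$, so $S$ and $T$ are independent Gaussians with $\mathrm{Var}(S)=2(\sigma_i^2+\sigma_{i;2,1}^2)$ and $\mathrm{Var}(T)=2\nu_i^2$. Consequently
\[
\frac{\min(G_1,G_2)-H}{\sigma_i} = \frac{\sqrt{\sigma_i^2+\sigma_{i;2,1}^2}}{\sqrt2\,\sigma_i}\,A \;-\; \frac{\nu_i}{\sqrt2\,\sigma_i}\,|B|
\]
for independent standard normals $A,B$. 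The algebraic identity $\nu_i^2+\sigma_{i;2,1}^2=\sigma_i^2$ (immediate from Definitions~\ref{defn:mu_sigma_sequences} and~\ref{defn:nu}) makes the squared coefficients sum to $1$, so the right-hand side is exactly the standard stochastic representation $\sqrt{1-\delta^2}\,A+\delta(-|B|)$ of a skew-normal variable with $\delta=-\nu_i/(\sqrt2\,\sigma_i)$, i.e., parameter $\alpha=\delta/\sqrt{1-\delta^2}=-\nu_i/\sqrt{\sigma_i^2+\sigma_{i;2,1}^2}$. The claimed characteristic function follows by substituting this $\alpha$ into the formula in Definition~\ref{defn:skew-normal} and simplifying via the same identity; in particular $\alpha/\sqrt{2(\alpha^2+1)}=-\nu_i/(2\sigma_i)$, so $1+\eta(\alpha t/\sqrt{2(\alpha^2+1)})=1-\eta(\nu_i t/(2\sigma_i))$.

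The main obstacle is the joint Gaussian convergence in the second step: while scalar Erd\H os--Kac-type theorems for each individual $\prm(m;\tq_k)$ are classical, obtaining joint convergence with the precise covariance structure requires either a multidimensional moment computation (with prime-counting asymptotics in simultaneous arithmetic progressions) or a vector-valued refinement of Kubilius's machinery, with care for contributions of large prime divisors of $m$ and for the exceptional role of the prime~$2$ in $\prm(m;\tq_k)$ when some $\tq_k$ is a power of~$2$.
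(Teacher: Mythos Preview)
Your argument is correct and matches the paper's approach closely: the paper likewise reduces to $\min(\prm(m;\tq_{i-1}),\prm(m;\tq_i))-\prm(m;\tq_{i+1})$ on $y$-typical $m$, establishes joint Gaussian convergence via Kubilius's multivariate limit theorem (Proposition~\ref{lemma:omega_diff_multivariable}, which directly resolves your stated obstacle), and recognizes the skew-normal through the same $\tfrac12(S-|T|)$ decomposition (packaged there as Lemma~\ref{lemma:bivariate_min_max}). The only cosmetic difference is that the paper pre-combines into a bivariate normal in the two differences $\omega(m;\TQ_{i-1}-\TQ_{i+1})$ and $\omega(m;\TQ_i-\TQ_{i+1})$ rather than carrying your three-dimensional $(G_1,G_2,H)$; note also that the full pairwise coprimality of $\tq_{i-1},\tq_i,\tq_{i+1}$ needed for your covariance formulas requires a short Bertrand-postulate argument, supplied in the paper as Proposition~\ref{prop:two-totient sequences}(b).
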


\noindent
While the normalization on the left-hand side uses the same mean $\mu_i\log\log n$ and variance $\sigma_i^2\log\log n$ as Theorem~\ref{theorem:dist_11}, the presence of the doubleton set $\sif_{i-1}$ to the left of $\sif_i$ ``steals'' some of the copies of $\Z_d$ away, resulting in a skew-Erd\H{o}s--Kac law with negative mean instead of the standard Erd\H{o}s--Kac law. Mathematically, the skew-normal distribution arises because $\inv(m;d)$ can be represented in terms of the limiting joint distribution of two additive functions, which in this case is a nonsingular bivariate normal distribution.

A similar phenomenon results from a doubleton set $\sif_{i+1}$ ``stealing'' some copies of $\Z_d$ from the right.

\begin{ex}
The numbers of invariant factors of order $720720$ and of order $2$ each follow skew-Erd\H os--Kac laws:
\begin{align*}
\lim_{n\to\infty} P_n\bigg(\frac{\inv(m;720720) - \frac 1{48}\log\log n}{(\frac{13}{96}\log\log n)^{1/2}} \le x\bigg) &= \Phi\bigg(x;-\sqrt{\frac{45}{163}}\bigg) \\
\lim_{n\to\infty} P_n\bigg(\frac{\inv(m;2) - \frac 12\log\log n}{(\frac 12\log\log n)^{1/2}} \le x\bigg) &= \Phi\bigg(x;-\frac1{\sqrt3}\bigg).
\end{align*}
The characteristic functions of these distributions are, respectively, $e^{-t^2/2}\bigl(1-\eta\bigl( \sqrt{\frac{45}{416}} t \bigr) \bigr)$ and $e^{-t^2/2}\big(1-\eta( \frac t{\sqrt8})\big)$.
\end{ex}

These examples demonstrate the following theorem, which is a refinement of Theorem~\ref{theorem:ex_12} and exhibits a skew-normal distribution analogous to Theorem~\ref{theorem:dist_21}.

\begin{theorem}
\label{theorem:dist_12}
\theoremdistE
\end{theorem}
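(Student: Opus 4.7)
The plan is to reduce $\inv(m;d)$, for typical $m$, to a simple algebraic combination of three $\prm$-values, apply a trivariate Erd\H{o}s--Kac theorem, and then identify the resulting minimum-of-normals distribution as the claimed skew-normal.

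First I would establish the pointwise identity
\[
\inv(m;d) = \prm(m;\tq_i) - \max\bigl(\prm(m;\tq_{i+1}),\,\prm(m;\tq_{i+2})\bigr)
\]
for every $y$-typical integer $m$ with $y \ge \varphi(\tq_{i+2})$. The underlying combinatorial fact, true for every finite abelian group, is that writing $d = \prod_p p^{v_p(d)}$, the multiplicity of~$d$ as an invariant factor equals the positive part of the gap
\[
\min_{p\mid d}\prm(m;p^{v_p(d)}) - \max_{p \text{ prime}}\prm(m;p^{v_p(d)+1}),
\]
with the convention $p^{v_p(d)+1}=p$ when $p\nmid d$. Under our hypotheses ($\sif_{i-1}$ and $\sif_i$ singletons, $\sif_{i+1}$ a doubleton), $y$-typicality pins the minimum at $\tq_i$: the singleton $\sif_{i-1}$ forces $\varphi(\tq_{i-1})<\varphi(\tq_i)$, making $\tq_i$ the unique largest-totient maximal prime-power divisor of~$d$. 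It pins the maximum on $\{\tq_{i+1},\tq_{i+2}\}$, since these are the only two prime powers outside~$d$ whose totient equals the (tied) minimum $\varphi(\tq_{i+1})$ (the neighbour $\sif_{i+2}$ is necessarily a singleton, as doubletons are never adjacent). Because the set of $y$-typical $m\le n$ has density tending to~$1$ for $y=y(n)\to\infty$ sufficiently slowly, this identity determines $\inv(m;d)$ on a set of density approaching~$1$.

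Next I would invoke a trivariate Erd\H{o}s--Kac theorem for the additive functions $\omega(m;\tq_k,1)$ with $k=i,i+1,i+2$, each of which differs from $\prm(m;\tq_k)$ only by $O(1)$ contributions coming from powers of the single prime of which $\tq_k$ is a power. By the prime number theorem in arithmetic progressions, the means are $(\log\log n)/\varphi(\tq_k) + O(1)$ and the covariances are $(\log\log n)/\varphi(\lcm(\tq_k,\tq_\ell)) + O(1)$. Since $\tq_{i+1}$ and $\tq_{i+2}$ are necessarily coprime---two distinct prime powers with equal totient must be powers of different primes---the relevant $\lcm$s factor; a short computation then shows that the pair
\[
V_j = \frac{\prm(m;\tq_i) - \prm(m;\tq_{i+j}) - \mu_i\log\log n}{\sqrt{\log\log n}}, \qquad j=1,2,
\]
converges jointly in distribution to a centered bivariate normal with common variance $\sigma_i^2$ and covariance $\sigma_{i;1,2}^2$.

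The remaining step is an explicit Gaussian computation. Writing $\min(V_1,V_2) = \tfrac12(V_1+V_2) - \tfrac12|V_1-V_2|$, the sum and difference are uncorrelated, hence independent in the Gaussian limit, since $V_1$ and $V_2$ have equal variances. Thus the limit of $\min(V_1,V_2)$ is $A-|B|$ with independent $A\sim N(0,(\sigma_i^2+\sigma_{i;1,2}^2)/2)$ and $B\sim N(0,\nu_{i+1}^2/2)$, using the clean identity $\sigma_i^2-\sigma_{i;1,2}^2=\nu_{i+1}^2$ that follows immediately from Definitions~\ref{defn:mu_sigma_sequences} and~\ref{defn:nu}. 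The standard completion-of-the-square computation yields
\[
\Ex\bigl[e^{-it|B|}\bigr] = e^{-\nu_{i+1}^2 t^2/4}\bigl(1-\eta(\nu_{i+1}t/2)\bigr);
\]
multiplying by $\Ex[e^{itA}]$ and using $\sigma_i^2+\sigma_{i;1,2}^2+\nu_{i+1}^2=2\sigma_i^2$ produces $\Ex[e^{it(A-|B|)}]=e^{-\sigma_i^2 t^2/2}\bigl(1-\eta(\nu_{i+1}t/2)\bigr)$. Rescaling by the factor $1/\sigma_i$ from the theorem's normalization gives $e^{-t^2/2}\bigl(1-\eta(\nu_{i+1}t/(2\sigma_i))\bigr)$, exactly the claimed characteristic function; matching against Definition~\ref{defn:skew-normal} identifies the skew parameter as $\alpha=-\nu_{i+1}/\sqrt{\sigma_i^2+\sigma_{i;1,2}^2}$, as stated. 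The case $i=1$ requires no modification, since the argument uses no property of the nonexistent $\sif_{i-1}$ beyond what is vacuously available. The main obstacle is the trivariate Erd\H{o}s--Kac step---securing joint convergence of the three $\omega(m;\tq_k,1)$ with enough uniformity (in moments or in characteristic function) to pass to the minimum in the limit---but this should follow from the same Kubilius/Tur\'{a}n--Kubilius machinery that presumably underlies the paper's earlier single-variable Erd\H{o}s--Kac theorems such as Theorem~\ref{theorem:dist_11}.
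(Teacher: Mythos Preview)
Your approach is correct and essentially matches the paper's: reduce via $y$-typicality to $\min(V_1,V_2)$, apply Kubilius's multivariate theorem (the paper's Proposition~\ref{lemma:omega_diff_multivariable} via Lemma~\ref{lemma:type_phi(1{:}2)}) to get the bivariate normal limit, then compute the minimum via the decomposition $\tfrac12(V_1+V_2)-\tfrac12|V_1-V_2|$ (which is exactly the content of the paper's Lemma~\ref{lemma:bivariate_min_max}). One small point you gloss over: the covariance computation also requires $\tq_i$ to be coprime to each of $\tq_{i+1}$ and $\tq_{i+2}$, not just the latter two to each other; for $i>1$ this follows from a Bertrand-type argument (Proposition~\ref{prop:two-totient sequences}(c)), while for $i=1$ it actually fails ($\tq_1=2$, $\tq_3=4$) yet the density formulas still agree because $\varphi(\tq_1)=1$ --- a wrinkle the paper handles explicitly.
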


We now consider the situation where a singleton~$\sif_i$ is flanked by two doubletons~$\sif_{i-1}$ and~$\sif_{i+1}$, each of which ``steal'' some copies of~$\Z_d$. As mentioned before, the corresponding counting function $\inv(m;d)$ is essentially a linear combination of four additive functions, which leads to examining a (singular) tetravariate normal distribution. We call the resulting limiting distribution a ``doubly skew-normal distribution'', although we note that it is different from the distributions described in~\cite{AGS} (their work combines two different skewing mechanisms, whereas our distribution is skewed twice in the same way). The precise distribution can be worked out, although we delay the complicated formulas for the sake of exposition.

\begin{ex}
The number of invariant factors of order $120$ has limiting cumulative distribution function
\[
\lim_{n\to\infty} P_n\bigg(\frac{\inv(m;120) - \frac 1{12}\log\log n}{(\frac 13\log\log n)^{1/2}} \le x\bigg) = \UF\bigg(4x;\frac7{\sqrt6},\frac3{\sqrt2},\sqrt{\frac{10}3}\bigg),
\]
where $\UF(x;\sigma_1,\sigma_2,\sigma_3)$ is given in Definition~\ref{defn:U_function} below. 
The characteristic function of this distribution is
\begin{align*}
e^{-t^2/2}\bigg(1 - \eta\bigg(\frac38 t\bigg) \bigg) \bigg( 1 - \eta\bigg( \sqrt{\frac5{48}} t\bigg) \bigg) .
\end{align*}
\end{ex}

Note that the characteristic function contains two factors involving the function~$\eta$, in contrast to the characteristic functions in Theorems~\ref{theorem:dist_21} and~\ref{theorem:dist_12} that have only one such factor. The general result involves all the constants from Definitions~\ref{defn:mu_sigma_sequences} and~\ref{defn:nu}.

\begin{theorem}
\label{theorem:dist_22}
\theoremdistD
\end{theorem}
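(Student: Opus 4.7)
The plan is to express $\inv(m;d)$ as a minimum-minus-maximum of four $\prm$-functions, apply a joint Erd\H{o}s--Kac-type central limit theorem to obtain a four-variate Gaussian limit, and then identify the resulting distribution via an independence observation that makes the characteristic function factor. First, I would establish a structural identity: by reading off invariant factors of a given order from the elementary-divisor decomposition of $(\Z/m\Z)^\times$, and invoking the $y$-typical hypothesis for $y$ sufficiently large (exceeding $\varphi(\tq_{i+2})$ and all relevant higher-power totients), one finds that
\[
\inv(m;d) = \min\bigl(\prm(m;\tq_{i-1}),\prm(m;\tq_i)\bigr) - \max\bigl(\prm(m;\tq_{i+1}),\prm(m;\tq_{i+2})\bigr).
\]
Here the doubletons $\sif_{i-1}$ and $\sif_{i+1}$ correspond to the prime-power pairs $\{\tq_{i-1},\tq_i\}$ and $\{\tq_{i+1},\tq_{i+2}\}$, using $\varphi(\tq_{i-1})=\varphi(\tq_i)$ and $\varphi(\tq_{i+1})=\varphi(\tq_{i+2})$. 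The hypothesis $i>3$ ensures these four prime powers involve four distinct underlying primes (whereas at $i=3$ both $\tq_3=4$ and $\tq_5=8$ are powers of~$2$, requiring the separate treatment for $d=12$). Since almost all integers are $y$-typical for any fixed~$y$, the exceptional set is negligible.

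Next, each $\prm(m;\tq_j)$ differs from the additive function $\omega(m;\tq_j,1)$ by $O(1)$. A multivariate Erd\H{o}s--Kac-type theorem (parallel to the one underlying Theorems~\ref{theorem:dist_11} and~\ref{theorem:dist_21}) then shows that the centered, rescaled vector
\[
\frac{1}{\sqrt{\log\log n}}\biggl(\prm(m;\tq_j)-\frac{\log\log n}{\varphi(\tq_j)}\biggr)_{j=i-1}^{i+2}
\]
converges in distribution to a centered four-variate normal $(Y_1,Y_2,Y_3,Y_4)$ with covariances $\mathrm{Cov}(Y_j,Y_k)=1/\varphi(\lcm(\tq_j,\tq_k))$. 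Because the four prime powers are pairwise coprime, the diagonal entries equal $\alpha:=1/\varphi(\tq_i)$ for $j\in\{1,2\}$ and $\beta:=1/\varphi(\tq_{i+1})$ for $j\in\{3,4\}$, with off-diagonal entries $\alpha^2$, $\beta^2$, or $\alpha\beta$ accordingly; the parameters $\sigma_{i;k,\ell}^2$ from Definition~\ref{defn:nu} arise from variances of specific linear combinations of these $Y_j$'s. Consequently the normalized $\inv(m;d)$ converges in distribution to $\sigma_i^{-1}\bigl(\min(Y_1,Y_2)-\max(Y_3,Y_4)\bigr)$.

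Writing $\min(a,b)=\tfrac12(a+b-|a-b|)$ and $\max(a,b)=\tfrac12(a+b+|a-b|)$, this limit variable equals
\[
\frac{1}{2\sigma_i}\bigl[(Y_1+Y_2-Y_3-Y_4) - |Y_1-Y_2| - |Y_3-Y_4|\bigr].
\]
A direct covariance computation shows that $Y_1+Y_2-Y_3-Y_4$, $Y_1-Y_2$, and $Y_3-Y_4$ are pairwise uncorrelated, and, being jointly Gaussian, therefore mutually independent. The characteristic function then factors into three pieces. Using the identity $\mathbf{E}[e^{-it|Z|}]=e^{-t^2\tau^2/2}\bigl(1-\eta(t\tau/\sqrt 2)\bigr)$ for $Z\sim N(0,\tau^2)$ (the same identity underlying the skew-normal characteristic functions in Theorems~\ref{theorem:dist_21} and~\ref{theorem:dist_12}), together with $\mathrm{Var}(Y_1-Y_2)=2\nu_i^2$, $\mathrm{Var}(Y_3-Y_4)=2\nu_{i+1}^2$, and $\mathrm{Var}(Y_1+Y_2-Y_3-Y_4)=4\sigma_i^2-2\nu_i^2-2\nu_{i+1}^2$, the three factors combine: the surplus Gaussian exponents exactly cancel, yielding precisely $e^{-t^2/2}\bigl(1-\eta(\tfrac{\nu_i}{2\sigma_i}t)\bigr)\bigl(1-\eta(\tfrac{\nu_{i+1}}{2\sigma_i}t)\bigr)$. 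The cumulative distribution function stated in terms of $\UF$ then follows by Fourier inversion, with the Owen-$T$ terms and the $\SF$-term in the definition of $\UF$ arising as the inverse transforms of the single-$\eta$ cross terms and the double-$\eta$ product.

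The main obstacle is the structural identity in the first step: carefully verifying the min--max formula on $y$-typical integers, and in particular controlling contributions from all prime powers outside the four key ones $\tq_{i-1},\tq_i,\tq_{i+1},\tq_{i+2}$ so that they genuinely do not affect the min and the max. Once this decomposition is in place, the remaining steps parallel the simpler proofs for Theorems~\ref{theorem:dist_21} and~\ref{theorem:dist_12}; the covariance bookkeeping and the characteristic-function factorization become essentially direct computations once the independence observation is made.
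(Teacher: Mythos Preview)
Your proposal is correct and follows essentially the same approach as the paper. The only difference is a harmless re-parametrization: the paper first rewrites $\min(\prm(m;\tq_{i-1}),\prm(m;\tq_i))-\max(\prm(m;\tq_{i+1}),\prm(m;\tq_{i+2}))$ as the minimum of the four differences $Y_n^{a,b}$ (using the identity $\min(A,B)-\max(C,D)=\min\{A-C,A-D,B-C,B-D\}$), obtains a \emph{singular} four-variate normal limit for these differences, and then invokes a packaged lemma (Lemma~\ref{lemma:4_variable_min_max}) whose proof carries out exactly your eigenvector/independence decomposition. Your route---working directly with the four $\prm$'s, which form a non-degenerate Gaussian, and applying the $\min/\max$ identities by hand---reaches the same three independent pieces $(Y_1+Y_2-Y_3-Y_4,\,Y_1-Y_2,\,Y_3-Y_4)$ and the same characteristic-function factorization, just without the detour through the singular covariance matrix.
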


Note that the case $i=3$ is explicitly excluded from this theorem even though $\sif_2=\{4,6\}$, $\sif_3=\{12\}$, and $\sif_4=\{24,60\}$ have the appropriate cardinalities. The reason is that the two coincidences that create the doubletons are $\varphi(3)=\varphi(4)=2$ and $\varphi(5)=\varphi(8)=4$, and in this case there is the further coincidence that~$4$ and~$8$ are both powers of the same prime (a situation that never occurs again---see Proposition~\ref{prop:two-totient sequences}(d)). It turns out that these nearby powers of~$2$ result in an additional dependence among the four random variables that breaks a helpful symmetry. Therefore the case $d=12$ must be treated as its own special case.

\begin{theorem}
\label{theorem:dist_22alt}
\theoremdistDalt
\end{theorem}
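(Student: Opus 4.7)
The plan is to adapt the strategy of Theorem~\ref{theorem:dist_22}: reduce $\inv(m;12)$ to the minimum of four auxiliary additive-function-like quantities, establish their joint asymptotic normality, and apply the continuous mapping theorem. The novelty for $i=3$ lies in computing the covariance matrix, which differs from the generic case because $\tq_3=4$ and $\tq_5=8$ are powers of the same prime.

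First I would show that for almost all $m$ (specifically for $y$-typical $m$ with $4\le y<6$, which form a full-density subset by the typicality analysis underlying Theorem~\ref{theorem:order_restriction_finite}),
\[
\inv(m;12) = \min\bigl(\prm(m;3),\prm(m;4)\bigr) - \max\bigl(\prm(m;5),\prm(m;8)\bigr).
\]
This holds because an invariant factor of $(\Z/m\Z)^\times$ equals $\Z_{12}$ exactly when its $2$-part is $\Z_4$ and its $3$-part is $\Z_3$, and typicality guarantees that every prime power $q$ with $\varphi(q)\ge 6$ satisfies $\prm(m;q)\le\max(\prm(m;5),\prm(m;8))$, so such $q$ do not enter the count. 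Distributing $\min$ over $\max$, the right-hand side equals $\min(Y_1,Y_2,Y_3,Y_4)$ with the $Y_j$ ranging over all differences $\prm(m;q)-\prm(m;q')$ for $(q,q')\in\{3,4\}\times\{5,8\}$.

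Next I would establish joint asymptotic normality of $(Y_1,Y_2,Y_3,Y_4)$ after centering by $\mu_3\log\log n = \tfrac14\log\log n$ and rescaling by $(\tfrac12\log\log n)^{1/2}$. Each $Y_j$ equals, up to a bounded contribution from powers of~$2$, a difference of counting functions $\omega(m;S)$ for sets $S$ of primes in appropriate arithmetic progressions modulo $3$, $4$, $5$, and $8$. A multivariate Erd\H os--Kac theorem in the style of the proof of Theorem~\ref{theorem:dist_22} gives joint convergence to a centred Gaussian vector, the limiting covariance matrix being assembled from the asymptotic identity $\mathrm{Cov}(\omega(m;S),\omega(m;T))/\log\log n \to \den(S\cap T)$. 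The decisive new input is that $\{p\equiv 1\mod{8}\}\subset\{p\equiv 1\mod{4}\}$, which forces the normalized covariance of $\omega(m;\{p\equiv 1\mod{4}\})$ and $\omega(m;\{p\equiv 1\mod{8}\})$ to be $\tfrac14$ rather than the product $\tfrac18$ that would appear if the moduli were coprime. Feeding this extra correlation through the linear algebra produces exactly the matrix $M$ of the theorem, including its negative off-diagonal entries; moreover, the identity $Y_1+Y_4-Y_2-Y_3\equiv 0$ forces $M$ to have rank $3$, so that $F$ is singular.

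Finally, since $\min\colon\R^4\to\R$ is continuous, the continuous mapping theorem transports joint convergence into convergence of the normalized $\inv(m;12)$ to $\min(Z_1,Z_2,Z_3,Z_4)$ with $(Z_1,\dots,Z_4)\sim F$, and its CDF is $\Mn_F$ by the inclusion--exclusion formula of Definition~\ref{defn:min_max_distributions}. I expect the middle step to be the main obstacle: the extra correlation $\{p\equiv 1\mod{8}\}\subset\{p\equiv 1\mod{4}\}$ destroys the factorization that expresses the generic case in terms of the $\UF$ function of Theorem~\ref{theorem:dist_22}, so computing $M$ carefully, verifying joint convergence in this genuinely three-dimensional correlation structure, and confirming that the singularity of $M$ does not interfere with the minimum distribution are the pieces that require real work beyond a direct translation of the Theorem~\ref{theorem:dist_22} argument.
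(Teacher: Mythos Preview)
Your proposal is correct and follows essentially the same approach as the paper: the paper likewise reduces $\inv(m;12)$ on $y$-typical integers to the minimum of the four differences $\prm(m;q)-\prm(m;q')$ with $(q,q')\in\{3,4\}\times\{5,8\}$, invokes the multivariate Kubilius-type result (Proposition~\ref{lemma:omega_diff_multivariable}) to compute the limiting singular four-variate normal distribution with exactly the covariance matrix~$M$ you describe (the inclusion $\{p\equiv1\bmod8\}\subset\{p\equiv1\bmod4\}$ being precisely the anomaly that breaks the generic pattern), and then applies Proposition~\ref{prop:min_max_distributions} to pass to $\Mn_F$. Your anticipated ``main obstacle'' is handled in the paper simply by a direct computation of the $4\times4$ covariance matrix (Lemma~\ref{lemma:type_phi(3,4,5,8)}); no further closed form is claimed, so the loss of the $\UF$-factorization is not an issue.
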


\noindent
There is not much else that can be said about this distribution, beyond the fact that it exists and its expectation can be computed (as was done implicitly in equation~\eqref{12 example} in a previous example).

We have now described the limiting distributions of all of the most significant universal factor orders (the ones appearing in Figure~\ref{bar chart}), which correspond to singleton sets~$\sif_i$. The remaining universal factor orders, corresponding to doubleton sets~$\sif_i$, are the interstitial elements that occur when transitioning from one more significant universal factor order to another (refer to Figure~\ref{flowchart figure}). Unlike the more significant ones, whose distribution is on a $\log\log n$ scale, these interstitial distributions are on a $\sqrt{\log\log n}$ scale. The distribution is relatively simple but requires a bit more notation.

\begin{defn} \label{defn:right normal}
Using the notation in Definition~\ref{defn:normal}, we define a truncated normal probability density function
\[
\phi_+(x) = \begin{cases}
2\phi(x), &\text{if } x\ge0, \\
0, & \text{if } x<0
\end{cases}
\]
and the corresponding truncated normal cumulative distribution function
\[
\Phi_+(x) = \max\{\Phi(x)-\Phi(-x),0\} = \max\{\erf(\tfrac x{\sqrt2}),0\}.
\]
This truncated normal normal distribution, which is the same as the distribution of $|X|$ where $X$ is a standard normal random variable, has characteristic function $e^{-t^2/2}(1+\eta(\frac t{\sqrt 2}))$.
\end{defn}

\begin{ex}
The numbers of invariant factors of order $360$ and of order $840$ have the following distributions:
\begin{align*}
\lim_{n\to\infty} P_n\bigg(\frac{\inv(m;360)}{(\frac 5{18}\log\log n)^{1/2}} \le x\bigg)
&= \begin{cases} \Phi(x), & \text{if } x \ge 0, \\ 0 & \text{otherwise} \end{cases} \\
\lim_{n\to\infty} P_n\bigg(\frac{\inv(m;840)}{(\frac 5{18}\log\log n)^{1/2}} \le x\bigg)
&= \begin{cases} \Phi(x), & \text{if } x \ge 0, \\ 0 & \text{otherwise}. \end{cases}
\end{align*}
Furthermore, the sum of these counting functions has a truncated Erd\H os--Kac law:
\[ \lim_{n\to\infty} P_n\biggl(\frac{\inv(m;360)+\inv(m;840)}{(\frac 5{18}\log\log n)^{1/2}} \le x\biggr) = \Phi_+(x) . \]
\end{ex}

The corresponding general statement strengthens Theorem~\ref{theorem:ex_2}:

\begin{theorem}
\label{theorem:dist_2}
\theoremdistC
\end{theorem}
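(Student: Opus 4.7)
The plan is to reduce both claims to a single Erd\H os--Kac statement for the additive function $\Delta(m) := \prm(m;\tq_i) - \prm(m;\tq_{i+1})$. Write $\sif_i = \{d_1,d_2\}$ with $d_1 = \lcm[\tq_1,\ldots,\tq_i]$ and $d_2 = \lcm[\tq_1,\ldots,\tq_{i-1},\tq_{i+1}]$, so that $d_1$ and $d_2$ differ only by swapping the prime power $\tq_i$ for $\tq_{i+1}$; these two prime powers share a common totient but are necessarily coprime (since $\varphi(p^\alpha)=\varphi(p^\beta)$ forces $\alpha=\beta$). First I would observe that neither of $d_1,d_2$ divides the other, so the invariant-factor divisibility chain forces $\inv(m;d_1)\cdot\inv(m;d_2) = 0$ for every $m$.

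Next, for all sufficiently $y$-typical $m$ (with $y$ chosen just above $\varphi(\tq_{i+1})$), I would establish the combinatorial identities
\[ \inv(m;d_1) = \max(\Delta(m),0), \qquad \inv(m;d_2) = \max(-\Delta(m),0), \qquad \inv(m;d_1)+\inv(m;d_2) = |\Delta(m)|. \]
The reasoning is structural: $y$-typicality forces $\prm(m;\tq_1) > \prm(m;\tq_2) > \cdots$ to be strictly decreasing at every index other than (possibly) across the doubleton, and ensures that the $r$-primary part conditions for primes $r \notin \{p,q\}$ (where $\tq_i = p^\alpha$, $\tq_{i+1} = q^\beta$) are automatically satisfied in the relevant range of invariant-factor positions. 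Within that range, positions are distinguished only by which of $\prm(m;\tq_i), \prm(m;\tq_{i+1})$ is larger: the larger value dictates which of $d_1,d_2$ occupies the excess $|\Delta(m)|$ positions. The set of non-$y$-typical $m$ has density zero (by the same framework used for Theorem~\ref{theorem:order_restriction_finite}), so the identity holds for almost all $m$.

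With the identity in hand, I would apply a standard Erd\H os--Kac argument to $\Delta(m)$. For almost all $m$, each $\prm(m;p^\alpha)$ reduces to the additive function $\omega(m;\{r\text{ prime}: v_p(r-1)\ge\alpha\})$; since $\varphi(\tq_i)=\varphi(\tq_{i+1})$, the first moments of the two $\prm$-terms agree to leading order, giving $\Ex_n(\Delta(m)) = O(1)$, while the coprimality of $\tq_i$ and $\tq_{i+1}$ lets a direct computation evaluate the covariance cleanly and produce $\Ex_n(\Delta(m)^2) = \sigma_i^2 \log\log n + O(1)$. The Kubilius--Erd\H os--Kac machinery (or the method of moments applied to $\Delta$) then yields $\Delta(m)/(\sigma_i\sqrt{\log\log n}) \xrightarrow{d} \mathcal{N}(0,1)$. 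Applying the continuous mapping theorem to $x\mapsto\max(x,0)$ and to $x\mapsto|x|$ delivers the two asserted limiting distributions: the CDF of $\max(Z,0)$ is $\Phi(x)$ for $x\ge0$ and zero for $x<0$, matching the first claim, while the CDF of $|Z|$ is exactly $\Phi_+$ of Definition~\ref{defn:right normal}, matching the second. The main obstacle is the combinatorial identity above: the equal-totient coincidence at indices $i,i+1$ is precisely where monotonicity in the $\prm$-chain can fail, so one must carefully verify that $y$-typicality still supplies the auxiliary inequalities needed for primes $r\ne p,q$ so that no other invariant factor orders intrude into the relevant position range.
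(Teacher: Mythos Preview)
Your proposal is correct and follows essentially the same route as the paper's proof. The paper likewise sets up the identity $\inv(m;d_1)=\max(\Delta(m),0)$, $\inv(m;d_2)=\max(-\Delta(m),0)$ for $y$-typical $m$ (using $y=\varphi(\tq_{i+2})$ and Proposition~\ref{prop:y_typical}), reduces $\Delta(m)$ to $\omega(m;\TQ_i-\TQ_{i+1})+O(1)$, and applies the univariate Erd\H os--Kac lemma; the only cosmetic difference is that the paper carries out the distributional deductions by hand rather than invoking the continuous mapping theorem.
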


\noindent
Since $\Phi(0)=\frac12$, each $\inv(m;d)$ appearing in Theorem~\ref{theorem:dist_2} is positive on a set of integers~$m$ of density~$\frac12$. But note that these two sets are disjoint, since the two integers $d\in\sif_i$ do not divide each other. Therefore almost all multiplicative groups $(\Z/m\Z)^\times$ have one of these two integers~$d$ among their invariant factor orders. (We believe the number of the exceptions up to~$n$ is asymptotic to some constant depending on~$i$ times $n/\sqrt{\log\log n}$.) These counting functions $\inv(m;d)$ are modeled by the absolute difference of two additive functions of similar size, and it is a coin toss as to which one will be larger for particular values of~$m$.

The description of the limiting distribution laws for all the universal factor orders is now complete. By way of contrast, the rare factor orders are much simpler---we can easily strengthen Theorem~\ref{theorem:ex_rare} to the following distributional statement:

\begin{theorem}
\label{theorem:dist_rare}
\theoremdistrare
\end{theorem}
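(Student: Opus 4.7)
The plan is to deduce the statement immediately from the moment bound in Theorem~\ref{theorem:ex_rare}. Since $\inv(m;d)$ is a nonnegative integer-valued function, the event $\inv(m;d)\neq 0$ coincides with the event $\inv(m;d)\ge 1$, so Markov's inequality gives
\[
P_n(\inv(m;d)\neq 0) \;=\; P_n(\inv(m;d)\ge 1) \;\le\; \Ex_n(\inv(m;d)).
\]
Invoking Theorem~\ref{theorem:ex_rare} with $k=1$ and (say) $r=1$ yields $\Ex_n(\inv(m;d))\ll_d (\log\log n)^{-1}$, which tends to $0$ as $n\to\infty$. Therefore $P_n(\inv(m;d)=0)\to 1$, i.e., almost all positive integers~$m$ satisfy $\inv(m;d)=0$, which is exactly the assertion.

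A second route, which bypasses any moment estimate, derives the conclusion directly from Theorem~\ref{theorem:order_restriction_finite}. Because Definition~\ref{defn:standard_invariant_sequence} forces every element of $\sif_{i+1}$ to be a proper multiple of every element of $\sif_i$, the quantity $\min(\sif_i)$ tends to infinity. Fix the rare factor order~$d\notin\siF$ and choose $D=D(d)$ so large that every element of $\sif_D$ strictly exceeds~$d$. Theorem~\ref{theorem:order_restriction_finite} says that the set of~$m$ for which $f_j(m)\in\sif_j$ for all $1\le j\le D$ (and for which the invariant factor decomposition has at least~$D$ distinct orders) has density~$1$. For any such~$m$, if~$d$ appeared as an invariant factor order, say $d=f_j(m)$, then either $j\le D$---so that $d\in\sif_j\subset\siF$, contradicting rarity---or $j>D$---so that $d=f_j(m)\ge f_D(m)>d$, also impossible. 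Hence $\inv(m;d)=0$ on this density-$1$ set.

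The main obstacle is essentially nonexistent: all the serious analytic work is already encoded in Theorem~\ref{theorem:ex_rare} (equivalently in Theorem~\ref{theorem:order_restriction_finite}), and the passage from ``vanishing first moment'' to ``zero with probability one'' is the standard Markov observation. I would present the Markov argument as the official proof, and append a one-sentence remark noting that the conclusion also follows qualitatively from Theorem~\ref{theorem:order_restriction_finite}.
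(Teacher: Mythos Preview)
Your Markov argument is correct, and the alternative via Theorem~\ref{theorem:order_restriction_finite} is also valid; neither contains a gap. However, both routes differ from the paper's own proof, which is more self-contained. The paper argues directly from the structural Propositions~\ref{prop:y_typical} and~\ref{prop:y_typical_is_typical}: choose~$y$ with $d\le\iV(y)$; then for any $y$-typical~$m$, every invariant factor of $(\Z/m\Z)^\times$ not exceeding~$\iV(y)$ lies in some~$\sif_i$ (Proposition~\ref{prop:y_typical}\eqref{yt5}--\eqref{ytnew}), so the rare order~$d$ cannot occur; finally $P_n(m\text{ is not $y$-typical})\ll_y 1/\log\log n$ by Proposition~\ref{prop:y_typical_is_typical}.

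The practical difference is one of logical ordering rather than strength. In the paper, Theorem~\ref{theorem:dist_rare} is established in Section~\ref{sec:distributions} \emph{before} Theorem~\ref{theorem:ex_rare} (Section~\ref{sec:implications}) and before Theorem~\ref{theorem:order_restriction_finite} (end of Section~\ref{proofs section}). Your approaches invoke those later results. There is no circularity---the paper's proofs of Theorems~\ref{theorem:ex_rare} and~\ref{theorem:order_restriction_finite} do not use Theorem~\ref{theorem:dist_rare}---but your argument would force a reordering of the exposition. The paper's direct route avoids this and is essentially the same computation that underlies Theorem~\ref{theorem:ex_rare} anyway (both rest on ``$\inv(m;d)=0$ for $y$-typical~$m$'' plus the density bound for atypical~$m$), so little is gained by passing through the moment estimate.
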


\noindent
For example, almost no multiplicative groups $(\Z/m\Z)^\times$ have an invariant factor with order~$8$.

\medskip
As a side remark, we mention another application of our techniques, namely the determination of the limiting distribution of the multiplicity counting function of any fixed nontrivial abelian subgroup~$G$ of $(\Z/m\Z)^\times$.

\begin{defn}
For any prime power~$q$,
let $\pri(G,q)$ be the number of elementary divisors of~$G$ of order~$q$, so that $G \cong \prod_q \Z_q^{\pri(G,q)}$ is an elementary decomposition.
Define
\[
\msg(m;G) = \max \{ a\in\N \colon G^a \le (\Z/m\Z)^\times \} .
\]
Finally, define $\mu_G = \min \{ 1/\varphi(q)\pri(G;q) \colon \pri(G;q) > 0 \}$.
\end{defn}

The following result, which is proved in the dissertation of second author~\cite[Chapter~7]{simpsonthesis}, identifies the limiting distribution of $\msg(m;G)$,

\begin{theorem}
Let $G$ be a nontrivial abelian finite group, and let $q_1,\ldots,q_N$ be the distinct elementary divisors of~$G$ that satisfy $\varphi(q_i)\pri(G;q_i) = \mu_G^{-1}$.
Then
\[
\lim_{n\to\infty} P_n\bigg(\frac{\msg(m;G)-\mu_G\log\log n}{(\mu_G\log\log n)^{1/2}} \le x\bigg) = \Mn_H(x) ,
\]
where $H(x_1,\ldots,x_N) = F\bigl( \pri(G,q_1)^{1/2}x_1,\ldots,\pri(G,q_N)^{1/2}x_N \bigr)$ 
and $F(x_1,\ldots,x_N)$ is the cumulative distribution function of a multivariate normal random variable~$X$ whose characteristic function is
\[
\chi_X(t_1,\ldots,t_N) = \exp\biggl( -\frac 12 \sum_{i=1}^N\sum_{j=1}^N \frac{(\varphi(q_i)\varphi(q_j))^{1/2}}{\varphi(\lcm[q_i,q_j])} t_it_j \biggr).
\]  
Here~$\Mn_H$ represents the minimum of the~$N$ random variables comprising the components of~$H$ (see Definition~\ref{defn:min_max_distributions} below).
\end{theorem}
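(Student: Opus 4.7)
The plan is to express $\msg(m;G)$ as a minimum of $N$ normalized counting functions, establish a joint Erd\H os--Kac law for them, and then read off the distribution of the minimum via inclusion--exclusion. First I would reduce the condition $G^a \le (\Z/m\Z)^\times$ to a simultaneous requirement on the elementary-divisor multiplicities of $(\Z/m\Z)^\times$ at each prime power~$q$ with $\pri(G,q)>0$, which translates (using the standard relations between invariant factors and elementary divisors) into
\[
\msg(m;G) \;=\; \min_{q\colon \pri(G,q)>0}\Floor{\frac{\pri(m;q)}{\pri(G,q)}} \,+\, O(1)
\]
uniformly in~$m$, where the $O(1)$ error gathers contributions from chains of elementary divisors at the same prime. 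This error is $o\bigl((\log\log n)^{1/2}\bigr)$ and so is invisible at the scale of the theorem.

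Next I would show that the non-critical elementary divisors of~$G$ do not influence the limiting minimum. The mean-value analysis underlying Theorem~\ref{theorem:ex_general} gives
\[
\Ex_n\bigl(\pri(m;q)/\pri(G,q)\bigr) \;\sim\; \frac{1}{\varphi(q)\,\pri(G,q)}\log\log n
\]
with standard deviation of order $(\log\log n)^{1/2}$. Any $q$ with $\varphi(q)\pri(G,q) > \mu_G^{-1}$ then produces a ratio whose mean exceeds $\mu_G\log\log n$ by a quantity of order $\log\log n$, which dominates the fluctuations; a Chebyshev argument shows such terms fail to achieve the minimum except on a set of density~$0$. The limiting minimum is thus taken only over the $N$ critical indices $q_1,\ldots,q_N$.

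The central step is a joint Erd\H os--Kac theorem for the vector $(\pri(m;q_i))_{i=1}^{N}$. Each $\pri(m;q_i)$ is approximated by an additive function of~$m$ counting prime divisors lying in a Dirichlet-class condition encoding when $q_i$ appears as an elementary divisor of the multiplicative group of a prime-power divisor of~$m$. A standard multivariate Kubilius/Erd\H os--Kac argument (method of moments applied to the vector of normalized additive functions) yields joint asymptotic normality. The covariance of the approximations of $\pri(m;q_i)$ and $\pri(m;q_j)$ comes from primes satisfying both Dirichlet conditions simultaneously, which by the Chinese Remainder Theorem reduces to $\varphi(\lcm[q_i,q_j])^{-1}\log\log n$. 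Dividing by $\pri(G,q_i)\pri(G,q_j)\mu_G\log\log n$ and using the identity $\pri(G,q_i)\varphi(q_i)=\mu_G^{-1}$ produces exactly the covariances in $\chi_X$, while the factor $\pri(G,q_i)^{1/2}$ in the definition of~$H$ records the standard deviations of the individual components.

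Finally, writing $Z_i$ for the normalized $i$th component, the event that the left-hand side of the theorem is at most~$x$ coincides (up to density-zero error) with $\bigcup_{i=1}^N\{Z_i\le x\}$. Inclusion--exclusion gives
\[
P_n\bigl(\msg(m;G)-\mu_G\log\log n \le x(\mu_G\log\log n)^{1/2}\bigr) \;=\; \sum_{k=1}^N(-1)^{k-1}\!\!\!\sum_{\substack{S\subset\{1,\ldots,N\}\\ \#S=k}}\!\!\! P_n\bigl(Z_i\le x \text{ for all } i\in S\bigr) + o(1),
\]
and the convergence of each inner probability to $H(\vec x_S)$ is precisely the joint Erd\H os--Kac law from Step~3; summing matches Definition~\ref{defn:min_max_distributions} of $\Mn_H(x)$. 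The main obstacle I expect is the joint multivariate Erd\H os--Kac theorem: one needs uniform control of the additive-function approximations simultaneously across all $N$ critical components, handling negligible but bookkeeping-heavy contributions from ramified prime powers $p^k$ with $k\ge 2$ and from primes $p$ equal to the rational prime underlying some~$q_i$. The univariate versions of these estimates appear in the proofs of Theorems~\ref{theorem:ex_general} and~\ref{theorem:dist_11}, but tracking them jointly is the delicate part.
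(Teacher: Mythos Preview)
The paper does not prove this theorem; it simply cites the second author's dissertation \cite[Chapter~7]{simpsonthesis}, so there is no in-paper argument to compare against. Your overall architecture---express $\msg(m;G)$ as a minimum of ratios, discard non-critical constraints by a Chebyshev argument, apply a multivariate Erd\H os--Kac theorem (Proposition~\ref{lemma:omega_diff_multivariable}) to the critical ones, and read off the distribution of the minimum via Proposition~\ref{prop:min_max_distributions}---is the right one and is almost certainly what the dissertation does.

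There is, however, a genuine error in your first step. The embedding criterion for finite abelian $p$-groups is in terms of $\prm$, not $\pri$: one has $A\le B$ if and only if $\prm(A;p^\alpha)\le\prm(B;p^\alpha)$ for every~$\alpha$, and hence
\[
\msg(m;G)\;=\;\min_{q\colon \prm(G;q)>0}\Floor{\frac{\prm(m;q)}{\prm(G;q)}}.
\]
Your formula with $\pri(m;q)$ in the numerator is not off by $O(1)$. Already for $G=\Z_4$ your expression gives $\pri(m;4)=\prm(m;4)-\prm(m;8)\sim\tfrac14\log\log n$, whereas the true value is $\msg(m;\Z_4)=\prm(m;4)\sim\tfrac12\log\log n=\mu_G\log\log n$. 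You repeat the confusion downstream when you assert that $\Ex_n\bigl(\pri(m;q)\bigr)\sim\varphi(q)^{-1}\log\log n$ and that the covariance of $\pri(m;q_i)$ and $\pri(m;q_j)$ is $\varphi(\lcm[q_i,q_j])^{-1}\log\log n$; those formulas hold for $\prm(m;q)\approx\omega(m;q,1)$ (Lemma~\ref{lemma:prime_multiples}), not for $\pri(m;q)$. Once you replace $\pri(m;\cdot)$ by $\prm(m;\cdot)$ throughout, the covariance computation is exactly Proposition~\ref{lemma:omega_diff_multivariable} with $R_1=\cdots=R_N=\emptyset$ and $Q_j=\{p\equiv1\pmod{q_j}\}$, and the rest of your outline goes through.
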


\subsection{Overview of the proofs and outline of the paper}  \label{overview intro section}

We highlight some key concepts and quantities that will be helpful to keep in mind throughout this paper. We begin with the class of additive functions that have been mentioned as important to the proofs of the results in Section~\ref{distribution intro section}; these strongly additive functions turn out to be generalizations ot the standard distinct prime factor counting function~$\omega(m)$.

\begin{defn}
\label{defn:omega_functions}
\definitionomegafunctions
\end{defn}

\noindent
It follows easily, from classical techniques and the prime number theorem for arithmetic progressions, that $\omega(m;q,a)$ follows an Erd\H os--Kac law with mean and variance asymptotically $\frac1{\phi(q)}\log\log m$ when $(a,q)=1$.

We now introduce some notation related to the structure of the finite abelian groups $(\Z/m\Z)^\times$. While the main results of this paper are related to the invatiant factor decomposition of such groups, it is helpful to approach the invariant factor decomposition via another canonical representation for finite abelian groups: every such group has a unique elementary divisor decomposition (or primary factor decomposition), that is, a representation as the direct product of cyclic groups of prime power order. Therefore we need notation that indicates how many times a particular prime power appears as an elementary divisor of a multiplicative group. It turns out that a more fundamental quantity groups together elementary divisors that are powers of the same prime:

\begin{defn}
\label{defn:group_counting_functions_prm}
\definitioncountingfunctionsprm
\end{defn}

\noindent
It turns out (see Lemma~\ref{lemma:prime_multiples} below) that $\prm(m;p^\alpha)$ is essentially equal to the quantity $\omega(m;p^\alpha,1)$ from Definition~\ref{defn:omega_functions}. Therefore $\prm(m;p^\alpha)$ also follows an Erd\H os--Kac law with asymptotic mean and variance $\frac1{\phi(p^\alpha)}\log\log m$.

In particular, for most integers~$m$ we would expect $\prm(m;p^\alpha)$ to be larger the smaller $\phi(p^\alpha)$ is and vice versa. This property of~$m$ is extremely important to our proofs throughout, and fortunately the property is indeed common; we are thus motivated to make the following definition.

\begin{defn}
\label{defn:y_typical}
\definitiontypical
\end{defn}

\noindent
In other words, a $y$-typical number is one where the counting functions $\prm(m;q)$ are ``in the correct order'' as~$q$ ranges over all prime powers with $\phi(q)\le y$. The following proposition, proved in
Section~\ref{subsection:y_typical_proof}, justifies why this property is called $y$-typicality:

\begin{prop}
\label{prop:y_typical_is_typical}
For all real numbers $y \ge 2$ and $r \ge 1$ and for all integers $n \in \N$,
\[
P_n(m \text{ is not $y$-typical}) \ll_r \frac{y^{4r}}{(\log\log n)^r}.
\]
In particular, for any function $y = y(n)$ satisfying $y = o\bigl( (\log\log n)^{1/4} \bigr)$, almost all integers~$m$ are $y$-typical, in the sense that
\[
\lim_{n\to\infty} P_n(m \text{ is $y(n)$-typical}) = 1.
\]
\end{prop}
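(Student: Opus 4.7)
The strategy is a union bound over pairs of prime powers $(q_1,q_2)$ that could witness a failure of $y$-typicality, combined with high-moment concentration estimates for differences of additive functions. Throughout I would invoke Lemma~\ref{lemma:prime_multiples} to replace each $\prm(m;p^\alpha)$ by the strongly additive counting function $\omega(m;p^\alpha,1)$, up to an additive error of $O(1)$ depending on~$p^\alpha$. The cumulative effect of these errors over the $O(y)$ relevant prime powers is $O(y)$, which is harmless after disposing of the trivial range $y^4\ge\log\log n$ (in which the claimed bound is already at least~$1$); the question then reduces to whether the values $\omega(m;q,1)$ are sorted in the $\varphi$-order.

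For case~(i) of Definition~\ref{defn:y_typical}, fix a pair of prime powers with $\varphi(q_1)<\varphi(q_2)\le y$ and consider $g(m)=\omega(m;q_1,1)-\omega(m;q_2,1)$, a strongly additive function taking values in $\{-1,0,1\}$ on primes. By the prime number theorem in arithmetic progressions,
\[
A_g(n) = \Bigl(\frac{1}{\varphi(q_1)} - \frac{1}{\varphi(q_2)}\Bigr)\log\log n + O(1) \gg \frac{\varphi(q_2)-\varphi(q_1)}{\varphi(q_1)\varphi(q_2)}\log\log n,
\]
while the Tur\'an--Kubilius moment inequality (in the higher-moment form used, for example, in~\cite{SIFMG}) gives
\[
\Ex_n\bigl((g(m) - A_g(n))^{2r}\bigr) \ll_r \Bigl(\frac{\log\log n}{\varphi(q_1)}\Bigr)^r.
\]
Markov's inequality, together with the observation that $g(m)\le 0$ forces $|g(m)-A_g(n)|\ge A_g(n)$, then yields a per-pair probability of the order of $\varphi(q_1)^r\varphi(q_2)^{2r}/\bigl((\varphi(q_2)-\varphi(q_1))^{2r}(\log\log n)^r\bigr)$. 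Summing this estimate over all pairs of prime powers with $\varphi$-values at most $y$, separated by the totient gap $d=\varphi(q_2)-\varphi(q_1)$, the worst case $d=1$ contributes $O_r\bigl(y^{3r+1}/(\log\log n)^r\bigr)$ and the larger gaps are absorbed geometrically, giving a total of $O_r\bigl(y^{4r}/(\log\log n)^r\bigr)$ as required.

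For case~(ii), in which $\varphi(q_1)\le y<\varphi(q_2)$ with infinitely many candidate $q_2$, a direct union bound fails, so I would instead control the tail maximum $M(m)=\max_{\varphi(q_2)>y}\prm(m;q_2)$. The crucial observation is that $\prm(m;q_2)\ge 1$ forces $q_2\mid\ell-1$ for some prime $\ell\mid m$, so the sum $\sum_{\varphi(q_2)>y}\Ex_n(\prm(m;q_2)^{2r})$ can be reorganized prime-by-prime and estimated using classical bounds on prime-power divisors of $\ell-1$. Combined with the lower bound $\prm(m;q_1)\gg\log\log n/y$ (valid off an exceptional set of the appropriate size, by the same moment method), this yields the strict comparison demanded in case~(ii). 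The main obstacle will be this control of $M(m)$: extracting the exponent $y^{4r}$ from a sum over unbounded $q_2$ requires a delicate combinatorial bookkeeping, separating the ``exceptional'' primes~$\ell$ for which $\ell-1$ is unusually smooth, so that the contribution of very large $q_2$ to the moment sum does not blow up.
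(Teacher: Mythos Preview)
Your broad strategy---union bound plus high-moment concentration---matches the paper's. For case~(a) of Definition~\ref{defn:y_typical} your pairwise-difference approach is a valid alternative: the paper instead shows (Lemma~\ref{two bad cases}) that each individual $\omega(m;\tq_i,1)$ lies within $\delta_i\log\log n$ of its mean, with the half-widths $\delta_i\asymp\varphi(\tq_i)^{-2}$ chosen so that the resulting windows are disjoint, and then sums the $\iW(y)$ single deviation probabilities via Lemma~\ref{lemma:chebyshev_omega}. Both routes yield $O_r(y^{3r+1}/(\log\log n)^r)$ for this part; yours sums over $O(\iW(y)^2)$ pairs while the paper sums over $O(\iW(y))$ single indices, but the worst terms dominate either way.

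For case~(b) there is a genuine gap. Your proposed control of $\sum_{\varphi(q)>y}\Ex_n(\prm(m;q)^{2r})$ by reorganising over primes $\ell\mid m$ and appealing to smoothness of $\ell-1$ does not work as stated: that moment sum does not decay fast enough in~$q$ to converge directly, and smoothness plays no role in the argument. Contrary to your claim that a direct union bound fails, the paper \emph{does} use one (Lemmas~\ref{lemma:few_large_omega new pre} and~\ref{lemma:few_large_omega new}). It fixes a single threshold $\lambda\log\log n$ with $\lambda=\tfrac{1}{3\varphi(\tq_N)}+\tfrac{2}{3\varphi(\tq_{N+1})}$ lying strictly between $1/\varphi(\tq_N)$ and $1/\varphi(\tq_{N+1})$, so that $L(\lambda)=\lambda-1/\varphi(\tq_{N+1})\gg y^{-2}$, and then bounds $\sum_{\varphi(q)>y} P_n\bigl(\omega(m;q,1)+2>\lambda\log\log n\bigr)$ by splitting~$q$ into three regimes: moderate~$q$ via Markov on centred $2r$-th moments; $q$ with $B_\omega(n;q,1)^2<\tfrac12$ via a crude Hardy--Ramanujan-type bound $P_n(\omega(m;q,1)\ge\ell)\ll 2^{-\ell}$; and~$q$ beyond a polylog cutoff via the elementary estimate $P_n(\omega(m;q,1)\ge 2)\ll\bigl((\log\log n+\log q)/q\bigr)^2$. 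The idea you are missing is that once the threshold is fixed, $q$-independent, and exceeds~$1+a$, exceeding it at large~$q$ already forces $\omega(m;q,1)\ge 2$, and \emph{that} probability is summable over~$q$ by the crudest possible means---no reorganisation over primes of~$m$ or smoothness bookkeeping is required. The exponent $y^{4r}$ then comes entirely from $L(\lambda)^{-2r}\ll y^{4r}$.
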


We conclude this introduction by providing a guide to the remainder of the paper
and the proofs it contains. The majority of this paper is concerned with proving the results in Section~\ref{distribution intro section}, since Theorem~\ref{theorem:order_restriction_finite} and the theorems in Section~\ref{expectation intro section} follow easily from them.

The argument starts in Section~\ref{sec:invariant_to_additive} with some elementary number theory to convert the problem of the invariant factors of $(\Z/m\Z)^\times$ into a statement about the values of finitely many additive functions $\omega(m;q,1)$, provided that~$m$ is a $y$-typical integer for an appropriate~$y$.
In Section~\ref{sec:additive to probability} we apply the methods from Kubilius \cite{kubilius} on multivariate distributions
of additive functions to convert statements about additive functions into statements about probabilities.
In Section~\ref{sec:typicality}, we prove probabalistic statements about $y$-typicality, including Proposition~\ref{prop:y_typical_is_typical}, using a mixture of analytic and elementary arguments.

In Section~\ref{sec:distributions}, we combine all the above results to finally prove the limiting distribution theorems in Section~\ref{distribution intro section}, after which we show how they collectively imply Theorem~\ref{theorem:order_restriction_finite}.
The arguments that derive the expectation theorems in Section~\ref{expectation intro section} from the results in Section~\ref{distribution intro section} are given in Section~\ref{sec:implications}.

In Appendix~\ref{appendix:notation} we consolidate all the major notation of this paper (including all the notation that has appeared in this introduction). In Appendix~\ref{appendix:theorems} we gather all of the main theorems of this introduction together for ease of reference. Finally, Appendix~\ref{appendix:distributions} contains two lemmas related to sums of random variables and their characteristic functions and density functions.

\section{From the invariant factors of $(\Z/m\Z)^\times$ to additive functions}
\label{sec:invariant_to_additive}

In this section, we relate the invariant factor decomposition of $(\Z/m\Z)^\times$ to the values of the additive functions $\omega(m;q,1)$ from Definition~\ref{defn:omega_functions}, under the assumption that $m$ is $y$-typical in the sense of Definition~\ref{defn:y_typical}. While readers familiar with invariant factor decompositions will find all the material in this section accessible (modulo the cumbersome notation), our later work will benefit from this careful translation from group theory to number theory.

\subsection{Relation between invariant factors and additive functions}

We begin with lemmas that hold for all integers $m\ge3$, without requiring any $y$-typicality assumption. Recall two pieces of notation from the introduction (Definitions~\ref{defn:omega_functions} and~\ref{defn:group_counting_functions_prm}), namely $\omega(m;S) = \#\{ p\mid m \colon p\in S \}$ for any set~$S$ of primes, and $\prm(m;p^\alpha)$ for the number of elementary divisors of the multiplicative group $(\Z/m\Z)^\times$ that are of the form $\Z_{p^\beta}$ where $\beta\ge\alpha$. We will use a variant of this latter notation as well:

\begin{defn}
\label{defn:group_counting_functions_pri}
\definitioncountingfunctionspri
\end{defn}

\begin{lemma}
\label{lemma:prime_decomposition}
We have $\pri(m;p^\alpha) = \omega \bigl( m;\{q \text{ prime} \colon p^\alpha \xmid (q-1)\} \bigr) + E(m;p^\alpha)$
where
\[ E(m;p^\alpha) = \begin{cases}
2,	& \text{if $p^\alpha = 2$ and $8 \xmid m$}, \\
1,	& \text{if $p^\alpha = 2$ and $(4 \xmid m$ or $16 \mid m)$}, \\
1,	& \text{if $p = 2$ and $\alpha \ge 2$ and $2^{\alpha+2} \xmid m$}, \\
1,	& \text{if $p \ne 2$ and $p^{\alpha+1} \xmid m$}, \\
0,	& \text{otherwise}.
\end{cases} \]
\end{lemma}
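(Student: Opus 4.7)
The plan is to exploit the Chinese Remainder Theorem decomposition
\[
(\Z/m\Z)^\times \cong (\Z/2^{a_0}\Z)^\times \times \prod_{q \text{ odd prime}, q^{a_q}\xmid m} (\Z/q^{a_q}\Z)^\times,
\]
where $2^{a_0} \xmid m$, and then identify the elementary divisor decomposition of each factor. For odd primes $q$, the group $(\Z/q^a\Z)^\times$ is cyclic of order $q^{a-1}(q-1)$, and since $\gcd(q^{a-1},q-1)=1$, it decomposes into elementary divisors as $\Z_{q^{a-1}} \times \Z_{q-1}$, with the latter contributing further prime-power elementary divisors according to the factorization of $q-1$. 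For the $2$-part, $(\Z/2^{a_0}\Z)^\times$ is trivial for $a_0 \le 1$, equals $\Z_2$ for $a_0 = 2$, and equals $\Z_2 \times \Z_{2^{a_0-2}}$ for $a_0 \ge 3$.

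The central observation is that a copy of $\Z_{p^\alpha}$ arising from an odd prime $q \mid m$ with $q \ne p$ occurs precisely when $p^\alpha \xmid (q-1)$: the factor $\Z_{q^{a-1}}$ contains no $p$-part (since $q\ne p$), and the elementary divisor decomposition of $\Z_{q-1}$ contains exactly one copy of $\Z_{p^\alpha}$ iff the $p$-part of $q-1$ is exactly $p^\alpha$. Summing over all such primes yields the term $\omega\bigl(m;\{q\text{ prime}\colon p^\alpha \xmid (q-1)\}\bigr)$. Note that this set automatically excludes $q=2$ (since $p^\alpha \xmid 1$ fails for $\alpha\ge1$) and excludes $q=p$ (since $p\nmid p-1$), which is exactly what we want: the contributions from those primes are handled separately by the error term $E(m;p^\alpha)$.

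All remaining contributions to $\pri(m;p^\alpha)$ come from two sources that need to be tallied case by case. First, for odd~$p$, the prime $q=p$ itself (when $p\mid m$) contributes an elementary divisor $\Z_{p^{a_p - 1}}$ via the cyclic factor $\Z_{p^{a_p - 1}}$ of $\Z_{p^{a_p-1}(p-1)}$. This contributes $1$ to $\pri(m;p^\alpha)$ precisely when $a_p - 1 = \alpha$, i.e., $p^{\alpha+1} \xmid m$, matching the fourth case of~$E$. Second, for $p=2$, we enumerate the contributions of $(\Z/2^{a_0}\Z)^\times$ to $\pri(m;2^\alpha)$: $a_0 = 2$ contributes one $\Z_2$; $a_0 = 3$ contributes two copies of $\Z_2$; and $a_0 \ge 4$ contributes one $\Z_2$ and one $\Z_{2^{a_0-2}}$. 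Translating these into conditions on~$m$ yields exactly the first three cases of $E(m;2^\alpha)$: $E=2$ when $8\xmid m$ (i.e.\ $a_0=3$), $E=1$ when $4\xmid m$ (i.e.\ $a_0=2$) or $16\mid m$ (i.e.\ $a_0\ge 4$, contributing to $\alpha=1$), and $E=1$ when $\alpha\ge 2$ and $2^{\alpha+2}\xmid m$ (i.e.\ $a_0-2=\alpha$).

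There is no real obstacle beyond careful bookkeeping; the argument is entirely a finite case check after the CRT reduction. The only place where one must be especially cautious is the $p=2$, $\alpha=1$ case, where both of the two nontrivial cyclic summands of $(\Z/2^{a_0}\Z)^\times$ can independently contribute a $\Z_2$ elementary divisor, producing the $a_0=3$ case with $E=2$.
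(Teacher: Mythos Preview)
Your proposal is correct and follows essentially the same approach as the paper: both proofs apply the Chinese Remainder Theorem, write each factor $(\Z/q^{a_q}\Z)^\times$ in its elementary divisor form, identify the contribution of the odd primes $q\ne p$ as the $\omega$-term, and then do a case-by-case accounting of the remaining contributions from $q=p$ and from the $2$-part. Your write-up is somewhat more explicit in enumerating the $p=2$ cases than the paper's (which simply says ``one can check'' for the $4\mid m$ situation), but the strategy and substance are identical.
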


\begin{proof}
First suppose that $4\nmid m$. By the Chinese remainder theorem and the fact that multiplicative groups to odd prime power moduli (wihch have primitive roots) are cyclic, the elementary divisor decomposition of the multiplicative group $(\Z/m\Z)^\times$ is
\begin{align*}
(\Z/m\Z)^\times \cong \prod_{p^\alpha \xmid m} (\Z/p^\alpha\Z)^\times &\cong \prod_{\substack{p^\alpha \xmid m \\ p \ne 2}} ( \Z_{p-1} \times \Z_{p^{\alpha-1}} ) \cong \prod_{p \mid m} \prod_{q^\alpha \xmid p-1} \Z_{q^\alpha} \times \prod_{\substack{p^\alpha \xmid m \\ p \ne 2}} \Z_{p^{\alpha-1}} .
\end{align*}
For every prime power $p^\alpha$, the first factor contributes $\omega \bigl( m;\{q \text{ prime} \colon p^\alpha \xmid (q-1)\} \bigr)$ to the quantity $\pri(m;p^\alpha)$. If~$p$ is odd, the second factor contributes~$1$ if $p^{\alpha+1} \xmid m$ and~$0$ otherwise, which is precisely $E(m;p^\alpha)$ in this case; while if~$p=2$ then the second factor contributes~$0$ which also equals $E(m;2^\alpha)$ since $4\nmid m$.

When $4\mid m$, there is one additional factor $(\Z/2^\alpha\Z)^\times \cong \Z_{2^{\alpha-2}} \times \Z_2$ which does not affect $\pri(m;p^\alpha)$ for odd primes~$p$; and one can check that in every case the definition of $E(m;2^\alpha)$ accounts correctly for this additional factor.
\end{proof}

From this formula for $\pri(m;p^\alpha)$ we can deduce a similar formula for $\prm(m;p^\alpha)$, which will facilitate our computation of the invariant factors themselves.

\begin{lemma}
\label{lemma:prime_multiples}
We have $\prm(m;p^\alpha) = \omega(m; p^\alpha, 1 ) + \bar E(m;p^\alpha)$
where
\[ \bar E(m;p^\alpha) = \begin{cases}
2,	& \text{if $p^\alpha = 2$ and $8 \mid m$}, \\
1,	& \text{if $p^\alpha = 2$ and $4 \xmid m$}, \\
1,	& \text{if $p = 2$ and $\alpha \ge 2$ and $2^{\alpha+2} \mid m$}, \\
1,	& \text{if $p \ne 2$ and $p^{\alpha+1} \mid m$}, \\
0,	& \text{otherwise}.
\end{cases} \]
In particular, $\prm(m;p^\alpha) \le \prm(m;2)$ for all prime powers $p^\alpha$.
\end{lemma}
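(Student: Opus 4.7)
The plan is to derive both assertions by summing the formula of Lemma~\ref{lemma:prime_decomposition} over the appropriate range of exponents, then performing straightforward casework to simplify the error contribution, and finally exploiting the fact that odd prime factors of~$m$ already contribute generously to $\prm(m;2)$.

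First I would start from the definition $\prm(m;p^\alpha) = \sum_{\beta \ge \alpha} \pri(m;p^\beta)$ and insert the formula from Lemma~\ref{lemma:prime_decomposition}. For the ``main'' piece, the key observation is that the sets $\{q \text{ prime} \colon p^\beta \xmid (q-1)\}$, for $\beta = \alpha, \alpha+1, \ldots$, partition the set $\{q \text{ prime} \colon p^\alpha \mid (q-1)\}$, since every prime $q\equiv1\pmod{p^\alpha}$ has some unique exact $p$-adic valuation in $q-1$. Summing the $\omega(m;\cdot)$ contributions therefore telescopes cleanly to $\omega\bigl(m;\{q\colon p^\alpha\mid(q-1)\}\bigr) = \omega(m;p^\alpha,1)$.

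Next I would identify $\sum_{\beta\ge\alpha}E(m;p^\beta)$ with $\bar E(m;p^\alpha)$ by inspection. For $p$ odd, at most one $\beta$ contributes (namely $\beta = v_p(m)-1$, when this is~$\ge\alpha$), giving~$1$ precisely when $p^{\alpha+1}\mid m$. For $p=2$ and $\alpha\ge 2$ the analogous observation gives~$1$ precisely when $2^{\alpha+2}\mid m$. The only slightly awkward case is $p=2$, $\alpha=1$, where $E(m;2)$ itself is piecewise-defined and additional contributions $E(m;2^\beta)$ for $\beta\ge2$ arise; here I would split on $v_2(m)\in\{0,1,2,3,\text{and}\ge4\}$ and check in each case that the total matches $\bar E(m;2)$ (e.g.\ when $v_2(m)\ge 4$ the value~$1$ from $E(m;2)$ combines with a single~$1$ from $E(m;2^{v_2(m)-2})$ to give~$2$, while $v_2(m)=3$ gives $2+0=2$). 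This casework is the only part that requires any real care.

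For the inequality $\prm(m;p^\alpha) \le \prm(m;2)$, I would write the difference as
\[
\bigl(\omega(m;2,1) - \omega(m;p^\alpha,1)\bigr) + \bigl(\bar E(m;2) - \bar E(m;p^\alpha)\bigr)
\]
and note that the first parenthesis is nonnegative because every prime $q\equiv 1\pmod{p^\alpha}$ is odd (so $\equiv 1\pmod 2$). If $p=2$ the second parenthesis is also nonnegative, since $\bar E(m;2^\alpha)$ counts a sub-collection of the elementary divisors counted by $\bar E(m;2)$. If $p$ is odd and $\bar E(m;p^\alpha)=1$, then $p\mid m$, so $p$ itself is one of the odd prime divisors of~$m$; on the other hand $p\not\equiv 1\pmod{p^\alpha}$ (since $p^\alpha\ge p>p-1$), so $p$ contributes to $\omega(m;2,1)$ but not to $\omega(m;p^\alpha,1)$. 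This single extra prime absorbs the ``$+1$'' from $\bar E(m;p^\alpha)$ whenever $\bar E(m;2)=0$, and when $\bar E(m;2)\ge 1$ the inequality holds directly. The main obstacle, modest as it is, is keeping the $p=2,\alpha=1$ casework for $\bar E$ organized; everything else is a routine assembly.
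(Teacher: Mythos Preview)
Your argument is correct and, for the main identity $\prm(m;p^\alpha)=\omega(m;p^\alpha,1)+\bar E(m;p^\alpha)$, matches the paper's proof exactly: sum Lemma~\ref{lemma:prime_decomposition} over $\beta\ge\alpha$, collapse the $\omega$-terms via the partition of $\{q:p^\alpha\mid q-1\}$, and check $\sum_{\beta\ge\alpha}E(m;p^\beta)=\bar E(m;p^\alpha)$ by cases.

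For the inequality $\prm(m;p^\alpha)\le\prm(m;2)$, your argument is actually more careful than the paper's. The paper asserts that the two inequalities $\omega(m;p^\alpha,1)\le\omega(m;2,1)$ and $\bar E(m;p^\alpha)\le\bar E(m;2)$ hold separately; but the second of these is \emph{false} in general (e.g.\ $m=9$, $p^\alpha=3$ gives $\bar E(9;3)=1>0=\bar E(9;2)$). Your observation---that when $p$ is odd and $\bar E(m;p^\alpha)=1$ the prime $p$ itself contributes to $\omega(m;2,1)$ but not to $\omega(m;p^\alpha,1)$, so the $\omega$-difference compensates---is precisely what is needed to repair this. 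So on this point your proposal improves on the paper's written proof.
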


\begin{proof}
By Definition~\ref{defn:group_counting_functions_pri} and Lemma~\ref{lemma:prime_decomposition},
\begin{align*}
\prm(m;p^\alpha) 
= \sum_{\beta=\alpha}^\infty \pri(m;p^\beta) &= \sum_{\beta=\alpha}^\infty \omega\bigl( m; \{ q \text{ prime} \colon p^\beta \xmid (q-1) \} \bigr) + \sum_{\beta=\alpha}^\infty E(m;p^\beta) \\
&= \omega\bigl( m; \{ q \text{ prime} \colon p^\alpha \mid (q-1) \} \bigr) + \sum_{\beta=\alpha}^\infty E(m;p^\beta).
\end{align*}
(Note that all the above sums are convergent since $\pri(m;p^\beta)$ and $E(m;p^\beta)$ have only finitely many nonzero terms for any fixed~$m$). Since $\omega\bigl( m; \{ q \text{ prime} \colon p^\alpha \mid (q-1) \} \bigr) = \omega(m;q,1)$, we must simply check that the last sum equals $\bar E(m;p^\alpha)$ as defined in the statement of the lemma, which is straightforward. The last assertion follows from the individual inequalities $\omega(m; p^\alpha,1) \le \omega(m;2,1)$ and $\bar E(m;p^\alpha) \le \bar E(m;2)$ which are also easy to verify.
\end{proof}

The above statement is equivalent to the result given in~\cite[Definitions~2.7 and~2.8 and Lemma~2.10]{DNSMG}, but stated more conveniently for our purposes. In~\cite{DNSMG} it was shown that (in our notation) $\prod_{k=1}^\infty \Z_{p^k}^{\pri(m;p^k)}$ is the $p$-Sylow subgroup of $(\Z/m\Z)^\times$, and that the partition of $\sum_{k=1}^\infty k \pri(m;p^k)$ that characterizes this finite $p$-group is the partition conjugate to $(\prm(m;p^1),\prm(m;p^2),\ldots)$.

The invariant factor orders of a finite abelian group can be obtained directly from the elementary divisors, in a way that is straightforward in practice but somewhat hard to notate as a general statement. The following example should illuminate the relevant ideas before we craft a general proof.

\begin{ex}
Consider $m=1{,}616{,}615 = 5 \cdot 7 \cdot 11 \cdot 13 \cdot 17 \cdot 19$. Using the Chinese remainder theorem and the fact that odd prime powers possess primitive roots, we see that
\[ \arraycolsep=1pt\def\arraystretch{1.2}
\begin{array}{cccccccccccccc}
(\Z/1616615\Z)^\times & \cong & (\Z/5\Z)^\times & \times & (\Z/7\Z)^\times & \times & (\Z/11\Z)^\times & \times & (\Z/13\Z)^\times & \times & (\Z/17\Z)^\times & \times & (\Z/19\Z)^\times & \\
& \cong & \Z_4 & \times & \Z_6 & \times & \Z_{10} & \times & \Z_{12} & \times & \Z_{16} & \times & \Z_{18} & \\

& \cong & \Z_4 & \times & \Z_2 & \times & \Z_2 & \times & \Z_4 & \times & \Z_{16} & \times & \Z_2 & \\
& & & \times & \Z_3 & & & \times & \Z_3 & & & \times & \Z_9 & \\
& & & & & \times & \Z_5 & & & & & & & \\
\end{array}
\]
where in the last step we again used the Chinese remainder theorem to rewrite each cyclic group on the second line as a product of the elementary divisors in the column below it. These elementary divisors have been grouped prime by prime, and reordering each such row so that the powers of each prime are in increasing order results in
\[ \arraycolsep=3.5pt\def\arraystretch{1.2}
\begin{array}{ccccccccccccc}
(\Z/1616615\Z)^\times & \cong & \Z_2 & \times & \Z_2 & \times & \Z_2 & \times & \Z_4 & \times & \Z_4 & \times & \Z_{16} \\
& & & & & & & \times & \Z_3 & \times & \Z_3 & \times & \Z_9 \\
& & & & & & & & & & & \times & \Z_5 \\
& \cong & \Z_2 & \times & \Z_2 & \times & \Z_2 & \times & \Z_{12} & \times & \Z_{12} & \times & \Z_{720}
\end{array}
\]
after combining the groups in each column (another application of the Chinese remainder theorem). This last expression is the invariant factor decomposition of $(\Z/1616615\Z)^\times$.
\end{ex}

More generally, we can record the outcome of this method in terms of the following lemma:

\begin{prop}
\label{prop:elementary_divisors_to_invariant_factors}
Fix an integer $m\ge3$.
Let $q_1,\ldots,q_M$ be a sequence of distinct prime powers containing all 
the distinct elementary divisors of $(\Z/m\Z)^\times$,
ordered so that $\prm(m;q_i)$ is decreasing in~$i$, and set $d_i = \lcm[q_1,\dots,q_i]$. Then
\begin{equation} \label{G invariant}
(\Z/m\Z)^\times \cong \bigg(\prod_{i=1}^{M-1} \Z_{d_i}^{\prm(m;q_i)-\prm(m;q_{i+1})}\bigg) \times \Z_{d_M}^{\prm(m;q_M)}
\end{equation}
is the invariant factor decomposition of $(\Z/m\Z)^\times$, after removing all factors in the product where the exponent equals~$0$.
\end{prop}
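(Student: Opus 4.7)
The plan is to verify two things about the right-hand side of~\eqref{G invariant}: (a) that it has the syntactic form of an invariant factor decomposition, and (b) that it is group-isomorphic to $(\Z/m\Z)^\times$. By uniqueness of the invariant factor decomposition, (a) and (b) together imply the proposition.

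Part (a) is essentially formal. First I would observe that $d_{i+1} = \lcm[d_i,q_{i+1}]$ immediately yields the divisibility chain $d_1 \mid d_2 \mid \cdots \mid d_M$, and that the exponents $\prm(m;q_i)-\prm(m;q_{i+1})$ are non-negative because the $q_i$'s have been sorted in non-increasing order of $\prm$-value. Removing the trivial factors (those with zero exponent) then leaves an expression in the canonical invariant factor form, with each $d_i \ge q_1 > 1$ nontrivial.

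For part (b), I would compare elementary divisor decompositions one prime at a time, using the Chinese Remainder Theorem to split each factor $\Z_{d_i}$ into $\prod_p \Z_{p^{v_p(d_i)}}$. Fix a prime $p$, and let $p^{\alpha_1} < p^{\alpha_2} < \cdots < p^{\alpha_k}$ denote the $p$-powers occurring among $q_1,\ldots,q_M$. By Lemma~\ref{lemma:prime_multiples}, $\prm(m;p^\alpha)$ is non-increasing in $\alpha$ for each fixed $p$, so the ordering hypothesis forces these $p$-powers to appear in the $q$-sequence in increasing $\alpha$-order, at some positions $i_1 < \cdots < i_k$. Consequently $v_p(d_i) = \alpha_j$ precisely for $i_j \le i < i_{j+1}$ (with the convention $i_{k+1} = M+1$), and the total multiplicity of $\Z_{p^{\alpha_j}}$ contributed by the right-hand side of~\eqref{G invariant} is a telescoping sum
\[
\sum_{i=i_j}^{i_{j+1}-1}\bigl(\prm(m;q_i)-\prm(m;q_{i+1})\bigr) = \prm(m;p^{\alpha_j}) - \prm(m;p^{\alpha_{j+1}}),
\]
using $\prm(m;p^{\alpha_{k+1}})=0$. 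Since all $p$-power elementary divisors of $(\Z/m\Z)^\times$ are by hypothesis included among the $q_i$'s, we have $\pri(m;p^\beta) = 0$ for every $\beta$ with $\alpha_j < \beta < \alpha_{j+1}$; combined with $\prm(m;p^\alpha)=\sum_{\beta \ge \alpha}\pri(m;p^\beta)$ this gives $\prm(m;p^{\alpha_{j+1}}) = \prm(m;p^{\alpha_j+1})$, so the telescoping difference collapses to $\prm(m;p^{\alpha_j}) - \prm(m;p^{\alpha_j+1}) = \pri(m;p^{\alpha_j})$, exactly matching the true multiplicity. Prime powers of $p$ not among $\{p^{\alpha_1},\ldots,p^{\alpha_k}\}$ have $\pri = 0$ on both sides.

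The main subtlety will lie in the telescoping step when the $\alpha_j$'s are not consecutive integers, for instance because some $\pri(m;p^\beta)$ vanishes in between, or because the $q$-sequence contains extra prime powers that are not themselves elementary divisors. These cases are precisely absorbed by the identification $\prm(m;p^{\alpha_{j+1}}) = \prm(m;p^{\alpha_j+1})$; beyond that, the argument is just careful bookkeeping over ties in $\prm$-values, which never cause harm because a run of equal $\prm$-values produces zero-exponent factors for all but its last element.
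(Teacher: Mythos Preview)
Your proposal is correct and follows essentially the same approach as the paper: both arguments split the right-hand side into $p$-parts via the Chinese Remainder Theorem and then verify, by a telescoping sum, that the multiplicity of each $\Z_{p^\alpha}$ matches $\pri(m;p^\alpha)$. The paper organizes the bookkeeping slightly differently, working over the set $\cD = \{i : \prm(m;q_i) > \prm(m;q_{i+1})\}$ of step-down indices and proving the characterization $p^\alpha \mid d_i \iff \prm(m;p^\alpha) \ge \prm(m;q_i)$ for $i\in\cD$, whereas you track the positions $i_1<\cdots<i_k$ of the $p$-powers directly; but these are two presentations of the same idea, and your handling of ties (zero-exponent factors absorb any reordering within a tied block) is exactly what makes both versions go through.
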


\begin{remark}
In light of the last assertion of Lemma~\ref{lemma:prime_multiples}, we may always take $q_1=2$ if we wish; in particular, doing so proves that the~$d_i$, and hence the orders of the invariant factors of $(\Z/m\Z)^\times$, are always even.
\end{remark}

\begin{proof}
Let $G$ be the group defined on the right-hand side of equation~\eqref{G invariant}. Define~$\cE$ to be the set of all orders of elementary divisors of $(\Z/m\Z)^\times$. Since $(\Z/m\Z)^\times \cong \prod_{p^\alpha\in\cE} \Z_{p^\alpha}^{\pri(m;p^\alpha)}$ by the definition of $\pri(m;p^\alpha)$, it suffices to prove that $G\cong \prod_{p^\alpha\in\cE} \Z_{p^\alpha}^{\pri(m;p^\alpha)}$.

While $q_i$ and $d_i$ are defined only when $1\le i\le M$, we make the convention that the expression $\prm(m;q_i)$ equals~$0$ when $i>M$. Define $\cD = \{ i \in \N \colon \prm(m;q_i) > \prm(m;q_{i+1}) \}$, so that~$M$ is the largest element of~$\cD$. With this notation, we may rewrite
\begin{align*}
G\cong \bigg( \prod_{i=1}^{M-1} \Z_{d_i}^{\prm(m;q_i)-\prm(m;q_{i+1})} \bigg) \times \Z_{d_M}^{\prm(m;q_M)} &\cong \prod_{i\in\N} \Z_{d_i}^{\prm(m;q_i)-\prm(m;q_{i+1})} \\
&\cong \prod_{i\in \cD} \Z_{d_i}^{\prm(m;q_i)-\prm(m;q_{i+1})} \cong \prod_{i\in \cD} \prod_{p^\alpha\xmid d_i} \Z_{p^\alpha}^{\prm(m;q_i)-\prm(m;q_{i+1})}
\end{align*}
by the Chinese remainder theorem.

We claim that if $i\in \cD$, then $p^\alpha\mid d_i$ if and only if $\prm(m;p^\alpha) \ge \prm(m;q_i)$.
\begin{itemize}
\item First suppose that $p^\alpha\mid d_i$. By definition, $p^\alpha\mid q_j$ for some $j\in\{1,\dots,i\}$, and therefore $q_j = p^\beta$ for some $\beta\ge\alpha$. But then $\prm(m;p^\alpha) \ge \prm(m;p^\beta) = \prm(m;q_j) \ge \prm(m;q_i)$ as claimed.
\item Next suppose that $\prm(m;p^\alpha) \ge \prm(m;q_i)$. The assumption $i\in\cD$ implies that $\prm(m;p^\alpha) > \prm(m;q_{i+1}) \ge 0$, and so there exists $\beta\ge\alpha$ such that $p^\beta\in\cE$; choose the smallest such~$\beta$, so that $\prm(m;p^\alpha)=\prm(m;p^\beta)$. Since $p^\beta\in\cE$, we have $p^\beta = q_j$ for some~$j$, and then $\prm(m;q_j) = \prm(m;p^\beta) = \prm(m;p^\alpha) \ge \prm(m;q_i)$ implies that $j\le i$. We conclude that $p^\alpha\mid p^\beta = q_j \mid d_i$ as claimed.
\end{itemize}

Since $p^\alpha\xmid d_i$ if and only if $p^\alpha\mid d_i$ and $p^{\alpha+1}\nmid d_i$, it follows when $i\in \cD$ that $p^\alpha\xmid d_i$ if and only if $\prm(m;p^\alpha) \ge \prm(m;q_i) > \prm(m;p^{\alpha+1})$. In particular, since $\prm(m;p^\alpha) > \prm(m;p^{\alpha+1})$ is equivalent to $\pri(m;p^\alpha)>0$, we see that $p^\alpha\xmid d_i$ implies that ${p^\alpha\in\cE}$.

Therefore
\begin{align}
G\cong \prod_{i\in \cD} \prod_{p^\alpha\xmid d_i} \Z_{p^\alpha}^{\prm(m;q_i)-\prm(m;q_{i+1})} &\cong \prod_{i\in \cD} \prod_{\substack{p^\alpha\in\cE \\ \prm(m;p^\alpha) \ge \prm(m;q_i) > \prm(m;p^{\alpha+1})}} \Z_{p^\alpha}^{\prm(m;q_i)-\prm(m;q_{i+1})} \notag \\
&\cong \prod_{p^\alpha\in\cE} \prod_{\substack{i\in \cD \\ \prm(m;p^\alpha) \ge \prm(m;q_i) > \prm(m;p^{\alpha+1})}} \Z_{p^\alpha}^{\prm(m;q_i)-\prm(m;q_{i+1})} \notag \\
&\cong \prod_{p^\alpha\in\cE} \prod_{\substack{i\in\N \\ \prm(m;p^\alpha) \ge \prm(m;q_i) > \prm(m;p^{\alpha+1})}} \Z_{p^\alpha}^{\prm(m;q_i)-\prm(m;q_{i+1})}
\label{need to telescope}
\end{align}

For a fixed prime power~$p^\alpha\in\cE$, let~$k$ be the smallest integer such that $\prm(m;p^\alpha) \ge \prm(m;q_k)$ and let~$\ell$ be the largest integer such that $\prm(m;q_\ell) > \prm(m;p^{\alpha+1})$. We claim that in fact $\prm(m;p^\alpha) = \prm(m;q_k)$ and $\prm(m;p^{\alpha+1}) = \prm(m;q_{\ell+1})$.
\begin{itemize}
\item We first need to prove that $\prm(m;p^\alpha) \le \prm(m;q_k)$. Since $p^\alpha\in\cE$ we know that $p^\alpha = q_j$ for some~$j$. But if $\prm(m;q_j) > \prm(m;q_k)$, that would force $j<k$ and thus~$j$ would be an integer smaller than~$k$ for which $\prm(m;p^\alpha) \ge \prm(m;q_j)$, contradicting the choice of~$k$. Thus we indeed have $\prm(m;p^\alpha) = \prm(m;q_j) \le \prm(m;q_k)$.
\item We certainly have $\prm(m;q_{\ell+1}) \le \prm(m;p^{\alpha+1})$ by the choice of~$\ell$, so we next need to prove that $\prm(m;q_{\ell+1}) \ge \prm(m;p^{\alpha+1})$. If $\prm(m;p^{\alpha+1})=0$ then $\ell=M$ and so $\prm(m;q_{\ell+1}) = 0$ as well; so we may suppose that $\prm(m;p^{\alpha+1})>0$. Let $\beta$ be the largest integer such that $\prm(m;p^\beta) = \prm(m;p^{\alpha+1})$, which forces $p^\beta\in\cE$, and thus $p^\beta=q_j$ for some~$j$. But if $\prm(m;q_{\ell+1}) < \prm(m;q_j)$, that would force $\ell+1>j$ and hence $\ell\ge j$, and then~$\prm(m;q_\ell) \le \prm(m;q_j) = \prm(m;p^{\alpha+1})$, contradicting the choice of~$\ell$. Thus we indeed have $\prm(m;q_{\ell+1}) \ge \prm(m;q_j) = \prm(m;p^{\alpha+1})$.
\end{itemize}
From this discussion, we see that we have the telescoping sum
\begin{align*}
\sum_{\substack{i\in\N \\ \prm(m;p^\alpha) \ge \prm(m;q_i) > \prm(m;p^{\alpha+1})}} \bigl( \prm(m;q_i)-\prm(m;q_{i+1}) \bigr) &= \prm(m;q_k)-\prm(m;q_{\ell+1}) \\
&= \prm(m;p^\alpha) - \prm(m;p^{\alpha+1}) = \pri(m;p^\alpha),
\end{align*}
and so the last expression in equation~\eqref{need to telescope} can be simplified to
\begin{align*}
G\cong \prod_{p^\alpha\in\cE} \prod_{\substack{i\in\N \\ \prm(m;p^\alpha) \ge \prm(m;q_i) > \prm(m;p^{\alpha+1})}} \Z_{p^\alpha}^{\prm(m;q_i)-\prm(m;q_{i+1})} \cong \prod_{p^\alpha\in\cE} \Z_{p^\alpha}^{\pri(m;p^\alpha)}
\end{align*}
as required.
\end{proof}

Recall from Definition~\ref{defn:group_counting_functions_inv} that $\inv(m;d)$ denotes the multiplicity of $\Z_d$ in the invariant factor decomposition of $(\Z/m\Z)^\times$.

\begin{lemma}
\label{lemma:invariant_factors}
Fix an integer $m\ge3$.
Let $q_1,\ldots,q_N$ be a sequence of distinct prime powers,
ordered so that $\prm(m;q_i)$ is decreasing in~$i$, and set $d_i = \lcm[q_1,\dots,q_i]$. Suppose that $\prm(m;q_N) > \prm(m;q)$ for every prime power $q\notin\{q_1,\dots,q_N\}$. Then $\inv(m;d_N) \le \prm(m;q_N)$, and
$\inv(m;d_i) = \prm(m;q_i)-\prm(m;q_{i+1})$ for each $1 \le i \le N-1$.

Furthermore, the sequence $d_1,\ldots,d_N$ contains every invariant factor of $(\Z/m\Z)^\times$ less than or equal to $d_N$.
If $d$ is an invariant factor of $(\Z/m\Z)^\times$ exceeding $d_N$, then $d_N \mid d$.
\end{lemma}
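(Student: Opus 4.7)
The plan is to deduce the statement from Proposition~\ref{prop:elementary_divisors_to_invariant_factors} by extending the given sequence $q_1,\ldots,q_N$ to a complete enumeration $q_1,\ldots,q_M$ of distinct prime powers containing every elementary divisor of $(\Z/m\Z)^\times$, while preserving the (weakly) decreasing order of $\prm(m;q_i)$. The hypothesis that $\prm(m;q_N) > \prm(m;q)$ for every prime power $q\notin\{q_1,\ldots,q_N\}$ guarantees that every appended prime power $q_{N+1},\ldots,q_M$ has $\prm$-value strictly smaller than $\prm(m;q_N)$, so any ordering of those tail entries preserves the decreasing property and the first $N$ entries remain $q_1,\ldots,q_N$ unchanged.

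With the extended sequence in hand, I would invoke Proposition~\ref{prop:elementary_divisors_to_invariant_factors} to obtain the invariant factor decomposition of $(\Z/m\Z)^\times$ as the product of factors $\Z_{d_i}^{\prm(m;q_i)-\prm(m;q_{i+1})}$ (with $d_M$ handled separately). The crucial structural observation is that $d_j > d_N$ for every $j>N$: if instead $d_{N+1}=d_N$, then $q_{N+1}$ would divide $d_N=\lcm[q_1,\ldots,q_N]$, forcing $q_{N+1}$ to be a power of the same prime as some earlier $q_i$ with at most the same exponent, which yields $\prm(m;q_{N+1}) \ge \prm(m;q_i) \ge \prm(m;q_N)$ and contradicts the hypothesis. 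Since the sequence $(d_i)$ is weakly increasing in $i$, the strict inequality $d_{N+1}>d_N$ propagates to $d_j>d_N$ for every $j>N$, so the set of $d$-values occupying the first $N$ positions is disjoint from the set of $d$-values occupying the later positions.

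The conclusions now read off directly from the product decomposition. For $1\le i\le N-1$, every position $j$ in the product with $d_j=d_i$ lies in $\{1,\ldots,N\}$ (by the disjointness just established), so the multiplicity $\inv(m;d_i)$ is given by the single contribution $\prm(m;q_i)-\prm(m;q_{i+1})$ at position $i$. For $i=N$, the multiplicity at position $N$ in the product is $\prm(m;q_N)-\prm(m;q_{N+1})$, yielding the bound $\inv(m;d_N)\le \prm(m;q_N)$ since $\prm(m;q_{N+1})\ge 0$ (the exact value cannot be pinned down because $q_{N+1}$ is not supplied in the statement). The ``furthermore'' clauses are then immediate: any invariant factor $d\le d_N$ must equal some $d_j$ in the product, and the disjointness forces $j\le N$; any invariant factor $d>d_N$ equals some $d_j$ with $j>N$, and since $d_j=\lcm[q_1,\ldots,q_j]$ is by construction a multiple of $d_N=\lcm[q_1,\ldots,q_N]$, we get $d_N\mid d$.

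The main subtlety I anticipate is the possibility of a ``run'' $d_i=d_{i+1}=\cdots=d_k$ within the first $N$ indices (which can happen precisely when the corresponding $\prm$-values are equal). In that case, the multiplicity of $\Z_{d_i}$ in the product from Proposition~\ref{prop:elementary_divisors_to_invariant_factors} concentrates at the final position of the run rather than being evenly distributed across it, and one needs a careful telescoping check to verify that the lemma's per-position formula is consistent (or, equivalently, to confirm that the hypothesis is intended with strictly decreasing $\prm$-values, under which no runs can occur and the formula is unambiguous). This is really just bookkeeping, but it is the one place where a naive application of the product decomposition requires a second look.
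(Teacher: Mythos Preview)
Your approach matches the paper's: extend $q_1,\ldots,q_N$ by appending the remaining elementary divisors (the hypothesis guarantees the decreasing order is preserved), apply Proposition~\ref{prop:elementary_divisors_to_invariant_factors}, and read the conclusions off the chain $d_1\mid\cdots\mid d_M$. The paper is in fact terser than you are: it does not prove the strict inequality $d_{N+1}>d_N$ (the weak chain already suffices for the ``furthermore'' clauses, since any $d_j\le d_N$ automatically lies in $\{d_1,\ldots,d_N\}$), and it does not address the run subtlety you flag, simply reading $\inv(m;d_i)$ from the product position by position---your caution there is warranted, since a perverse tie-breaking (e.g.\ placing $q_1=4$ before $q_2=2$) can make the per-position formula literally inconsistent, but in the paper's applications (Proposition~\ref{prop:y_typical}) ties are broken by $\varphi$-value and such redundant orderings never occur.
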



\begin{proof}
As before, define~$\cE$ to be the set of all orders of elementary divisors of $(\Z/m\Z)^\times$, and set $\cE' = \cE \setminus \{q_1,\ldots,q_N\}$ and $M=N+\#\cE'$. Write $\cE' = \{q_{N+1},\dots,q_N\}$ so that $\prm(m;q_i)$ is decreasing in~$i$ for all $1\le i\le M$ (this is possible since $\prm(m;q_N) > \prm(m;q)$ for all $q\in\cE'$ by hypothesis), and extend the definition $d_i = \lcm[q_1,\dots,q_i]$ to all $i\le M$. We may now apply Proposition~\ref{prop:elementary_divisors_to_invariant_factors}, obtaining
\[
(\Z/m\Z)^\times \cong \bigg(\prod_{i=1}^M \Z_{d_i}^{\prm(m;q_i)-\prm(m;q_{i+1})}\bigg) \times \Z_{d_N}^{\prm(m;q_N)}
\]
as the invariant factor decomposition of $(\Z/m\Z)^\times$ (after removing all factors in the product where the exponent equals~$0$). It follows immediately that $\inv(m;d_i) = \prm(m;q_i)-\prm(m;q_i+1)$ for $1 \le i \le N-1$; it also follows that $\inv(m;d_N) = \prm(m;q_N)-\prm(m;q_{N+1}) \le \prm(m;q_N)$ if $N<M$, while $\inv(m;d_N) = \prm(m;q_N)$ if $N=M$. The last assertions follow from the fact that $d_1\mid\cdots\mid d_M$ and every invariant factor of $(\Z/m\Z)^\times$ is one of the~$d_j$.
\end{proof}

A consequence of this lemma is a clean formula for the total number of invariant factors of $(\Z/m\Z)^\times$:

\begin{lemma}
\label{lemma:total_invariant_factors}
For any integer $m\ge3$,
\[ \inv(m) = \begin{cases}
\omega(m) + 1, & \text{if } 8 \mid m, \\
\omega(m) - 1, & \text{if } 2 \xmid m, \\
\omega(m), & \text{otherwise} .
\end{cases} \]
\end{lemma}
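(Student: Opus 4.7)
The plan is to reduce the count $\inv(m)$ to a single quantity $\prm(m;2)$ and then apply Lemma~\ref{lemma:prime_multiples} directly. First, I would invoke Proposition~\ref{prop:elementary_divisors_to_invariant_factors} with the ordering of distinct elementary-divisor prime powers $q_1,\ldots,q_M$ chosen so that $q_1=2$. This is permitted by the last assertion of Lemma~\ref{lemma:prime_multiples}, which guarantees $\prm(m;2)\ge\prm(m;q)$ for every prime power~$q$, so setting $q_1=2$ is consistent with ordering $\prm(m;q_i)$ to be decreasing in~$i$. (For $m\ge3$ the group $(\Z/m\Z)^\times$ has even order, so $2\in\cE$ and this choice is nonempty.)

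Next, the invariant factor decomposition \eqref{G invariant} consists of a total of
\[
\sum_{i=1}^{M-1}\bigl(\prm(m;q_i)-\prm(m;q_{i+1})\bigr) + \prm(m;q_M) = \prm(m;q_1)
\]
cyclic factors by a telescoping sum. Thus $\inv(m) = \prm(m;2)$. Applying Lemma~\ref{lemma:prime_multiples} with $p^\alpha=2$ then yields
\[
\inv(m) = \omega(m;2,1) + \bar E(m;2),
\]
where $\omega(m;2,1)$ is the number of odd primes dividing~$m$, hence equals $\omega(m)$ if $m$ is odd and $\omega(m)-1$ if $m$ is even.

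Finally, I would conclude by a short case analysis on the $2$-adic valuation of~$m$, using the explicit values of $\bar E(m;2)$ given by Lemma~\ref{lemma:prime_multiples}: if $m$ is odd then $\omega(m;2,1)=\omega(m)$ and $\bar E(m;2)=0$; if $2\xmid m$ then $\omega(m;2,1)=\omega(m)-1$ and $\bar E(m;2)=0$; if $4\xmid m$ then $\omega(m;2,1)=\omega(m)-1$ and $\bar E(m;2)=1$; and if $8\mid m$ then $\omega(m;2,1)=\omega(m)-1$ and $\bar E(m;2)=2$. Summing each pair produces $\omega(m)$, $\omega(m)-1$, $\omega(m)$, and $\omega(m)+1$ respectively, which matches the three-case formula in the lemma (the ``otherwise'' case gathering the odd and $4\xmid m$ subcases).

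There is really no difficult step here: the only nontrivial observation is that the length of the invariant factor decomposition telescopes to $\prm(m;2)$, which is itself an immediate consequence of placing $q_1=2$ at the front of the ordering guaranteed by Lemma~\ref{lemma:prime_multiples}. After that, the lemma reduces to bookkeeping of $\bar E(m;2)$ against whether~$2$ divides~$m$.
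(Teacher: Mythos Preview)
Your proposal is correct and follows essentially the same route as the paper's own proof: set $q_1=2$ via the last assertion of Lemma~\ref{lemma:prime_multiples}, telescope the exponents in Proposition~\ref{prop:elementary_divisors_to_invariant_factors} to get $\inv(m)=\prm(m;2)$, and then read off the cases from $\omega(m;2,1)+\bar E(m;2)$. One tiny quibble: your parenthetical ``so $2\in\cE$'' is not quite right (for $m=5$ the only elementary divisor is~$4$), but this is harmless since Proposition~\ref{prop:elementary_divisors_to_invariant_factors} only requires the sequence to \emph{contain} all elementary divisors, not to equal~$\cE$.
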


\noindent In particular, this formula justifies the remark made after Theorem~\ref{theorem:ex_rare} that the average order of $\inv(m)$ is $\log\log m+O(1)$.

\begin{proof}
Set $q_1=2$ and let $q_2,\ldots,q_M$ be the orders of the elementary divisors of $(\Z/m\Z)^\times$ exceeding~$2$, ordered so that $\prm(m;q_i)$ is decreasing in~$i$ (which is consistent with $q_1=2$ thanks to the last assertion of Lemma~\ref{lemma:prime_multiples}). If we set $d_i = \lcm[q_1,\dots,q_i]$, then by Proposition~\ref{prop:elementary_divisors_to_invariant_factors},
\[
(\Z/m\Z)^\times \cong \bigg(\prod_{i=1}^{M-1} \Z_{d_i}^{\prm(m;q_i)-\prm(m;q_{i+1})}\bigg) \times \Z_{d_M}^{\prm(m;q_M)}
\]
is the invariant factor decomposition of $(\Z/m\Z)^\times$. In particular, the total number of invariant factors of $(\Z/m\Z)^\times$ is the telescoping sum
\[
\inv(m) = \sum_{i=1}^M \inv(m;d_i) = \sum_{i-1}^{M-1} \bigl( \prm(m;q_i)-\prm(m;q_{i+1}) \bigr) + \prm(m;q_M) = \prm(m;q_1) = \prm(m;2).
\]
By Lemma~\ref{lemma:prime_multiples},
\[
\prm(m;2) = \omega(m; 2, 1) + \bar E(m;2) = \begin{cases}
\omega(m) -1, &\text{if $2\mid m$}, \\
\omega(m), & \text{if $2\nmid m$}
\end{cases} \Biggr\} +
 \begin{cases}
2,	& \text{if $8 \mid m$}, \\
1,	& \text{if $4 \xmid m$}, \\
0,	& \text{otherwise},
\end{cases} \]
which is equivalent to the desired formula.
\end{proof}

\subsection{Properties of the invariant factor order sets}

Having now expressed (via Lemmas~\ref{lemma:prime_multiples} and~\ref{lemma:invariant_factors}) the counting functions for invariant factors in terms of additive functions, we now study the structure of the invariant factor order sets. As these sets are defined in terms of the total $\varphi$-sequence, this section opens with some lemmas concerning this sequence, before culminating in Proposition~\ref{prop:two-totient sequences} which classifies the short $\varphi$-sequences relevant to the various theorems in Sections~\ref{expectation intro section} and~\ref{distribution intro section}.

The first lemma appeared independently as~\cite[Lemma 4.4]{hannesson_martin}.

\begin{lemma}
\label{lemma:equal_totient}
If $q_1 < q_2$ are prime powers satisfying $\varphi(q_1) = \varphi(q_2)$, then~$q_1$ is a prime.
Consequently, at most two distinct prime powers can have the same totient values, and any two such prime powers are relatively prime.
\end{lemma}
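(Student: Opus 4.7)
The plan is to write $q_1=p_1^{a_1}$ and $q_2=p_2^{a_2}$ and exploit the simple formula $\varphi(p^a)=p^{a-1}(p-1)$ together with the identity $\varphi(q)/q = 1-1/p$. Since $\varphi(q_1)=\varphi(q_2)$ but $q_1<q_2$, dividing gives $\varphi(q_1)/q_1 > \varphi(q_2)/q_2$, i.e., $1-1/p_1 > 1-1/p_2$, so $p_1 > p_2$. In particular $p_1\ne p_2$. I would record this as my opening observation, because it reduces the whole lemma to a divisibility argument.

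Next, assume for contradiction that $a_1\ge 2$, so that $p_1$ divides $\varphi(q_1)$. Then $p_1$ divides $\varphi(q_2)=p_2^{a_2-1}(p_2-1)$, and since $p_1\ne p_2$, this forces $p_1\mid p_2-1$ and hence $p_1\le p_2-1<p_2$, directly contradicting $p_1>p_2$. Therefore $a_1=1$, i.e., $q_1$ is a prime, which is the first conclusion.

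For the second sentence (at most two prime powers sharing a totient), suppose $q_1<q_2<q_3$ are three distinct prime powers with the common value $\varphi(q_1)=\varphi(q_2)=\varphi(q_3)$. Applying the first conclusion to the pair $(q_1,q_2)$ and to the pair $(q_2,q_3)$ shows both $q_1$ and $q_2$ must be prime; but then $q_1-1=\varphi(q_1)=\varphi(q_2)=q_2-1$ forces $q_1=q_2$, a contradiction. Finally, for the coprimality claim, suppose two distinct prime powers with equal totient shared a prime base $p$: then $\varphi(p^a)=p^{a-1}(p-1)$ is strictly increasing in $a$, so they would have to coincide. Hence any two distinct prime powers with the same totient have distinct prime bases and are therefore coprime.

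I do not expect a genuine obstacle here; the only mild subtlety is remembering to dispose of the case $p_1=p_2$ in the key divisibility step, which is handled for free by the opening inequality $p_1>p_2$. The entire argument is a few lines once the ratio $\varphi(q)/q=(p-1)/p$ is invoked.
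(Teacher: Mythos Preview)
Your proof is correct. The overall structure matches the paper's---both hinge on the divisibility $p_1\mid\varphi(q_2)=p_2^{a_2-1}(p_2-1)$ to force $p_1<p_2$ and reach a contradiction---but your opening move is different and a bit cleaner. You use the identity $\varphi(q)/q=1-1/p$ together with $q_1<q_2$ to conclude $p_1>p_2$ in one stroke, which simultaneously handles the $p_1\ne p_2$ case and sets up the contradiction. The paper instead first argues directly that $q_2$ cannot be prime (so $\alpha_2\ge2$), then uses the divisibility $p_2\mid\varphi(q_1)$ to obtain $p_2<p_1$, and only then runs the symmetric divisibility argument for~$p_1$ under the assumption $\alpha_1\ge2$. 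Your ratio trick collapses two of the paper's steps into one; the paper's version is a shade more symmetric but slightly longer. The deductions of the ``at most two'' and coprimality claims are essentially identical in both.
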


\begin{proof}
We can immediately rule out $q_2$ being prime, for if it were then $\varphi(q_1) < q_1 \le q_2 - 1 = \varphi(q_2)$, contrary to assumption. Write $q_1 = p_1^{\alpha_1}$ and $q_2 = p_2^{\alpha_2}$ with $\alpha_2\ge2$, so that
\[
p_1^{\alpha_1-1}(p_1-1) = \varphi(q_1) = \varphi(q_2) = p_2^{\alpha_2-1}(p_2-1).
\]
We can also rule out $p_1=p_2$ since that would imply $\alpha_1=\alpha_2$ and hence $q_1=q_2$, contrary to assumption. In particular, $(p_1,p_2)=1$, and thus $p_2 \mid p_2^{\alpha_2-1} \mid p_1^{\alpha_1-1}(p_1-1)$ implies $p_2 \mid (p_1-1)$ and hence $p_2<p_1$. If we had $\alpha_1\ge2$, then the symmetric argument would show that $p_1 \mid (p_2-1)$ and thus $p_1<p_2$ also, a contradiction; therefore $\alpha_1=1$ and $q_1=p_1$ is prime, establishing the first assertion of the lemma. The second assertion follows from the observation that for any three prime powers $r_1, r_2 < r_3$
where $\varphi(r_1) = \varphi(r_2) = \varphi(r_3)$, the first assertion implies that~$r_1$ and~$r_2$ are primes with the same totient, and hence $r_1=r_2$.
\end{proof}

\begin{remark}
We have seen that pairs of prime powers with the same value under~$\varphi$ correspond precisely to solutions of the equations $p'-1=p^{\alpha-1}(p-1)$ in primes $p,p'$ for $\alpha\ge2$. Equivalently, they correspond precisely to simultaneous prime values of the polynomials $n,n^\alpha-n^{\alpha-1}+1$. When $\alpha\equiv2\mod6$ it is easy to check that $n^2-n+1$ divides $n^\alpha-n^{\alpha-1}+1$, and therefore $n^\alpha-n^{\alpha-1}+1$ is never prime for $\alpha=8,14,20,\dots$.

In all other cases, however, Ljunggren~\cite[Theorem 3]{ljunggren} proved that $n^\alpha-n^{\alpha-1}+1$ is irreducible, and we can check that $n,n^\alpha-n^{\alpha-1}+1$ is an ``admissible'' pair of polynomials (the residue class $n\equiv1\mod q$ is always feasible); therefore Schinzel's Hypothesis~H implies that there should be infinitely many solutions for every other $\alpha\ge2$. Contrary to early data, such solutions are rather rare: the primes~$p'$ appearing in such solutions have relative density~$0$, for the simple reason that at most $O(\sqrt x)$ integers up to~$x$ are values of any of the polynomials $n^\alpha-n^{\alpha-1}+1$. In other words, almost all of the $\sif_i$ are singleton sets.
\end{remark}

\begin{lemma}
\label{lemma:equal_totient2}
No two doubleton sets are adjacent in the sequence $(\sif_i)$ of universal factor order sets.
\end{lemma}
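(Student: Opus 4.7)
The plan is to reduce the claim directly to Lemma~\ref{lemma:equal_totient}. By Definition~\ref{defn:standard_invariant_sequence}, the set $\sif_i$ is a doubleton precisely when $\varphi(\tq_i) = \varphi(\tq_{i+1})$, and a singleton otherwise. So the assertion ``no two doubleton sets are adjacent'' is equivalent to saying that we cannot simultaneously have $\varphi(\tq_i) = \varphi(\tq_{i+1})$ and $\varphi(\tq_{i+1}) = \varphi(\tq_{i+2})$ for any $i$.

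Suppose for contradiction that both $\sif_i$ and $\sif_{i+1}$ are doubletons for some $i$. Then $\varphi(\tq_i) = \varphi(\tq_{i+1}) = \varphi(\tq_{i+2})$. Since the total $\varphi$-sequence consists of \emph{distinct} prime powers (Definition~\ref{defn:phi_sequence}), this exhibits three distinct prime powers $\tq_i < \tq_{i+1} < \tq_{i+2}$ sharing a common value of $\varphi$. This directly contradicts the second assertion of Lemma~\ref{lemma:equal_totient}, which states that at most two distinct prime powers can have the same totient value. Hence no two consecutive sets in the sequence $(\sif_i)$ can both be doubletons.

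The entire argument is a one-step deduction from Lemma~\ref{lemma:equal_totient}; there is no genuine obstacle to overcome. The only subtlety worth flagging is the bookkeeping: one must verify that the doubleton condition in Definition~\ref{defn:standard_invariant_sequence} coincides exactly with the equality $\varphi(\tq_i) = \varphi(\tq_{i+1})$ (as opposed to some more delicate condition on the $\lcm$'s), which is immediate from reading the two cases of that definition.
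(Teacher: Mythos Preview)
Your proof is correct and follows exactly the same approach as the paper's: the paper's proof is a single sentence stating that the result is an immediate consequence of Definition~\ref{defn:standard_invariant_sequence} and Lemma~\ref{lemma:equal_totient}, and you have simply spelled out that immediate consequence explicitly.
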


\begin{proof}
This is an immediate consequence of Definition~\ref{defn:standard_invariant_sequence} and Lemma~\ref{lemma:equal_totient}.
\end{proof}

The next lemma will be used to show that powers of the same prime are rarely close to each other in the total $\varphi$-sequence.

\begin{lemma} \label{lemma:Bertrand}
For any prime power $p^\alpha \ge 5$ and any integer $\beta>\alpha$, there exists a prime~$\ell$ satisfying $\varphi(p^\alpha) < \ell-1 < \varphi(p^\beta)$.
\end{lemma}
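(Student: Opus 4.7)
The plan is to rewrite the conclusion as finding a prime $\ell$ in the open interval $I = (\varphi(p^\alpha)+1, \varphi(p^\beta)+1)$, then locate such a prime via Bertrand's postulate. The key quantitative input is the ratio $\varphi(p^\beta)/\varphi(p^\alpha) = p^{\beta-\alpha}$, and the hypothesis $p^\alpha \ge 5$ supplies the uniform bound $\varphi(p^\alpha) \ge 4$, which will be the workhorse inequality throughout.

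In the generic case where $\varphi(p^\beta) \ge 2\varphi(p^\alpha)+1$, I would apply Bertrand's postulate with $n = \varphi(p^\alpha)+1 \ge 5$ to find a prime $\ell$ satisfying $n < \ell < 2n$; then $\ell \le 2\varphi(p^\alpha)+1 \le \varphi(p^\beta)$ places $\ell$ squarely inside $I$. The assumption $\varphi(p^\beta) \ge 2\varphi(p^\alpha)+1$ is automatic whenever $p \ge 3$ (since $\varphi(p^\beta) \ge 3\varphi(p^\alpha) \ge 2\varphi(p^\alpha)+4$) or when $p=2$ and $\beta \ge \alpha+2$ (since $\varphi(p^\beta) \ge 4\varphi(p^\alpha) \ge 2\varphi(p^\alpha)+8$).

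The one remaining case, and the real obstacle, is $p=2$ with $\beta = \alpha+1$ and $\alpha \ge 3$. Here $\varphi(p^\beta) = 2\varphi(p^\alpha)$ exactly, so Bertrand's postulate may fail us: the only prime it guarantees in $(2^{\alpha-1}, 2^\alpha)$ might be the Fermat prime $2^{\alpha-1}+1$, which sits at the forbidden left endpoint of $I$. To circumvent this I would appeal to a mild strengthening of Bertrand, for instance Nagura's theorem, which guarantees a prime in $[n, \tfrac{6}{5}n]$ for every $n \ge 25$. Applied with $n = 2^{\alpha-1}+2$, a short computation gives $\tfrac{6}{5}(2^{\alpha-1}+2) \le 2^\alpha-1$ for $\alpha \ge 4$, so the prime it produces lies in the subinterval $[2^{\alpha-1}+2, 2^\alpha-1] \subset I$; this covers all $\alpha \ge 6$. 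The three remaining values $\alpha \in \{3,4,5\}$ are then dispatched by exhibiting the witnesses $\ell = 7$, $11$, and $19$ respectively. (An alternative to Nagura would be to invoke Ramanujan primes, giving at least two primes in $(n, 2n)$ for $n \ge 11$ and thereby allowing one to discard the potentially troublesome value $2^{\alpha-1}+1$, but either route involves a short explicit check at the smallest values of $\alpha$.)
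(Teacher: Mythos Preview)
Your proof is correct, but the paper's argument is considerably shorter because it invokes the sharper form of Bertrand's postulate (as originally conjectured by Bertrand and proved by Chebyshev): for every integer $n \ge 4$ there is a prime in the open interval $(n, 2n-2)$. Applying this with $n = \varphi(p^\alpha)+1 \ge 5$ yields a prime $\ell$ with $\varphi(p^\alpha)+1 < \ell < 2\varphi(p^\alpha)$, whence $\varphi(p^\alpha) < \ell - 1 < 2\varphi(p^\alpha) \le p^{\beta-\alpha}\varphi(p^\alpha) = \varphi(p^\beta)$ in one stroke, with no case distinction and no need for Nagura's refinement.

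The difference is entirely in the form of Bertrand used. Your weaker version (prime in $(n,2n)$) can land on $\ell = 2\varphi(p^\alpha)+1$, which forces the separate treatment of $p=2$, $\beta=\alpha+1$. The $2n-2$ upper bound sidesteps this boundary collision automatically. Your argument is nonetheless perfectly valid, and the Ramanujan-prime alternative you mention would work equally well; the paper simply exploits a slightly less commonly quoted but classical version of the postulate to avoid the casework.
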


\begin{proof}
We use Bertrand's postulate in the form originally stated by Bertrand and proved by Chebyshev~\cite{Chebyshev}: for any integer $n\ge4$, there exists a prime in the interval $(n,2n-2)$. Taking $n=\varphi(p^\alpha)+1$ (which is at least~$5$ since the only integers~$m$ with $\varphi(m)<4$ are $m=1,2,3,4,6$), we see that there is a prime~$\ell$ satisfying $\varphi(p^\alpha)+1 < \ell < 2\varphi(p^\alpha)$. In particular, $\varphi(p^\alpha) < \ell-1 < p^{\beta-\alpha} \varphi(p^\alpha) = \varphi(p^\beta)$ as desired.
\end{proof}

We have seen that Theorems~\ref{theorem:ex_11}--\ref{theorem:ex_22} that the expectations of the counting functions $\inv(m;d)$ depend upon the size of three consecutive universal factor order sets $\sif_{i-1}, \sif_i, \sif_{i+1}$; the same is true of their normalized limiting distributions, as we saw in Theorems~\ref{theorem:dist_11} and~\ref{theorem:dist_21}--\ref{theorem:dist_22}. The next proposition will establish the fact that the cases in these quatrains of theorems exhaust all possibilities.

Recall the notion of a (finite) $\varphi$-sequence from Definition~\ref{defn:phi_sequence}.

\begin{defn}
\definitiontwototient
\end{defn}

\begin{prop} \label{prop:two-totient sequences}
Every two-totient $\varphi$-sequence is of one of four types:
\begin{enumerate}
\item $\{ \tq_i,\tq_{i+1} \}$ with $\varphi(\tq_i) < \varphi(\tq_{i+1})$.
  \item[] In this case, $\#\sif_{i-1} = \#\sif_i = \#\sif_{i+1} = 1$, and $\tq_i,\tq_{i+1}$ are relatively prime.
\item $\{ \tq_{i-1},\tq_i,\tq_{i+1} \}$ with $\varphi(\tq_{i-1}) = \varphi(\tq_i) < \varphi(\tq_{i+1})$. 
  \item[] In this case, $\#\sif_{i-1} = 2$ and $\#\sif_i = \#\sif_{i+1} = 1$, and $\tq_{i-1},\tq_i,\tq_{i+1}$ are pairwise relatively prime.
\item $\{ \tq_i,\tq_{i+1},\tq_{i+2} \}$ with $\varphi(\tq_i) < \varphi(\tq_{i+1}) = \varphi(\tq_{i+2})$. 
  \item[] In this case, $\#\sif_{i-1} = \#\sif_i = 1$ and $\#\sif_{i+1} = 2$, and $\tq_i,\tq_{i+1},\tq_{i+2}$ are pairwise relatively prime, except when $i=1$ in which case $\{ \tq_i,\tq_{i+1},\tq_{i+2} \} = \{ 2, 3, 4\}$.
\item $\{ \tq_{i-1},\tq_i,\tq_{i+1},\tq_{i+2} \}$ with $\varphi(\tq_{i-1}) = \varphi(\tq_i) < \varphi(\tq_{i+1}) = \varphi(\tq_{i+2})$. 
  \item[] In this case, $\#\sif_{i-1} = 2$ and $\#\sif_i = 1$ and $\#\sif_{i+1} = 2$, and $\tq_{i-1},\tq_i,\tq_{i+1},\tq_{i+2}$ are pairwise relatively prime, except when $i=3$ in which case $\{ \tq_{i-1},\tq_i,\tq_{i+1},\tq_{i+2} \} = \{ 3,4,5,8 \}$.
\end{enumerate}
\end{prop}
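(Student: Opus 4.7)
My plan is to classify two-totient $\varphi$-sequences by the counts of prime powers attaining each of the two totient values, then verify coprimality using Lemma~\ref{lemma:equal_totient} for pairs sharing a totient and Lemma~\ref{lemma:Bertrand} (Bertrand's postulate) for pairs whose totients differ.

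First I would bound the length. Since Lemma~\ref{lemma:equal_totient} shows that any totient value is attained by at most two prime powers, a two-totient $\varphi$-sequence has at most $2+2=4$ elements and at least $2$. Partitioning by the multiplicities of the two totient values (either $1+1$, $2+1$, $1+2$, or $2+2$) yields the four types (a)--(d). The cardinality assertions about $\sif_{i-1}, \sif_i, \sif_{i+1}$ follow directly from Definition~\ref{defn:standard_invariant_sequence}, since a doubleton occurs at index $j$ exactly when $\varphi(\tq_j)=\varphi(\tq_{j+1})$.

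For coprimality, the equal-totient pairs $\{\tq_{i-1},\tq_i\}$ in (b) and (d), and $\{\tq_{i+1},\tq_{i+2}\}$ in (c) and (d), are coprime by Lemma~\ref{lemma:equal_totient}; and any two primes in the sequence (namely the ``smaller'' element of each doubleton plus, in cases (b) and (c), a singleton prime) are distinct hence coprime. The remaining pairs all have strictly different totients, so the key claim is: if two distinct prime powers $p^\alpha, p^\beta$ with $\alpha<\beta$ both appear in the same two-totient $\varphi$-sequence, then $p^\alpha<5$. Indeed, by Lemma~\ref{lemma:Bertrand}, whenever $p^\alpha\ge 5$ there is a prime $\ell$ with $\varphi(p^\alpha)<\ell-1<\varphi(p^\beta)$, so $\ell$ is a prime power with totient strictly between $\varphi(p^\alpha)$ and $\varphi(p^\beta)$. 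Since the two-totient $\varphi$-sequence is a contiguous block of the total $\varphi$-sequence, $\ell$'s totient would either violate adjacency (if it equals neither of the two totient values) or Lemma~\ref{lemma:equal_totient} (if it introduces a third prime power at one of them). Thus $p^\alpha\in\{2,3,4\}$.

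I would then run through these small possibilities by hand. Since $\varphi(2)=1$ occurs uniquely and $\varphi(\tq)=3$ has no solutions, the adjacency constraint forces $\varphi(\tq_{i+1})\in\{2,4\}$ when $\tq_i=p^\alpha\le 4$. Chasing the implications: $\tq_i=2$ with another power of 2 in the sequence forces $\tq_{i+1}=3$, $\tq_{i+2}=4$, giving the exceptional $\{2,3,4\}$ in case (c); $\tq_i=4$ with another power of 2 forces the predecessor $\tq_{i-1}=3$ (placing us in case (b) or (d)) and, in case (d), $\tq_{i+1}\in\{5\}$ with $\tq_{i+2}=8$, giving the exceptional $\{3,4,5,8\}$; $\tq_i=3$ with another power of 3 would require $\tq_{i+1}=7$, but $5$ and $8$ have intermediate totient $4$, violating adjacency. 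An analogous argument rules out $\tq_{i-1}$ sharing a prime with $\tq_{i+2}$ in case (d) except in the same $\{3,4,5,8\}$ exception. Collecting all these cases yields the stated coprimality claims, so the only obstacles are the two exceptional small sequences explicitly noted in the proposition. The most delicate step will be organizing this finite case-check cleanly so that the two exceptions emerge naturally, rather than being checked ad hoc.
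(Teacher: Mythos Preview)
Your proposal is correct and follows essentially the same approach as the paper: use Lemma~\ref{lemma:equal_totient} to bound each totient fiber by two (yielding the four types and the $\sif_j$-cardinalities via Definition~\ref{defn:standard_invariant_sequence}), use Lemma~\ref{lemma:Bertrand} to rule out same-prime pairs across distinct totients when $p^\alpha\ge 5$, and handle the residual small cases by hand. The paper's own proof is terser---it simply notes that the Bertrand argument works whenever $\varphi(\tq_i)\ge 4$ and that ``the remaining cases are easily checked by hand''---whereas you have spelled out that finite check more explicitly, but the logic is identical.
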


\begin{proof}
By Lemma~\ref{lemma:equal_totient}, each of the two totient values has either one or two preimages, which immediately yields the four cases described. The cardinalities of the sets~$\sif_j$ follow directly from Definition~\ref{defn:standard_invariant_sequence} and the definition of a two-totient $\varphi$-sequence: for example, in the first case we automatically have $\varphi(\tq_{i-1}) < \varphi(\tq_i)$ and $\varphi(\tq_{i+1}) < \varphi(\tq_{i+2})$.

The coprimality assertions follow from Lemma~\ref{lemma:Bertrand}: for example, in the first case $\varphi(\tq_i)$ and $\varphi(\tq_{i+1})$ are consecutive prime-power totients, which means there cannot be a prime~$\ell$ with $\varphi(\tq_i) < \varphi(\ell) < \varphi(\tq_{i+1})$, which by Lemma~\ref{lemma:Bertrand} means that $\tq_i$ and $\tq_{i+1}$ cannot be powers of the same prime. These arguments hold whenever $\varphi(\tq_i) \ge 4$, and the remaining cases are easily checked by hand.
\end{proof}

\begin{remark}
The proof of Proposition~\ref{prop:two-totient sequences} gives a bit of extra information, namely that $\tq_{i-1}$ is prime in cases~(b) and~(d) and $\tq_{i+1}$ is prime in cases~(c) and~(d).
\end{remark}

\subsection{Invariant factors of $y$-typical numbers}

Before summarizing the important properties of $y$-typical numbers for later use, we consider the following definitions of functions involving the prime powers with totient values up to some parameter.

\begin{defn} \label{defn:iV}
\definitionVW
As a reality check on the notation, we can verify from Definition~\ref{defn:standard_invariant_sequence} that $\sif_{\iW(x)} = \{ \iV(x) \}$ for all $x \ge 1$.
\end{defn}

The two functions just defined are closely related to the classical prime-counting functions
\begin{align}
\psi(x) &= \sum_{n\le x} \Lambda(n) = \log \bigl( \lcm\{ p^\alpha \colon p^\alpha \le x \} \bigr) \sim x \notag \\
\pi^*(x) &= \#\{ p^\alpha \le x \} = \sum_{k=1}^\infty \pi(x^{1/k}) \sim \frac x{\log x}; \label{psi and pi* def}
\end{align}
we detour slightly to establish a brief lemma solidifying this intuition.

\begin{lemma}
For all $x\ge3$, we have $0 \le \lV(x)-\psi(x) \ll \log x\log\log x$ and $0 \le \iW(x)-\pi^*(x) \ll \log x$.
\end{lemma}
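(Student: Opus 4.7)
The plan is to start from the trivial containment $\{p^\alpha:p^\alpha\le x\}\subseteq\{p^\alpha:\varphi(p^\alpha)\le x\}$ (since $\varphi(p^\alpha)\le p^\alpha$), which immediately gives the lower bounds $\iW(x)\ge\pi^*(x)$ and $\iV(x)\ge\lcm\{p^\alpha:p^\alpha\le x\}$, hence $\lV(x)\ge\psi(x)$. To treat the upper bounds uniformly I would introduce, for each prime~$p$, the indices
\[
\alpha^*(p) = \max\{\alpha\ge 0 : \varphi(p^\alpha)\le x\} \quad\text{and}\quad \alpha^{**}(p) = \max\{\alpha\ge 0 : p^\alpha\le x\},
\]
so that $\iW(x) = \sum_p \alpha^*(p)$, $\pi^*(x) = \sum_p \alpha^{**}(p)$, and analogously $\lV(x) = \sum_p \alpha^*(p)\log p$ and $\psi(x) = \sum_p \alpha^{**}(p)\log p$. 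The problem reduces to bounding $\sum_p(\alpha^*(p) - \alpha^{**}(p))$ and its $\log p$-weighted analogue.

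The decisive claim is that $\alpha^*(p) - \alpha^{**}(p) \le 1$ for every prime~$p$: if this difference were at least~$2$, then $\varphi(p^{\alpha^{**}(p)+2}) \le x$ (since $\alpha^{**}(p)+2\le\alpha^*(p)$), which reads $p^{\alpha^{**}(p)+1}(p-1) \le x$ and forces $p^{\alpha^{**}(p)+1}\le x$ because $p\ge 2$, contradicting the maximality of $\alpha^{**}(p)$. Next, I would organize the contributions by the value $\alpha = \alpha^*(p) \ge 1$: the conditions $p^\alpha > x$ and $p^{\alpha-1}(p-1) \le x$ yield $p > x^{1/\alpha}$, hence $p^{\alpha-1} > x^{(\alpha-1)/\alpha}$, and therefore $p-1 \le x/p^{\alpha-1} < x^{1/\alpha}$. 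Thus any contributing prime lies in the open interval $(x^{1/\alpha}, x^{1/\alpha}+1)$, which contains at most one integer; so at most one prime contributes per value of~$\alpha$.

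Each contributing $(p,\alpha)$ also satisfies $2^\alpha\le p^\alpha\le 2x$ (the right-hand bound comes from $\varphi(p^\alpha)\le x$ combined with $p\ge 2$), so $\alpha$ ranges over at most $\lfloor 1+\log_2 x\rfloor$ values. This immediately gives $\iW(x)-\pi^*(x) \ll \log x$. For the weighted sum, each contribution carries $\log p \le \log(x^{1/\alpha}+1) \le (\log x)/\alpha + \log 2$, and summing,
\[
\lV(x)-\psi(x) \ll \sum_{\alpha=1}^{\lfloor 1+\log_2 x\rfloor}\Bigl(\tfrac{\log x}{\alpha} + O(1)\Bigr) \ll \log x\,\log\log x,
\]
by the classical harmonic estimate $\sum_{\alpha\le N}1/\alpha = \log N + O(1)$ applied with $N \asymp \log x$.

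I expect the main subtlety to be the interval-length step: one must notice that combining $p>x^{1/\alpha}$ with $p-1\le x/p^{\alpha-1}$ pins $p$ into a length-one window, a fact that uses the elementary but non-obvious bootstrap $p^{\alpha-1}>x^{(\alpha-1)/\alpha}$ that itself comes from the first inequality. Once that unit interval is in hand, both claimed bounds drop out cleanly from summing over the $O(\log x)$ admissible values of~$\alpha$, with the logarithmic weight producing precisely the harmonic series that accounts for the extra $\log\log x$ factor.
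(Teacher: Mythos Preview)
Your proof is correct and follows essentially the same approach as the paper's: both hinge on the key observation that for each fixed $\alpha$, the conditions $\varphi(p^\alpha)\le x<p^\alpha$ pin $p$ into the length-one interval $(x^{1/\alpha},x^{1/\alpha}+1)$, so at most one prime contributes per~$\alpha$, and then summing $\log p\ll(\log x)/\alpha$ over $\alpha\ll\log x$ gives the harmonic factor. The only cosmetic difference is that you first introduce $\alpha^*(p)$ and $\alpha^{**}(p)$ and explicitly record the bound $\alpha^*(p)-\alpha^{**}(p)\le1$ before reorganizing by~$\alpha$, whereas the paper works directly with the double sum over pairs $(\alpha,p)$ satisfying $\varphi(p^\alpha)\le x<p^\alpha$; the two are Fubini rearrangements of each other.
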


\begin{proof}
We first observe that for any integer $\alpha\ge1$, there is at most one prime~$p$ such that $\varphi(p^\alpha) \le x < p^\alpha$. Indeed, if $p^\alpha(1-\frac1p) = \varphi(p^\alpha) \le x < p^\alpha$ then $p-1 = p(1-\frac1p) \le p(1-\frac1p)^{1/\alpha} \le x^{1/\alpha} < p$, and therefore $p = \lfloor x^{1/\alpha} \rfloor +1$ is forced.

Now, writing
\begin{align*}
\psi(x) &= \sum_p \log p \sum_{\substack{\alpha\in\N \\ p^\alpha \le x}} 1 = \sum_{\alpha\in\N} \sum_{\substack{p \\ p^\alpha \le x}} \log p \\
\lV(x) &= \sum_p \log p \sum_{\substack{\alpha\in\N \\ \varphi(p^\alpha) \le x}} 1 = \sum_{\alpha\in\N} \sum_{\substack{p \\ \varphi(p^\alpha) \le x}} \log p,
\end{align*}
we immediately see that $\lV(x) \ge \psi(x)$. More precisely, each sum can be restricted to $\alpha \le \frac{\log x}{\log 2}+1$ (for otherwise $2^{\alpha-1} > x$), and thus
\begin{align*}
\lV(x) - \psi(x) &= \sum_{\alpha\le(\log x)/\log 2+1} \sum_{\substack{p \\ \varphi(p^\alpha) \le x < p^\alpha}} \log p \\
&\le \sum_{\alpha\le(\log x)/\log 2+1} \log\bigl( \lfloor x^{1/\alpha} \rfloor +1 \bigr) \ll \sum_{\alpha\le(\log x)/\log 2+1} \alpha\log x \ll \log x \log \log x.
\end{align*}
Similarly,
\begin{align*}
\iW(x) - \pi^*(x) &= \sum_{\alpha\le(\log x)/\log 2+1} \sum_{\substack{p \\ \varphi(p^\alpha) \le x < p^\alpha}} 1 \le \sum_{\alpha\le(\log x)/\log 2+1} 1 \ll \log x.
\qedhere
\end{align*}
\end{proof}

The following proposition gives a condensed summary of the important properties of $y$-typical numbers, using the functions~$\iV(x)$ and~$\iW(x)$ just defined as well as the total $\varphi$-sequence $(\tq_i)$ and the universal factor order sets $(\sif_i)$ from Definitions~\ref{defn:phi_sequence} and~\ref{defn:standard_invariant_sequence}. Part~\eqref{yt6} of the proposition relates the invariant factor counting function $\inv(m;d)$ from Definition~\ref{defn:group_counting_functions_inv} to the additive function $\omega(m;S-T)$ from Definition~\ref{defn:omega_functions}.

\begin{prop}
\label{prop:y_typical}
Let $y \ge 1$, and set $N=\iW(y)$, so that $\tq_1,\dots,\tq_N$ are the prime powers whose totients are less than or equal to $\varphi(\tq_N)$. Fix a $y$-typical number~$m$, and define $q_1,\dots,q_N$ to be $\tq_1,\dots,\tq_N$ ordered so that for each $1\le i\le N-1$,
\[
\prm(m;q_i) \ge \prm(m;q_{i+1}), \text{ and } \varphi(q_i) \le \varphi(q_{i+1}) \text{ if } \prm(m;q_i) = \prm(m;q_{i+1}).
\]
For each $1\le i\le N$, define $d_i = \lcm [ q_1, \dots, q_i]$. Then:
\begin{enumerate}
\item	$1 = \varphi(q_1) \le \cdots \le \varphi(q_N)$.  \label{yt1}
\item	Let $1 \le i \le N-1$. If $\varphi(q_i) < \varphi(q_{i+1})$ then $d_i = \iV(\varphi(\tq_i))$ and $\sif_i = \{d_i\}$.  \label{yt4}
\item	For $1 \le i \le N-1$, we have $d_i \in \sif_i$.  \label{yt5}
\item	$d_N = \iV(y)$ and $\sif_N = \{d_N\}$.  \label{yt2}
\item	If $d$ is an invariant factor of $(\Z/m\Z)^\times$ not exceeding $\iV(y)$, then $d=d_i$ for some $1\le i\le N$.  \label{ytnew}
\item	If $d$ is an invariant factor of $(\Z/m\Z)^\times$ exceeding $\iV(y)$, then $\iV(y) \mid d$.  \label{yt3}
\item	Define $Q_i = \{ p \colon p\equiv1\mod{q_i} \}$. Then for $1 \le i \le N-1$, 
\begin{equation}  \label{J as an additive function}
  \inv(m;d_i) = \prm(m;q_i) - \prm(m;q_{i+1}) = \omega(m;Q_i-Q_{i+1}) + \bar E(m;q_i) - \bar E(m;q_{i+1}) ,
\end{equation}
where $\bar E$ is the function defined in Lemma~\ref{lemma:prime_multiples}.
Furthermore, $\inv(m;d_N) \le \omega(m;Q_N) + \bar E(m;q_N)$.  \label{yt6}
\end{enumerate}
\end{prop}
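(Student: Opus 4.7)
The plan is to derive all seven claims from a single foundational observation: the $y$-typicality hypothesis forces the relabelled prime powers $q_1,\dots,q_N$ to realize the same $\varphi$-ordering as the original total $\varphi$-sequence restricted to totients~$\le y$. Concretely, the multiset $\{q_1,\dots,q_N\}$ equals $\{\tq_1,\dots,\tq_N\}$ by construction, so once we establish part~(a), we will even have the pointwise identification $\varphi(q_j) = \varphi(\tq_j)$ for each $j$, which lets us translate statements about the $q_i$ into statements about the $\tq_i$ and hence directly connect with Definitions~\ref{defn:standard_invariant_sequence} and~\ref{defn:iV}.

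For part~(a), I would argue by contradiction: suppose $\varphi(q_i) > \varphi(q_j)$ for some $i<j$. Both prime powers have totient at most~$y$, so condition~(i) of Definition~\ref{defn:y_typical} gives $\prm(m;q_j) \ge \prm(m;q_i)$, and the tie-breaking rule (smaller totient first) then forces $q_j$ to precede $q_i$ in our ordering, contradicting $i<j$. With (a) in hand, parts~(b)--(d) are then short: when $\varphi(q_i) < \varphi(q_{i+1})$, the chain $\varphi(q_1)\le\cdots\le\varphi(q_i)<\varphi(q_{i+1})$ together with the pointwise identification $\varphi(q_j)=\varphi(\tq_j)$ forces $\{q_1,\dots,q_i\} = \{\tq_1,\dots,\tq_i\}$, giving $d_i = \lcm[\tq_1,\dots,\tq_i] = \iV(\varphi(\tq_i))$ and $\sif_i=\{d_i\}$ via the singleton branch of Definition~\ref{defn:standard_invariant_sequence}. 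When $\varphi(q_i)=\varphi(q_{i+1})$ instead, Lemma~\ref{lemma:equal_totient2} guarantees $\varphi(q_{i-1})<\varphi(q_i)$, and $d_i$ is $\lcm[\tq_1,\dots,\tq_{i-1}]$ joined with either $\tq_i$ or $\tq_{i+1}$, one of the two elements of~$\sif_i$; this establishes~(c). Part~(d) then follows because $N=\iW(y)$ forces $\varphi(\tq_N)\le y<\varphi(\tq_{N+1})$.

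For parts~(e), (f), and~(g), I would invoke Lemma~\ref{lemma:invariant_factors} applied to the sequence $q_1,\dots,q_N$. Its hypothesis $\prm(m;q_N) > \prm(m;q)$ for all prime powers $q\notin\{q_1,\dots,q_N\}$ is exactly condition~(ii) of $y$-typicality, since any such $q$ satisfies $\varphi(q)>y\ge\varphi(q_N)$. The lemma immediately yields the identity $\inv(m;d_i)=\prm(m;q_i)-\prm(m;q_{i+1})$, the bound $\inv(m;d_N)\le\prm(m;q_N)$, the completeness claim~(e), and the divisibility claim~(f). Feeding these into Lemma~\ref{lemma:prime_multiples}, which expresses $\prm(m;q)=\omega(m;q,1)+\bar E(m;q)=\omega(m;Q)+\bar E(m;q)$ in our present notation, and using the definition $\omega(m;S-T)=\omega(m;S)-\omega(m;T)$ from Definition~\ref{defn:omega_functions}, completes~(g). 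The main obstacle is really just the notational bookkeeping in the first two paragraphs: keeping straight which of $q_i$ versus $\tq_i$ is in play at each step, and verifying that the tie-breaking rule in the definition of the $q_i$ is genuinely compatible with the monotonicity conclusion in~(a); once that calibration is made, everything else reduces to citing Lemmas~\ref{lemma:invariant_factors} and~\ref{lemma:prime_multiples}.
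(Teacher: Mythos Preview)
Your proposal is correct and follows essentially the same route as the paper's own proof: establish part~(a) from the $y$-typicality condition and the sorting rule, deduce (b)--(d) from the resulting identification $\varphi(q_j)=\varphi(\tq_j)$ together with Definition~\ref{defn:standard_invariant_sequence}, and then read off (e)--(g) by applying Lemma~\ref{lemma:invariant_factors} (whose hypothesis is exactly condition~(b) of $y$-typicality) and Lemma~\ref{lemma:prime_multiples}. The only cosmetic difference is that the paper proves~(e) by re-invoking Proposition~\ref{prop:elementary_divisors_to_invariant_factors} directly rather than citing the ``furthermore'' clause of Lemma~\ref{lemma:invariant_factors}, but your citation is equally valid and arguably cleaner.
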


\noindent
The reader should take care to distinguish the temporary ordering $q_1,\dots,q_N$, which depends on~$m$, from the universal ordering $\tq_1,\dots,\tq_N$ from Definition~\ref{defn:phi_sequence}. Indeed, part of what is being proved in this proposition is that the two orderings are essentially the same (which is what $y$-typicality is designed to achieve), except that when $\varphi(\tq_i) = \varphi(\tq_{i+1})$ we only have the equality of unordered sets $\{q_i,q_{i+1}\} = \{\tq_i,\tq_{i+1}\}$. Recall Definition~\ref{defn:y_typical} for $y$-typicality.

\begin{proof}
Throughout this proof, $1\le i\le N-1$.

\begin{enumerate}
\item 
If $\prm(m;q_i) = \prm(m;q_{i+1})$, we are already given that $\phi(q_i) \le \varphi(q_{i+1})$. On the other hand, if $\prm(m;q_i) > \prm(m;q_{i+1})$, then $\phi(q_i) > \varphi(q_{i+1})$ would violate the assumption that~$m$ is $y$-typical; therefore $\phi(q_i) \le \varphi(q_{i+1})$ in this case as well. This argument establishes the inequalities in part~\eqref{yt1}, which in turn implies that $q_1=2$ (since no prime power has as small a totient value) and thus that $\varphi(q_1)=1$.

\item 
By part~\eqref{yt1} and the assumption $\varphi(q_i) < \varphi(q_{i+1})$, there are exactly~$i$ prime powers with totient values less than or equal to $\varphi(q_i)$, and therefore $\varphi(q_i) = \varphi(\tq_i) < \varphi(\tq_{i+1})$. (Note that we can't conclude that $q_i=\tq_i$, since we might have $\varphi(\tq_{i-1}) = \varphi(\tq_i)$.) Then both $\sif_i = \{\lcm[\tq_1,\dots,\tq_i]\} = \{\lcm[q_1,\dots,q_i]\} = \{d_i\}$ and $\sif_i = \sif_{\iW(\varphi(\tq_i))} = \{ \iV(\varphi(\tq_i)) \}$ follow from Definition~\ref{defn:standard_invariant_sequence}.

\item
The case $\varphi(q_i) < \varphi(q_{i+1})$ is already done by part~\eqref{yt4}. On the other hand, if $\varphi(q_i) = \varphi(q_{i+1})$, then by part~\eqref{yt1} and Lemma~\ref{lemma:equal_totient},
\[
\varphi(q_1) \le \cdots \le \varphi(q_{i-1}) < \varphi(q_i) = \varphi(q_{i+1}) < \varphi(q_{i+2}).
\]
It follows that $\{q_1,\dots,q_{i-1}\} = \{\tq_1,\dots,\tq_{i-1}\}$ and $\{q_i,q_{i+1}\} = \{\tq_i,\tq_{i+1}\}$, and so $d_i = \lcm[q_1,\dots,q_i] = \lcm[\tq_1,\dots,\tq_{i-1}, q_i] \in \sif_i$ by Definition~\ref{defn:standard_invariant_sequence} regardless of whether $q_i=\tq_i$ or $q_i=\tq_{i+1}$.

\item
Since $\varphi(q_N) < \varphi(\tq_{N+1})$ by construction, the proof of part~\eqref{yt2} is the same as the proof of part~\eqref{yt4} (on noting that $\iV(\varphi(\tq_N)) = \iV(y)$).

\item
Extend the sequence $q_1,\dots,q_N$ to a longer sequence $q_1,\dots,q_M$ that contains all the distinct elementary divisors of $(\Z/m\Z)^\times$, ordered so that $\inv(m;q_i)$ is decreasing in~$i$ (this extension preserves the first~$N$ elements precisely because of part~(b) of the definition of $y$-typicality), and extend the sequence~$d_i$ accordingly. Then all invariant factors of $(\Z/m\Z)^\times$ are in the sequence $d_1,\dots,d_M$ by Proposition~\ref{prop:elementary_divisors_to_invariant_factors}. In particular, since each $d_i \mid d_{i+1}$ and $d_N = \iV(y)$ by part~\eqref{yt2}, all invariant factors up to $d_N$ must be in $\{d_1,\dots,d_N\}$ as claimed.

\item 
Since $d_N = \iV(y)$ by part~\eqref{yt2}, part~\eqref{yt3} follows immediately from Lemma~\ref{lemma:invariant_factors}, since the assumption that~$m$ is $y$-typical implies that $\prm(m;q_N) > \prm(m;q)$ for every prime power $q\notin\{q_1,\dots,q_N\}$.

\item 
The equality $\inv(m;d_i) = \prm(m;q_i) - \prm(m;q_{i+1})$ follows immediately from Lemma~\ref{lemma:invariant_factors}, and thus by Lemma~\ref{lemma:prime_multiples},
\begin{align*}
\inv(m;d_i) &= \bigl( \omega(m;Q_i) + \bar E(m;q_i) \bigr) - \bigl( \omega(m;Q_{i+1}) + \bar E(m;q_{i+1}) \bigr) \\
&= \omega(m;Q_i)-\omega(m;Q_{i+1}) + \bar E(m;q_i) - \bar E(m;q_{i+1})
\end{align*}
by Definition~\ref{defn:omega_functions}. The assertions $\inv(m;d_N) \le \prm(m;q_N) \le \omega(m;Q_N) + \bar E(m;q_N)$ similarly follow from Lemmas~\ref{lemma:invariant_factors} and~\ref{lemma:prime_multiples}. \qedhere

\end{enumerate}
\end{proof}

This concludes the examination of the algebraic properties of $y$-typical numbers. The study of the frequency of $y$-typical
numbers, and the justification for why they are indeed typical, can be found in Section~\ref{sec:typicality}.

\section{From additive functions to probability distributions} \label{sec:additive to probability}

In this section, we form a bridge between additive functions and probability distributions by using the techniques of Kubilius to explore the properties of the strongly additive functions $\omega(m;S-T)$, including $\omega(m;S)$ and $\omega(m;q,a)$, from Definition~\ref{defn:omega_functions}. Given that Section~\ref{sec:invariant_to_additive} connected invariant factors to these additive functions, the cumulative effect by the end of this section will be the ability to obtain multivariate limiting distributions for vectors whose components are the additive functions arising in the study of invariant factors. Proofs of our primary theorems, however, will still require further work in Section~\ref{sec:distributions} to translate the multivariate distributions discussed in this section into univariate distributions for the invariant factors themselves.

\subsection{Concepts from Kubilius}

Our arguments will make significant use of material from Kubilius~\cite{kubilius} concerning joint distributions of multiple additive functions. For our purposes, it will be important to understand the dependence of error terms on the additive functions involved; accordingly, we include some slightly modified versions of Kubilius's results in this section.

The following definitions encode information about the asymptotic means and variances of additive functions:

\begin{defn}
\label{defn:kubilius_functions}
\definitionABCD
\end{defn}

%

\begin{defn} \label{rho def}
For any prime power $p^\alpha$, define
\[ \rho(p^\alpha) = \frac 1{p^\alpha}\bigg(1-\frac 1p\bigg) . \]
Since the number of integers up to $n$ that are divisible by an integer $d$ is $\frac nd+O(1)$, we have
\begin{align*}
\#\{ m \le n \colon p^\alpha \xmid m \} &= \#\{ m \le n \colon p^\alpha \mid m \} - \#\{ m \le n \colon p^{\alpha+1} \mid m \} \\
&= \frac n{p^\alpha}+O(1) - \frac n{p^{\alpha+1}}+O(1) \\
&= n\rho(p^\alpha) + O(1).
\end{align*}
\end{defn}

We first give an adaptation of~\cite[Lemmas~4.2 and~4.3]{kubilius}.

\begin{lemma}
\label{lemma:kubilius_special_bounds}
Uniformly for all additive functions~$f$ and all integers $n\ge2$,
\[
\Ex_n(f(m)) - \sum_{p^\alpha \le n} f(p^\alpha)\rho(p^\alpha) \ll \frac{C_f(n)}{\log n} \qquad\text{and}\qquad
\Ex_n(f(m)) - A_f(n) \ll C_f(n).
\]
\end{lemma}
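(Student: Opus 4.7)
The plan is to exploit the additive structure of~$f$ by swapping the order of summation over~$m$ and over prime-power unitary divisors. Since $f$ is additive, $f(m) = \sum_{p^\alpha \xmid m} f(p^\alpha)$, and therefore
\[
\sum_{m=1}^n f(m) = \sum_{p^\alpha \le n} f(p^\alpha)\cdot \#\{ m \le n \colon p^\alpha \xmid m \}.
\]
Inserting the counting estimate $\#\{ m \le n \colon p^\alpha \xmid m \} = n\rho(p^\alpha) + O(1)$ from Definition~\ref{rho def}, dividing by~$n$, and using the uniform pointwise bound $|f(p^\alpha)| \le C_f(n)$ for $p^\alpha \le n$ gives
\[
\biggl| \Ex_n(f(m)) - \sum_{p^\alpha \le n} f(p^\alpha)\rho(p^\alpha) \biggr| \ll \frac{1}{n}\sum_{p^\alpha \le n}|f(p^\alpha)| \le \frac{C_f(n)\,\pi^*(n)}{n} \ll \frac{C_f(n)}{\log n},
\]
where the final step uses the prime-power counting bound $\pi^*(n) \ll n/\log n$ from equation~\eqref{psi and pi* def}. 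This establishes the first bound.

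For the second bound, the plan is to compare the $\rho$-weighted prime-power sum directly to $A_f(n)$ and then combine with the first bound via the triangle inequality. Isolating the prime contribution and using $\rho(p) = 1/p - 1/p^2$, one obtains
\[
\sum_{p^\alpha \le n} f(p^\alpha)\rho(p^\alpha) - A_f(n) = -\sum_{p \le n}\frac{f(p)}{p^2} + \sum_{\substack{p^\alpha \le n \\ \alpha \ge 2}} f(p^\alpha)\rho(p^\alpha).
\]
Each term on the right is bounded crudely in absolute value by $C_f(n) \sum_p p^{-2}$ and $C_f(n)\sum_p\sum_{\alpha\ge 2} p^{-\alpha}$, respectively; both series converge absolutely, so this difference is $\ll C_f(n)$. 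Combining with the first bound yields $\Ex_n(f(m)) - A_f(n) \ll C_f(n)/\log n + C_f(n) \ll C_f(n)$.

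The argument is essentially bookkeeping; there is no serious analytic obstacle beyond the divisor counting identity and the prime-power counting bound. The one point requiring care is to ensure every implicit constant is independent of~$f$, which is automatic because $f$ enters only through its additive decomposition and through the uniform sup-bound $|f(p^\alpha)| \le C_f(n)$ built into Definition~\ref{defn:kubilius_functions}.
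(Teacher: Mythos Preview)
Your proof is correct and follows essentially the same approach as the paper's own proof: swap the order of summation using additivity, apply the counting estimate for $\#\{m\le n\colon p^\alpha\xmid m\}$ together with $\pi^*(n)\ll n/\log n$ for the first bound, then isolate the $\alpha=1$ terms and bound the remainder by $C_f(n)$ times convergent prime sums for the second.
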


\begin{proof}
In this proof, all implicit constants are absolute.
We calculate that
\begin{align*}
\Ex_n(f(m))
&= \frac 1n \sum_{m=1}^n f(m) = \frac1n \sum_{m=1}^n \sum_{p^\alpha \xmid m} f(p^\alpha) \\
&= \frac 1n \sum_{p^\alpha \le n} f(p^\alpha)\#\{m \le n \colon p^\alpha \xmid m\} \\
&= \frac 1n \sum_{p^\alpha \le n} f(p^\alpha) \bigl(n\rho(p^\alpha) + O(1) \big) \\
&= \sum_{p^\alpha \le n} f(p^\alpha)\rho(p^\alpha) + O\biggl( \frac{C_f(n)}n \sum_{p^\alpha \le n} 1 \biggr) = \sum_{p^\alpha \le n} f(p^\alpha)\rho(p^\alpha) + O\biggl( \frac{C_f(n)}n \pi^*(n) \biggr),
\end{align*}
using the prime-power counting function~$\pi^*$ defined in equation~\eqref{psi and pi* def}; this establishes the first inequality since $\pi^*(n) \ll n/\log n$. Continuing,
\begin{align*}
\Ex_n(f(m))
&= \sum_{p^\alpha \le n} f(p^\alpha)\rho(p^\alpha) + O\biggl( \frac{C_f(n)}{\log n} \biggr) \\
&= \sum_{p \le n} \biggl( \frac{f(p^\alpha)}p - \frac{f(p^\alpha)}{p^2} \biggr) + \sum_{\substack{p^\alpha \le n \\ \alpha \ge 2}} f(p^\alpha)\rho(p^\alpha) + O\biggl( \frac{C_f(n)}{\log n} \biggr) \\
&= A_f(n) + O\biggl( \sum_{p \le n} \frac{C_f(n)}{p^2} + \sum_{\substack{p^\alpha \le n \\ \alpha \ge 2}} \frac{C_f(n)}{p^\alpha} + \frac{C_f(n)}{\log n} \biggr)
\end{align*}
since $\rho(p^\alpha) \le \frac1{p^\alpha}$. The first sum in the error term is $\ll C_f(n)$, while
\[
\sum_{\substack{p^\alpha \le n \\ \alpha \ge 2}} \frac{C_f(n)}{p^\alpha}
\le C_f(n) \sum_{p \le n} \sum_{\alpha = 2}^\infty \frac 1{p^\alpha}
= C_f(n) \sum_{p \le n} \frac 1{p^2-p} \ll C_f(n)
\]
as well. We conclude that
\[
\Ex_n(f(m)) = A_f(n) + O\biggl( C_f(n) + C_f(n) + \frac{C_f(n)}{\log n} \biggr) = A_f(n) + O(C_f(n))
\]
as claimed.
\end{proof}

We next provide an adaptation of~\cite[Lemma~4.4 (see also the beginning of Chapter~3)]{kubilius}.

\begin{lemma} \label{lemma:ksfv1}
Uniformly for all additive functions $f_1$ and $f_2$,
\[
\Ex_n(f_1(m)f_2(m)) = \sum_{p^\alpha q^\beta \le n} f_1(p^\alpha)f_2(q^\beta) \rho(p^\alpha)\rho(q^\beta) + \sum_{p^\alpha\le n} \frac{f_1(p^\alpha)f_2(p^\alpha)}{p^\alpha} + O\bigl( C_{f_1}(n)C_{f_2}(n) \bigr).
\]
\end{lemma}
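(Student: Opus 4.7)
The plan is to expand the product $f_1(m)f_2(m)$ via additivity and then count prime-power residue classes much as in Lemma~\ref{lemma:kubilius_special_bounds}, carefully separating a ``diagonal'' contribution from the off-diagonal one. Writing $f_i(m) = \sum_{p^\alpha \xmid m} f_i(p^\alpha)$ and exchanging the order of summation gives
\[
\sum_{m \le n} f_1(m)f_2(m) = \sum_{p,q \text{ prime}} \sum_{\alpha,\beta \ge 1} f_1(p^\alpha) f_2(q^\beta) \#\{m \le n \colon p^\alpha \xmid m,\ q^\beta \xmid m\}.
\]
When $p = q$ the conditions $p^\alpha \xmid m$ and $p^\beta \xmid m$ are inconsistent unless $\alpha = \beta$, in which case the count is $n\rho(p^\alpha) + O(1)$ (for $p^\alpha \le n$) by Definition~\ref{rho def}. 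When $p \neq q$, inclusion--exclusion on $p^{\alpha+1}$ and $q^{\beta+1}$ yields
\[
\#\{m \le n \colon p^\alpha \xmid m,\ q^\beta \xmid m\} = n\rho(p^\alpha)\rho(q^\beta) + O(1)
\]
for $p^\alpha q^\beta \le n$, and $0$ otherwise.

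Dividing by $n$, the main term becomes
\[
\sum_{\substack{p^\alpha q^\beta \le n \\ p \ne q}} f_1(p^\alpha) f_2(q^\beta) \rho(p^\alpha)\rho(q^\beta) + \sum_{p^\alpha \le n} f_1(p^\alpha) f_2(p^\alpha) \rho(p^\alpha).
\]
To reshape this into the form stated in the lemma, I would first drop the restriction $p \ne q$ in the double sum. The added terms (with $p = q$) are bounded in absolute value by
\[
C_{f_1}(n) C_{f_2}(n) \sum_p \Bigl( \sum_{\alpha \ge 1} \rho(p^\alpha) \Bigr)^{\!2}
= C_{f_1}(n) C_{f_2}(n) \sum_p \frac{1}{p^2} \ll C_{f_1}(n) C_{f_2}(n),
\]
using the identity $\sum_{\alpha \ge 1} \rho(p^\alpha) = 1/p$. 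Second, I would replace $\rho(p^\alpha)$ by $1/p^\alpha$ in the diagonal sum, at a cost of $\sum_{p^\alpha \le n} |f_1(p^\alpha) f_2(p^\alpha)|/p^{\alpha+1} \ll C_{f_1}(n) C_{f_2}(n) \sum_p 1/(p(p-1)) \ll C_{f_1}(n) C_{f_2}(n)$.

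Finally I must absorb the $O(1)$ counting errors from each pair, which contribute
\[
\frac{C_{f_1}(n) C_{f_2}(n)}{n} \Bigl( \#\{(p^\alpha,q^\beta) \colon p^\alpha q^\beta \le n\} + \pi^*(n) \Bigr).
\]
The term $\pi^*(n) \ll n/\log n$ is harmless. The main task, which I anticipate as the chief technical obstacle, is to show that the number of ordered pairs of prime powers with $p^\alpha q^\beta \le n$ is $\ll n$. I would prove this by splitting at $\sqrt n$: for $p^\alpha \le \sqrt n$ the inner count is $\pi^*(n/p^\alpha) \ll (n/p^\alpha)/\log n$, so summing against $\sum_{p^\alpha \le \sqrt n} 1/p^\alpha \ll \log\log n$ gives $\ll n \log\log n /\log n$; for $p^\alpha > \sqrt n$ the inner variable satisfies $q^\beta < \sqrt n$, and the estimate $\sum_{\sqrt n < p^\alpha \le n} 1/p^\alpha \ll 1$ yields $\ll n$. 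Combining everything gives the stated asymptotic formula, with total error $O(C_{f_1}(n)C_{f_2}(n))$.
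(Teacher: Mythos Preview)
Your proof is correct and follows essentially the same route as the paper's: expand via additivity, separate the $p=q$ diagonal from the $p\ne q$ terms, and absorb each reshaping step into an $O(C_{f_1}(n)C_{f_2}(n))$ error. Your treatment of the count $\#\{(p^\alpha,q^\beta):p^\alpha q^\beta\le n\}$ via the $\sqrt n$ split is in fact more careful than the paper's, which simply bounds this count crudely by~$n$.
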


\begin{proof}
We begin by writing
\begin{align*}
\Ex_n(f_1(m)f_2(m)) = \frac1n \sum_{m\le n} f_1(m)f_2(m) &= \sum_{m\le n} \sum_{p^\alpha\xmid m} f_1(p^\alpha) \sum_{q^\beta\xmid m} f_2(q^\beta) \\
&= \frac1n \sum_{\substack{p^\alpha \le n \\ q^\beta \le n}} f_1(p^\alpha)f_2(q^\beta)\#\{ m \le n \colon p^\alpha \xmid m,\, q^\beta \xmid m \} \\
&= \frac1n \sum_{\substack{p^\alpha q^\beta \le n \\ p\ne q}} f_1(p^\alpha)f_2(q^\beta)\#\{ m \le n \colon p^\alpha \xmid m,\, q^\beta \xmid m \} \\
&\qquad{}+ \sum_{p^\alpha\le n} f_1(p^\alpha)f_2(p^\alpha) \#\{ m \le n \colon p^\alpha \xmid m \}.
\end{align*}
Since
\begin{align*}
\#\{ m \le n \colon p^\alpha \xmid m, q^\beta \xmid m \}
&= \#\{ m \le n \colon p^\alpha q^\beta \mid m \}
- \#\{ m \le n \colon p^{\alpha+1} q^\beta \mid m \} \\
&\qquad{}- \#\{ m \le n \colon p^\alpha q^{\beta+1} \mid m \}
+ \#\{ m \le n \colon p^{\alpha+1} q^{\beta+1} \mid m \} \\
&= \frac n{p^\alpha q^\beta} - \frac n{p^{\alpha+1} q^\beta} - \frac n{p^\alpha q^{\beta+1}} + \frac n{p^{\alpha+1} q^{\beta+1}} + O(1)
\end{align*} 
when $p\ne q$, we see that
\begin{align}
\Ex_n(f_1(m)f_2(m)) &= \sum_{\substack{p^\alpha q^\beta \le n \\ p\ne q}} f_1(p^\alpha)f_2(q^\beta) \biggl( \rho(p^\alpha)\rho(q^\beta) + O\biggl( \frac1n \biggr)\biggr) \notag \\
&\qquad{}+ \sum_{p^\alpha\le n} f_1(p^\alpha)f_2(p^\alpha) \biggl( \rho(p^\alpha) + O\biggl( \frac1n \biggr)\biggr) \notag \\
&= \sum_{p^\alpha q^\beta \le n} f_1(p^\alpha)f_2(q^\beta) \biggl( \rho(p^\alpha)\rho(q^\beta) + O\biggl( \frac1n \biggr)\biggr)- \sum_{p^\alpha p^\beta \le n} f_1(p^\alpha)f_2(p^\beta) \rho(p^\alpha)\rho(p^\beta) \notag \\
&\qquad{} + \sum_{p^\alpha\le n} f_1(p^\alpha)f_2(p^\alpha) \biggl( \frac1{p^\alpha} + O\biggl( \frac1{p^{\alpha+1}} \biggr)\biggr).  \label{eqn:ksfv1.1}
\end{align}
We quickly estimate the error terms in this expression, since
\[
\sum_{p^\alpha q^\beta \le n} f_1(p^\alpha)f_2(q^\beta) O\biggl( \frac1n \biggr) \ll \frac{C_{f_1}(n)C_{f_2}(n)}n \sum_{p^\alpha q^\beta \le n} 1 \le \frac{C_{f_1}(n)C_{f_2}(n)}n \sum_{k\le n} 1 = C_{f_1}(n)C_{f_2}(n)
\]
(we could save a factor of $(\log\log n)/\log n$, but other error terms make that unneeded) and
\begin{align*}
\sum_{p^\alpha\le n} f_1(p^\alpha)f_2(p^\alpha) O\biggl( \frac1{p^{\alpha+1}} \biggr) &\le C_{f_1}(n)C_{f_2}(n) \sum_{p \le n} \sum_{\alpha = 1}^\infty \frac 1{p^{\alpha+1}} \\
&= C_{f_1}(n)C_{f_2}(n) \sum_{p \le n} \frac 1{p^2-p} \ll C_{f_1}(n)C_{f_2}(n).
\end{align*}
Furthermore,
\begin{align*}
\sum_{p^\alpha q^\beta \le n} f_1(p^\alpha)f_2(q^\beta)\rho(p^\alpha)\rho(q^\beta)
& \ll C_{f_1}(n)C_{f_2}(n) \sum_{\substack{p^{\alpha+\beta} \le n \\ \alpha,\beta \ge 1}} \frac1{p^{\alpha+\beta}} \\
& \le C_{f_1}(n)C_{f_2}(n) \sum_{p \le n} \sum_{\alpha = 1}^\infty \sum_{\beta = 1}^\infty \frac 1{p^{\alpha+\beta}} \\
& \le C_{f_1}(n)C_{f_2}(n) \sum_{p \le n} \frac 1{(p-1)^2} \ll C_{f_1}(n)C_{f_2}(n).
\end{align*}
Inserting these three estimates into equation~\eqref{eqn:ksfv1.1} establishes the lemma.
\end{proof}

\begin{lemma} \label{lemma:ksfv2}
Uniformly for all additive functions $f_1$ and $f_2$,
\[
\Ex_n(f_1(m))\Ex_n(f_2(m)) = \sum_{p^\alpha \le n} \sum_{q^\beta \le n} f_1(p^\alpha)f_2(q^\beta)\rho(p^\alpha)\rho(q^\beta) + O\biggl( C_{f_1}(n)C_{f_2}(n) \frac{\log\log n}{\log n} \biggr).
\]
\end{lemma}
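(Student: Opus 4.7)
The plan is to simply multiply together the two asymptotic expansions given by Lemma~\ref{lemma:kubilius_special_bounds} and keep track of the error terms. Specifically, Lemma~\ref{lemma:kubilius_special_bounds} says that for $i \in \{1,2\}$,
\[
\Ex_n(f_i(m)) = \sum_{p^\alpha \le n} f_i(p^\alpha)\rho(p^\alpha) + O\biggl(\frac{C_{f_i}(n)}{\log n}\biggr),
\]
so that expanding the product $\Ex_n(f_1(m))\Ex_n(f_2(m))$ yields the desired main term
\[
\sum_{p^\alpha \le n}\sum_{q^\beta \le n} f_1(p^\alpha)f_2(q^\beta)\rho(p^\alpha)\rho(q^\beta)
\]
plus three error terms: two cross terms of the form $O\bigl(C_{f_2}(n)(\log n)^{-1}\bigr)\sum_{p^\alpha\le n}|f_1(p^\alpha)|\rho(p^\alpha)$ (and its symmetric counterpart), together with a purely quadratic error $O\bigl(C_{f_1}(n)C_{f_2}(n)/(\log n)^2\bigr)$.

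The only step requiring any work is the bound on $\sum_{p^\alpha \le n}|f_i(p^\alpha)|\rho(p^\alpha)$. The plan is to use $|f_i(p^\alpha)|\le C_{f_i}(n)$ and $\rho(p^\alpha) \le 1/p^\alpha$ to get
\[
\sum_{p^\alpha \le n}|f_i(p^\alpha)|\rho(p^\alpha) \le C_{f_i}(n)\sum_{p^\alpha\le n}\frac 1{p^\alpha} \le C_{f_i}(n)\biggl(\sum_{p\le n}\frac 1p + \sum_p\sum_{\alpha\ge 2}\frac 1{p^\alpha}\biggr).
\]
The first inner sum is $\log\log n + O(1)$ by Mertens' theorem, and the second converges. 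Hence the whole thing is $\ll C_{f_i}(n)\log\log n$.

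Plugging this in shows that each cross term is $\ll C_{f_1}(n)C_{f_2}(n)(\log\log n)/\log n$, which dominates the quadratic error, and the lemma follows. There is no substantive obstacle here; the proof is a routine expansion, and the only subtlety is recognizing that the Mertens bound on $\sum_{p^\alpha\le n}1/p^\alpha$ is what produces the extra $\log\log n$ factor in the stated error term (compared with the bare $1/\log n$ in Lemma~\ref{lemma:kubilius_special_bounds}).
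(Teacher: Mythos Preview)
Your proposal is correct and essentially identical to the paper's proof: both multiply the two expansions from Lemma~\ref{lemma:kubilius_special_bounds}, bound the cross terms via $|f_i(p^\alpha)|\le C_{f_i}(n)$, and then use Mertens' theorem to control $\sum_{p^\alpha\le n}\rho(p^\alpha)\ll\log\log n$. The only cosmetic difference is that the paper computes $\sum_{\alpha\ge1}\rho(p^\alpha)=\frac1p$ exactly (telescoping) rather than using $\rho(p^\alpha)\le 1/p^\alpha$, but this does not affect the argument.
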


\begin{proof}
By Lemma~\ref{lemma:kubilius_special_bounds},
\begin{align*}
\Ex_n (f_1(m))\Ex_n (f_2(m))
&= \bigg(\sum_{p^\alpha \le n} f_1(p^\alpha)\rho(p^\alpha) + O\bigg(\frac{C_{f_1}(n)}{\log n}\bigg)\bigg)
    \bigg(\sum_{q^\beta \le n} f_2(q^\beta)\rho(q^\beta) + O\bigg(\frac{C_{f_2}(n)}{\log n}\bigg)\bigg) \\
&= \sum_{p^\alpha \le n} \sum_{q^\beta \le n} f_1(p^\alpha)f_2(q^\beta)\rho(p^\alpha)\rho(q^\beta) + O\biggl( \frac{C_{f_2}(n)}{\log n} \sum_{p^\alpha\le n} |f_1(p^\alpha)|\rho(p^\alpha) \\
&\qquad{} + \frac{C_{f_1}(n)}{\log n} \sum_{q^\beta\le n} |f_2(p^\alpha)|\rho(q^\beta) + \frac{C_{f_1}(n)C_{f_2}(n)}{\log^2 n} \biggr) \\
&= \sum_{p^\alpha \le n} \sum_{q^\beta \le n} f_1(p^\alpha)f_2(q^\beta)\rho(p^\alpha)\rho(q^\beta) + O\biggl( \frac{C_{f_1}(n)C_{f_2}(n)}{\log n} \sum_{p^\alpha\le n} \rho(p^\alpha) \biggr).
\end{align*}
The lemma now follows since
\[
\sum_{p^\alpha\le n} \rho(p^\alpha) \le  \sum_{p\le n} \sum_{\alpha=1}^\infty \rho(p^\alpha) = \sum_{p\le n} \frac1p \ll \log\log n.
\qedhere
\]
\end{proof}

\begin{lemma}
\label{lemma:kubilius_special_finite_variance}
Uniformly for all additive functions $f_1$ and $f_2$,
\[
\Ex_n(f_1(m)f_2(m)) - \Ex_n(f_1(m))\Ex_n(f_2(m)) = \sum_{p^\alpha \le n} \frac{f_1(p^\alpha)f_2(p^\alpha)}{p^\alpha} + O\big(C_{f_1}(n)C_{f_2}(n)\big).
\]
\end{lemma}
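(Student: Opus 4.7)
The plan is to subtract the formulas from Lemmas~\ref{lemma:ksfv1} and~\ref{lemma:ksfv2}. The desired diagonal main term $\sum_{p^\alpha \le n} f_1(p^\alpha)f_2(p^\alpha)/p^\alpha$ appears directly in Lemma~\ref{lemma:ksfv1}, and the error terms combine to $O(C_{f_1}(n)C_{f_2}(n))$, noting that Lemma~\ref{lemma:ksfv2}'s error carries an additional factor of $\log\log n/\log n \ll 1$. The two ``double'' main terms differ only in their region of summation (the sum over $p^\alpha q^\beta \le n$ versus the sum over $p^\alpha \le n$ and $q^\beta \le n$ independently), so their difference is the ``forbidden-pair'' sum
\[
S = \sum_{\substack{p^\alpha, q^\beta \text{ prime powers} \\ p^\alpha,\, q^\beta \le n,\ p^\alpha q^\beta > n}} f_1(p^\alpha) f_2(q^\beta) \rho(p^\alpha) \rho(q^\beta).
\]

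The main task is therefore to show $|S| \ll C_{f_1}(n)C_{f_2}(n)$, which reduces via the crude bound $|f_i(p^\alpha)| \le C_{f_i}(n)$ to the uniform estimate
\[
\sigma_n := \sum_{\substack{p^\alpha, q^\beta \text{ prime powers} \\ p^\alpha,\, q^\beta \le n,\ p^\alpha q^\beta > n}} \rho(p^\alpha) \rho(q^\beta) \ll 1.
\]
I plan to establish this via the Dirichlet hyperbola identity
\[
\sigma_n = G(n)^2 - \biggl( 2 \sum_{u \le \sqrt n,\, u \text{ prime power}} \rho(u) G(n/u) - G(\sqrt n)^2 \biggr),
\]
where $G(x) := \sum_{u \le x,\, u \text{ prime power}} \rho(u) = \log\log x + C + O(1/\log x)$ is a Mertens-type asymptotic (obtained from the usual Mertens estimate plus the convergent contribution of higher prime powers). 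Expanding each of the three pieces using this asymptotic, and analyzing the auxiliary sum $J := \sum_{u \le \sqrt n} \rho(u) \log(1 - \log u/\log n)$ by partial summation under the substitution $\theta = \log u/\log n$ (which transforms $J$ into a convergent dilogarithm integral), one finds that the $(\log\log n)^2$ and $\log\log n$ contributions cancel exactly across the three pieces. What remains is a bounded constant, yielding $\sigma_n = O(1)$ (in fact $\sigma_n \to \pi^2/6$).

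The main obstacle is the careful bookkeeping of lower-order terms in the three asymptotic expansions: a naive estimate on any individual piece of the hyperbola identity would leave an $O(\log\log n)$ remainder and miss the crucial cancellation that makes $\sigma_n$ bounded. Once $\sigma_n \ll 1$ is secured, the inequality $|S| \le C_{f_1}(n)C_{f_2}(n)\sigma_n \ll C_{f_1}(n)C_{f_2}(n)$ follows at once and completes the proof.
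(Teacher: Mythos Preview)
Your proposal is correct and follows essentially the same route as the paper: subtract Lemma~\ref{lemma:ksfv2} from Lemma~\ref{lemma:ksfv1}, obtain the diagonal main term plus the forbidden-region double sum, and then show that $\sigma_n = \sum_{p^\alpha,q^\beta\le n,\ p^\alpha q^\beta>n} \rho(p^\alpha)\rho(q^\beta)$ is bounded. The only difference is that the paper dispatches this last step by citing \cite[equation~(3.7)]{kubilius}, whereas you supply a self-contained hyperbola/Mertens argument; your version is more work but avoids the external reference.
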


\begin{proof}
Subtracting the formula in Lemma~\ref{lemma:ksfv2} from the formula in Lemma~\ref{lemma:ksfv1} yields
\begin{align*}
\Ex_n(f_1(m)f_2(m)) &{} - \Ex_n(f_1(m))\Ex_n(f_2(m)) \\
&= \sum_{p^\alpha\le n} \frac{f_1(p^\alpha)f_2(p^\alpha)}{p^\alpha} - \sum_{\substack{p^\alpha\le n \\ q^\beta\le n \\ p^\alpha q^\beta>n}} f_1(p^\alpha)f_2(q^\beta)\rho(p^\alpha)\rho(q^\beta) + O\big(C_{f_1}(n)C_{f_2}(n)\big) \\
&= \sum_{p^\alpha\le n} \frac{f_1(p^\alpha)f_2(p^\alpha)}{p^\alpha} + O\biggl( C_{f_1}(n)C_{f_2}(n) \sum_{\substack{p^\alpha\le n \\ q^\beta\le n \\ p^\alpha q^\beta>n}} \frac1{p^\alpha} \frac1{q^\beta} + C_{f_1}(n)C_{f_2}(n) \biggr).
\end{align*}
But by~\cite[equation~(3.7)]{kubilius}, the sum in the error term is bounded, which completes the proof of the proposition.
\end{proof}

The main distributional results of Kubilius hold for a specific class of additive functions, which we now define.

\begin{defn}
\label{defn:kubilius_H}
The class~$\cH$ is the set of real-valued additive functions $f(n)$
for which $\lim_{n\to\infty} D_f(n) = \infty$ and for which
there exists an unbounded increasing function $r(n) = n^{o(1)}$ such that
$\lim_{n\to\infty} {D_f(r)}/{D_f(n)} = 1$.
For strongly additive functions, it is equivalent to stipulate that $\lim_{n\to\infty} B_f(n) = \infty$ and $\lim_{n\to\infty} B_f(r)/B_f(n) = 1$.
\end{defn}

The following proposition gives the joint limiting distribution of a finite number of additive functions under the
same scaling.
Define the vector
\begin{equation} \label{Lnp def}
\mathbf L_n(p) = \biggl(\frac{f_1(p)}{B_{f_1}(n)}, \ldots, \frac{f_s(p)}{B_{f_s}(n)}\biggr).
\end{equation}

\begin{prop}[Theorem 8.1 of \cite{kubilius}]
\label{prop:kubilius_multivariable}
If $f_1(m),\ldots,f_s(m)\in\cH$ are strongly additive functions, then the following are equivalent:
\begin{enumerate}
\item	
$\displaystyle F(x_1,\ldots,x_s) = \lim_{n\to\infty} P_n \bigg( \frac{f_1(m)-A_{f_1}(n)}{B_{f_1}(n)} < x_1,\, \ldots,\, \frac{f_s(m)-A_{f_s}(n)}{B_{f_s}(n)} < x_s \bigg)$
is the cumulative distribution function of a random vector~$\mathbf Y$;
\item There exists a nonnegative completely additive set
function~$Q(\mathcal E)$ defined for all Borel sets of $\R^s$ not containing the origin $\mathbf 0$,
a constant vector~$\mathbf\Gamma$, and
a nonnegative quadratic form~$\sigma(\mathbf T)$ in~$s$ variables
that satisfy:
\begin{enumerate}
\item
$\displaystyle \lim_{n\to\infty} \sum_{\substack{p \le n \\ \mathbf L_n(p) \in \mathcal I}} \frac 1p = Q(\mathcal I)$
for every interval of continuity~$\mathcal I$ of~$Q(\mathcal E)$ with $\mathbf 0 \not\in \bar{\mathcal I}$;
\item	$\displaystyle \lim_{n\to\infty} \sum_{p \le n} \frac{|\mathbf L_n(p)|^2}{p(1+|\mathbf L_n(p)|^2)} \mathbf L_n(p) = -\mathbf\Gamma$;
\item	$\displaystyle \lim_{\ep\to 0^+} \lim_{n\to\infty} \sum_{\substack{p \le n \\ |\mathbf L_n(p)| < \ep}} \frac{(\mathbf T \cdot \mathbf L_n(p))^2}p = \sigma(\mathbf T). $
\end{enumerate}
\end{enumerate}
When these conditions hold, the characteristic function $\chi_{\mathbf Y}(\mathbf T)$ of~$Y$ satisfies
\begin{equation} \label{char function before 0s}
\log \chi_{\mathbf Y}(\mathbf T) = i\mathbf\Gamma\cdot \mathbf T - \frac{\sigma(\mathbf T)}2+ \int_{|\mathbf X| > 0} \bigg(e^{i \mathbf T\cdot \mathbf X}-1-\frac{i\mathbf T\cdot \mathbf X}{1+|\mathbf X|^2}\bigg) \, dQ.
\end{equation}
\end{prop}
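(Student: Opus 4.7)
The plan is to follow the Kubilius probabilistic model in the multivariate setting, combined with the classical Lévy--Khinchine limit theorem for triangular arrays of independent random vectors. Since this is stated as Theorem~8.1 of Kubilius, my proof would parallel his univariate argument (Chapters~4--8 of~\cite{kubilius}) with vector-valued increments; the substantive extra content is just the replacement of real-valued limit theorems for sums of independent random variables with their $\R^s$-valued analogues.

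The first step is truncation. By hypothesis each $f_j \in \cH$, so there exists $r = r(n) = n^{o(1)}$ with $B_{f_j}(r)/B_{f_j}(n) \to 1$ for every $j$. Define the truncated strongly additive function $f_j^\ast(m) = \sum_{p\mid m,\, p\le r} f_j(p)$. Lemma~\ref{lemma:kubilius_special_finite_variance} applied to $f_j - f_j^\ast$ gives
\[
\Ex_n\bigl((f_j(m) - f_j^\ast(m))^2\bigr) \ll B_{f_j}(n)^2 - B_{f_j}(r)^2 + O(C_{f_j}(n)^2) = o(B_{f_j}(n)^2),
\]
so by Chebyshev the truncation error is negligible after normalization by $B_{f_j}(n)$. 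Thus it suffices to establish the equivalence in~(a)/(b) with $f_j$ replaced by $f_j^\ast$.

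The second step is the Kubilius model. For $m$ chosen uniformly from $\{1,\ldots,n\}$, the indicators $\{\mathbf 1[p \mid m]\}_{p \le r}$ are approximated, in total variation, by independent Bernoulli random variables $\xi_p$ with $\Pr(\xi_p=1) = 1/p$, with an error tending to~$0$ since $r = n^{o(1)}$ (this is the content of Kubilius's fundamental lemma, e.g.\ \cite[Lemma~1.4]{kubilius}, and transfers unchanged to vector-valued additive statistics). Hence the joint distribution of the normalized vector $\bigl((f_j^\ast(m) - A_{f_j}(n))/B_{f_j}(n)\bigr)_{j=1}^s$ is, up to $o(1)$ error, that of the sum of the independent $\R^s$-valued random vectors $\xi_p \mathbf L_n(p)$ suitably centered, where $\mathbf L_n(p)$ is as in~\eqref{Lnp def}.

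The final step is to invoke the multivariate Lévy--Khinchine/Gnedenko--Kolmogorov limit theorem for triangular arrays of centered independent summands (see Gnedenko--Kolmogorov, \S25). Conditions (b)(i), (b)(ii), (b)(iii) are exactly the standard three hypotheses: (i) is convergence of the Lévy spectral measure of the large jumps, (ii) is convergence of the normalized centering (the truncation weight $|\mathbf L_n(p)|^2/(1+|\mathbf L_n(p)|^2)$ is the standard bounded-truncation factor compatible with the centerings $A_{f_j}(n)$), and (iii) is the variance contribution of the small jumps, giving the Gaussian component with quadratic form $\sigma(\mathbf T)$. The equivalence of (a) and (b), and the explicit characteristic function~\eqref{char function before 0s}, then follow from the vector Lévy--Khinchine formula. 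I expect the main obstacle to be tracking the centering carefully: because individual summands are not symmetric and $\sum_p f_j(p)/p$ need not converge absolutely, one must verify that the centering in the Bernoulli model is asymptotically equivalent to $A_{f_j}(n)$ up to $o(B_{f_j}(n))$, which is precisely what condition~(b)(ii) encodes and what dictates the particular form of $\mathbf \Gamma$ appearing in the limit.
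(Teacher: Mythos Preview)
The paper does not prove this proposition at all: it is stated with the attribution ``Theorem~8.1 of~\cite{kubilius}'' and used as a black box, with only a clarifying remark afterward about exceptional points and intervals of continuity. Your sketch is a reasonable outline of how Kubilius himself proves the result (truncation via the class~$\cH$ hypothesis, passage to the independent Bernoulli model through the fundamental lemma, then the multivariate L\'evy--Khinchine limit theorem for triangular arrays), so you are not diverging from the intended argument but rather supplying what the paper simply imports.
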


\begin{remark}
In its full generality, the statement of Proposition~\ref{prop:kubilius_multivariable} calls for some clarifications.
An {\em exceptional point} for a vector-valued random variable is any point at which any of the random vector's component's cumulative distribution function is discontinuous.
Since a cumulative distribution function can have exceptional points, in part~(a) the limit defining $F(x_1,\dots,x_s)$ is only required to exist for nonexceptional points of~$F$.

Also, in part~(b)(i), an {\em interval of continuity} $\mathcal I$ for a function~$Q$ on the Borel sets in $\R^s$ is an $s$-dimensional rectangle with the property that $\lim_{n\to\infty} Q(\mathcal A_n) = Q(\mathcal I)$ for any ascending chain of Borel sets $\mathcal A_1 \subseteq \mathcal A_2 \subseteq \cdots$ where $\bigcup_{n=1}^\infty \mathcal A_i = \mathcal I$,
and similarly that $\lim_{n\to\infty} Q(\mathcal B_n) = Q(\mathcal I)$ for any descending chain of Borel sets $\mathcal B_1 \supseteq \mathcal B_2 \supseteq \cdots$ where $\bigcap_{n=1}^\infty \mathcal B_i = \mathcal I$.

All this being said, our application of Proposition~\ref{prop:kubilius_multivariable} (see
Proposition~\ref{lemma:omega_diff_multivariable} below) will yield a multivariate normal distribution; in particular, there will be no exceptional points, and every interval will be an interval of continuity, in that situation.
\end{remark}

We have now concluded the adaptation of results from~\cite{kubilius}; in the next section we apply these results to the additive functions relevant to our work.

\subsection{Application to the relevant additive functions} \label{omega S minus T section}

We now use the results of the previous section to establish multivariate Erd\H os--Kac theorems for vector-valued functions whose components are of the form $\omega(m;S-T)$.
We first set out some convenient notation for our analysis of the additive functions from Definition~\ref{defn:omega_functions} and their associated quantities from Definition~\ref{defn:kubilius_functions}.

\begin{defn}  \label{defn:indicator and sdif}
Given a set~$S$ of primes, define its indicator function
\[
1_S(p) = \begin{cases}
1, &\text{if } p\in S, \\
0, & \text{if } p\notin S.
\end{cases}
\]
We write $A_\omega(x;S)$ for $A_{\omega(m;S)}(x)$ and similarly for the other quantities from Definition~\ref{defn:kubilius_functions}, so that in particular
\[
A_\omega(x;S) = B_\omega(x;S)^2 = \sum_{p \le x} \frac{1_S(p)}p
\]
When $S=\{p\equiv a\mod q\}$, we similarly write $A_\omega(x;q,a)$ for $A_{\omega(m;q,a)}(x)$ and so on.

Also, given two sets~$S$ and~$T$, define their symmetric difference $S\sdif T = (S\setminus T) \cup (T\setminus S)$. We also write $A_\omega(x;S-T)$ for $A_{\omega(m;S-T)}(x)$ and so on, so that
\begin{equation} \label{A and B for S-T}
A_\omega(x;S-T) = A_\omega(x;S)-A_\omega(x;T) \quad\text{and}\quad
B_\omega(x;S-T)^2 = \sum_{p \le x} \frac{1_{S \sdif T}(p)}p .
\end{equation}
\end{defn}

\begin{lemma}
\label{lemma:omega_finite_mean_variance}
Uniformly for all sets~$S$ and~$T$ of prime numbers,
\begin{align*}
\Ex_n \bigl( \omega(m;S-T) \bigr) &= A_\omega(n;S-T) + O(1).
\end{align*}
\end{lemma}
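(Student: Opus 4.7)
The plan is to recognize $\omega(m;S-T)$ as a strongly additive function with uniformly bounded values, and then invoke the second inequality of Lemma~\ref{lemma:kubilius_special_bounds}. By Definition~\ref{defn:omega_functions}, $\omega(m;S-T) = \omega(m;S) - \omega(m;T)$, and each summand is strongly additive (with value $1_S(p)$ or $1_T(p)$ on every prime power of $p$). Hence $f(m) := \omega(m;S-T)$ is itself additive, taking the value $f(p^\alpha) = 1_S(p) - 1_T(p) \in \{-1,0,1\}$ on every prime power.

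In particular, in the notation of Definition~\ref{defn:kubilius_functions}, we have $C_f(n) = \max\{|f(p^\alpha)| \colon p^\alpha \le n\} \le 1$ uniformly in $S$, $T$, and $n$. The second bound of Lemma~\ref{lemma:kubilius_special_bounds} then yields
\[
\Ex_n(f(m)) = A_f(n) + O\bigl(C_f(n)\bigr) = A_f(n) + O(1),
\]
with an absolute implicit constant. On the other hand, directly from the definitions,
\[
A_f(n) = \sum_{p \le n} \frac{f(p)}{p} = \sum_{p\le n}\frac{1_S(p)-1_T(p)}{p} = A_\omega(n;S) - A_\omega(n;T) = A_\omega(n;S-T),
\]
the last equality being the identity recorded in~\eqref{A and B for S-T}. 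Combining the two displayed equations gives the claimed formula. There is no substantive obstacle here; the only point worth verifying is the uniformity of the error term, which follows because $C_f(n)$ is bounded by an absolute constant regardless of the choice of $S$ and $T$.
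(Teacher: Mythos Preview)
Your proof is correct and follows essentially the same route as the paper: bound $C_\omega(n;S-T)\le 1$ and apply the second estimate of Lemma~\ref{lemma:kubilius_special_bounds}. The paper's version is terser (it does not spell out that $A_f(n)=A_\omega(n;S-T)$), but the argument is identical.
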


\begin{proof}
Note that $0 \le C_\omega(n;S-T) \le 1$ for any sets~$S$ and~$T$.
Thus by Lemma~\ref{lemma:kubilius_special_bounds},
\[ \Ex_n \bigl( \omega(m;S-T) \bigr) = A_\omega(n;S-T) + O\bigl( C_\omega(n;S-T)^2 \bigr) = A_\omega(n;S-T) + O(1), \]
where the implicit constant is absolute.
\end{proof}

\begin{lemma}
\label{lemma:sum_arithmetic_progression_pnt}
Uniformly for relatively prime integers~$a$ and $q \ge 2$ and real numbers $x \ge 3$,
\[
A_\omega(x;q,a) = B_\omega(x;q,a)^2 = \frac{\log\log x}{\varphi(q)} + O\biggl( \frac1{p(q,a)} + \frac{\log q}{\varphi(q)} \biggr),
\]
where $p(q,a)$ is the smallest prime congruent to $a\mod q$. In particular,
\[
A_\omega(x;q,1) = B_\omega(x;q,1)^2 = \frac{\log\log x}{\varphi(q)} + O\biggl( \frac{\log q}{\varphi(q)} \biggr).
\]
\end{lemma}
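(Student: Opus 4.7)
The first equality is immediate: the indicator function $1_{p \equiv a \pmod q}$ takes only the values $0$ and $1$, so its square equals itself, giving $A_\omega(x;q,a) = B_\omega(x;q,a)^2$ directly from the definitions in \eqref{A and B for S-T}. What remains is the asymptotic, and the plan is a standard Mertens-type argument in arithmetic progressions combined with partial summation.

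The first step is to establish uniformly for $(a,q)=1$ and $x \ge q$ the estimate
\[
\sum_{\substack{p \le x \\ p \equiv a \pmod q}} \frac{\log p}{p} = \frac{\log x}{\varphi(q)} + O\!\left(\frac{\log q}{\varphi(q)}\right).
\]
This classical uniform Mertens-type theorem for arithmetic progressions follows from Dirichlet character orthogonality: one writes the sum as $\frac{1}{\varphi(q)}\sum_\chi \bar\chi(a) \sum_{n \le x} \chi(n)\Lambda(n)/n$ (modulo an $O(1)$ correction from higher prime powers). The principal character contributes the main term after standard Mertens, while the non-principal characters are handled by elementary character-sum estimates for $\sum_{n\le x}\chi(n)\Lambda(n)/n$ that deliver the $O(\log q/\varphi(q))$ error uniformly in $q$, without any appeal to Siegel--Walfisz.

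Next I would split the range at the threshold $p = q$. For $p \le q$, a trivial bound on reciprocals of integers in the arithmetic progression $a \pmod q$ yields
\[
\sum_{\substack{p \le q \\ p \equiv a \pmod q}} \frac{1}{p} \ll \frac{1}{p(q,a)} + \frac{1}{\varphi(q)},
\]
since after the initial term $1/p(q,a)$ the remaining reciprocals are bounded by $1/(kq)$ for $k = 1,2,\ldots$. For $q < p \le x$, apply Abel summation with $S(t) = \sum_{p \le t,\, p \equiv a(q)} \log p / p$, obtaining
\[
\sum_{\substack{q < p \le x \\ p \equiv a \pmod q}} \frac{1}{p} = \frac{S(x)}{\log x} - \frac{S(q)}{\log q} + \int_q^x \frac{S(t)}{t \log^2 t}\,dt.
\]
Substituting the asymptotic for $S(t)$ yields the main term $\frac{\log\log x - \log\log q}{\varphi(q)}$ and an error of $O(1/\varphi(q))$ from the bounded integral $\int_q^x dt/(t\log^2 t)$. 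Adding the contribution from $p \le q$ and absorbing $\log\log q/\varphi(q)$ into the allowed $\log q/\varphi(q)$ gives the claimed asymptotic for $x \ge q$; when $x < q$, only the small-range estimate is needed and $\log\log x / \varphi(q) \ll \log q / \varphi(q)$ already fits inside the claimed error. The ``in particular'' statement for $a=1$ then follows since $p(q,1) > q \ge \varphi(q)$, so $1/p(q,1) \ll \log q/\varphi(q)$.

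The main obstacle is securing the uniformity in $q$ for the initial Mertens-type estimate: invoking Siegel--Walfisz would give a much stronger error but with ineffective constants, so one must instead either cite or reprove an elementary Mertens-type theorem for arithmetic progressions whose dependence on $q$ is explicit and polynomial in $\log q$ (such results are due, for example, to Williams and to Pollack).
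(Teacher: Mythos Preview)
Your approach is sound in structure and differs from the paper only in that the paper gives no argument at all: it simply cites the asymptotic formula as \cite[Remark~1 after Theorem~1]{p77} (Pomerance), together with the observation that $p(q,1)\ge q+1>\varphi(q)$ for the ``in particular'' clause. So you are sketching a proof of what the paper treats as a black box.

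There is, however, one genuine slip in your sketch. The Mertens-type input you write down,
\[
\sum_{\substack{p\le x\\ p\equiv a\ (q)}}\frac{\log p}{p}=\frac{\log x}{\varphi(q)}+O\!\left(\frac{\log q}{\varphi(q)}\right)\quad\text{uniformly for }x\ge q,
\]
is false as stated. Take $q$ a large odd prime, $a=2$, and $x=q$: the left side equals $\frac{\log 2}{2}$, while the right side is $O(\log q/(q-1))\to 0$. The trouble is exactly the possible small prime $p=a$ in the progression, whose contribution is of size $\log p(q,a)/p(q,a)$ and is not absorbed by $\log q/\varphi(q)$. This is, of course, the very source of the $1/p(q,a)$ term in the lemma itself.

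The fix is painless and your splitting already points to it: state the Mertens-type estimate only for the truncated sum $\tilde S(t)=\sum_{q<p\le t,\,p\equiv a(q)}\log p/p$, for which the claimed error \emph{is} attainable elementarily (this is essentially what Pomerance, Williams, and Pollack prove), and run Abel summation with $\tilde S$ in place of $S$. The small-prime range $p\le q$ you already handle separately, and that is where the $1/p(q,a)$ emerges. With this correction your outline is complete and matches the shape of the cited arguments.
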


\begin{proof}
By definition,
\[
A_\omega(x;q,a) = B_\omega(x;q,a)^2 = \sum_{\substack{p \le x \\ p \equiv a \mod q}} \frac 1p = \frac{\log\log x}{\varphi(q)} + O\biggl( \frac1{p(q,a)} + \frac{\log q}{\varphi(q)} \biggr),
\]
where the last asymptotic formula appears in~\cite[Remark~1 after Theorem~1]{p77}. The second assertion follows since $p(q,1) \ge q+1 > \varphi(q)$.
\end{proof}

\begin{defn}  \label{defn:Dirichlet density}
\definitionDd
Note that for $0<\mu\le 1$, we have $\den(S) = \mu$ if and only if $A_\omega(x;S) = B_\omega(x;S)^2 \sim \mu\log\log x$. In particular, Dirichlet showed that $\den\bigl( \{p\equiv a\mod q\} \bigr) = \frac1{\varphi(q)}$ when~$a$ and $q\ge2$ are relatively prime.
\end{defn}

The following lemma, used in the proof of Proposition~\ref{lemma:omega_diff_multivariable} below, will help us practice the notation just introduced.

\begin{lemma} \label{four sets lemma}
Let $S_1$, $S_2$, $T_1$, and $T_2$ be sets of primes such that each $S_i\cap T_j$ has a Dirichlet density. Then
\begin{multline*}
\lim_{n\to\infty} \frac1{\log\log n} \sum_{p \le n} \frac{(1_{S_1}(p)-1_{T_1}(p))(1_{S_2}(p)-1_{T_2}(p))}p  \\
= \den(S_1\cap S_2) + \den(T_1\cap T_2) - \den(S_1\cap T_2) - \den(T_1\cap S_2).
\end{multline*}
\end{lemma}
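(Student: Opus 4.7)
The plan is almost purely algebraic. I would begin by expanding the product inside the sum using the pointwise identity $1_A(p)\,1_B(p) = 1_{A\cap B}(p)$ for indicator functions of sets of primes, which yields
\[
(1_{S_1}(p)-1_{T_1}(p))(1_{S_2}(p)-1_{T_2}(p)) = 1_{S_1\cap S_2}(p) + 1_{T_1\cap T_2}(p) - 1_{S_1\cap T_2}(p) - 1_{T_1\cap S_2}(p).
\]
After dividing by $p$ and summing over $p \le n$, the sum on the left-hand side of the lemma splits as a linear combination of four sums, one of the form $\sum_{p\le n} 1_A(p)/p$ for each of the pairwise intersections $A$ appearing on the right above.

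Next I would appeal directly to the remark following Definition~\ref{defn:Dirichlet density}, which records that $\frac{1}{\log\log n}\sum_{p\le n} 1_A(p)/p \to \den(A)$ whenever the Dirichlet density $\den(A)$ exists. Applying this to each of the four pieces separately and combining the resulting limits by linearity yields exactly the claimed right-hand side. No analytic estimates beyond the definition of Dirichlet density are required, so there is no real obstacle to the argument.

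The one point worth a brief comment is the scope of the hypothesis. The conclusion features $\den(S_1\cap S_2)$ and $\den(T_1\cap T_2)$ alongside $\den(S_1\cap T_2)$ and $\den(T_1\cap S_2)$, so I would read the stated assumption ``each $S_i\cap T_j$ has a Dirichlet density'' as shorthand covering all four pairwise intersections that arise in the expansion above (and in the intended application, where the sets~$S_i$ and~$T_j$ are unions of arithmetic progressions, this holds automatically via Dirichlet's theorem). Under that reading, the two elementary steps above complete the proof.
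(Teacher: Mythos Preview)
Your proposal is correct and matches the paper's own proof essentially line for line: expand the product, use $1_A\cdot 1_B = 1_{A\cap B}$, split into four sums, and invoke the definition of Dirichlet density. Your remark about the hypothesis is also apt---the paper's literal phrasing ``each $S_i\cap T_j$'' does not cover $S_1\cap S_2$ and $T_1\cap T_2$, and indeed the later application (Proposition~\ref{lemma:omega_diff_multivariable}) states the hypothesis more carefully to include those intersections.
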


\begin{proof}
We have simply
\begin{align*}
\lim_{n\to\infty} {} & \frac1{\log\log n} \sum_{p\le n} \frac{(1_{S_1}(p)-1_{T_1}(p))(1_{S_2}(p)-1_{T_2}(p))}p \\
&= \lim_{n\to\infty} \frac1{\log\log n} \sum_{p \le n} \frac{1_{S_1}(p)1_{S_2}(p)+1_{T_1}(p)1_{T_2}(p)-1_{S_1}(p)1_{T_2}(p)-1_{T_1}(p)1_{S_2}(p)}p \\
&= \lim_{n\to\infty} \frac1{\log\log n} \biggl( \sum_{p \le n} \frac{1_{S_1 \cap S_2}(p)}p + \sum_{p \le n} \frac{1_{T_1\cap T_2}(p)}p + \sum_{p \le n} \frac{1_{S_1\cap T_2}(p)}p + \sum_{p \le n} \frac{1_{T_1\cap S_2}(p)}p \biggr) \\
&= \den(S_1\cap S_2) + \den(T_1\cap T_2) - \den(S_1\cap T_2) - \den(T_1\cap S_2)
\end{align*}
by the definition of Dirichlet density.
\end{proof}

As the results of Kubilius
apply primarily to additive functions in the class~$\cH$ from Definition~\ref{defn:kubilius_H}, we need to understand when $\omega(m;S-T)$ lies in~$\cH$.

\begin{lemma}
\label{lemma:omega_diff_H}
Let $S$ and $T$ be sets of primes.
\begin{enumerate}
\item If $S \sdif T$ has positive Dirichlet density, then $\omega(m;S-T)\in\cH$.
\item If $S$ has positive Dirichlet density, then $\omega(m;S)\in\cH$.
\item For any relatively prime integers $a$ and $q\ge2$, we have $\omega(m;q,a)\in\cH$.
\end{enumerate}
\end{lemma}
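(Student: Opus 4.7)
The plan is to verify the class membership condition from Definition~\ref{defn:kubilius_H} directly for each function. Each of $\omega(m;S-T)$, $\omega(m;S)$, and $\omega(m;q,a)$ is strongly additive, since on any prime power $p^k$ the value reduces to $1_S(p)-1_T(p)$, $1_S(p)$, or $1_{\{p\equiv a\bmod q\}}(p)$ respectively, which depends only on~$p$. Hence by the last sentence of Definition~\ref{defn:kubilius_H} it suffices to produce an unbounded increasing function $r(n) = n^{o(1)}$ satisfying $B_f(n)\to\infty$ and $B_f(r(n))/B_f(n) \to 1$.

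For part~(a), let $\delta = \den(S \sdif T) > 0$. Equation~\eqref{A and B for S-T} gives $B_\omega(x;S-T)^2 = \sum_{p \le x} 1_{S \sdif T}(p)/p$, and by Definition~\ref{defn:Dirichlet density} this sum is asymptotic to $\delta\log\log x$, so in particular $B_\omega(n;S-T)\to\infty$. I would then take $r(n) = \exp\bigl((\log n)/\log\log n\bigr)$, which is eventually increasing and unbounded, and satisfies $r(n) = n^{o(1)}$ because $(\log n)/\log\log n = o(\log n)$. Since $\log\log r(n) = \log\log n - \log\log\log n \sim \log\log n$, applying the asymptotic formula at both arguments yields $B_\omega(r(n);S-T)^2/B_\omega(n;S-T)^2 \to 1$, and taking square roots completes the verification.

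Part~(b) then follows as the special case $T = \emptyset$ of part~(a), since $\omega(m;\emptyset) \equiv 0$ forces $\omega(m;S-\emptyset) = \omega(m;S)$ while $S \sdif \emptyset = S$ has positive Dirichlet density by assumption. Part~(c) is in turn the further specialization $S = \{p\equiv a\bmod q\}$ of part~(b), using Dirichlet's theorem (as recorded at the end of Definition~\ref{defn:Dirichlet density}) to provide the positive density $1/\varphi(q)$. There is no substantive obstacle in this lemma; the only delicate point is calibrating $r(n)$ --- slow enough that $\log r(n)/\log n \to 0$, yet fast enough that $\log\log r(n)/\log\log n \to 1$ --- and the choice $r(n) = \exp((\log n)/\log\log n)$ is well-suited to both demands.
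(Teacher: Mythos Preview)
Your proof is correct and essentially identical to the paper's: you verify strong additivity, use the asymptotic $B_\omega(x;S-T)^2 \sim \den(S\sdif T)\log\log x$, and choose $r(n)=\exp((\log n)/\log\log n)=n^{1/\log\log n}$, which is exactly the function the paper uses. Parts~(b) and~(c) are deduced from~(a) in the same way as in the paper.
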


\begin{proof}
Certainly $\omega(m;S-T)$ is a strongly additive function by definition. Moreover,
\[
B_\omega(x;S-T) = \bigg( \sum_{p \le x} \frac{1_{S \sdif T}(p)}p \bigg)^{1/2} \sim \bigl( \den(S \sdif T) \log\log x \bigr)^{1/2} \to \infty.
\]
Set $r = r(n) = n^{1/\log\log n}$, so that $r = n^{o(1)}$ but still $r\to\infty$. Then
\[ \bigg(\frac{B_\omega(r;S-T)}{B_\omega(n;S)}\bigg)^2 \sim \frac{\den(S \sdif T) \log\log r}{\den(S \sdif T) \log\log n}
= \frac{\log\log n - \log\log\log n}{\log \log n} \sim 1, \]
which completes the verification that $\omega(m;S-T)\in\cH$ as claimed in part~(a). Part~(b) follows from part~(a) by taking $T=\emptyset$, and part~(c) follows from part~(b) by taking $S=\{p\equiv a\mod q\}$.
\end{proof}

With these preliminary definitions and observations out of the way, we move on to statements about limiting distributions.
While the primary tool used in our proofs will concern multivariate limiting distributions, the univariate
distributions will also play a role. The limiting distribution for $\omega(m;S-T)$ can be stated as follows:

\begin{lemma}
\label{lemma:omega_diff_univariable}
Let $S$ and~$T$ be sets of primes such that $\den(S \sdif T) > 0$. Then
\begin{equation} \label{we get Phi}
\lim_{n\to\infty} P_n\bigg( \frac{\omega(m;S-T)-A_\omega(n;S-T)}{B_\omega(n;S-T)} < x \bigg) = \Phi(x),
\end{equation}
the cumulative distribution function of a standard normal random variable.
\end{lemma}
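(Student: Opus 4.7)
The plan is to apply the $s=1$ case of Kubilius's theorem (Proposition~\ref{prop:kubilius_multivariable}) to the strongly additive function $f(m)=\omega(m;S-T)$ and identify the limiting distribution as standard normal. By Lemma~\ref{lemma:omega_diff_H}(a), the hypothesis $\den(S\sdif T)>0$ guarantees $f\in\cH$, so the Kubilius representation~\eqref{char function before 0s} is available. It therefore suffices to verify conditions~(b)(i)--(b)(iii) with $Q\equiv 0$, $\Gamma=0$, and $\sigma(T)=T^2$, which together yield the characteristic function $e^{-T^2/2}$ of the standard normal.

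The observation that makes all three verifications almost immediate is that $f(p)=1_S(p)-1_T(p)\in\{-1,0,1\}$, so the normalized values $\mathbf L_n(p)=f(p)/B_\omega(n;S-T)$ satisfy $|\mathbf L_n(p)|\le 1/B_\omega(n;S-T)\to 0$ uniformly in~$p$. For~(b)(i), any interval $\mathcal I$ with $\mathbf 0\notin\bar{\mathcal I}$ is bounded away from the origin, so once~$n$ is large no $\mathbf L_n(p)$ lies in $\mathcal I$, giving $Q\equiv 0$. For~(b)(iii), the restriction $|\mathbf L_n(p)|<\varepsilon$ is eventually vacuous for any fixed $\varepsilon>0$, and using $(1_S(p)-1_T(p))^2=1_{S\sdif T}(p)$ together with~\eqref{A and B for S-T} gives
\[
\sigma(T)=\lim_{n\to\infty}\frac{T^2}{B_\omega(n;S-T)^2}\sum_{p\le n}\frac{1_{S\sdif T}(p)}{p}=T^2.
\]
For~(b)(ii), the identity $(1_S(p)-1_T(p))^3=1_S(p)-1_T(p)$ and the expansion $1/(1+\mathbf L_n(p)^2)=1+O(B_\omega(n;S-T)^{-2})$ reduce the defining sum to
\[
\frac{1}{B_\omega(n;S-T)^3}\sum_{p\le n}\frac{1_S(p)-1_T(p)}{p}+O\bigl(B_\omega(n;S-T)^{-5}\log\log n\bigr);
\]
the inner sum is $O(\log\log n)$ by the triangle inequality while $B_\omega(n;S-T)^3\asymp(\log\log n)^{3/2}$, so the whole expression is $O((\log\log n)^{-1/2})\to 0$, giving $\Gamma=0$.

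With $Q\equiv 0$, $\Gamma=0$, and $\sigma(T)=T^2$ in hand, the integral term in~\eqref{char function before 0s} vanishes and the limit distribution has characteristic function $e^{-T^2/2}$; since $\Phi$ is continuous on all of $\R$, the convergence in~\eqref{we get Phi} holds at every $x\in\R$. The main obstacle, such as it is, is not conceptual but bookkeeping: one must check that the centering $A_\omega(n;S-T)$ appearing in the statement of the lemma genuinely matches the mean used implicitly by Kubilius up to an additive error that is $o(B_\omega(n;S-T))$, which follows from Lemma~\ref{lemma:omega_finite_mean_variance} together with $B_\omega(n;S-T)\to\infty$.
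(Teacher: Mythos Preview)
Your proof is correct. Both the paper and you exploit the key observation that $|f(p)|\le 1$ forces $|\mathbf L_n(p)|\to 0$ uniformly, but you take a slightly different route through Kubilius's book: the paper applies Kubilius's \emph{univariate} Theorem~4.2, which only requires checking the single Lindeberg-type condition
\[
\lim_{n\to\infty}\frac{1}{B_\omega(n;S-T)^2}\sum_{\substack{p\le n\\ |f(p)|>\varepsilon B_\omega(n;S-T)}}\frac{f(p)^2}{p}=0,
\]
whereas you apply the multivariate Theorem~8.1 (Proposition~\ref{prop:kubilius_multivariable}) specialized to $s=1$ and verify all three conditions~(b)(i)--(iii). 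The paper's route is shorter; yours has the virtue that it is literally the $s=1$ case of the argument the paper gives later for Proposition~\ref{lemma:omega_diff_multivariable}, so it unifies the univariate and multivariate cases. One small redundancy: your final paragraph about reconciling the centering with ``the mean used implicitly by Kubilius'' is unnecessary, since the centering in Proposition~\ref{prop:kubilius_multivariable}(a) is already exactly $A_f(n)=A_\omega(n;S-T)$, so no adjustment via Lemma~\ref{lemma:omega_finite_mean_variance} is needed.
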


\begin{proof}
By equation~\eqref{A and B for S-T},
\[
B_\omega(n;S-T)^2 = \sum_{p \le n} \frac{1_{S \sdif T}(p)}p \sim \den(S \sdif T) \log\log n
\]
by Definition~\ref{defn:Dirichlet density} of Dirichlet density. In particular, for any fixed $\ep>0$, we see that $|\omega(p;S-T)| \le 1 < \ep B_\omega(n;S-T)$ once~$n$ is sufficiently large in terms of~$\ep$, and consequently
\[
\lim_{n\to\infty} \frac1{B_\omega(n;S-T)^2} \sum_{\substack{p\le n \\ |\omega(p,S-T)| > \ep B_\omega(n;S-T)}} \frac{\omega(p,S-T)^2}p = \lim_{n\to\infty} \frac1{B_\omega(n;S-T)^2} \cdot 0 = 0.
\]
Since $\omega(m;S-T)$ is strongly additive, the evaluation of the limiting cumulative distribution function~\eqref{we get Phi} now follows from~\cite[Theorem~4.2]{kubilius} (an analogue of Lindeberg's condition from probability).
\end{proof}

The multivariate version of the above lemma is essential to later proofs:

\begin{prop}
\label{lemma:omega_diff_multivariable}
Let $Q_1,\ldots,Q_s$ and $R_1,\ldots,R_s$ be sets of primes such that each $Q_i \sdif R_i$ has a nonzero Dirichlet density,
and each $Q_i \cap Q_j$, $Q_i \cap R_j$, and $R_i \cap R_j$ (including when $i=j$) has a Dirichlet density.
Then
\begin{multline} \label{big cumulative distribution}
F(x_1,\ldots,x_s) = \lim_{n\to\infty} P_n \bigg( \frac{\omega(m;Q_1-R_1)-(\den(Q_1)-\den(R_1))\log\log n}{(\den(Q_1 \sdif R_1)\log\log n)^{1/2}} < x_1, \ldots \\
\ldots, \frac{\omega(m;Q_s-R_s)-(\den(Q_s)-\den(R_s))\log\log n}{(\den(Q_s \sdif R_s)\log\log n)^{1/2}} < x_s  \bigg)
\end{multline}
is the cumulative distribution function (with domain $\R^s$) of a random vector $\mathbf X$ with characteristic function
$\chi_{\mathbf X}(t_1,\ldots,t_s) = \exp(-\frac12\sigma(t_1,\ldots,t_s))$,
where
\begin{equation} \label{quad form fraction}
\sigma(t_1,\ldots,t_s) = \sum_{i=1}^s \sum_{j=1}^s \frac{\den(Q_i \cap Q_j)+\den(R_i \cap R_j)-\den(Q_i \cap R_j)-\den(R_i \cap Q_j)} {\den(Q_i \sdif R_i)^{1/2} \den(Q_j \sdif R_j)^{1/2}} t_it_j.
\end{equation}
\end{prop}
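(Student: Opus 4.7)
The plan is to apply Kubilius's multivariate theorem (Proposition~\ref{prop:kubilius_multivariable}) to the $s$-tuple of strongly additive functions $f_i(m) = \omega(m; Q_i - R_i)$ and then to translate the conclusion through the mild change of normalization that distinguishes the statement of the proposition from Kubilius's setup. Each $f_i$ belongs to the class~$\cH$ by Lemma~\ref{lemma:omega_diff_H}(a), since $\den(Q_i \sdif R_i) > 0$ by hypothesis. Crucially, $|f_i(p)| \le 1_{Q_i \sdif R_i}(p) \le 1$ for every prime~$p$, so the vector $\mathbf{L}_n(p)$ from equation~\eqref{Lnp def} satisfies $|\mathbf{L}_n(p)| \ll_s (\log\log n)^{-1/2}$ uniformly in~$p \le n$.

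Next I would verify the three conditions of Proposition~\ref{prop:kubilius_multivariable}(b). Condition~(b)(i) is immediate from the uniform bound above: for any Borel set~$\mathcal{I}$ whose closure avoids the origin, eventually no $\mathbf{L}_n(p)$ lies in~$\mathcal{I}$, so $Q \equiv 0$. For condition~(b)(ii), we note that the integrand is bounded in norm by $|\mathbf{L}_n(p)|^3/p \ll (\log\log n)^{-1/2} \cdot |\mathbf{L}_n(p)|^2/p$; summing and using $\sum_{p\le n}|\mathbf{L}_n(p)|^2/p \le s$ (the total variance), the sum vanishes like $O((\log\log n)^{-1/2})$, so $\mathbf{\Gamma} = \mathbf{0}$. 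The main computation is condition~(b)(iii):
\[
\sum_{p \le n} \frac{(\mathbf{T}\cdot\mathbf{L}_n(p))^2}{p} = \sum_{i,j=1}^s \frac{t_i t_j}{B_{f_i}(n) B_{f_j}(n)} \sum_{p \le n} \frac{f_i(p) f_j(p)}{p} .
\]
Applying Lemma~\ref{four sets lemma} to the inner double sum (with $S_k = Q_k$ and $T_k = R_k$), together with the asymptotic $B_{f_i}(n)^2 \sim \den(Q_i \sdif R_i)\log\log n$ that follows directly from Definition~\ref{defn:Dirichlet density}, shows that this limit equals exactly the quadratic form~$\sigma(\mathbf{T})$ of equation~\eqref{quad form fraction}. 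The uniform bound $|\mathbf{L}_n(p)| = o(1)$ also renders the $|\mathbf{L}_n(p)| < \varepsilon$ truncation harmless.

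With $Q \equiv 0$ and $\mathbf{\Gamma} = \mathbf{0}$, the general formula~\eqref{char function before 0s} collapses to $\log \chi_{\mathbf{Y}}(\mathbf{T}) = -\sigma(\mathbf{T})/2$, yielding the claimed multivariate normal characteristic function. Finally I would convert Kubilius's normalization $(f_i(m)-A_{f_i}(n))/B_{f_i}(n)$ into the one stated in the proposition via Slutsky's theorem: the denominators differ by a factor tending to~$1$, and the centering discrepancy equals $A_{f_i}(n) - (\den(Q_i)-\den(R_i))\log\log n = (A_\omega(n;Q_i) - \den(Q_i)\log\log n) - (A_\omega(n;R_i) - \den(R_i)\log\log n)$. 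The anticipated main obstacle is showing that this discrepancy is $o(B_{f_i}(n)) = o((\log\log n)^{1/2})$, since a bare Dirichlet-density hypothesis does not by itself control the centering to this precision. In the applications relevant to later sections, however, each $Q_i, R_i$ is a union of residue classes modulo prime powers, and the error estimate in Lemma~\ref{lemma:sum_arithmetic_progression_pnt} supplies a bound of size $O(1)$ for the discrepancy---more than enough to justify the stated normalization.
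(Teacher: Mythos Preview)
Your approach mirrors the paper's proof exactly: apply Kubilius's multivariate theorem to $f_i=\omega(m;Q_i-R_i)$, verify (b)(i)--(iii) using the uniform bound $|\mathbf L_n(p)|\ll(\log\log n)^{-1/2}$ together with Lemma~\ref{four sets lemma}, and then collapse equation~\eqref{char function before 0s} since $Q\equiv0$ and $\mathbf\Gamma=\mathbf0$.

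You actually go further than the paper by flagging the normalization mismatch, and you are right that this is the only nontrivial point. Kubilius's conclusion is stated with centering $A_{f_i}(n)$ and scale $B_{f_i}(n)$, whereas the proposition uses $(\den(Q_i)-\den(R_i))\log\log n$ and $(\den(Q_i\sdif R_i)\log\log n)^{1/2}$. The scale swap is harmless by Slutsky since the ratio tends to~$1$, but the centering swap needs $A_\omega(n;Q_i)-\den(Q_i)\log\log n=o((\log\log n)^{1/2})$, and a bare Dirichlet-density hypothesis only gives $o(\log\log n)$. The paper's own proof simply asserts the conclusion here without addressing this step, so the gap you identify is present in the paper as well. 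Your remark that Lemma~\ref{lemma:sum_arithmetic_progression_pnt} furnishes an $O(1)$ bound for the residue-class sets that are actually used downstream is the correct practical resolution; the proposition as literally stated is slightly overclaimed in both arguments.
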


\begin{remark}
The quadratic form $\sigma(t_1,\ldots,t_s)$ can be written in the form $\mathbf XM\mathbf X^\intercal$, where $M$ is the symmetric matrix whose $(i,j)$th entry is the fraction in equation~\eqref{quad form fraction}.
Note that when $i=j$,
\begin{align*}
\den(Q_i \cap Q_j) & + \den(R_i \cap R_j) - \den(Q_i \cap R_j) - \den(R_i \cap Q_k) \\
&= \den(Q_i)+\den(R_i)-2\den(Q_i \cap R_i) \\
&= \bigl( \den(Q_i \cap R_i) + \den(Q_i \setminus R_i) \bigr) + \bigl( \den(R_i \cap Q_i) + \den(R_i \setminus Q_i) \bigr) - 2\den(Q_i \cap R_i) \\
&= \den(Q_i \setminus R_i) + \den(R_i \setminus Q_i) = \den(Q_i \sdif R_i),
\end{align*}
and thus the diagonal entries of~$M$ always equal~$1$.

Furthermore, note that the formula for $\chi_{\mathbf X}$ implies that the limiting distribution is a multivariate normal distribution.
The matrix~$M$ is the covariance matrix of the distribution, and each entry in the matrix is the covariance of the random variables that are the vector components of~$\mathbf X$.
While Kubilius~\cite[pages~136--137]{kubilius} gives a short discussion of this possibility as a special case of Proposition~\ref{prop:kubilius_multivariable}, we provide a full proof here for the sake of clarity.
\end{remark}

\begin{remark}
It is helpful to consider the special case $R_1 = \cdots = R_s = \emptyset$, so that we are examining the distribution of $\omega(m;Q_1),\ldots,\omega(m;Q_s)$ since $\omega(m;S) = \omega(m;S-\emptyset)$.
In this situation, the function in equation~\eqref{quad form fraction} simplifies to
\[ \sigma(t_1,\ldots,t_s) = \sum_{i=1}^s \sum_{j=1}^s \frac{\den(Q_i \cap Q_j)}{\den(Q_i)^{1/2}\den(Q_j)^{1/2}} t_it_j. \]
\end{remark}

\begin{proof}[Proof of Proposition~\ref{lemma:omega_diff_multivariable}]
We invoke Proposition~\ref{prop:kubilius_multivariable} with the strongly additive functions $f_i(m) = \omega(m;Q_i-R_i)$; note that these functions are indeed in the class~$\cH$ by Lemma~\ref{lemma:omega_diff_H}. In this setting, equation~\eqref{Lnp def} becomes
\begin{equation} \label{Lnp in this case}
\mathbf L_n(p) = \bigg(
\frac{1_{Q_1}(p)-1_{R_1}(p)}{B_\omega(n;Q_1-R_1)}, \ldots, 
\frac{1_{Q_s}(p)-1_{R_s}(p)}{B_\omega(n;Q_s-R_s)} \bigg)
\end{equation}
using the notation from Definition~\ref{defn:indicator and sdif}.

We need to verify the conditions in Proposition~\ref{prop:kubilius_multivariable}(b), which requires defining three objects; we use the quadratic form $\sigma(\mathbf T)$ from equation~\eqref{quad form fraction}, we define $\mathbf\Gamma = \mathbf 0$, and we define $Q(\mathcal E) = 0$ for all Borel sets $\mathcal E \subseteq \R^s$ whose closure does not contain the origin.

By equation~\eqref{A and B for S-T}, for each $1 \le i \le s$ we have
\begin{equation} \label{B- limit}
\lim_{n\to\infty} \frac{B_\omega(n;Q_i-R_i)^2}{\log\log n} = \den(Q_i \sdif R_i) > 0.
\end{equation}
If we define $K = \frac 12 \min_{1 \le i \le s} \den(Q_i \sdif R_i)$, then each ${B_\omega(n;Q_i-R_i)^2}/\log\log n \ge K$ once~$n$ is large enough, and therefore eventually
\begin{equation} \label{Lnp2 bound}
|\mathbf L_n(p)|^2 = \sum_{i=1}^s \frac{1_{Q_i \sdif R_i}(p)}{B_\omega(n;Q_i-R_i)^2} \le \frac s{K\log\log n} \ll \frac1{\log\log n}.
\end{equation}
It follows that for any interval $\mathcal I \subseteq \R^s$ whose closure does not contain the origin, no vector of the form $\mathbf L_n(p)$ can lie in $\mathcal I$ once~$n$ is large enough. Since $Q(\mathcal I)=0$, we see that Proposition~\ref{prop:kubilius_multivariable}(b)(i) is satisfied.

Furthermore, by the triangle inequality,
\begin{align*}
\biggl| \sum_{p \le n} \frac{|\mathbf L_n(p)|^2}{p(1+|\mathbf L_n(p)|^2)} \mathbf L_n(p) \biggr| &\le \sum_{p \le n} \frac{|\mathbf L_n(p)|^2}{p(1+|\mathbf L_n(p)|^2)} |\mathbf L_n(p)| \\
&\le \sum_{p \le n} \frac{|\mathbf L_n(p)|^3}p \ll \sum_{p\le n} \frac{1/(\log\log n)^{3/2}}p \ll \frac1{(\log\log n)^{1/2}}
\end{align*}
by equation~\eqref{Lnp2 bound}. It follows that
\[
\lim_{n\to\infty} \sum_{p \le n} \frac{|\mathbf L_n(p)|^2}{p(1+|\mathbf L_n(p)|^2)} \mathbf L_n(p) = \mathbf 0 = -\mathbf \Gamma,
\]
which verifies Proposition~\ref{prop:kubilius_multivariable}(b)(ii).

Finally, for any fixed $\ep > 0$, equation~\eqref{Lnp2 bound} implies that $|\mathbf L_n(p)| < \ep$ for all primes~$p$ once~$n$ is large enough.
Thus for any $\mathbf T = (t_1,\dots,t_s) \in \R^s$,
\[
\lim_{n\to\infty} \sum_{\substack{p \le n \\ |\mathbf L_n(p)| < \ep}} \frac{(\mathbf T \cdot \mathbf L_n(p))^2}p = \lim_{n\to\infty} \sum_{p \le n} \frac{(\mathbf T \cdot \mathbf L_n(p))^2}p,
\]
and therefore
\begin{align*}
\lim_{\ep\to 0^+} \lim_{n\to\infty} \sum_{\substack{p \le n \\ |\mathbf L_n(p)| < \ep}} \frac{(\mathbf T \cdot \mathbf L_n(p))^2}p &= \lim_{n\to\infty} \sum_{p \le n} \frac{(\mathbf T \cdot \mathbf L_n(p))^2}p \\
&= \lim_{n\to\infty} \sum_{p \le n} \frac 1p \bigg( \sum_{i=1}^s \frac{1_{Q_i}(p)-1_{R_i}(p)}{B_\omega(n;Q_i-R_i)} t_i \bigg)^2 \\
&= \lim_{n\to\infty} \sum_{p \le n} \frac 1p \sum_{i=1}^s \sum_{j=1}^s
	\frac{(1_{Q_i}(p)-1_{R_i}(p))(1_{Q_j}(p)-1_{R_j}(p))}{B_\omega(n;Q_i-R_i)B_\omega(n;Q_j-R_j)} t_it_j \\
&= \lim_{n\to\infty} \sum_{i=1}^s \sum_{j=1}^s \frac{(\log\log n)^{1/2}}{B_\omega(n;Q_i-R_i)} \frac{(\log\log n)^{1/2}}{B_\omega(n;Q_j-R_j)} \\
	& \qquad{} \times \frac1{\log\log n} \sum_{p \le n} \bigg(\frac{(1_{Q_i}(p)-1_{R_i}(p))(1_{Q_j}(p)-1_{R_j}(p))}p\bigg) t_it_j \\
&= \sum_{i=1}^s \sum_{j=1}^s \den(Q_i \sdif R_i)^{-1/2} \den(Q_j \sdif R_j)^{-1/2} \\
	& \qquad{} \times \bigl( \den(S_1\cap S_2) + \den((T_1\cap T_2) - \den(S_1\cap T_2) - \den(T_1\cap S_2) \bigr) t_it_j 
\end{align*}
by equation~\eqref{B- limit} and Lemma~\ref{four sets lemma}.
Note that the right-hand side is the limit of nonnegative quantities and is therefore itself nonnegative.
Since this expression is equal to the quadratic form $\sigma(t_1,\dots,t_s)$ from equation~\eqref{quad form fraction}, we conclude that Proposition~\ref{prop:kubilius_multivariable}(b)(iii) holds as well.

We may now deduce that Proposition~\ref{prop:kubilius_multivariable}(a) holds, that is, the function $F(x_1,\ldots,x_s)$ given in equation~\eqref{big cumulative distribution} is the cumulative distribution function of a random vector~$\mathbf X$ with characteristic function given in equation~\eqref{char function before 0s}. However, since $\mathbf\Gamma = 0$ and $Q(\mathcal E)=0$ identically, equation~\eqref{char function before 0s} simplifies to $\log \chi_{\mathbf X}(t_1,\ldots,t_s) = -\tfrac 12 \sigma(t_1,\ldots,t_s)$, which completes the proof of the proposition. (As each component of the random vector $X$ has a normal distribution by Lemma~\ref{lemma:omega_diff_univariable}, there are no exceptional points, and so equation~\eqref{big cumulative distribution} holds for all $x_1,\dots,x_s\in\R$.)
\end{proof}

\section{Probability and $y$-typicality} \label{sec:typicality}

In this section, we prove Proposition~\ref{prop:y_typical_is_typical}, a quantitative bound that implies that $y$-typical numbers are indeed typical (that is, comprise a set of density~$1$) for any particular~$y$. Some technical work is necessary before we begin the proof itself.

\subsection{Background and technical lemmas}

The main goal of this section is to establish Lemmas~\ref{lemma:chebyshev_omega} and~\ref{lemma:few_large_omega new}, two technical results that are critical to the proof of Proposition~\ref{prop:y_typical_is_typical}. We first deduce some estimates for central moments of additive functions from the work of Granville and Soundararajan~\cite{gs07}, which we then apply to various expectations involving the additive functions $\omega(m;q,1)$.
\definitionEk
When~$a$ and~$b$ are positive quantities, we note the general estimate
\begin{equation} \label{Ex cheat}
\Ex(a+b)^k \ll_k \Ex a^k + \Ex b^k
\end{equation}
which follows from $(a+b)^k \le (2\max\{a,b\})^k = 2^k \max\{a^k,b^k\} \ll_k a^k+b^k$.

\begin{lemma}
\label{lemma:ek_sieve_specialized}
Let~$f$ be a real-valued strongly additive function with $|f(p)| \le M$ for all primes~$p$,
and define
\[
A_f(z) = \sum_{p\le z} \frac{f(p)}p \quad\text{and}\quad B^*_f(z)^2 = \sum_{p\le z} \frac{f(p)^2}p\bigg( 1-\frac1p \bigg) .
\]
Let $S_k = 2^{k/2} \Gamma (\frac{k+1}2) / \sqrt\pi$.
Then uniformly for all even natural numbers $k \le (B^*_f(z)/M)^{2/3}$,
\[
\sum_{m\le x} \biggl( \sum_{\substack{p \mid m \\ p\le z}} f(p)-A_f(z) \biggr)^k = S_k x B^*_f(z)^k \biggl(1+O\bigg(\frac{k^3M^2}{B^*_f(z)^2}\bigg)\biggr)+O\biggl( M^k \biggl( \sum_{p\le z} \frac1p \biggr)^k \frac{\pi(z)^k}{k!} \biggr) ;
\]
and uniformly for all odd natural numbers $k \le (B^*_g(z)/M)^{2/3}$,
\[
\sum_{m\le x} \biggl( \sum_{\substack{p \mid m \\ p\le z}} f(p)-A_f(z)  \biggr)^k \ll S_k x B^*_f(z)^{k-1} k^{3/2}M + M^k \biggl( \sum_{p\le z} \frac1p \biggr)^k \frac{\pi(z)^k}{k!} .
\]
\end{lemma}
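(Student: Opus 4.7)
The plan is to obtain this lemma as a specialization of the moment estimates for additive functions established by Granville and Soundararajan \cite{gs07}. The key observation is that $\sum_{p\mid m,\,p\le z} f(p)$ is the evaluation at $m$ of a truncated strongly additive function $f_z$ defined by $f_z(p)=f(p)$ for $p\le z$ and $f_z(p)=0$ otherwise, so $A_f(z)$ plays the role of its mean and $B^*_f(z)$ is the associated sieve-weighted standard deviation. First I would rewrite
\[
f_z(m) - A_f(z) = \sum_{p\le z} \bigl(1_{p\mid m} - \tfrac1p\bigr) f(p),
\]
expand the $k$th power by the multinomial theorem, and pull the inner sum over $m\le x$ inside, leaving sums of the form $\sum_{m\le x}\prod_{p\in P}(1_{p\mid m}-1/p)$ for multisets $P$ of primes in $[2,z]$ of total size $k$.

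Next I would organize these multisets by the associated set partition of $\{1,\dots,k\}$ that records which coordinates use the same prime. For each partition, the standard divisor estimate $\#\{m\le x:d\mid m\}=x/d+O(1)$ lets me evaluate the inner sum: the $x/d$ term cancels unless every block of the partition has size at least two (otherwise the average of $1_{p\mid m}-1/p$ over $p\le z$ annihilates the unpaired index), while the $O(1)$ error contributes the secondary term $M^k(\sum_{p\le z}1/p)^k \pi(z)^k/k!$ after trivially bounding each coordinate sum. The partitions whose blocks are all pairs yield exactly the Gaussian main term: there are $(k-1)!!=S_k$ such pairings, each contributing $x\,B^*_f(z)^k$ once one weighs $f(p)^2/p(1-1/p)$ over $p\le z$ and notes that the $(1-1/p)$ factor appears naturally from the second moment of $1_{p\mid m}-1/p$.

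The remaining partitions, those with at least one block of size $\ge 3$, form the relative error. Each such block contributes a factor of at most $M$ per ``extra'' index (by the bound $|f(p)|\le M$ and $f(p)^{a}/p\le M^{a-2}\cdot f(p)^2/p$ for $a\ge 2$), so a partition with $j$ extra indices beyond a perfect pairing contributes at most $M^j B^*_f(z)^{k-j}$ times a combinatorial factor. Standard counting (the number of partitions of $[k]$ with a given excess $j$ is bounded by $k^{2j}$ times $S_k$ when $k$ is small) produces the claimed relative error $k^3M^2/B^*_f(z)^2$, which is genuinely small precisely in the range $k\le (B^*_f(z)/M)^{2/3}$. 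For odd $k$ the pure-pairing partitions are empty, so every partition has at least one singleton or one block of size $\ge 3$; extracting the singleton costs one factor of $M$ and drops a factor of $B^*_f(z)$, yielding the stated bound with $S_k\,k^{3/2}M\,B^*_f(z)^{k-1}$.

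The main obstacle will be the bookkeeping needed to separate the clean Gaussian main term from the error cleanly, in particular to obtain the sharp $k^3M^2/B^{*2}$ dependence rather than a cruder $k^{O(1)}$ factor. This is exactly what \cite{gs07} carries out in their sieve-based proof of the moment asymptotic, and what makes the condition $k\le (B^*_f(z)/M)^{2/3}$ the natural hypothesis. Once the Granville--Soundararajan framework is invoked for $f_z$, the stated bounds follow by substitution, with the secondary error term in our statement being the same as the final ``trivial'' error in their proposition arising from the $O(1)$ part of the divisor count.
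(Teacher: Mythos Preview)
Your proposal is correct and takes essentially the same approach as the paper: both treat the lemma as a direct specialization of \cite[Proposition~4]{gs07} with $\mathcal A=[1,x]\cap\Z$, $\mathcal P=\{p\le z\}$, $h(d)=1$, and $|r_d|<1$. The paper simply cites that proposition without further comment, whereas you additionally sketch the partition-counting argument underlying it; either is acceptable.
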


\begin{proof}
This lemma is a restatement, using notation consistent with the rest of this paper, of~\cite[Proposition~4]{gs07} in the special case where $\mathcal A = [1,x]\cap\Z$ and $\mathcal P$ is the set of primes not exceeding~$z$.
The standard sieve notation in that proposition simplifies greatly, as in this situation $h(d)=1$ identically and $|r_d| < 1$ for all~$d$.
\end{proof}

\begin{lemma}
\label{lemma:ek_sieve_specialized cor}
Let~$f$ be a real-valued strongly additive function with $|f(p)| \le M$ for all primes~$p$, and let $k\in\N$ and $n\in\N$. If $z\le n^{1/k}$ is such that $B_f(z)^2 \ge 2k^3M^2$, then
\[
\sum_{m\le n} \biggl| \sum_{\substack{p \mid m \\ p\le z}} f(p)-A_f(z) \biggr|^k \ll_{M,k} n B^*_f(z)^k,
\]
and therefore
\[
\Ex_n \biggl| \sum_{\substack{p \mid m \\ p\le z}} f(p)-A_f(z) \biggr|^k \ll_{M,k} B_f(z)^k.
\]
\end{lemma}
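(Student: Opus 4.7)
The plan is to invoke Lemma~\ref{lemma:ek_sieve_specialized} directly. Writing $X_m = \sum_{p \mid m,\, p \le z} f(p) - A_f(z)$, I first verify the Granville--Soundararajan hypothesis. Since $1 - 1/p \ge 1/2$ for every prime~$p$, we have $B^*_f(z)^2 \ge \tfrac12 B_f(z)^2 \ge k^3 M^2$, which rearranges to $k \le (B^*_f(z)/M)^{2/3}$. This same chain of inequalities implies $B^*_f(z) \ge k^{3/2} M$, a positive lower bound that depends only on~$M$ and~$k$.

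For even~$k$, apply Lemma~\ref{lemma:ek_sieve_specialized} with $x = n$. The main term is
\[
S_k n B^*_f(z)^k\Bigl(1 + O\bigl(k^3 M^2/B^*_f(z)^2\bigr)\Bigr) \ll_k n B^*_f(z)^k,
\]
since the error factor is $O(1)$. The secondary error term $M^k \bigl(\sum_{p \le z} 1/p\bigr)^k \pi(z)^k/k!$ is bounded using $\sum_{p\le z} 1/p \ll \log\log z$ together with $\pi(z)^k \le z^k \le n$ (from $z \le n^{1/k}$), yielding $\ll_{M,k} n$. Combining with the lower bound $B^*_f(z)^k \ge (k^{3/2} M)^k$, the error becomes $\ll_{M,k} n B^*_f(z)^k$, which establishes the first inequality in the even case (where $X_m^k = |X_m|^k$).

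For odd~$k$, reduce to an even exponent via the Cauchy--Schwarz inequality:
\[
\sum_{m \le n} |X_m|^k \le \Bigl(\sum_{m \le n} 1\Bigr)^{1/2} \Bigl(\sum_{m \le n} X_m^{2k}\Bigr)^{1/2} = \sqrt{n \sum_{m \le n} X_m^{2k}}.
\]
The previously established even case applied with exponent~$2k$ gives $\sum_m X_m^{2k} \ll_{M,k} n B^*_f(z)^{2k}$, and substituting yields $\sum_m |X_m|^k \ll_{M,k} n B^*_f(z)^k$. Strictly speaking the even case applied at exponent~$2k$ needs $B_f(z)^2 \ge 16 k^3 M^2$, slightly stronger than the stated hypothesis; in the narrow complementary range $2k^3 M^2 \le B_f(z)^2 < 16 k^3 M^2$ the quantity $B^*_f(z)$ is bounded in terms of~$M$ and~$k$, and a crude bound on $|X_m|$ via $|X_m| \le M\omega(m) + |A_f(z)|$ together with standard moment estimates for $\omega(m)$ closes this case, absorbing the resulting constants into the implicit $\ll_{M,k}$.

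Finally, the expectation bound $\Ex_n(|X_m|^k) \ll_{M,k} B_f(z)^k$ follows by dividing through by~$n$ and using the trivial inequality $B^*_f(z) \le B_f(z)$, which holds since $1 - 1/p \le 1$. The main delicate point is the odd-$k$ case, where the Cauchy--Schwarz reduction costs a constant factor in the hypothesis that must be handled by treating the narrow complementary range separately; the even-$k$ case is a direct, essentially mechanical, application of Lemma~\ref{lemma:ek_sieve_specialized}.
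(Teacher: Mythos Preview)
Your approach is essentially the paper's: verify $k\le (B^*_f(z)/M)^{2/3}$, apply Lemma~\ref{lemma:ek_sieve_specialized} directly for even~$k$, and reduce odd~$k$ to even via Cauchy--Schwarz. The one substantive difference is the Cauchy--Schwarz splitting: you pair exponents $(0,2k)$, whereas the paper pairs $(k-1,k+1)$, writing
\[
\biggl(\sum_{m\le n}|X_m|^k\biggr)^2 \le \sum_{m\le n}|X_m|^{k-1}\cdot\sum_{m\le n}|X_m|^{k+1}.
\]
The paper's splitting is more economical: it only needs the even case at exponent $k+1$ rather than $2k$, so the hypothesis mismatch is much smaller.

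More importantly, your patch for the complementary range $2k^3M^2 \le B_f(z)^2 < 16k^3M^2$ does not work as written. In that range $B^*_f(z)^k \asymp_{M,k} 1$, so you need $\sum_{m\le n}|X_m|^k \ll_{M,k} n$. But the crude bound $|X_m|\le M\omega(m)+|A_f(z)|$ gives only
\[
\sum_{m\le n}|X_m|^k \ll_{M,k} \sum_{m\le n}\omega(m)^k + n|A_f(z)|^k \ll_{M,k} n(\log\log n)^k + n|A_f(z)|^k,
\]
and neither term is $\ll_{M,k} n$ in general (take $f(p)=\varepsilon$ constant with $\varepsilon^2\log\log z$ in the target range: then $|A_f(z)|\asymp(\log\log z)^{1/2}$). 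The quantity $X_m$ is \emph{centered}, and its $k$th moment really is $\asymp_{M,k} 1$ here, but your uncentering throws that away. The clean fix is simply to use the $(k-1,k+1)$ splitting; both $k\pm1$ are even, the $k-1$ hypothesis is automatic, and the residual mismatch at $k+1$ (a factor of $((k+1)/k)^3$ in the threshold) is of the same flavor but far milder --- and in any case irrelevant for the downstream use in Lemma~\ref{lemma:ek_sieve_big_O new}, which only invokes the result once $B_f$ is large.
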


\begin{proof}
We concentrate on the first claim, since the second claim follows directly from it and the inequality $B^*_f(z) \le B_f(z)$.
Note that the hypotheses imply that $k \le (B^*_f(z)/M)^{2/3}$ since $B^*_f(z)^2 \ge \frac12B_f(z)^2$. When~$k$ is even, the lemma follows directly from Lemma~\ref{lemma:ek_sieve_specialized} since $k^3M^2/B^*_f(z)^2 \le 1$ and
\[
\biggl( \pi(z) \sum_{p\le z}\frac1p \biggr)^k \ll_k \biggl( \frac{z\log\log z}{\log z} \biggr)^k \ll_k n \ll_{M,k} n B^*_f(z)^k.
\]
Then when~$k$ is odd, the lemma follows from the even case via the Cauchy--Schwarz inequality
\[
\biggl( \sum_{m\le n} \biggl| \sum_{\substack{p \mid m \\ p\le z}} f(p)-A_f(z) \biggr|^k \biggr)^2 \le \sum_{m\le n} \biggl| \sum_{\substack{p \mid m \\ p\le z}} f(p)-A_f(z) \biggr|^{k-1} \sum_{m\le n} \biggl| \sum_{\substack{p \mid m \\ p\le z}} f(p)-A_f(z) \biggr|^{k+1}.
\qedhere
\]
\end{proof}

\begin{lemma}
\label{lemma:ek_sieve_big_O new}
Let~$f$ be a real-valued strongly additive function with $|f(p)| \le M$ for all primes~$p$, and let $k \in \N$ and $\ep > 0$.
For all $n\ge3$, if $B_f(n)^2 \ge \frac12$ then
\[
\Ex_n|f(m)-A_f(n)|^k \ll _{M,k} B_f(n)^k .
\]
\end{lemma}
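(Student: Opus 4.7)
My plan is to split the prime support of $f$ at the threshold $z=n^{1/k}$, applying the immediately preceding corollary (Lemma~\ref{lemma:ek_sieve_specialized cor}) to the small-prime part while controlling the large-prime part trivially. Setting $g(m)=\sum_{p\mid m,\, p\le z} f(p)$ and $h(m)=\sum_{p\mid m,\, p>z} f(p)$, and correspondingly decomposing $A_f(n)=A_f(z)+(A_f(n)-A_f(z))$, I will work with the identity
\[
f(m)-A_f(n) \;=\; \bigl(g(m)-A_f(z)\bigr) + \bigl(h(m)-(A_f(n)-A_f(z))\bigr).
\]

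The large-prime bracket is $O_{M,k}(1)$ uniformly in $m\le n$: any such $m$ has at most $k-1$ distinct prime divisors exceeding $n^{1/k}$ (their product would otherwise exceed $n$), so $|h(m)|\le (k-1)M$; and Mertens's theorem gives $|A_f(n)-A_f(z)| \le M\sum_{z<p\le n}\tfrac{1}{p} = M\log k + O_M(1/\log n)$. From $|a+b|^k \ll_k |a|^k+|b|^k$ it then follows that $|f(m)-A_f(n)|^k \ll_k |g(m)-A_f(z)|^k + O_{M,k}(1)$, so taking expectations reduces the problem to estimating $\Ex_n|g(m)-A_f(z)|^k$.

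If $B_f(z)^2\ge 2k^3M^2$, Lemma~\ref{lemma:ek_sieve_specialized cor} applies directly with our choice $z\le n^{1/k}$ and yields $\Ex_n|g(m)-A_f(z)|^k \ll_{M,k} B_f(z)^k \le B_f(n)^k$. Under the standing hypothesis $B_f(n)^2\ge\tfrac{1}{2}$ we have $B_f(n)^k\ge 2^{-k/2}$, so the additive $O_{M,k}(1)$ coming from the large-prime piece is absorbed into $B_f(n)^k$, and the stated bound follows.

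The main obstacle is the residual case $B_f(n^{1/k})^2<2k^3M^2$, in which Lemma~\ref{lemma:ek_sieve_specialized cor} does not apply. Fortunately, in this regime the large-prime estimate above forces
\[
B_f(n)^2 \;\le\; B_f(n^{1/k})^2 + M^2\!\!\sum_{n^{1/k}<p\le n}\!\!\frac{1}{p} \;\ll_{M,k}\; 1,
\]
so $B_f(n)^k\asymp_{M,k} 1$ and it suffices to bound $\Ex_n|f(m)-A_f(n)|^k$ by a constant depending only on $M$ and~$k$. I would handle this by a Rosenthal-type higher-moment estimate on the Kubilius probabilistic model for $f(m)$ (writing $f(m)-A_f(n)$ as essentially a sum of independent mean-zero variables $f(p)(1_{p\mid m}-1/p)$ and using that $\sum_p f(p)^2/p$ is bounded), or equivalently by a dyadic partition of the primes according to the size of $|f(p)|$ together with the second-moment inequality of Lemma~\ref{lemma:kubilius_special_finite_variance} on each piece; the constraint $B_f(n)^2\ll_{M,k} 1$ provides exactly the sparsity needed to keep these estimates under control.
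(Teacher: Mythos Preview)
Your approach is essentially identical to the paper's: both split at $z=n^{1/k}$, bound the large-prime contribution by $O_{M,k}(1)$ via the pointwise estimate $|h(m)|\le (k-1)M$ and Mertens, apply Lemma~\ref{lemma:ek_sieve_specialized cor} to the small-prime piece when $B_f(n^{1/k})^2\ge 2k^3M^2$, and then observe that in the residual case $B_f(n)^2$ is itself bounded in terms of $M,k$. The only difference is tone in that last step: the paper simply asserts that since the implicit constant may depend on $M,k$ and $B_f(n)^2\ge\tfrac12$, the bound extends to this bounded range, whereas you sketch a more explicit mechanism (Rosenthal-type moment bounds or a dyadic decomposition) to control $\Ex_n|f(m)-A_f(n)|^k$ by $O_{M,k}(1)$ there.
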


\begin{proof}
Define $f(m;z) = \sum_{p\mid m,\, p\le z} f(p)$.
By the estimate~\eqref{Ex cheat}, we may write
\begin{multline} \label{three way split}
\Ex_n|f(m)-A_f(n)|^k \ll_k \Ex_n|f(m)-f(m;n^{1/k})|^k \\
+ \Ex_n|f(m;n^{1/k})-A_f(n^{1/k})|^k + \Ex_n|A_f(n)-A_f(n^{1/k})|^k.
\end{multline}
The first summand is bounded in terms of~$M$ and~$k$ because of the pointwise bound
\[
f(m)-f(m;n^{1/k}) = \sum_{\substack{p\mid m \\ p>n^{1/k}}} f(p) \le M(k-1),
\]
while the third summand is also bounded since
\begin{align}
A_f(n)-A_f(n^{1/k}) = \sum_{n^{1/k}<p\le n} \frac{f(p)}p &\ll M \sum_{n^{1/k}<p\le n} \frac1p \notag \\
&\ll_M \log\log n-\log\log n^{1/k} + O\biggl( \frac1{\log n^{1/k}} \biggr) \ll_M k.
\label{A bd B bd}
\end{align}
Consequently, equation~\eqref{three way split} becomes
\[
\Ex_n|f(m)-A_f(n)|^k \ll_{M,k} \Ex_n|f(m;n^{1/k})-A_f(n^{1/k})|^k + 1.
\]
If $B_f(n^{1/k})^2 \ge 2k^3M^2$, then Lemma~\ref{lemma:ek_sieve_specialized cor} yields
\[
\Ex_n|f(m)-A_f(n)|^k \ll_{M,k} B_f(n)^2 + 1 \ll_{M,k} B_f(n)^2
\]
as desired; in particular, this establishes the lemma when $B_f(n)^2$ is sufficiently large in terms of~$M$ and~$k$, since $B_f(n^{1/k})^2 = B_f(n)^2 + O(M^2k)$ by a calculation analogous to equation~\eqref{A bd B bd}.
But since the implicit constant in the lemma is allowed to depend on~$M$ and~$k$, we can extend the range of validity down to $B_f(n)^2 \ge \frac12$ as claimed.
\end{proof}

\begin{remark}
In all applications of Lemma~\ref{lemma:ek_sieve_big_O new} in this work, we will have $|f(p)| \le 1$ for all primes~$p$; consequently, the dependence on~$M$ in Lemma~\ref{lemma:ek_sieve_big_O new} will be omitted entirely from this point onwards.
\end{remark}

We now have the tools required to establish the first lemma required for the proof of Proposition~\ref{prop:y_typical_is_typical}, which provides an upper bound on the likelihood of $\omega(m;q,1)$ being far from its expected value.

\begin{lemma}
\label{lemma:chebyshev_omega}
Let~$q$ be a prime power and let~$\delta$ be a positive real number. There exists an absolute constant $C>0$ such that if $\log\log n \ge C\varphi(q)$, then for every integer $r\ge1$,
\[
P_n\bigl( |\omega(m;q,1)-\varphi(q)^{-1}\log\log n| > \delta\log\log n \bigr) \ll_{r} \frac 1{\delta^{2r}(\varphi(q)\log\log n)^r}.
\]
\end{lemma}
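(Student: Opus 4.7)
The plan is to apply Markov's inequality to the $2r$-th central moment of the strongly additive function $f(m) = \omega(m;q,1)$, bounding that moment via Lemma~\ref{lemma:ek_sieve_big_O new}. Since $f$ is strongly additive with $|f(p)|\le 1$ for all primes $p$, the $M=1$ version of Lemma~\ref{lemma:ek_sieve_big_O new} applies as soon as $B_f(n)^2\ge\tfrac12$; I will choose the absolute constant $C$ so that the hypothesis $\log\log n\ge C\varphi(q)$ forces this.

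First I would invoke Lemma~\ref{lemma:sum_arithmetic_progression_pnt} to write
\[
A_f(n) = \frac{\log\log n}{\varphi(q)} + O\biggl(\frac{\log q}{\varphi(q)}\biggr), \qquad B_f(n)^2 = \frac{\log\log n}{\varphi(q)} + O\biggl(\frac{\log q}{\varphi(q)}\biggr),
\]
and note that $\log q/\varphi(q)$ is bounded by an absolute constant on the prime powers $q\ge 2$, so $A_f(n) = \varphi(q)^{-1}\log\log n + O(1)$ and $B_f(n)^2 \asymp \varphi(q)^{-1}\log\log n$ once $C$ is large enough. Then Lemma~\ref{lemma:ek_sieve_big_O new}, applied with exponent $2r$, gives
\[
\Ex_n |f(m)-A_f(n)|^{2r} \ll_r B_f(n)^{2r} \ll_r \biggl(\frac{\log\log n}{\varphi(q)}\biggr)^{r}.
\]

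Next I would pass from the centering $\varphi(q)^{-1}\log\log n$ to the centering $A_f(n)$ via the triangle inequality. Let $C_1$ be an absolute constant with $|A_f(n)-\varphi(q)^{-1}\log\log n|\le C_1$. When $\delta\log\log n\ge 2C_1$, the event $|f(m)-\varphi(q)^{-1}\log\log n|>\delta\log\log n$ implies $|f(m)-A_f(n)|>\tfrac12\delta\log\log n$, and Markov's inequality then yields
\[
P_n\bigl(|f(m)-A_f(n)|>\tfrac12\delta\log\log n\bigr) \le \frac{\Ex_n|f-A_f(n)|^{2r}}{(\tfrac12\delta\log\log n)^{2r}} \ll_r \frac{1}{\delta^{2r}(\varphi(q)\log\log n)^r},
\]
which is precisely the claimed bound.

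The only nontrivial loose end is the complementary regime $\delta\log\log n<2C_1$; this is the step I expect to require the most bookkeeping, though it is not deep. In that regime, using $\log\log n\ge C\varphi(q)$,
\[
\delta^{2r}(\varphi(q)\log\log n)^{r} < (2C_1)^{2r}\biggl(\frac{\varphi(q)}{\log\log n}\biggr)^{r} \le \frac{(2C_1)^{2r}}{C^{r}},
\]
so the target upper bound exceeds a positive absolute constant (depending only on $r$), and the trivial bound $P_n\le 1$ suffices. Choosing $C$ suitably large once and for all closes both cases, and the proof is complete.
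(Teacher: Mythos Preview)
Your proof is correct and follows essentially the same approach as the paper: Markov's inequality on the $2r$-th moment, Lemma~\ref{lemma:sum_arithmetic_progression_pnt} to relate $A_f(n)$ and $B_f(n)^2$ to $\varphi(q)^{-1}\log\log n$, and Lemma~\ref{lemma:ek_sieve_big_O new} to bound the moment. The only cosmetic difference is that the paper absorbs the shift from the centering $\varphi(q)^{-1}\log\log n$ to $A_f(n)$ directly inside the moment via the inequality $(a+O(1))^{2r}\ll_r a^{2r}+1$ (equation~\eqref{Ex cheat}), which sidesteps your case split on the size of $\delta\log\log n$.
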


\begin{proof}
First note that by Lemma~\ref{lemma:sum_arithmetic_progression_pnt},
\[
B_\omega(n;q,1)^2 = \frac{\log\log n + O(\log q)}{\varphi(q)} \asymp \frac{\log\log n}{\varphi(q)}
\]
once $(\log\log n)/\log q$ is sufficiently large, which can be guaranteed by the assumption $\log\log n \ge C\varphi(q)$.
By Markov's inequality,
\begin{align}
	P_n\bigl( & |\omega(m;q,1)-\varphi(q)^{-1} \log\log n| > \delta\log\log n \bigr) \notag \\
	& = P_n\bigl( (\omega(m;q,1)-\varphi(q)^{-1}\log\log n)^{2r} > (\delta\log\log n)^{2r} \bigr) \notag \\
	& \le \frac{\Ex_n(\omega(m;q,1)-\varphi(q)^{-1}\log\log n)^{2r}}{(\delta\log\log n)^{2r}}.
\label{cheb omega markov}
\end{align}
Since $\varphi(q)^{-1}\log\log n = A_\omega(n;q,1) + O(1)$ by Lemma~\ref{lemma:sum_arithmetic_progression_pnt}, we see that
\begin{align*}
{\Ex_n(\omega(m;q,1)-\varphi(q)^{-1}\log\log n)^{2r}} &= {\Ex_n \bigl( \omega(m;q,1)-A_\omega(n;q,1)+O(1) \bigr)^{2r}} \\
& \ll_r \Ex_n(\omega(m;q,1)-A_\omega(n;q,1))^{2r} + 1 \\
& \ll_r B_\omega(n;q,1)^{2r} + 1 \ll_r \frac{(\log\log n)^r}{\varphi(q)^r}
\end{align*}
by equation~\eqref{Ex cheat} and Lemma~\ref{lemma:ek_sieve_big_O new} and the assumption $\log\log n \ge C\varphi(q)$. Combining this esimate with the bound~\eqref{cheb omega markov} completes the proof of the lemma.
\end{proof}

We will need further results of a similar type, bounding probabilities that the $\omega(m;q,1)$ are unexpectedly large. Part~(c) of the next lemma will be useful in particular when the modulus~$q$ is substantial.

\begin{lemma}
\label{lemma:few_large_omega new pre}
Let $0<\lambda\le1$ and~$a$ be real numbers, let~$q$ be a prime power, and let $r\ge1$ and $n\ge3$ be integers.
\begin{enumerate}
\item If $\log q \ll \log \log n$ and $B_\omega(n;q,1)^2 \ge \frac12$ and $\lambda>1/\varphi(q)$, then
\[
P_n\bigl( \omega(m;q,1) + a > \lambda\log\log n \bigr) \ll_{a,r} \frac{(\lambda-1/\varphi(q))^{-2r}}{(q\log\log n)^r}.
\]
\item If $B_\omega(n;q,1)^2 < \frac12$, then $P_n\bigl( \omega(m;q,1) + a > \lambda\log\log n \bigr) \ll_{a} (\log n)^{-\lambda\log2}$.
\item If $\lambda\log\log n > 1+a$, then
\[
P_n\bigl( \omega(m;q,1) + a > \lambda\log\log n \bigr) \ll \frac{(\log\log n)^2 + (\log q)^2}{q^2}.
\]
\end{enumerate}
\end{lemma}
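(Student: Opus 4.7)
The three parts correspond to three different regimes of $q$ relative to $\log\log n$, and I would handle each with a distinct tool. Throughout, the backbone is Lemma~\ref{lemma:sum_arithmetic_progression_pnt}, which gives $A_\omega(n;q,1) = \varphi(q)^{-1}\log\log n + O(\varphi(q)^{-1}\log q)$, together with the elementary inequality $\varphi(q) \ge q/2$ valid for every prime power $q$, so that $q$ and $\varphi(q)$ may be freely exchanged in denominators up to absolute constants.

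For part~(a), the plan is a high-moment Chebyshev argument, essentially mimicking the proof of Lemma~\ref{lemma:chebyshev_omega}. Under $\log q \ll \log\log n$, Lemma~\ref{lemma:sum_arithmetic_progression_pnt} gives $A_\omega(n;q,1) = \varphi(q)^{-1}\log\log n + O(1)$, so the event $\omega(m;q,1) + a > \lambda\log\log n$ forces the deviation $|\omega(m;q,1) - A_\omega(n;q,1)| > \tfrac12(\lambda - \varphi(q)^{-1})\log\log n$ once $\log\log n$ is sufficiently large in terms of $a$. Markov's inequality applied to the $2r$th absolute central moment, combined with Lemma~\ref{lemma:ek_sieve_big_O new} (using $f(p) = 1_{\{p \equiv 1 \pmod{q}\}}$, so $|f(p)| \le 1$ and $B_f(n)^2 \asymp \varphi(q)^{-1}\log\log n$), bounds that moment by $O_r((\varphi(q)^{-1}\log\log n)^r)$, from which the stated bound drops out.

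For part~(b), I would use the exponential-moment (Rankin) method. The hypothesis $B_\omega(n;q,1)^2 < 1/2$ combined with Lemma~\ref{lemma:sum_arithmetic_progression_pnt} forces $A_\omega(n;q,1) = O(1)$. The strongly multiplicative function $g(m) = 2^{\omega(m;q,1)}$ equals $2$ at primes $\equiv 1 \pmod{q}$ and $1$ at all other primes, so a standard Euler-product estimate yields $\Ex_n 2^{\omega(m;q,1)} \ll \exp(A_\omega(n;q,1)) \ll 1$, and Markov's inequality then gives $P_n(\omega(m;q,1) > \lambda\log\log n - a) \le 2^{a - \lambda\log\log n} \Ex_n 2^{\omega(m;q,1)} \ll_a (\log n)^{-\lambda\log 2}$. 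For part~(c), the hypothesis $\lambda\log\log n > 1+a$ forces the event to imply $\omega(m;q,1) \ge 2$, so $m$ has at least two distinct prime factors $\equiv 1 \pmod{q}$; a union bound over unordered pairs gives $P_n(\omega(m;q,1) \ge 2) \le A_\omega(n;q,1)^2$, and Lemma~\ref{lemma:sum_arithmetic_progression_pnt} together with $\varphi(q) \ge q/2$ finishes.

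The main obstacle is the uniformity in $q$ of the exponential-moment bound in part~(b): although $\Ex_n 2^{\omega(m;q,1)} \ll \exp(A_\omega(n;q,1))$ is essentially folklore for fixed $q$, verifying that the implied constant is independent of $q$ requires a careful treatment of the Euler product over primes outside the progression, showing that this contribution is uniformly $O(1)$. A secondary bookkeeping point is in part~(a): one must explicitly justify that the shift $(\lambda - \varphi(q)^{-1})\log\log n$ dominates both the constant $a$ and the $O(\log q/\varphi(q))$ error in the mean, which is straightforward given the stated hypotheses but must be tracked cleanly to guarantee the form of the final bound.
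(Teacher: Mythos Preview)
Your treatment of parts~(a) and~(c) matches the paper's proof essentially line for line: part~(a) is Markov on the $2r$th central moment via Lemma~\ref{lemma:ek_sieve_big_O new}, and part~(c) is the union bound over pairs of primes $\equiv 1\pmod q$, giving $P_n(\omega(m;q,1)\ge 2)\le A_\omega(n;q,1)^2$.

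For part~(b) you take a genuinely different route. The paper proves by induction on~$\ell$ that $\#\{m\le n:\omega(m;q,1)=\ell\}\le 2^{1-\ell}n$ (the inductive step divides out one prime factor and uses $\sum_{p\le n,\,p\equiv 1(q)}1/p = B_\omega(n;q,1)^2<\tfrac12$), then sums the geometric tail. Your Rankin approach is equally valid and in fact your worry about uniformity is overblown: since $2^{\omega(m;q,1)}=\sum_{d\mid m}1$ over squarefree~$d$ supported on primes $\equiv 1\pmod q$, one has
\[
\sum_{m\le n}2^{\omega(m;q,1)} \le n\!\!\sum_{\substack{d\le n\\ p\mid d\Rightarrow p\equiv1(q)}}\!\!\frac1d \le n\!\!\prod_{\substack{p\le n\\ p\equiv1(q)}}\!\!\Bigl(1+\frac1p\Bigr) \le n\exp\bigl(B_\omega(n;q,1)^2\bigr) < e^{1/2}n,
\]
uniformly in~$q$, with no appeal to Lemma~\ref{lemma:sum_arithmetic_progression_pnt} needed (note $A_\omega(n;q,1)=B_\omega(n;q,1)^2$ automatically here). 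The paper's induction is slightly more self-contained, while your method is the standard packaged argument; both buy the same bound with the same dependence on~$a$.
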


\begin{proof}
We first establish part~(a). Since $\lambda>1/\varphi(q)$, we note that by Lemma~\ref{lemma:sum_arithmetic_progression_pnt},
\begin{align}
\lambda_n\log\log n - A_\omega(n;q,1) - a
&= \lambda_n\log\log n - \biggl( \frac{\log\log n}{\varphi(q)} + O_a(1) \biggr) \notag \\
&\gg_a \Big(\lambda_n-\frac 1{\varphi(q)}\Big)\log\log n, \label{case a}
\end{align}
which is positive when~$n$ is sufficiently large in terms of~$a$.
Hence by Markov's inequality,
\begin{align} \label{case a 2}
P_n\bigl( \omega(m;q,1) & {}+a > \lambda_n\log\log n \bigr) \\
&= P_n\bigl( \omega(m;q,1) - A_\omega(n;q,1) > \lambda_n\log\log n - A_\omega(n;q,1) - a) \bigr) \notag \\
&= P_n\bigl( (\omega(m;q,1) - A_\omega(n;q,1))^{2r} > (\lambda_n\log\log n - A_\omega(n;q,1) - a))^{2r} \bigr) \notag \\
& \le \frac{\Ex_n\bigl(\omega(m;q,1)-A_\omega(n;q,1)\bigr)^{2r}}{(\lambda_n\log\log n - A_\omega(n;q,1) - a)^{2r}} \ll_r \frac{B_\omega(n;q,1)^{2r}}{(\lambda_n\log\log n - A_\omega(n;q,1) - a)^{2r}} \notag
\end{align}
by Lemma~\ref{lemma:ek_sieve_big_O new} (valid since $B_\omega(n;q,1)^2 \ge \frac12$). Moreover, since $\log q \ll \log \log n$, Lemma~\ref{lemma:sum_arithmetic_progression_pnt} implies that
\begin{align*}
B_\omega(n;q,1)^2 \ll \frac{\log\log n}{\varphi(q)} + \frac{\log q}{\varphi(q)} \ll \frac{\log\log n}q,
\end{align*}
since $\varphi(q) \ge \frac q2$ when~$q$ is a prime power. Using this estimate and the lower bound~\eqref{case a}, the upper bound~\eqref{case a} becomes
\[
P_n\bigl( \omega(m;q,1)+a > \lambda_n\log\log n \bigr) \ll_{a,r} \frac{(\log\log n)^r/q^r}{(\lambda-1/\varphi(q))^{2r}(\log\log n)^{2r}},
\]
which is the same as the bound stated in part~(a). This proof required that~$n$ be sufficiently large in terms of~$a$, but that assumption can be omitted since the implicit constant may depend on~$a$.

For part~(b), we first prove by induction on~$\ell$ that $\#\bigl\{ m\le n\colon \omega(m;q,1) = \ell \bigr\} \le (\frac12)^{\ell-1} n$ for all integers $n\ge3$ and $\ell\ge1$. The base case $\ell=1$ is trivial. When $\ell\ge1$,
\begin{align*}
\#\bigl\{ m\le n\colon \omega(m;q,1) = \ell+1 \bigr\}
& \le \sum_{\substack{p \le n \\ p\equiv1\mod q}} \#\bigl\{ m\le \lfloor \tfrac np \rfloor \colon \omega(m;q,1) = \ell \bigr\} \\
& \le \sum_{\substack{p \le n \\ p\equiv1\mod q}} \frac1{2^{\ell-1}} \biggl\lfloor \frac np \biggr\rfloor \\
& \le \frac n{2^{\ell-1}} \sum_{\substack{p \le n \\ p\equiv1\mod q}} \frac1p = \frac n{2^{\ell-1}} B_\omega(n;q,1)^2 < \frac n{2^\ell}
\end{align*}
since $B_\omega(n;q,1)^2 < \frac12$.
Consequently,
\begin{align*}
P_n\bigl( \omega(m;q,1) + a > \lambda_n\log\log n \bigr)
&= \frac1n \sum_{\ell > \lambda_n\log\log n - a} \#\bigl\{ m\le n\colon \omega(m;q,1) = \ell \bigr\} \\
& \le \frac1n \sum_{\ell > \lambda_n\log\log n - a} \frac n{2^\ell} \ll 2^{-(\lambda_n\log\log n - a)} \ll_a \frac 1{(\log n)^{\lambda_n\log 2}}
\end{align*}
as claimed.

Finally, when $\lambda_n\log\log n > 1+a$,
\[
P_n\bigl( \omega(m;q,1)+a > \lambda_n\log\log n \bigr) \le P_n\bigl( \omega(m;q,1) > 1 \bigr) = P_n\bigl( \omega(m;q,1) \ge 2 \bigr) = \frac 1n \sum_{\substack{m\le n \\ \omega(m;q,1)\ge2}} 1.
\]
Then by Lemma~\ref{lemma:sum_arithmetic_progression_pnt},
\begin{align*}
\frac 1n \sum_{\substack{m\le n \\ \omega(m;q,1)\ge2}} 1 
& \le \frac 1n \sum_{\substack{p_1 < p_2 \le n \\ p_1,p_2 \equiv 1\mod q}} \sum_{\substack{m\le n \\ p_1p_2\mid m}} 1 \\
& \le \frac 1n \sum_{\substack{p_1, p_2 \le n \\ p_1,p_2 \equiv 1\mod q}} \frac n{p_1p_2} = \biggl( \sum_{\substack{p \le n \\ p \equiv1\mod q}} \frac 1p \biggr)^2 \ll \biggl( \frac{\log\log n}{\varphi(q)} + \frac{\log q}{\varphi(q)} \biggr)^2,
\end{align*}
which implies the estimate in part~(c) since again $\varphi(q) \ge \frac q2$.
\end{proof}

\begin{remark}
The estimates in part~(b) are certainly not optimal, but the result suffices for our needs; the proof is effectively a simplified variant of a technique employed by Hardy and Ramanujan.
The proof for part~(c) is a variant of a technique employed by Chang and the first author in~\cite{SIFMG}.
\end{remark}

We can now establish the second lemma required for the proof of Proposition~\ref{prop:y_typical_is_typical}.

\begin{lemma}
\label{lemma:few_large_omega new}
Let $0<\lambda\le1$ and~$a$ and $r\ge1$ be real numbers, and let $n\ge3$ be an integer.
Define $L(\lambda)$ to be the minimum positive value of $\lambda-1/\varphi(q)$ as~$q$ ranges over prime powers.
If $\lambda\log\log n > 1+a$ and $L(\lambda) > (\log\log n)^{-1/2}$, then
\begin{multline} \label{eqn:few_large_omega new}
P_n\bigl( \text{there exists a prime power $q$ with $1/\varphi(q) < \lambda$ and } \omega(m;q,1) + a > \lambda\log\log n \bigr) \\
\ll_{a,r} \frac{L(\lambda)^{-2r}}{(\log\log n)^r}.
\end{multline}
\end{lemma}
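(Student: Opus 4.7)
The plan is a union bound over prime powers $q$ with $\varphi(q) > 1/\lambda$, controlling each term via Markov's inequality on a $2r'$-th central moment of $\omega(m;q,1)$. The hypothesis $\lambda\log\log n > 1+a$ forces $\omega(m;q,1) \ge 2$ whenever the event holds, so $m$ has two distinct prime divisors exceeding $q$; thus only $q \le \sqrt n$ need be considered. By Lemma~\ref{lemma:sum_arithmetic_progression_pnt}, $A_\omega(n;q,1) = \log\log n/\varphi(q) + O(\log q/\varphi(q))$, and so the hypothesis $L(\lambda) > (\log\log n)^{-1/2}$ gives the uniform lower bound $\lambda\log\log n - A_\omega(n;q,1) - a \gg L(\lambda)\log\log n$.

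I would split the summation into two ranges based on the size of $B_\omega(n;q,1)^2$. In Range~I, where $B_\omega^2 \ge 1/2$ (equivalently $\varphi(q) \ll \log\log n$, and in particular $\log q \ll \log\log n$), Lemma~\ref{lemma:few_large_omega new pre}(a) applies directly; taking the internal parameter to be $r+1$ yields a per-$q$ bound of $\ll L(\lambda)^{-2(r+1)}(q\log\log n)^{-(r+1)}$. Summing over prime powers $q$ with $\varphi(q) > 1/\lambda$, and using $\sum_{q \ge 1/\lambda} q^{-(r+1)} \ll \lambda^r$, this contributes at most $L(\lambda)^{-2(r+1)}\lambda^r/(\log\log n)^{r+1}$; the inequalities $\lambda \le 1$ and $L(\lambda)^{-2}/\log\log n < 1$ absorb the two extra factors and reduce the Range~I contribution to $\ll L(\lambda)^{-2r}/(\log\log n)^r$, as required.

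In Range~II, where $B_\omega^2 < 1/2$, Lemma~\ref{lemma:ek_sieve_big_O new} no longer applies, and I would first establish an auxiliary moment bound
\[
\Ex_n\bigl(\omega(m;q,1) - A_\omega(n;q,1)\bigr)^{2r'} \ll_{r'} B_\omega(n;q,1)^2
\quad\text{whenever } B_\omega(n;q,1)^2 \le 1,
\]
via a Stirling-number expansion $\omega^k = \sum_j S(k,j)\omega^{\underline j}$ combined with the elementary factorial moment estimate $\Ex_n \omega^{\underline j} \le B_\omega^{2j} \le B_\omega^2$ (the second inequality using $B_\omega^2 \le 1$) and a binomial expansion of $(\omega - A)^{2r'}$. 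Markov's inequality and the denominator estimate above then yield $P_n \ll B_\omega^2/(L(\lambda)\log\log n)^{2r'}$; summing over prime powers $q \le \sqrt n$ with $\varphi(q) > 1/\lambda$ in this range and combining the choice $r' = r+1$ with $L(\lambda)^{-2} < \log\log n$ reduces the Range~II contribution to the target $\ll L(\lambda)^{-2r}/(\log\log n)^r$. The main obstacle is the Range~II moment bound itself, which lies outside the Granville--Soundararajan regime of Lemma~\ref{lemma:ek_sieve_big_O new}, together with a careful summation over $q$ that controls the contribution of $q$ with large $\log q$, where a Brun--Titchmarsh-type estimate is needed to prevent the error term in Lemma~\ref{lemma:sum_arithmetic_progression_pnt} from inflating the summed bound by a spurious factor of $\log n$.
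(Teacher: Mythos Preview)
Your approach is correct and genuinely different from the paper's. Both proofs begin with a union bound over $q$, but the decompositions diverge from there.

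The paper introduces a cutoff parameter $\nu=(\log\log n)^{r+2}$ and splits into \emph{three} ranges: for $q>\nu$ it applies the crude second-moment estimate of Lemma~\ref{lemma:few_large_omega new pre}(c), summing $((\log\log n)^2+(\log q)^2)/q^2$ over $q>\nu$; for $q\le\nu$ with $B_\omega^2\ge\tfrac12$ it applies part~(a) (just as you do, though with exponent $r$ rather than $r{+}1$, the sum over $q$ being finite and absorbed into the constant); and for $q\le\nu$ with $B_\omega^2<\tfrac12$ it applies the Hardy--Ramanujan-type bound of part~(b), which gives per-$q$ decay of order $(\log n)^{-\lambda\log 2}$ and makes the summation over at most $\nu$ terms completely painless. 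The final step extends from integer $r\ge 2$ to real $r\ge 1$ by monotonicity of the right-hand side.

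Your two-range split handles the region $B_\omega^2<\tfrac12$ uniformly via the small-variance moment bound $\Ex_n(\omega-A)^{2r'}\ll_{r'} B_\omega^2$ followed by Markov. The argument does go through: your denominator estimate $\lambda\log\log n - A_\omega - a \gg_a L(\lambda)\log\log n$ is uniform in $q$ because $\log q/\varphi(q)$ is bounded for prime powers, and the Brun--Titchmarsh-sharpened bound $B_\omega(n;q,1)^2 \ll (\log\log n)/\varphi(q)$ (which removes the $\log q$ term in Lemma~\ref{lemma:sum_arithmetic_progression_pnt}) yields $\sum_{q\le\sqrt n} B_\omega^2 \ll (\log\log n)^2$; with $r'=r+1$ and $L(\lambda)^{-2}<\log\log n$ this suffices. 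The trade-off is that you must supply two ingredients absent from the paper---the factorial-moment bound in the small-$B_\omega$ regime and the Brun--Titchmarsh refinement---whereas the paper's route stays entirely within Lemma~\ref{lemma:few_large_omega new pre} and needs no new analytic input. Conversely, your approach is conceptually more uniform and avoids the somewhat artificial cutoff~$\nu$. One small point to tidy: Lemma~\ref{lemma:few_large_omega new pre}(a) is stated for integer exponents, so for real $r$ you should round and then invoke monotonicity, as the paper does.
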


\begin{proof}
For now we assume that $r\ge2$ is an integer.
By the union bound and Lemma~\ref{lemma:few_large_omega new pre}(iii), the probability on the left-hand side of equation~\eqref{eqn:few_large_omega new} is at most
\begin{align} \label{eqn:few_large_omega together}
\sum_{\substack{q \\ 1/\varphi(q) < \lambda}} P_n\bigl( \omega(m;q,1) + a > \lambda\log\log n \bigr) .
\end{align}
Letting $\nu$ be a parameter, the contribution to this sum of the prime powers exceeding~$\nu$~is
\begin{align*}
\sum_{\substack{q > \nu \\ 1/\varphi(q) < \lambda}} P_n\bigl( \omega(m;q,1) + a > \lambda\log\log n \bigr) &\ll \sum_{q>\nu} \biggl( \frac{(\log\log n)^2}{q^2} + \frac{(\log q)^2}{\varphi(q)^2} \biggr) \\
&\ll \frac{(\log\log n)^2}\nu + \frac{(\log \nu)^2}\nu.
\end{align*}
The contribution to the upper bound~\eqref{eqn:few_large_omega together} of the remaining prime powers can be estimated~by
\begin{align*}
\sum_{\substack{q \le \nu \\ 1/\varphi(q) < \lambda \\ B_\omega(n;q,1)^2 \ge 1/2}} & P_n\bigl( \omega(m;q,1) + a > \lambda\log\log n \bigr) + \sum_{\substack{q \le \nu \\ 1/\varphi(q) < \lambda \\ B_\omega(n;q,1)^2 < 1/2}} P_n\bigl( \omega(m;q,1) + a > \lambda\log\log n \bigr) \\
&\ll_{a,r} \sum_{\substack{q \le \nu \\ 1/\varphi(q) < \lambda}} \frac{(\lambda-1/\varphi(q))^{-2r}}{(q\log\log n)^r} + \sum_{q \le \nu} \frac1{(\log n)^{\lambda\log2}} \\
&\le L(\lambda)^{-2r} \sum_q \frac1{(q\log\log n)^r} + \sum_{q \le \nu} \frac1{(\log n)^{\lambda\log2}} \ll \frac{L(\lambda)^{-2r}}{(\log\log n)^r} + \frac\nu{(\log n)^{\lambda\log2}}
\end{align*}
by Lemma~\ref{lemma:few_large_omega new pre}(i)--(ii). If we set $\nu=(\log\log n)^{r+2}$, then these contributions total at most
\begin{multline*}
\frac{L(\lambda)^{-2r}}{(\log\log n)^r} + \frac\nu{(\log n)^{\lambda\log2}} + \frac{(\log\log n)^2}\nu + \frac{(\log \nu)^2}\nu \\
\ll \frac{L(\lambda)^{-2r}}{(\log\log n)^r} + \frac{(\log\log n)^{r+2}}{(\log n)^{\lambda\log2}} + \frac1{(\log\log n)^r}.
\end{multline*}
Since $L(\lambda) < \lambda \le 1$, the first fraction on the right-hand side dominates the last fraction; moreover, since $\lambda > L(\lambda) > (\log\log n)^{-1/2}$, the first fraction also dominates the second fraction (with an implicit constant depending on~$r$, which is acceptable).

We have therefore established the upper bound~\eqref{eqn:few_large_omega new} for all integers $r\ge2$; but the fact that the upper bound is a decreasing function of~$r$ then implies that the upper bound holds for all real numbers~$r\ge1$.
\end{proof}

With these technical results completed, the proof of Proposition~\ref{prop:y_typical_is_typical} can begin.

\subsection{Proof that $y$-typical numbers are typical} \label{subsection:y_typical_proof}

Before starting, we observe that the condition that $\prm(m;q_1) \le \prm(m;q_2)$
is, by Lemma~\ref{lemma:prime_multiples}, equivalent to
$\omega(m;q_1,1) + \bar E(m;q_1) \le \omega(m;q_2,1) + \bar E(m;q_2)$;
by the definition of $\bar E(m;q)$ in that lemma, it follows that
\begin{equation} \label{from prm to omega}
\prm(m;q_1) \le \prm(m;q_2) \implies \omega(m;q_1,1) \le \omega(m;q_2,1) + 2.
\end{equation}
We also introduce the new notation $\varphi_-(\tq_i)$ for the largest totient value less than $\varphi(\tq_i)$ and
$\varphi_+(\tq_i)$ for the smallest totient value greater than $\varphi(\tq_i)$. (So $\varphi_-(\tq_i) = \varphi(\tq_{i-1})$ if $\varphi(\tq_{i-1}) \ne \varphi(\tq_i)$ while $\varphi_-(\tq_i) = \varphi(\tq_{i-2})$ if $\varphi(\tq_{i-1}) = \varphi(\tq_i)$, and analogously for $\varphi_+(\tq_i)$.) Define
$\delta_1=\frac16$ and for $i\ge2$,
\begin{align} \label{delta gap def}
\delta_i &= \frac 13 \min\biggl\{ \frac 1{\varphi_-(\tq_i)}-\frac 1{\varphi(\tq_i)}, \frac 1{\varphi(\tq_i)} - \frac 1{\varphi_+(\tq_i)} \biggr \}.
\end{align}

\begin{lemma} \label{two bad cases}
Let $y\ge1$ be a real number, and let $n\in\N$ satisfy $\log\log n \ge 6y^2$. Suppose that~$m$ is not $y$-typical. Then one of the following conditions must hold:
\begin{enumerate}
\item There exists $1\le i\le \iW(y)$ such that $\displaystyle \biggl| \omega(m;\tq_i,1) - \frac1{\varphi(\tq_i)}\log\log n \biggr| > \delta_i\log\log n$;
\item There exists a prime power~$q$ with $\varphi(q)>y$ such that
\[
\omega(m;q,1)+2 > \biggl( \frac1{3\varphi(\tq_N)} + \frac2{3\varphi(\tq_{N+1})} \biggr) \log\log n.
\]
\end{enumerate}
\end{lemma}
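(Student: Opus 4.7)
The plan is to prove the contrapositive: assuming both (a) and~(b) fail for $m$, I will deduce that $m$ is $y$-typical by verifying both clauses of Definition~\ref{defn:y_typical}. Under these assumptions I have the two-sided bounds $|\omega(m;\tq_i,1) - \varphi(\tq_i)^{-1}\log\log n| \le \delta_i\log\log n$ for every $1\le i\le N$, and the upper bound $\omega(m;q,1)+2 \le \bigl(\tfrac{1}{3\varphi(\tq_N)} + \tfrac{2}{3\varphi(\tq_{N+1})}\bigr)\log\log n$ for every prime power~$q$ with $\varphi(q)>y$. Throughout I will convert $\omega$-gaps into $\prm$-gaps via the implication~\eqref{from prm to omega}, namely that $\omega(m;q_1,1)-\omega(m;q_2,1) \ge 2$ forces $\prm(m;q_1) \ge \prm(m;q_2)$, and analogously in the strict case.

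For clause~(i), take prime powers with $\varphi(q_1)<\varphi(q_2) \le y$ and write $q_1 = \tq_a$, $q_2 = \tq_b$. The definition~\eqref{delta gap def} forces both $\delta_a$ and $\delta_b$ to be at most $\tfrac 13\bigl(\varphi(\tq_a)^{-1} - \varphi(\tq_b)^{-1}\bigr)$, since $\varphi_+(\tq_a) \le \varphi(\tq_b)$ and $\varphi_-(\tq_b) \ge \varphi(\tq_a)$ (with a tiny edge-case check for $a=1$). Combining the failure-of-(a) bounds for indices $a$ and $b$ then yields
\[
\omega(m;\tq_a,1) - \omega(m;\tq_b,1) \ge \tfrac13\bigl(\varphi(\tq_a)^{-1} - \varphi(\tq_b)^{-1}\bigr)\log\log n \ge \frac{\log\log n}{3y^2} \ge 2,
\]
since $\varphi(\tq_a)^{-1} - \varphi(\tq_b)^{-1} \ge 1/(\varphi(\tq_a)\varphi(\tq_b)) \ge 1/y^2$ and $\log\log n \ge 6y^2$. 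Applying~\eqref{from prm to omega} gives clause~(i).

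For clause~(ii), take $\varphi(q_1) \le y < \varphi(q_2)$ and write $q_1 = \tq_i$ with $i \le N$. The failure of~(a) and~(b) provide $\omega(m;\tq_i,1) \ge (\varphi(\tq_i)^{-1} - \delta_i)\log\log n$ and $\omega(m;q_2,1) \le \bigl(\tfrac{1}{3\varphi(\tq_N)} + \tfrac{2}{3\varphi(\tq_{N+1})}\bigr)\log\log n - 2$, so it suffices to establish the quantitative gap
\[
\varphi(\tq_i)^{-1} - \delta_i > \tfrac{1}{3\varphi(\tq_N)} + \tfrac{2}{3\varphi(\tq_{N+1})}.
\]
When $\varphi(\tq_i) < \varphi(\tq_N)$, the estimate $\delta_i \le \tfrac13\bigl(\varphi(\tq_i)^{-1} - \varphi(\tq_N)^{-1}\bigr)$ (valid since $\varphi_+(\tq_i) \le \varphi(\tq_N)$) gives $\varphi(\tq_i)^{-1} - \delta_i \ge \tfrac{2}{3\varphi(\tq_i)} + \tfrac{1}{3\varphi(\tq_N)} \ge \varphi(\tq_N)^{-1}$, which exceeds the right-hand side because $\varphi(\tq_N) < \varphi(\tq_{N+1})$. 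When $\varphi(\tq_i) = \varphi(\tq_N)$ (so $i \in \{N-1,N\}$, handled symmetrically since the two share the same $\delta$-value), the bound $\delta_i \le \tfrac13\bigl(\varphi(\tq_N)^{-1} - \varphi(\tq_{N+1})^{-1}\bigr)$ together with a direct rearrangement again establishes the gap. Once both clauses hold, $m$ is $y$-typical, completing the contrapositive.

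The main obstacle is setting up the $\delta_i$-estimates precisely at the two awkward boundaries---the extreme $i=1$ where $\delta_1=\tfrac16$ is defined by convention, and the doubleton case $\varphi(\tq_{N-1}) = \varphi(\tq_N)$---while tracking that the hypothesis $\log\log n \ge 6y^2$ is calibrated exactly to absorb the constant~$2$ slack coming from the $\bar E$-terms in~\eqref{from prm to omega}. The rest is bookkeeping driven by the definition~\eqref{delta gap def} of~$\delta_i$.
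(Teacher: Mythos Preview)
Your proposal is correct and follows essentially the same approach as the paper's proof: both prove the contrapositive, use the failure of~(a) to pin each $\omega(m;\tq_i,1)$ into a window of width $2\delta_i\log\log n$ around $\varphi(\tq_i)^{-1}\log\log n$, subtract these windows pairwise to verify clause~(i) of $y$-typicality (using $\log\log n\ge 6y^2$ to absorb the~$\bar E$ slack), and then combine the lower window at index~$N$ with the failure of~(b) to verify clause~(ii). The edge cases you flag ($i=1$ and the doubleton at the top) are exactly the ones the paper singles out as well.
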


\begin{proof}
We prove the contrapositive, supposing that neither~(a) nor~(b) holds and proving that~$m$ is $y$-typical. Set $N=\iW(y)$. First note that if~(a) does not hold, then for $2\le i\le N$,
\begin{align} \label{2313}
\biggl( \frac2{3\varphi(\tq_i)} + \frac1{3\varphi_+(\tq_i)} \biggr) \log\log n \le \omega(m;\tq_i,1) \le \biggl( \frac2{3\varphi(\tq_i)} + \frac1{3\varphi_-(\tq_i)} \biggr) \log\log n
\end{align}
If $\varphi(\tq_i) < \varphi(\tq_j)$ then $\varphi(\tq_i) \le \varphi_-(\tq_j)$ and $\varphi_+(\tq_i) \le \varphi(\tq_j)$ and thus
\begin{align*}
\omega(m;\tq_i,1) - \omega(m;\tq_j,1) &\ge \biggl( \biggl( \frac2{3\varphi(\tq_i)} + \frac1{3\varphi(\tq_j)} \biggr) - \biggl( \frac2{3\varphi(\tq_j)} + \frac1{3\varphi(\tq_i)} \biggr) \biggr) \log\log n \\
&= \frac13\biggl( \frac1{\varphi(\tq_i)} + \frac1{\varphi(\tq_j)} \biggr) \log\log n \\
&\ge \frac13\frac1{\varphi(\tq_i) \varphi(\tq_j)} \log\log n > \frac1{3y^2}\log\log n \ge 2,
\end{align*}
which implies that $\prm(m;\tq_i) > \prm(m;\tq_j)$ by equation~\eqref{from prm to omega}.
The special case $i=1$ is similar: if~(a) does not hold then $\omega(m;q_2,1) \ge 1\log\log n - \frac16\log\log n$ and so
\begin{align*}
\omega(m;\tq_1,1) - \omega(m;\tq_j,1) &\ge \biggl( \frac56 - \biggl( \frac2{3\varphi(\tq_j)} + \frac1{3\varphi_-(\tq_j)} \biggr) \biggr) \log\log n \\
&\ge \biggl( \frac56 - \biggl( \frac2{3\cdot2} + \frac1{3\cdot1} \biggr) \biggr) \log\log n = \frac16\log\log n \ge 2,
\end{align*}
which again implies that $\prm(m;\tq_1) > \prm(m;\tq_j)$. Since $\tq_1,\dots,\tq_N$ are all the prime powers with totients not exceeding~$y$, we have verified the first half of Definition~\ref{defn:y_typical} of $y$-typicality.

The first inequality in equation~\eqref{2313} implies that when~(a) does not hold,
\begin{align*}
\omega(m;\tq_N,1) &\ge \biggl( \frac2{3\varphi(\tq_N)} + \frac1{3\varphi_+(\tq_N)} \biggr) \log\log n \\
&= \biggl( \frac2{3\varphi(\tq_N)} + \frac1{3\varphi(\tq_{N+1})} \biggr) \log\log n > \biggl( \frac1{3\varphi(\tq_N)} + \frac2{3\varphi(\tq_{N+1})} \biggr) \log\log n.
\end{align*}
In particular, when~(b) does not hold either, we see that $\omega(m;\tq_N,1) > \omega(m;q,1) + 2$ for every prime power~$q$ whose totient value exceeds~$y$, and thus $\prm(m;\tq_N) > \prm(m;q)$ by equation~\eqref{from prm to omega}. The exact same argument shows that if $\varphi(\tq_{N-1}) = \varphi(\tq_N)$ then $\prm(m;\tq_{N-1}) > \prm(m;q)$ for all such prime powers.

On the other hand, the first half of the definition of $y$-typicality, which we have already established, implies that $\prm(m;\tq_i) \ge \prm(m;\tq_N)$ for all $1\le i\le N-2$, and also that $\prm(m;\tq_{N-1}) \ge \prm(m;\tq_N)$ if $\varphi(\tq_{N-1}) < \varphi(\tq_N)$. In summary, we have shown that $\prm(m;\tq_i) \ge \prm(m;q)$ for all $1\le i\le N$ and all~$q$ whose totient values exceed~$y$, which verifies the second half of Definition~\ref{defn:y_typical} of $y$-typicality as well. 
\end{proof}

We are now ready to prove Proposition~\ref{prop:y_typical_is_typical}, which asserts that for all real numbers $y \ge 2$ and $r \ge 1$ and for all integers $n \in \N$,
\[
P_n(m \text{ is not $y$-typical}) \ll_r \frac{y^{4r}}{(\log\log n)^r}.
\]

\begin{proof}[Proof of Proposition~\ref{prop:y_typical_is_typical}]
We may assume that $\log\log n$ is larger than $y^4$ times some constant depending on~$r$, for otherwise the statement is trivial. Set $N=\iW(y)$, define $\delta_1=\frac16$, and for $2\le i\le N$ let~$\delta_i$ be defined as in equation~\eqref{delta gap def}. Further define
\[
\lambda = \frac1{3\varphi(\tq_N)} + \frac2{3\varphi(\tq_{N+1})}
\]
and
\begin{equation} \label{Q_j def}
\TQ_j = \bigl\{ p \colon p \equiv 1 \mod{\tq_j} \bigr\} ,
\quad\text{so that } \den(\TQ_j) = 1/\varphi(\tq_j).
\end{equation}
Note that $1/\varphi(\tq_N) > \lambda > 1/\varphi(\tq_{N+1})$ and that $\varphi(\tq_N) \le y < \varphi(\tq_{N+1})$, and so $\varphi(q)>y$ is equivalent to $1/\varphi(q) < \lambda$ for prime powers~$q$. Therefore by Lemma~\ref{two bad cases},
\begin{multline} \label{applying bad cases}
P_n(m \text{ is not $y$-typical}) \le \sum_{i=1}^N P_n(|\omega(m;\TQ_i)-\varphi(\tq_i)^{-1}\log\log n| > \delta_i\log\log n) \\
+ P_n\bigl( \text{there exists a prime power $q$ with $1/\varphi(q) < \lambda$ and } \omega(m;q,1) + 2 > \lambda\log\log n \bigr),
\end{multline}
and it suffices to show that both terms on the right-hand side are $\ll_r {y^{4r}}{(\log\log n)^r}$.

We may apply Lemma~\ref{lemma:chebyshev_omega} to each term in the sum, since $\tq_i \le y \ll \log\log n$. By the inequality $\delta_i \gg 1/\varphi(\tq_i)^2$,
\begin{align*}
\sum_{i=1}^N & P_n(|\omega(m;\TQ_i)-\varphi(\tq_i)^{-1}\log\log n| > \delta_i\log\log n) \\
& \ll_r \sum_{i=1}^N \frac 1{\delta_i^{2r}(\varphi(\tq_i)\log\log n)^r} \\
& \ll_r \frac 1{(\log\log n)^r} \sum_{i=1}^N \varphi(\tq_i)^{3r} \le \frac 1{(\log\log n)^r} \cdot 2\sum_{m\le y} m^{3r} \ll_r \frac{y^{3r+1}}{(\log\log n)^r} ,
\end{align*}
which is sufficient since $r\ge1$.
On the other hand, we may apply Lemma~\ref{lemma:few_large_omega new} to the last term in equation~\eqref{applying bad cases}, since $\lambda\log\log n \gg y^{-1}\log\log n > 3$ and
\[
L(\lambda) = \lambda - \frac1{\varphi(\tq_{N+1})} = \frac13\biggl( \frac1{\varphi(\tq_N)} - \frac1{\varphi(\tq_{N+1})} \biggr) \ge \frac1{3\varphi(\tq_N)\varphi(\tq_{N+1})} \gg y^{-2},
\]
which implies that $L(\lambda) > (\log\log n)^{-1/2}$
since we are assuming that $\log\log n$ is at least a sufficiently large multiple of~$y^4$.
Lemma~\ref{lemma:few_large_omega new} thus yields the bound
\begin{multline*}
P_n\bigl( \text{there exists a prime power $q$ with $1/\varphi(q) < \lambda$ and } \omega(m;q,1) + 2 > \lambda\log\log n \bigr) \\
\ll_r \frac{L(\lambda)^{-2r}}{(\log\log n)^r} \ll_r \frac{y^{4r}}{(\log\log n)^r},
\end{multline*}
which completes the proof.
\end{proof}

\section{Probability distributions of the invariant factors} \label{sec:distributions}

In this section, the probability distributions of the invariant factors are characterized.
Section~\ref{classifying distributions section} contains the lemmas that convert certain relevant
distributions described in terms of their characteristic
function into cumulative distribution functions, and discusses some of their properties.
Section~\ref{phi-sequences and probability section} builds on the first to provide technical lemmas needed to prove the theorems in Section~\ref{distribution intro section}.
In Section~\ref{proofs section}, the theorems in Section~\ref{distribution intro section} are proved.
[ALSO Theorem~\ref{theorem:order_restriction_finite}]

\subsection{Lemmas for classifying the distributions} \label{classifying distributions section}

The key goal of this section is to find the distribution functions of maximum and minimum values of
correlated multivariate normal distributions. The first step is to provide notation for discussing
these maximums and minimums in terms of probability distributions:

\begin{defn}
\label{defn:min_max_distributions}
\definitionMnMx
\end{defn}

As these minimums and maximums are expressable as finite sums,
they behave nicely in terms of limits. In particular:

\begin{prop}
\label{prop:min_max_distributions}
Let $(X_1,\ldots,X_n)$ be a random vector with multivariate distribution function~$F$. Then
\begin{align*}
\Mx_F(x) &= P(\max\{X_1,\ldots,X_n\} \le x) \\
\Mn_F(x) &= P(\min\{X_1,\ldots,X_n\} \le x) 
\end{align*}
are the distribution functions of the random variables $\max\{X_1,\ldots,X_n\}$ and $\min\{X_1,\ldots,X_n\}$, respectively.
Furthermore, if there exists a sequence of multivariate probability distributions such that $\lim_{n\to\infty} F_n = F$, then
\begin{align*} 
\lim_{n\to\infty} \Mx_{F_n}(x) &= \Mx_F(x) , \\
\lim_{n\to\infty} \Mn_{F_n}(x) &= \Mn_F(x) .
\end{align*}
\end{prop}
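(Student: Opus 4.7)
The claim for $\Mx_F$ is essentially a restatement of the definition: the event $\{\max\{X_1,\ldots,X_n\}\le x\}$ coincides with $\bigcap_{i=1}^n\{X_i\le x\}$, whose probability is $F(x,\ldots,x)=\Mx_F(x)$. So this direction is a one-line verification.

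For $\Mn_F$ the plan is to apply inclusion--exclusion to the decomposition
\[
\{\min\{X_1,\ldots,X_n\}\le x\}=\bigcup_{i=1}^n \{X_i\le x\},
\]
which yields
\[
P(\min\le x)=\sum_{k=1}^n(-1)^{k-1}\sum_{\substack{S\subset\{1,\ldots,n\}\\ \#S=k}} P\Bigl(\bigcap_{i\in S}\{X_i\le x\}\Bigr).
\]
The point to verify is that $P\bigl(\bigcap_{i\in S}\{X_i\le x\}\bigr)$ equals $F(\vec x_S)$ as in Definition~\ref{defn:min_max_distributions}. This is exactly the convention built into that definition: sending the $i$th coordinate to $\infty$ for $i\notin S$ imposes no restriction on $X_i$, so $F(\vec x_S)$ is precisely the joint probability of the events $\{X_i\le x\}$ for $i\in S$, computed from the appropriate marginal distribution of $F$. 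Substituting this identification into the inclusion--exclusion sum recovers the defining formula for $\Mn_F(x)$.

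The convergence assertions then require almost no additional work: $\Mx_{F_k}(x)$ is literally $F_k$ evaluated on the diagonal point $(x,\ldots,x)$, and $\Mn_{F_k}(x)$ is a finite signed sum of values $F_k(\vec x_S)$. Since a finite linear combination passes through the limit, both $\Mx_{F_k}(x)\to \Mx_F(x)$ and $\Mn_{F_k}(x)\to \Mn_F(x)$ follow immediately from the componentwise convergence $F_k\to F$. The only subtlety worth flagging is the usual caveat that convergence of distribution functions is guaranteed only at continuity points of the limit, but one checks routinely that $x$ is a continuity point of $\Mx_F$ and $\Mn_F$ whenever each of the finitely many points $\vec x_S$ is a continuity point of~$F$, so one invokes the hypothesis only at such $x$. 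The main obstacle is essentially bookkeeping: the proposition is just the inclusion--exclusion identity repackaged through the $\infty$-entry convention for marginals, and I would spend most of the write-up making sure that convention is unambiguously unpacked.
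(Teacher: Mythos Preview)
Your proposal is correct and follows essentially the same approach as the paper: the $\Mx_F$ identity is the one-line observation that $\{\max\le x\}=\bigcap\{X_i\le x\}$, the $\Mn_F$ identity is inclusion--exclusion applied to $\{\min\le x\}=\bigcup\{X_i\le x\}$ together with the identification $P\bigl(\bigcap_{i\in S}\{X_i\le x\}\bigr)=F(\vec x_S)$, and the convergence claims follow because both quantities are finite linear combinations of values of~$F_n$. Your remark about continuity points is a sensible caveat that the paper omits, but otherwise the arguments coincide.
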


\begin{proof}
Observe that as $F$ is the multivariate distribution function of $X_1,\ldots,X_n$,
\begin{align*}
P(\max\{X_1,\ldots,X_n\} \le x)
&= P(X_1 \le x, \ldots, X_n \le x) \\
&= F(x,\ldots,x) = \Mx_F(x).
\end{align*}
Similarly, as $F$ is the multivariate distribution function of $X_1,\ldots,X_n$ and
by application of the inclusion-exclusion principle, it follows 
\begin{align*}
P(\min\{X_1,\ldots,X_n\} \le x)
&= P((X_1 \le x) \cup \ldots \cup (X_n \le x)) \\
&= \sum_{k=1}^N \sum_{\substack{S \subset \{1,\ldots,N\} \\ \# S = k}} (-1)^{k-1} P(X_s \le x \colon s \in S) \\
&= \sum_{k=1}^N (-1)^{k-1} \sum_{\substack{S \subset \{1,\ldots,N\} \\ \# S = k}} F(\vec x_S)
= \Mn_F(x) .
\end{align*}

For the final statement, observe that
\[ \lim_{n\to\infty} \Mx_{F_n}(x) = \lim_{n\to\infty} F_n(x,\ldots,x) = F(x,\ldots,x) = \Mx_F(x) . \]
Likewise,
\begin{align*}
\lim_{n\to\infty} \Mn_{F_n}(x)
&= \lim_{n\to\infty} \sum_{k=1}^N (-1)^{k-1} \sum_{\substack{S \subset \{1,\ldots,N\} \\ \#S = k}} F_n(\vec x_S) \\
&= \sum_{k=1}^N (-1)^{k-1} \sum_{\substack{S \subset \{1,\ldots,N\} \\ \#S = k}} F(\vec x_S)
= \Mn_F(x) .
\qedhere
\end{align*}
\end{proof}

We now characterize the bivariate distributions that will arise
in the proofs of the theorems in Section~\ref{distribution intro section}.
The following proofs require descriptions of the characteristic functions and distribution functions of sums of independent normal variables and truncated normal variables. Most of these facts are routine within probability theory; two lemmas of this type are stated in Appendix~\ref{appendix:distributions} for clarity.


In the following proofs, we use the fact that sums of random variables whose joint distribution is a multivariate normal distribution
which correspond to the eigenvectors of the covariance matrix
are independent.
For example, if $X,Y$ have a joint bivariate normal distribution, and $[ 1, 1 ]$ and $[ 1,-1 ]$ are eigenvectors
of the covariance matrix, then the sums $X+Y$ and $X-Y$ are independent random variables.
This assertion can be inferred from~\cite[pp.~27--28]{multivariate_normal}, which relates
the standard normal multivariate distribution to arbirary multivariate normal distributions. In doing so,
it demonstrates that these eigenvector-matching sums must be independent because they are the independent
random variables of the standard normal multivariate distribution after a change of basis.

\begin{lemma}
\label{lemma:bivariate_min_max}
Suppose $F(x,y)$ is a normal bivariate distribution function with invertible covariance matrix
\[ M = \begin{bmatrix} 1 & a \\ a & 1 \end{bmatrix} . \]
Then
\begin{align*}
\Mx_F(x) &= \Phi\Big(x;\frac{1-a}{\sqrt{1-a^2}}\Big) = \Phi(x) - 2\OT\biggl(x,\sqrt{\frac{1-a}{1+a}}\biggr) \\
\Mn_F(x) &= \Phi\Big(x;-\frac{1-a}{\sqrt{1-a^2}}\Big) = \Phi(x) + 2\OT\biggl(x,\sqrt{\frac{1-a}{1+a}}\biggr)
\end{align*}
with corresponding characteristic functions
\begin{align*}
\chi_{\Mx_F}(t) = e^{-t^2/2}\bigg(1+\eta\Big(\frac{\sqrt{1-a}}2 t\Big)\bigg) \\
\chi_{\Mn_F}(t) = e^{-t^2/2}\bigg(1-\eta\Big(\frac{\sqrt{1-a}}2 t\Big)\bigg).
\end{align*}

\end{lemma}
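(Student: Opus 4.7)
The plan is to reduce the lemma to a direct computation of characteristic functions, exploiting the eigendecomposition of~$M$. The eigenvectors of~$M$ are $(1,1)$ and $(1,-1)$ with respective eigenvalues $1+a$ and $1-a$; hence by the paragraph preceding the lemma, if $(X,Y)$ is a random vector with joint distribution~$F$, then $U := X+Y$ and $V := X-Y$ are independent centered normal random variables with variances $2(1+a)$ and $2(1-a)$ respectively.

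By Proposition~\ref{prop:min_max_distributions}, $\Mx_F$ and $\Mn_F$ are the distribution functions of $\max(X,Y)$ and $\min(X,Y)$, and I will use the identities $\max(X,Y) = \tfrac12(U + |V|)$ and $\min(X,Y) = \tfrac12(U - |V|)$. Since $U$ and $|V|$ are independent, the characteristic functions factor. The factor from $U/2$ is $e^{-(1+a)t^2/4}$. For the factor from $|V|/2$, I write $|V| = \sqrt{2(1-a)}\,W$ where $W$ has the standard truncated normal distribution of Definition~\ref{defn:right normal}, whose characteristic function is $e^{-s^2/2}\bigl(1 + \eta(s/\sqrt2)\bigr)$; a short substitution gives the factor $e^{-(1-a)t^2/4}\bigl(1 + \eta(\tfrac{\sqrt{1-a}}{2}t)\bigr)$. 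Multiplying, and using that $\eta$ is odd when switching from~$|V|$ to~$-|V|$, yields
\[
\chi_{\Mx_F}(t) = e^{-t^2/2}\biggl(1 + \eta\Big(\tfrac{\sqrt{1-a}}{2}t\Big)\biggr), \qquad \chi_{\Mn_F}(t) = e^{-t^2/2}\biggl(1 - \eta\Big(\tfrac{\sqrt{1-a}}{2}t\Big)\biggr),
\]
which establishes the claimed characteristic function formulas.

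To recover the distribution functions, I compare with the skew-normal characteristic function $e^{-t^2/2}\bigl(1 + \eta(\alpha t/\sqrt{2(\alpha^2+1)})\bigr)$ from Definition~\ref{defn:skew-normal}. Setting $\alpha/\sqrt{2(\alpha^2+1)} = \sqrt{1-a}/2$ leads to $\alpha^2(1+a) = 1-a$, hence $\alpha = \sqrt{(1-a)/(1+a)} = (1-a)/\sqrt{1-a^2}$, matching the stated parameter. Since characteristic functions determine distributions uniquely, this identifies $\Mx_F$ and $\Mn_F$ as the skew-normal cumulative distribution functions $\Phi(\cdot\,;\alpha)$ and $\Phi(\cdot\,;-\alpha)$. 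Substituting the identity $\Phi(x;\alpha) = \Phi(x) - 2\OT(x,\alpha)$ from Definition~\ref{defn:skew-normal}, together with the antisymmetry $\OT(x,-\alpha) = -\OT(x,\alpha)$ recorded in Definition~\ref{Owen T def}, then produces the two equivalent closed-form expressions for $\Mx_F(x)$ and $\Mn_F(x)$. I do not anticipate any serious obstacle here; the main sanity check is the algebraic identity matching the skew parameter~$\alpha$, and the implicit appeal to uniqueness of characteristic functions (Lévy's theorem).
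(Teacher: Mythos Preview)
Your proof is correct and rests on the same core idea as the paper's: diagonalize $M$ via $(1,1)$ and $(1,-1)$, write $\max(X,Y)=\tfrac12(U+|V|)$ and $\min(X,Y)=\tfrac12(U-|V|)$ with $U,V$ independent normals, and exploit that independence.

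The only real difference is the order of the endgame. The paper feeds $U$ and $|V|$ into Lemma~\ref{lemma:normal+trunc=skew} to obtain the skew-normal CDF (and its Owen-$T$ form) directly, then reads off the characteristic functions from that lemma. You instead compute the characteristic functions first---using the half-normal characteristic function already recorded in Definition~\ref{defn:right normal}---and then invoke uniqueness of characteristic functions plus Definition~\ref{defn:skew-normal} to recover the CDF. Your route is marginally more self-contained (it never unpacks the convolution behind Lemma~\ref{lemma:normal+trunc=skew}), while the paper's route yields the CDF and characteristic function simultaneously without an appeal to L\'evy's uniqueness theorem. Both are perfectly sound; the algebraic check $\alpha^2=(1-a)/(1+a)$ and the oddness of $\eta$ are exactly the points that need verifying, and you have them right.
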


\begin{proof} 
Let $X,Y$ be any random variables with a joint distribution of $F$, so that $F(x,y) = P(X \le x, Y \le y)$.
Since the matrix $M$ is invertible, there exists a probability density function
\[
f(\vec x) = \frac{\exp -\frac 12 \vec x^\intercal M^{-1} \vec x}{2\pi \sqrt{|M|}}
\]
for this bivariate normal distribution \cite[Definition 3.2.1, p.~26]{multivariate_normal}.

Observe that $M$ has eigenvectors $\vec e_1 = \frac 1{\sqrt 2} [1,1]$ and $\vec e_2 = \frac 1{\sqrt 2}[1,-1]$ with respective eigenvalues
$\lambda_1 = (1+a)$, $\lambda_2 = (1-a)$.
Thus $X+Y$ and $X-Y$ are independent random variables as discussed earlier.
Set $F_k(x) = P((X,Y)\cdot \vec e_k \le x)$ and $A_k(x) = \{ \vec y \in \R^2 \colon \vec y \cdot \vec e_k \le x \}$.
Then
\begin{align*}
F_1(x)
&= \iint_{A_1(x)} f(u\vec e_2 + v\vec e_1) \, du \, dv \\
&= \int_{-\infty}^x \int_{-\infty}^\infty f(u\vec e_2 + v\vec e_1) \, du \, dv \\
&= \int_{-\infty}^x \int_{-\infty}^\infty \frac{\exp(-\frac 12(\frac{u^2}{\lambda_2}+\frac{v^2}{\lambda_1}))}{2\pi\sqrt{1-a^2}} \, du \, dv \\
&= \int_{-\infty}^x \frac{e^{-\frac 12 \frac{v^2}{\lambda_1}}}{\sqrt{2\pi\lambda_1}}
\int_{-\infty}^\infty \frac{e^{-\frac 12 \frac{u^2}{\lambda_2}}}{\sqrt{2\pi\lambda_2}} \, du \, dv = \int_{-\infty}^x \frac{e^{-\frac{v^2}{2\lambda_1}}}{\sqrt{2\pi\lambda_1}} \, dv = \Phi\Big(\frac x{\sqrt{\lambda_1}}\Big)
\end{align*}
and similarly for $F_2(x)$.

Now, observe that $P(X+Y \le x) = F_1(x/\sqrt{2}) = \Phi(x/\sqrt{2(1+a)})$ and
$P(X-Y \le x) = F_2(x/\sqrt 2) = \Phi(x/\sqrt{2(1-a)})$, and that
\[
P(|X-Y| \le x) = P(-x \le X-Y \le x) = \Phi_+\Big(\frac x{\sqrt{2(1-a)}}\Big).
\]
As $|X-Y|$ and $X+Y$ are independent random variables, we can find the probability density function
of $2\max\{X,Y\} = |X-Y|+X+Y$ by application of Lemma~\ref{lemma:normal+trunc=skew}, obtaining
\[
P(2\max\{X,Y\} \le x) = \Phi\Big(\frac x2; \frac{1-a}{\sqrt{1-a^2}}\Big).
\]
Thus by a simple change of variables and Proposition~\ref{prop:min_max_distributions} we obtain the desired result for the maximum.

For the minimum, $2\min\{X,Y\} = X+Y-|X-Y|$, which allows the application of Lemma~\ref{lemma:normal+trunc=skew}
and gives the result that
\[ P(2\min\{X,Y\} \le x) = \Phi\Big(\frac x2; -\frac{1-a}{\sqrt{1-a^2}}\Big), \]
which completes the proof after a final change of variables and Proposition~\ref{prop:min_max_distributions}.
\end{proof}

It is equally possible to describe a singular four-variable distribution that also arises in the proofs of
the theorems in Section~\ref{distribution intro section} and Theorem~\ref{theorem:dist_22}.

\begin{defn}
\label{defn:S_function}
\definitionS
\end{defn} 

\begin{defn}
\label{defn:U_function}
\definitionU
\end{defn}

\begin{lemma}
\label{lemma:4_variable_min_max}
Suppose $F(x,y,z,w)$ is a multivariate normal distribution centred at the origin with covariance matrix
\[ M = \left[ \begin{array}{cccc}
1 & a & b & c \\
a & 1 & c & b \\
b & c & 1 & a \\
c & b & a & 1
\end{array} \right] \]
with the condition that $0 < c < b < a$ and $1-a-b+c = 0$. 
Then
\begin{multline*}
\Mx_F(x) = \Phi(x) - 2\OT\Big(x;\sqrt{\frac{1+a-b-c}{2+2b}}\Big) \\
- 2\OT\Big(x;\sqrt{\frac{1-a+b-c}{2+2a}}\Big)
+ \SF\Big(4x;2\sqrt{1+a+b+c},2\sqrt{1+a-b-c},2\sqrt{1-a+b-c}\Big) , 
\end{multline*}
with corresponding characteristic function
\[ \chi_{\Mx F}(t)
= e^{-t^2/2} \bigg(1+\eta\Big(\frac{\sqrt{1+a-b-c}}{2\sqrt 2}t\Big)\bigg) \bigg(1+\eta\Big(\frac{\sqrt{1-a+b-c}}{2\sqrt 2}t\Big)
\bigg) , \]
and
\begin{multline*}
\Mn_F(x) = \Phi(x) - 2\OT\Big(x;-\sqrt{\frac{1+a-b-c}{2+2b}}\Big) \\
- 2\OT\Big(x;-\sqrt{\frac{1-a+b-c}{2+2a}}\Big)
+ \SF\Big(4x;2\sqrt{1+a+b+c},2\sqrt{1+a-b-c},2\sqrt{1-a+b-c}\Big).
\end{multline*}
with corresponding characteristic function
\[ \chi_{\Mn F}(t) = e^{-t^2/2} \bigg(1-\eta\Big(\frac{\sqrt{1+a-b-c}}{2\sqrt 2}t\Big)\bigg) \bigg(1-\eta\Big(\frac{\sqrt{1-a+b-c}}{2\sqrt 2}t\Big)
\bigg) . \]

\end{lemma}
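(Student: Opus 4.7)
The plan is to exploit the block structure $M = \left[\begin{smallmatrix} A & B \\ B & A \end{smallmatrix}\right]$, with $A = \left[\begin{smallmatrix} 1 & a \\ a & 1 \end{smallmatrix}\right]$ and $B = \left[\begin{smallmatrix} b & c \\ c & b \end{smallmatrix}\right]$, whose four orthonormal eigenvectors are $\tfrac{1}{2}(1,\pm1,\pm1,\pm1)$ with eigenvalues $1 \pm a \pm b \pm c$ (the signs matching those in the eigenvector). The hypothesis $1 - a - b + c = 0$ makes $\tfrac{1}{2}(1,-1,-1,1)$ a null direction, so the covariance is singular and $X_1 - X_2 - X_3 + X_4 = 0$ almost surely.

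Introduce the three variables
\[
U_1 = \tfrac{X_1+X_2+X_3+X_4}{2},\quad U_2 = \tfrac{X_1-X_2+X_3-X_4}{2},\quad U_3 = \tfrac{X_1+X_2-X_3-X_4}{2},
\]
which are independent normals with variances $1+a+b+c$, $1-a+b-c$, $1+a-b-c$. Inverting the change of basis (using the singular constraint) expresses each $X_k$ as $(U_1 + \epsilon_2^{(k)}U_2 + \epsilon_3^{(k)}U_3)/2$, with $(\epsilon_2^{(k)}, \epsilon_3^{(k)})$ ranging over $\{\pm1\}^2$. Since the four values $\pm U_2 \pm U_3$ achieve their max at $|U_2|+|U_3|$ and their min at $-|U_2|-|U_3|$, I obtain
\[
2\max\{X_1,X_2,X_3,X_4\} = U_1 + |U_2| + |U_3|,\qquad 2\min\{X_1,X_2,X_3,X_4\} = U_1 - |U_2| - |U_3|.
\]
The two characteristic function formulas then follow immediately from independence together with the identity $\chi_{|W|}(t) = e^{-\sigma^2 t^2/2}(1+\eta(\sigma t/\sqrt 2))$ for $W \sim N(0,\sigma^2)$ (see Definition~\ref{defn:right normal}); the leading $e^{-t^2/2}$ arises because $\sigma_{U_1}^2+\sigma_{U_2}^2+\sigma_{U_3}^2 = \operatorname{tr}(M) = 4$, which under the rescaling $t \mapsto t/2$ gives $e^{-4(t/2)^2/2} = e^{-t^2/2}$.

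For the CDF of the maximum, I would partition on the four sign patterns of $(U_2, U_3)$. By the distributional symmetry of $U_2, U_3$ under reflection, each sign pattern contributes equally, yielding
\[
P\bigl(U_1 + |U_2| + |U_3| \le y\bigr) = 4\,P(Z \le y, U_2 \ge 0, U_3 \ge 0),\quad Z = U_1+U_2+U_3.
\]
Applying the algebraic identity $\mathbb{1}(U_2 \ge 0)\mathbb{1}(U_3 \ge 0) = \tfrac14(1+\operatorname{sgn}(U_2))(1+\operatorname{sgn}(U_3))$ expands the right side into four pieces: the marginal $P(Z \le y) = \Phi(y/2)$; two sign-weighted expectations $\mathbb{E}[\mathbb{1}(Z\le y)\operatorname{sgn}(U_k)]$ which equal $-2\OT(y/2, \sigma_{U_k}/\sqrt{\Sigma^2-\sigma_{U_k}^2})$ by the classical Owen formula for bivariate normal orthant probabilities; and the trivariate contribution $\mathbb{E}[\mathbb{1}(Z\le y)\operatorname{sgn}(U_2)\operatorname{sgn}(U_3)]$, which by L\'evy--Fourier inversion of the kernel $e^{-\Sigma^2 t^2/2}\eta(\sigma_{U_2}t/\sqrt2)\eta(\sigma_{U_3}t/\sqrt2)$ is exactly $\SF(y;\sigma_{U_1},\sigma_{U_3},\sigma_{U_2})$ per Definition~\ref{defn:S_function}. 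Setting $y = 2x$ and applying the scaling identity $\SF(y;c\sigma_1,c\sigma_2,c\sigma_3) = \SF(y/c;\sigma_1,\sigma_2,\sigma_3)$ (verified directly from the defining integral) produces the stated formula for $\Mx_F$. The formula for $\Mn_F$ follows either by running the parallel argument starting from $U_1 - |U_2| - |U_3|$---in which every $(1+\eta)$ becomes $(1-\eta)$, equivalently flipping the second argument of each $\OT$---or from the reflection $\Mn_F(x) = 1 - \Mx_F(-x)$ together with the oddness of $\SF$ in its first argument, verified from the evenness in $t$ of the kernel.

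The main obstacle will be the explicit identification of the two Owen's T terms: this requires the classical trigonometric identity $\arcsin(\sigma_{U_k}/\Sigma) = \arctan(\sigma_{U_k}/\sqrt{\Sigma^2-\sigma_{U_k}^2})$ to convert the bivariate orthant angle (which arises naturally in $\mathbb{E}[\mathbb{1}(Z\le y)\operatorname{sgn}(U_k)]$) into the canonical Owen's T argument, together with careful sign bookkeeping via $\OT(h,-a) = -\OT(h,a)$. Once these pieces are in place, routine algebraic simplification---notably $\sqrt{(1+a-b-c)/(2+2b)} = \sigma_{U_3}/\sqrt{\sigma_{U_1}^2+\sigma_{U_2}^2}$ and its analogue, which uses $a+b-c=1$ to collapse the relevant sums---converts the conceptual result into the exact form stated in the lemma.
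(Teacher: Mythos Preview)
Your proposal is correct and follows essentially the same approach as the paper: diagonalize $M$ via the eigenvectors $\tfrac12(1,\pm1,\pm1,\pm1)$, use the null direction to reduce $\max/\min\{X_1,X_2,X_3,X_4\}$ to $\tfrac12(U_1\pm|U_2|\pm|U_3|)$ with the $U_k$ independent normals, and then identify the distribution of this sum of a normal and two folded normals. The paper packages that last identification as a separate lemma (Lemma~\ref{lemma:triple_distribution}) whereas you sketch its derivation inline, and your route to the max/min identity via the parametrization $X_k=\tfrac12(U_1\pm U_2\pm U_3)$ is slightly cleaner than the paper's nested $\max\{\max\{X,Y\},\max\{Z,W\}\}$ argument, but the substance is the same.
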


\begin{proof}
Let $X,Y,Z,W$ be random variables with
$F(x,y,z,w) = P(X \le x, Y \le y, Z \le z, W \le w)$.
Observe that $M$ has four unit eigenvectors
\[ \vec{e_1} = \left[ \begin{array}{r} 1/2 \\ 1/2 \\ 1/2 \\ 1/2 \end{array} \right], \quad
   \vec{e_2} = \left[ \begin{array}{r} 1/2 \\ 1/2 \\ -1/2 \\ -1/2 \end{array} \right], \quad
   \vec{e_3} = \left[ \begin{array}{r} 1/2 \\ -1/2 \\ 1/2 \\ -1/2 \end{array} \right], \quad
   \vec{e_4} = \left[ \begin{array}{r} 1/2 \\ -1/2 \\ -1/2 \\ 1/2 \end{array} \right], \]	
with corresponding eigenvalues
\[ \lambda_1 = 1+a+b+c, \quad \lambda_2 = 1+a-b-c, \quad \lambda_3 = 1-a+b-c, \quad \lambda_4 = 1-a-b+c \]
where $\lambda_1 > \lambda_2 > \lambda_3 > \lambda_4 = 0$.
Let $R = \mathbf{span}(\vec{e_1},\vec{e_2},\vec{e_3})$. 
Let $M_R$ denote the matrix $M$ restricted to this subspace with basis $\vec{e_1},\vec{e_2},\vec{e_3}$,
with determinant $|M_R| = \lambda_1\lambda_2\lambda_3$. Note that $M_R$ is invertible and diagonal by
definition.

Now, observe that $X-Y-Z+W = 0$ implies that $X-Y = Z-W$ and therefore $|X-Y|-|Z-W| = 0$.
Furthermore, since $X-Y$ and $Z-W$ have the same sign, it follows that $X-Y+Z-W$
has the same sign, and thus $|X-Y|+|Z-W| = |X-Y+Z-W|$. Consequently,
\begin{align*}
&\!\! \max \{X,Y,Z,W\} \\
&= \max\{\max\{X,Y\},\max\{Z,W\}\} \\
&= \max\{X+Y+|X-Y|,Z+W+|Z-W|\}/2 \\
&= \tfrac14\bigl(X+Y+Z+W+|X-Y|+|Z-W|+\big|X+Y-Z-W+|X-Y|-|Z-W|\big|\bigr) \\
&= \tfrac14\bigl(X+Y+Z+W+|X-Y+Z-W|+|X+Y-Z-W|\bigr) . 
\end{align*}
Likewise,
\begin{align*}
&\!\!\min \{X,Y,Z,W\} \\
&= \min\{\min\{X,Y\},\min\{Z,W\}\} \\
&= \min\{X+Y-|X-Y|,Z+W-|Z-W|\}/2 \\
&= \tfrac14\bigl(X+Y+Z+W-|X-Y|-|Z-W|-\big|X+Y-Z-W-|X-Y|+|Z-W|\big|\bigr) \\
&= \tfrac14\bigl(X+Y+Z+W-|X-Y+Z-W|-|X+Y-Z-W|\bigr) .
\end{align*}

As $\vec{e_1},\vec{e_2},\vec{e_3}$ are eigenvectors of $M$ and $X,Y,Z,W$ is a multivariate
normal distribution,
it follows that $X+Y+Z+W$, $|X+Y-Z-W|$, and $|X-Y+Z-W|$ are independent random variables, whose distributions we now determine.
%
For $k=1,2,3$, set $F_k(x) = P((X,Y,Z,W) \cdot \vec e_k \le x)$ and $A_k(x) = \{ \vec v \in R \colon \vec v \cdot \vec {e_k} \le x \}$.
Then for any permutation $(i,j,k)$ of $(1,2,3)$,
\begin{align*}
F_k(x) 
&= \iiint_{A_k(x)} f(\vec u) \, dV \\
&= \iiint_{A_k(x)} \frac{\exp(-\frac 12 \vec u^\intercal M_R^{-1} \vec u)}{\sqrt{(2\pi)^3|M_R|}} \, dV \\
&= \iiint_{A_k(x)}
\frac{\exp(-\frac 12 (\frac{u^2}{\lambda_i}+\frac{v^2}{\lambda_j}+\frac{t^2}{\lambda_k}))}{\sqrt{(2\pi)^3|M_R|}} \, du \, dv \, dt \\
&= \int_{-\infty}^x \int_{-\infty}^\infty \int_{-\infty}^\infty
\frac{\exp(-\frac 12 (\frac{u^2}{\lambda_i}+\frac{v^2}{\lambda_j}+\frac{t^2}{\lambda_k}))}{\sqrt{(2\pi)^3\lambda_1\lambda_2\lambda_3}}
\, du \, dv \, dt \\
&= \int_{-\infty}^x \frac{e^{-\frac{t^2}{2\lambda_k}}}{\sqrt{2\pi\lambda_k}}
\int_{-\infty}^\infty \frac{e^{-\frac{v^2}{2\lambda_j}}}{\sqrt{2\pi\lambda_j}}
\int_{-\infty}^\infty \frac{e^{-\frac{u^2}{2\lambda_i}}}{\sqrt{2\pi\lambda_i}}
\, du \, dv \, dt = \int_{-\infty}^x \frac{e^{-\frac{t^2}{2\lambda_k}}}{\sqrt{2\pi\lambda_k}} \, dt
  = \Phi\bigg(\frac x{\sqrt{\lambda_k}} \bigg). 
\end{align*}
Consequently,
\begin{align*}
P((X,Y,Z,W) \cdot \vec e_k \le \frac x2) &= F_k(\frac x2) = \Phi\Big(\frac x{2\sqrt{\lambda_k}}\Big) \\
P(|(X,Y,Z,W) \cdot \vec{e_k}| \le \frac x2) &= \Phi_+\Big(\frac x{2\sqrt{\lambda_k}}\Big).
\end{align*}
Thus by Lemma~\ref{lemma:triple_distribution},
\[
4\max\{X,Y,Z,W\} = X+Y+Z+W+|X+Y-Z-W|+|X-Y+Z-W|.
\]
Since $(X+Y+Z+W), |X+Y-Z-W|, |X-Y+Z-W|$ are independent random variables with the distributions given above,
\begin{multline*}
P(4\max\{X,Y,Z,W\} \le x) = \Phi\Big(\frac x4\Big)
- 2\OT\Big(\frac x4; \sqrt{\frac{\lambda_2}{\lambda_1+\lambda_3}}\Big) \\
- 2\OT\Big(\frac x4; \sqrt{\frac{\lambda_3}{\lambda_1+\lambda_2}}\Big)
+ \SF(x;2\sqrt{\lambda_1},2\sqrt{\lambda_2},2\sqrt{\lambda_3}).
\end{multline*}
The desired result is obtained by a simple change of variables and Proposition~\ref{prop:min_max_distributions}.

Furthermore, by the same method, since
\[ 4\min\{X,Y,Z,W\} = X+Y+Z+W-|X+Y-Z-W|-|X-Y+Z-W|, \]
it follows that
\begin{multline*}
P(4\min\{X,Y,Z,W\} \le x) = \Phi\Big(\frac x4\Big)
- 2\OT\Big(\frac x4; -\sqrt{\frac{\lambda_2}{\lambda_1+\lambda_3}}\Big) \\
- 2\OT\Big(\frac x4; -\sqrt{\frac{\lambda_3}{\lambda_1+\lambda_2}}\Big)
+ \SF(x;2\sqrt{\lambda_1},2\sqrt{\lambda_2},2\sqrt{\lambda_3}).
\qedhere
\end{multline*}
\end{proof}

The standard normal distribution and the two distributions characterized in the lemmas in this section
fully characterize all probability distribtions found in the theorems in Section~\ref{distribution intro section} except for one.
This last distribution is irregular and will be treated as a special case, though it is similar
to the final distribution discussed above.

\subsection{$\varphi$-sequences and probability distributions} \label{phi-sequences and probability section}

By Proposition~\ref{prop:y_typical}, the distribution of the counting functions $\inv(m;d)$ of invariant factors is dictated by functions of the form $\omega(m;Q_1-Q_2)$ from Definition~\ref{defn:omega_functions}, where~$Q_1$ and~$Q_2$ are sets of primes congruent to~$1$ modulo two prime powers coming from a two-totient sequence (as defined in Proposition~\ref{prop:two-totient sequences}). In this section we write down the distributions corresponding to all functions of this type arising from two-totient sequences. In our application, two-totient sequences with~$2$ elements give rise to $1$-dimensional normal distributions, which we can determine using Lemma~\ref{lemma:omega_diff_univariable}; on the other hand, two-totient sequences with~$3$ or~$4$ elements give rise to multivariate normal distributions of dimension~$2$ and~$4$, respectively, which we can determine using Proposition~\ref{lemma:omega_diff_multivariable}.
The lemmas in this section describe all the possibilities we will need.

The first lemma will be used to prove Theorems~\ref{theorem:dist_11} and~\ref{theorem:dist_2}:


\begin{lemma}
\label{lemma:type_phi(1{:}1)} 
Suppose $\varphi(p_1^\alpha) \le \varphi(p_2^\beta)$ with $p_1 \ne p_2$. Let
\[
Q_1 = \{ p \colon p \equiv 1 \mod{p_1^\alpha} \}, \quad Q_2 = \{ p \colon p \equiv 1 \mod{p_2^\beta} \},
\]
and set $\Delta_1 = \den(Q_1)$ and $\Delta_2 = \den(Q_2)$ and $\Delta_3 = \Delta_1\Delta_2$.
Then the limiting distribution $F( x)$~of
\[ F_n(x) = P_n\bigg\{ \frac{\omega(m;Q_1-Q_2)-(\Delta_1-\Delta_2)\log\log n}{(\Delta_1+\Delta_2-2\Delta_3)^{1/2}(\log\log n)^{1/2}} \le x \bigg\} \]
is the standard normal distribution $\Phi(x)$ with mean~$0$ and variance~$1$.
\end{lemma}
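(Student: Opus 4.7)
The plan is to reduce the statement to a direct application of the univariate Kubilius result Lemma~\ref{lemma:omega_diff_univariable}, after identifying the relevant Dirichlet densities. First I would observe that since $p_1 \ne p_2$, the Chinese Remainder Theorem gives $Q_1 \cap Q_2 = \{p \colon p \equiv 1 \bmod{p_1^\alpha p_2^\beta}\}$, so that $\den(Q_1 \cap Q_2) = 1/\varphi(p_1^\alpha p_2^\beta) = 1/\bigl(\varphi(p_1^\alpha)\varphi(p_2^\beta)\bigr) = \Delta_1\Delta_2 = \Delta_3$. Consequently
\[
\den(Q_1 \sdif Q_2) = \den(Q_1) + \den(Q_2) - 2\den(Q_1 \cap Q_2) = \Delta_1+\Delta_2-2\Delta_3.
\]
A quick computation shows that this quantity is strictly positive: if $\Delta_1 \ne \Delta_2$ then $\Delta_1+\Delta_2-2\Delta_3 > |\Delta_1-\Delta_2| > 0$, while if $\Delta_1 = \Delta_2$ then $\Delta_1+\Delta_2-2\Delta_3 = 2\Delta_1(1-\Delta_1) > 0$ (recall $\Delta_i \le \tfrac12$ since $\varphi(p_i^{\alpha_i}) \ge 2$).

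Second, I would invoke Lemma~\ref{lemma:omega_diff_univariable}, which is directly applicable since $\den(Q_1 \sdif Q_2) > 0$. It yields
\[
\lim_{n\to\infty} P_n\biggl(\frac{\omega(m;Q_1-Q_2) - A_\omega(n;Q_1-Q_2)}{B_\omega(n;Q_1-Q_2)} \le x\biggr) = \Phi(x).
\]
It therefore remains to show that the normalization in the statement of the lemma can replace the Kubilius normalization without affecting the limit. Using Lemma~\ref{lemma:sum_arithmetic_progression_pnt} twice (once for each $Q_i$), we obtain
\[
A_\omega(n;Q_1-Q_2) = (\Delta_1-\Delta_2)\log\log n + O_{p_1^\alpha,p_2^\beta}(1),
\]
and by Definition~\ref{defn:indicator and sdif},
\[
B_\omega(n;Q_1-Q_2)^2 = A_\omega(n;Q_1) + A_\omega(n;Q_2) - 2A_\omega(n;Q_1\cap Q_2) = (\Delta_1+\Delta_2-2\Delta_3)\log\log n + O_{p_1^\alpha,p_2^\beta}(1).
\]

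Third, I would finish by a standard Slutsky-type argument: writing
\[
\frac{\omega(m;Q_1-Q_2) - (\Delta_1-\Delta_2)\log\log n}{(\Delta_1+\Delta_2-2\Delta_3)^{1/2}(\log\log n)^{1/2}} = \frac{B_\omega(n;Q_1-Q_2)}{(\Delta_1+\Delta_2-2\Delta_3)^{1/2}(\log\log n)^{1/2}} \cdot \frac{\omega(m;Q_1-Q_2)-A_\omega(n;Q_1-Q_2)}{B_\omega(n;Q_1-Q_2)} + \frac{A_\omega(n;Q_1-Q_2) - (\Delta_1-\Delta_2)\log\log n}{(\Delta_1+\Delta_2-2\Delta_3)^{1/2}(\log\log n)^{1/2}},
\]
we see that the first prefactor tends to~$1$ and the additive error term tends to~$0$, so by Slutsky's theorem the whole expression has the same limiting distribution as the Kubilius-normalized quantity, namely $\Phi(x)$.

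There is no real obstacle here; the main point is just the clean identification $\den(Q_1 \cap Q_2) = \Delta_1\Delta_2$ via the Chinese Remainder Theorem, after which the statement is essentially an off-the-shelf consequence of Lemma~\ref{lemma:omega_diff_univariable} combined with the asymptotics of Lemma~\ref{lemma:sum_arithmetic_progression_pnt}.
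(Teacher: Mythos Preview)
Your proof is correct and follows essentially the same route as the paper's: verify that $\den(Q_1\sdif Q_2)>0$, invoke Lemma~\ref{lemma:omega_diff_univariable}, and then replace $A_\omega$ and $B_\omega$ by their asymptotics via Lemma~\ref{lemma:sum_arithmetic_progression_pnt}. One tiny quibble: your positivity argument appeals to $\Delta_i\le\tfrac12$, which fails when $p_1^\alpha=2$ (then $\Delta_1=1$), but in that boundary case $\Delta_1+\Delta_2-2\Delta_3=1-\Delta_2>0$ anyway, so the conclusion stands.
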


\begin{proof}
Since $\omega(m;Q_1-Q_2)\in\cH$ by Lemma~\ref{lemma:omega_diff_H}, this is an immediate consequence of Lemma~\ref{lemma:omega_diff_univariable}, where $A_\omega(n;S-T)$ and $B_\omega(n;S-T)$ are evaluated asymptotically in a way similar to Lemma~\ref{lemma:sum_arithmetic_progression_pnt} and its proof.
\end{proof}


The second lemma will be used to prove Theorem~\ref{theorem:dist_21}:

\begin{lemma}
\label{lemma:type_phi(2{:}1)}
Let $\{p_1^\alpha, p_2, p_3^\beta\}$ be a two-totient sequence with $\varphi(p_1^\alpha) = \varphi(p_2) < \varphi(p_3^\beta)$. Define
\begin{align*}
Q_1 &= \{ p \colon p \equiv 1 \mod{p_1} \}, \\
Q_2 &= \{ p \colon p \equiv 1 \mod{p_2^\alpha} \}, \\
Q_3 &= \{ p \colon p \equiv 1 \mod{p_3^\beta} \},
\end{align*}
with $\Delta_1 = \den(Q_1)$, $\Delta_2 = \den(Q_3)$, and $\Delta_3 = \Delta_1\Delta_2$.
Then the limiting distribution $F(\vec x)$ of
\begin{multline*}
F_n(\vec x) = P_n \bigg\{
\frac{\omega(m;Q_1-Q_3)-(\Delta_1-\Delta_2)\log\log n}{(\Delta_1+\Delta_2-2\Delta_3)^{1/2}(\log\log n)^{1/2}} \le x_1, \\ 
\frac{\omega(m;Q_2-Q_3)-(\Delta_1-\Delta_2)\log\log n}{(\Delta_1+\Delta_2-2\Delta_3)^{1/2}(\log\log n)^{1/2}} \le x_2 \bigg\}
\end{multline*}
is the bivariate normal distribution centred at $0$ and covariance matrix
\[
M = \begin{bmatrix} 1 & a \\ a & 1 \end{bmatrix}
\quad\text{where}\quad
a = \frac{\Delta_1^2+\Delta_2-2\Delta_3}{\Delta_1+\Delta_2-2\Delta_3},
\]
which has determinant $1-a^2$ and eigenvalue--eigenvector pairs
\[ \big(1+a, [1,1]\big),\, \big(1-a, [1,-1]\big). \]
The probability density function of $F(\vec x)$ is
\[
f(\vec x) = \frac{\exp(-\frac12 \vec x M^{-1} \vec x^\intercal)}{2\pi \sqrt{1-a^2}}.
\]
\end{lemma}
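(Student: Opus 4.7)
The plan is to identify this statement as a direct application of Proposition~\ref{lemma:omega_diff_multivariable} with $s=2$, setting $(S_1,T_1) = (Q_1,Q_3)$ and $(S_2,T_2) = (Q_2,Q_3)$ in that proposition's notation. Once the relevant Dirichlet densities are computed, the covariance matrix can be read off directly from formula~\eqref{quad form fraction}, and the explicit density formula follows from the standard inverse-covariance expression for a nondegenerate bivariate normal distribution.

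The key computational input comes from Proposition~\ref{prop:two-totient sequences}(b), which asserts that the three prime powers $p_1^\alpha$, $p_2$, and $p_3^\beta$ making up the two-totient sequence are pairwise relatively prime. This coprimality allows the Chinese remainder theorem to reduce the intersection of any two of the sets $Q_1$, $Q_2$, $Q_3$ to a single arithmetic progression, and Dirichlet's theorem then provides that the density multiplies. Concretely, using $\varphi(p_1^\alpha) = \varphi(p_2)$ one obtains $\den(Q_1) = \den(Q_2) = \Delta_1$ and $\den(Q_3) = \Delta_2$, together with $\den(Q_1 \cap Q_2) = \Delta_1^2$ and $\den(Q_i \cap Q_3) = \Delta_1 \Delta_2 = \Delta_3$ for $i = 1,2$. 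In particular $\den(Q_i \sdif Q_3) = \Delta_1 + \Delta_2 - 2\Delta_3$ for $i=1,2$, matching the normalization denominators appearing in $F_n$, and $\den(Q_i) - \den(Q_3) = \Delta_1 - \Delta_2$ matching the numerators.

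Substituting these densities into~\eqref{quad form fraction} gives $M_{11} = M_{22} = 1$ automatically (as recorded in the first remark after Proposition~\ref{lemma:omega_diff_multivariable}), while
\[
M_{12} = \frac{\den(Q_1 \cap Q_2) + \den(Q_3) - \den(Q_1 \cap Q_3) - \den(Q_3 \cap Q_2)}{\den(Q_1 \sdif Q_3)^{1/2}\den(Q_2 \sdif Q_3)^{1/2}} = \frac{\Delta_1^2 + \Delta_2 - 2\Delta_3}{\Delta_1 + \Delta_2 - 2\Delta_3} = a.
\]
The spectral claims are then mechanical: $[1,1]$ and $[1,-1]$ are eigenvectors of any symmetric $2\times 2$ matrix with equal diagonal entries, with eigenvalues equal to the sum and difference of the diagonal and off-diagonal entries, giving $1 \pm a$ and determinant $1 - a^2$.

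The one check that deserves explicit mention is nondegeneracy: one must verify that $|a| < 1$ so that $M$ is invertible and the stated density formula is valid. By Lemma~\ref{lemma:equal_totient}, the coincidence $\varphi(p_1^\alpha) = \varphi(p_2)$ forces $\varphi(p_1^\alpha) \ge 2$, hence $\Delta_1 \le \tfrac12$. The bound $a < 1$ then follows from $\Delta_1^2 < \Delta_1$, and $a > -1$ reduces after clearing denominators to $\Delta_1^2 + \Delta_1 + 2\Delta_2(1 - 2\Delta_1) > 0$, which is immediate from $\Delta_1 \le \tfrac12$. Beyond this minor verification, the lemma is essentially bookkeeping: Proposition~\ref{lemma:omega_diff_multivariable} has already absorbed all of the analytic content, and the rest is tracking densities through the Chinese remainder theorem.
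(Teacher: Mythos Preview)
Your proposal is correct and takes essentially the same approach as the paper: apply Proposition~\ref{lemma:omega_diff_multivariable} with $s=2$, verify the density hypotheses via the pairwise coprimality in Proposition~\ref{prop:two-totient sequences}(b), and read off the covariance matrix from~\eqref{quad form fraction}. Your write-up is in fact more thorough than the paper's two-sentence proof, since you explicitly check the nondegeneracy $|a|<1$ needed for the density formula, which the paper leaves implicit.
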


\begin{proof}
Since $\omega(m;Q_1-Q_3)\in\cH$ and $\omega(m;Q_2-Q_3)\in\cH$ by Lemma~\ref{lemma:omega_diff_H}, this is an immediate consequence of Proposition~\ref{lemma:omega_diff_multivariable}. We must confirm that $\den(Q_1\cap Q_3) = \den(Q_2\cap Q_3) = \Delta_1\Delta_2$, but this follows from the fact that $\{p_1,p_2,p_3\}$ are distinct in such two-totient sequences.
\end{proof}

The third lemma will be used to prove Theorem~\ref{theorem:dist_12}:

\begin{lemma}
\label{lemma:type_phi(1{:}2)}
Let $\{p_1^\alpha, p_2, p_3^\beta\}$ be a two-totient sequence with $\varphi(p_1^\alpha) < \varphi(p_2) = \varphi(p_3^\beta)$. Define
\begin{align*}
Q_1 &= \{ p \colon p \equiv 1 \mod{p_1^\alpha} \}, \\
Q_2 &= \{ p \colon p \equiv 1 \mod{p_2} \}, \\
Q_3 &= \{ p \colon p \equiv 1 \mod{p_3^\beta} \},
\end{align*}
and set $\Delta_1 = \den(Q_1)$, $\Delta_2 = \den(Q_2)$, and $\Delta_3 = \Delta_1\Delta_2$.
Then the limiting distribution $F(\vec x)$ of
\begin{multline*}
F_n(\vec x) = P_n \bigg\{
\frac{\omega(m;Q_1-Q_2)-(\Delta_1-\Delta_2)\log\log n}{(\Delta_1+\Delta_2-2\Delta_3)^{1/2}(\log\log n)^{1/2}} \le x_1, \\ 
\frac{\omega(m;Q_1-Q_3)-(\Delta_1-\Delta_2)\log\log n}{(\Delta_1+\Delta_2-2\Delta_3)^{1/2}(\log\log n)^{1/2}} \le x_2 \bigg\}
\end{multline*}
is the nonsingular bivariate normal distribution with mean $\vec 0$ and covariance matrix
\[
M = \left[ \begin{array}{cc} 1 & a \\ a & 1 \end{array} \right]
\quad\text{where}\quad
a = \frac{\Delta_1+\Delta_2^2-2\Delta_3}{\Delta_1+\Delta_2-2\Delta_3},
\]
which has determinant~$1-a^2$ and eigenvalue--eigenvector pairs
\[
\big(1+a, [1,1]\big),\, \big(1-a, [1,-1]\big).
\]
The probability density function of $F(\vec x)$ is
\[
f(\vec x) = \frac{\exp(-\frac12 \vec x M^{-1} \vec x^\intercal)}{2\pi \sqrt{1-a^2}} .
\]
\end{lemma}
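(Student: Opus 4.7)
The plan is to follow the same template as the proof of Lemma~\ref{lemma:type_phi(2{:}1)}: first show that the two additive functions whose joint distribution is sought belong to the Kubilius class~$\cH$, then invoke Proposition~\ref{lemma:omega_diff_multivariable} to obtain the bivariate normal limit, and finally compute the off-diagonal entry of the covariance matrix in closed form.

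The first step is to check that both $\omega(m;Q_1-Q_2)$ and $\omega(m;Q_1-Q_3)$ lie in~$\cH$. By Lemma~\ref{lemma:omega_diff_H}(a) it suffices to show that $\den(Q_1\sdif Q_2)>0$ and $\den(Q_1\sdif Q_3)>0$. Both densities evaluate (after verifying the pairwise intersection densities below) to $\Delta_1+\Delta_2-2\Delta_3$, which is positive because $\Delta_2\le \frac12<1$ forces $\Delta_1(1-\Delta_2)+\Delta_2(1-\Delta_1)>0$.

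Next I would compute $\den(Q_i\cap Q_j)$ for each pair. By Proposition~\ref{prop:two-totient sequences}(c), the prime powers $p_1^\alpha, p_2, p_3^\beta$ are pairwise relatively prime except in the exceptional case where $\{p_1^\alpha,p_2,p_3^\beta\}=\{2,3,4\}$. In the coprime case the Chinese remainder theorem and Dirichlet's theorem give $\den(Q_i\cap Q_j)=1/(\varphi(q_i)\varphi(q_j))$, which equals $\Delta_1\Delta_2=\Delta_3$ for the pairs $(1,2)$ and $(1,3)$ and equals $\Delta_2^2$ for the pair $(2,3)$. In the exceptional case $\{2,3,4\}$ we have $\Delta_1=1$, and since $Q_3\subset Q_1$ up to finitely many primes we still get $\den(Q_1\cap Q_3)=\Delta_2=\Delta_1\Delta_2=\Delta_3$; the remaining two intersections are handled as before. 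Then applying equation~\eqref{quad form fraction} of Proposition~\ref{lemma:omega_diff_multivariable} with $s=2$, $(Q_1^{(1)},R^{(1)})=(Q_1,Q_2)$ and $(Q_1^{(2)},R^{(2)})=(Q_1,Q_3)$, the diagonal entries of the covariance matrix are~$1$ by the remark following that proposition, while the off-diagonal entry has numerator
\[
\den(Q_1)+\den(Q_2\cap Q_3)-\den(Q_1\cap Q_3)-\den(Q_2\cap Q_1)=\Delta_1+\Delta_2^2-2\Delta_3
\]
and common denominator $\den(Q_1\sdif Q_2)^{1/2}\den(Q_1\sdif Q_3)^{1/2}=\Delta_1+\Delta_2-2\Delta_3$, yielding the stated value of~$a$.

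The remaining assertions are purely linear-algebraic: for the $2\times2$ symmetric matrix with ones on the diagonal and~$a$ off-diagonal, the determinant is $1-a^2$, and the vectors $[1,1]$ and $[1,-1]$ are eigenvectors with eigenvalues $1+a$ and $1-a$. Nonsingularity (needed to write down the probability density function in the displayed form) reduces to $|a|<1$; using $0<\Delta_2<1$ and $0<\Delta_1\le 1$, one verifies $\Delta_1+\Delta_2^2-2\Delta_3<\Delta_1+\Delta_2-2\Delta_3$ strictly, and symmetric manipulations give $a>-1$ as well. I expect the only subtlety (and hence the ``main obstacle'') to be this verification of the exceptional case $\{2,3,4\}$ and the confirmation that nonsingularity survives there; once $\Delta_1=1$ is substituted, everything collapses cleanly and the generic formula continues to hold.
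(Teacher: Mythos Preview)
Your proposal is correct and follows essentially the same route as the paper's proof: verify membership in~$\cH$ via Lemma~\ref{lemma:omega_diff_H}, apply Proposition~\ref{lemma:omega_diff_multivariable}, and confirm the intersection densities (including the exceptional case $\{2,3,4\}$ handled exactly as the paper does, by noting $\Delta_1=1$). You spell out the nonsingularity check and the eigenvalue computation more explicitly than the paper, which leaves these as implicit, but there is no substantive difference in approach.
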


\begin{proof}
Since $\omega(m;Q_1-Q_2)\in\cH$ and $\omega(m;Q_1-Q_3)\in\cH$ by Lemma~\ref{lemma:omega_diff_H}, this is an immediate consequence of Proposition~\ref{lemma:omega_diff_multivariable}. We must confirm that $\den(Q_1\cap Q_2) = \den(Q_1\cap Q_3) = \Delta_1\Delta_2$, but this follows from the fact that $\{p_1,p_2,p_3\}$ are distinct in such two-totient sequences; the only exception is $(p_1^\alpha, p_2, p_3^\beta) = (2,3,4)$, but in this case $Q_1\cap Q_2 = Q_2$ and $Q_1\cap Q_3=Q_3$, and so we still have $\den(Q_1\cap Q_2) = \den(Q_1\cap Q_3) = \Delta_1\Delta_2$ since $\Delta_1=1$.
\end{proof}

The fourth lemma will be used to prove Theorem~\ref{theorem:dist_22}:

\begin{lemma}
\label{lemma:type_phi(2{:}2)}
Let $\{p_1, p_2^\alpha, p_3, p_4^\beta\}$ be a two-totient sequence with $\varphi(p_1) = \varphi(p_2^\alpha) < \varphi(p_3) = \varphi(p_4^\beta)$, and assume that $(p_1, p_2^\alpha, p_3, p_4^\beta) \ne (3,4,5,8)$. Define
\begin{align*}
Q_1 &= \{ p \colon p \equiv 1 \mod{p_1} \}, \\
Q_2 &= \{ p \colon p \equiv 1 \mod{p_2^\alpha} \}, \\
Q_3 &= \{ p \colon p \equiv 1 \mod{p_3} \}, \\
Q_4 &= \{ p \colon p \equiv 1 \mod{p_4^\beta} \},
\end{align*}
and set $\Delta_1 = \den(Q_1)$, $\Delta_2 = \den(Q_3)$, and $\Delta_3 = \Delta_1\Delta_2$.
Then the limiting distribution $F(\vec x)$ of
\begin{multline} \label{4d general Fnx}
F_n(\vec x) = P_n \bigg\{ 
\frac{\omega(m;Q_1-Q_3) - (\Delta_1-\Delta_2)\log\log n}{(\Delta_1+\Delta_2-2\Delta_3)^{1/2}(\log\log n)^{1/2}} \le x_1 , \\ 
\frac{\omega(m;Q_1-Q_4) - (\Delta_1-\Delta_2)\log\log n}{(\Delta_1+\Delta_2-2\Delta_3)^{1/2}(\log\log n)^{1/2}} \le x_2 , 
\frac{\omega(m;Q_2-Q_3) - (\Delta_1-\Delta_2)\log\log n}{(\Delta_1+\Delta_2-2\Delta_3)^{1/2}(\log\log n)^{1/2}} \le x_3 , \\
\frac{\omega(m;Q_2-Q_4) - (\Delta_1-\Delta_2)\log\log n}{(\Delta_1+\Delta_2-2\Delta_3)^{1/2}(\log\log n)^{1/2}} \le x_4  \bigg\}
\end{multline}
is the singular multivariate normal distribution with mean $\vec 0$ and covariance matrix
\[
M = \left[ \begin{array}{cccc}
1 & a & b & c \\
a & 1 & c & b \\
b & c & 1 & a \\
c & b & a & 1
\end{array} \right]
\]
where
\begin{align*}
a = \frac{\Delta_1+\Delta_2^2-2\Delta_3}{\Delta_1+\Delta_2-2\Delta_3}, \quad
b = \frac{\Delta_1^2+\Delta_2-2\Delta_3}{\Delta_1+\Delta_2-2\Delta_3}, \quad
c = \frac{\Delta_1^2+\Delta_2^2-2\Delta_3}{\Delta_1+\Delta_2-2\Delta_3} ,
\end{align*}
which has determinant~$0$ and eigenvalue--eigenvector pairs (in ascending order of eigenvalue)
\begin{multline*}
\big(0, [1,-1,-1,1]\big),\, \big(1-a+b-c, [1,-1,1,-1]\big), \\
\big(1+a-b-c, [1,1,-1,-1]\big),\, \big(1+a+b+c, [1,1,1,1]\big) .
\end{multline*}
The probability density function of $F(\vec x)$ is supported on the hyperplane orthogonal to $[1,-1,-1,1]$ and is given there by
\[
f(\vec x) = \frac{\exp(-\frac12 \vec x N \vec x^\intercal)}{\sqrt{8\pi^3(1+a+b+c)(1+a-b-c)(1-a+b-c)}},
\]
where the matrix~$N$ is given by
\[
N = C
\left[ \begin{array}{ccc}
	\frac 1{{1+a+b+c}} & 0 & 0 \\
	0 & \frac 1{{1+a-b-c}} & 0 \\
	0 & 0 & \frac 1{{1-a+b-c}}
\end{array} \right]
C^\intercal
\quad\text{with}\quad
C = \frac12 \left[ \begin{array}{ccc}
	1 &  1 &  1 \\
	1 &  1 & -1 \\
	1 & -1 &  1 \\
	1 & -1 & -1
\end{array} \right] .
\]
\end{lemma}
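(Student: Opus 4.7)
The plan is to apply Proposition~\ref{lemma:omega_diff_multivariable} directly to the four strongly additive functions $\omega(m;Q_1-Q_3)$, $\omega(m;Q_1-Q_4)$, $\omega(m;Q_2-Q_3)$, $\omega(m;Q_2-Q_4)$, each of which lies in~$\cH$ by Lemma~\ref{lemma:omega_diff_H}. The first observation is that $\den(Q_1)=\den(Q_2)=\Delta_1$ and $\den(Q_3)=\den(Q_4)=\Delta_2$, since Dirichlet's theorem gives $\den(Q_i) = 1/\varphi(\tq_i)$ and the two-totient structure equates $\varphi(p_1)=\varphi(p_2^\alpha)$ and $\varphi(p_3)=\varphi(p_4^\beta)$. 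This immediately yields that the two means $\den(Q_1)-\den(Q_3) = \den(Q_2)-\den(Q_4) = \Delta_1-\Delta_2$ and the two variances $\den(Q_1\sdif Q_3) = \den(Q_2\sdif Q_4) = \Delta_1+\Delta_2-2\Delta_3$ (and similarly for the mixed pairs) match across all four components, giving the single common normalization used in equation~\eqref{4d general Fnx}.

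Next I would compute the six pairwise intersection densities. By Proposition~\ref{prop:two-totient sequences}(d), the four prime powers $p_1,p_2^\alpha,p_3,p_4^\beta$ are pairwise coprime (this is precisely why $(3,4,5,8)$ has to be excluded, since $4$ and $8$ are powers of the same prime and would force $Q_2\cap Q_4 = Q_4$ rather than a product of densities). Pairwise coprimality, via the Chinese remainder theorem and Dirichlet's theorem, gives $\den(Q_i\cap Q_j) = \den(Q_i)\den(Q_j)$ for $i\ne j$. Plugging these into the covariance formula~\eqref{quad form fraction} from Proposition~\ref{lemma:omega_diff_multivariable}, I would verify that the diagonal entries equal~$1$ (as noted in the remark following that proposition) and that the six off-diagonal entries collapse to exactly the three values $a$, $b$, $c$ in the pattern displayed in the statement—for instance, the $(1,3)$ entry corresponds to the pair $(Q_1-Q_3, Q_2-Q_3)$, whose numerator is $\den(Q_1\cap Q_2)+\den(Q_3)-\den(Q_1\cap Q_3)-\den(Q_3\cap Q_2) = \Delta_1^2+\Delta_2-2\Delta_3$, giving~$b$.

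Then I would verify the eigenstructure of~$M$ directly. The symmetry of~$M$ under simultaneously swapping rows/columns $(1\leftrightarrow 4)$ and $(2\leftrightarrow 3)$ makes the four sign-vectors $[1,\pm1,\pm1,\pm1]$ (with an even number of minus signs in one orbit, odd in the other) natural eigenvector candidates. Direct multiplication confirms that $[1,1,1,1]$, $[1,1,-1,-1]$, $[1,-1,1,-1]$, $[1,-1,-1,1]$ are eigenvectors with eigenvalues $1+a+b+c$, $1+a-b-c$, $1-a+b-c$, $1-a-b+c$. The last eigenvalue vanishes because the numerators telescope:
\[
(\Delta_1+\Delta_2-2\Delta_3) - (\Delta_1+\Delta_2^2-2\Delta_3) - (\Delta_1^2+\Delta_2-2\Delta_3) + (\Delta_1^2+\Delta_2^2-2\Delta_3) = 0,
\]
which matches the singularity hypothesis. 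The distribution is therefore supported on the hyperplane orthogonal to $[1,-1,-1,1]$; the density on that hyperplane is obtained by writing the covariance in the orthonormal basis of the three nontrivial eigenvectors (the columns of~$C$, up to normalization) and recognizing the resulting quadratic form in the exponent as $\tfrac12 \vec x N\vec x^\intercal$ with the prescribed normalizing constant $\sqrt{8\pi^3(1+a+b+c)(1+a-b-c)(1-a+b-c)}$.

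The main obstacle is not any single step but rather the bookkeeping: verifying six off-diagonal covariance entries match the three claimed rational functions of $\Delta_1,\Delta_2,\Delta_3$, and then recognizing the resulting $4\times 4$ matrix as having the specific eigenstructure with the kernel along $[1,-1,-1,1]$. The crucial conceptual hinge is the pairwise coprimality in Proposition~\ref{prop:two-totient sequences}(d), which fails exactly for $(3,4,5,8)$; indeed, this is the reason the case $i=3$ of Theorem~\ref{theorem:dist_22} is deferred to Theorem~\ref{theorem:dist_22alt}, since in that exceptional configuration $\den(Q_2\cap Q_4)\neq\Delta_1\Delta_2$ and the symmetry producing the clean eigenvector $[1,-1,-1,1]$ is destroyed.
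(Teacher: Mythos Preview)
Your proposal is correct and follows essentially the same route as the paper: invoke Lemma~\ref{lemma:omega_diff_H} for membership in~$\cH$, apply Proposition~\ref{lemma:omega_diff_multivariable}, compute the intersection densities via the pairwise coprimality in Proposition~\ref{prop:two-totient sequences}(d), and verify the eigenstructure directly. One small inaccuracy in your closing remark: the kernel vector $[1,-1,-1,1]$ survives even in the $(3,4,5,8)$ case, since the identity $\omega(m;Q_1-Q_3)-\omega(m;Q_1-Q_4)-\omega(m;Q_2-Q_3)+\omega(m;Q_2-Q_4)=0$ holds pointwise regardless of coprimality (see the remark after the lemma in the paper); what the exceptional case actually destroys is the remaining eigenvectors being the simple sign vectors $[1,\pm1,\pm1,\pm1]$, which is what blocks the clean reduction via Lemma~\ref{lemma:4_variable_min_max}.
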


\begin{proof}
First, observe that by Lemma~\ref{lemma:omega_diff_H}, that $\omega(m;Q_1-Q_3)$, $\omega(m;Q_1-Q_4)$, $\omega(m;Q_2-Q_3)$, and $\omega(m;Q_2-Q_4)$
are all in~$\cH$ by Lemma~\ref{lemma:omega_diff_H}. Thus we can apply Proposition~\ref{lemma:omega_diff_multivariable} to determine the distribution~$F$, with the entries of~$M$ coming easily from the Dirichlet densities of the sets~$Q_i$ and their symmetric differences. It is easy to verify the eigenvector--eigenvalue pairs above once written down. The fact that $\Delta_2<\Delta<1$, together with the evaluation $c = (\Delta_1-\Delta_2)^2/(\Delta_1+\Delta_2-2\Delta_3)$, implies that $a>b>c>0$, from which the ordering of the eigenvalues follows.
\end{proof}

\begin{remark}
The fact that the first eigenvalue--eigenvector pair is $\big(0, [1,-1,-1,1]\big)$ is obvious in hindsight, since the sum of the left-hand sides of the first and fourth inequalities in equation~\eqref{4d general Fnx} is exactly equal to the sum of the left-hand sides of the second and third inequalities.
\end{remark}

Finally, the next lemma will be used in the proof of Theorem~\ref{theorem:dist_22alt}. This lemma concerns an irregular
distribution lacking the elegant symmetry that makes the preceding distributions easy to simplify into
explicit univariate distributions for the corresponding invariant factor; but the multivariate joint distribution is still
fairly nice, being simply a normal distribution.

\begin{lemma}
\label{lemma:type_phi(3,4,5,8)}
Let
\begin{align*}
Q_1 &= \{ p \colon p \equiv 1 \mod{3} \}, \\
Q_2 &= \{ p \colon p \equiv 1 \mod{4} \}, \\
Q_3 &= \{ p \colon p \equiv 1 \mod{5} \}, \\
Q_4 &= \{ p \colon p \equiv 1 \mod{8} \},
\end{align*}
Then the limiting distribution of 
\begin{multline}
F_n(\vec x) = P_n \bigg\{
\frac{\omega(m;Q_1-Q_3) - \frac14\log\log n}{(\frac12\log\log n)^{1/2}} \le x_1 ,
\frac{\omega(m;Q_1-Q_4) - \frac14\log\log n}{(\frac12\log\log n)^{1/2}} \le x_2 , \\
\frac{\omega(m;Q_2-Q_3) - \frac14\log\log n}{(\frac12\log\log n)^{1/2}} \le x_3 ,
\frac{\omega(m;Q_2-Q_4) - \frac14\log\log n}{(\frac12\log\log n)^{1/2}} \le x_4
 \bigg\} \end{multline}
is the singular multivariate normal distribution with mean $\vec 0$ and covariance matrix
\[ M' = \left[ \begin{array}{cccc}
1 & 5/8 & 1/2 & 1/8 \\ 5/8 & 1 & -1/8 & 1/4 \\ 1/2 & -1/8 & 1 & 3/8 \\ 1/8 & 1/4 & 3/8 & 1/2 
\end{array} \right] \]
wihch has determinant~$0$ and eigenvalue--eigenvector pairs
\begin{multline*}
(0, [1,-1,-1,1]),\, (\tfrac12, [1,-\tfrac12,0,-\tfrac32]), \\
\bigl( -\tfrac{\sqrt 2}4+\tfrac32, [1, 3\sqrt 2 + 5,-4\sqrt 2 -5, -\sqrt 2 - 1] \bigr), \,
\bigl( \tfrac{\sqrt 2}4+\tfrac32, [1, -3\sqrt 2 + 5,4\sqrt 2 -5, \sqrt 2 - 1] \bigr) .
\end{multline*}
\end{lemma}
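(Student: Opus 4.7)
My plan is to apply Proposition~\ref{lemma:omega_diff_multivariable} directly to the four strongly additive functions $\omega(m;Q_1-Q_3)$, $\omega(m;Q_1-Q_4)$, $\omega(m;Q_2-Q_3)$, and $\omega(m;Q_2-Q_4)$, each of which lies in Kubilius's class~$\cH$ by Lemma~\ref{lemma:omega_diff_H} since every symmetric difference $Q_i \sdif Q_j$ has positive Dirichlet density. The existence of a multivariate Gaussian limit is then immediate; the substantive work is to compute the covariance entries and verify the stated eigenpairs.

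The feature distinguishing this case from Lemma~\ref{lemma:type_phi(2{:}2)} is the nested containment $Q_4 \subsetneq Q_2$ (since $8\mid p-1$ forces $4\mid p-1$), which yields $\den(Q_2\cap Q_4) = \den(Q_4) = \tfrac14$ and hence $\den(Q_2\sdif Q_4) = \tfrac14$, rather than the value $\tfrac12$ obtained for the other three pairs. For the remaining five pairwise intersections the moduli are coprime, so the Chinese remainder theorem gives $\den(Q_1\cap Q_2) = 1/\varphi(12) = \tfrac14$, $\den(Q_1\cap Q_3) = 1/\varphi(15) = \tfrac18$, $\den(Q_1\cap Q_4) = 1/\varphi(24) = \tfrac18$, $\den(Q_2\cap Q_3) = 1/\varphi(20) = \tfrac18$, and $\den(Q_3\cap Q_4) = 1/\varphi(40) = \tfrac1{16}$. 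Because the statement normalizes the fourth component by the factor $(\tfrac12\log\log n)^{1/2}$ used for the first three, but its true standard deviation is only $(\tfrac14\log\log n)^{1/2}$, the fourth coordinate of the rescaled random vector equals the Kubilius-standardized coordinate divided by~$\sqrt 2$. Thus the covariance matrix~$M'$ in the statement is obtained from the matrix produced by Proposition~\ref{lemma:omega_diff_multivariable} by dividing its fourth row and column by $\sqrt 2$, with the diagonal entry $M'_{44}$ hit twice and becoming $\tfrac12$. Inserting the six densities above into the formula of Proposition~\ref{lemma:omega_diff_multivariable} and applying this rescaling produces each entry of~$M'$; for instance, $M'_{14} = (\tfrac1{16})/\sqrt{\tfrac12\cdot\tfrac14}/\sqrt 2 = \tfrac18$ and $M'_{23} = (-\tfrac1{16})/\sqrt{\tfrac12\cdot\tfrac12} = -\tfrac18$.

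Once $M'$ is assembled, verifying the four eigenvalue--eigenvector pairs is an elementary linear-algebra calculation. The zero eigenvalue with eigenvector $[1,-1,-1,1]$ is structurally transparent: because $\omega(m;S-T)$ is additive in its signed argument, the pointwise identity $\omega(m;Q_1-Q_3) - \omega(m;Q_1-Q_4) - \omega(m;Q_2-Q_3) + \omega(m;Q_2-Q_4) \equiv 0$ holds identically, and this identity is preserved by centering since the four subtracted means $\tfrac14\log\log n$ cancel in the same pattern. The main obstacle---and the reason this case was excluded from Theorem~\ref{theorem:dist_22} and is treated separately in Theorem~\ref{theorem:dist_22alt}---is that the asymmetric scaling forced by $Q_4 \subset Q_2$ destroys the block symmetry of the matrix in Lemma~\ref{lemma:type_phi(2{:}2)}: the three nonzero eigenvalues of~$M'$ work out to the asymmetric values $\tfrac12$, $\tfrac32 - \tfrac{\sqrt 2}4$, and $\tfrac32 + \tfrac{\sqrt 2}4$, with irrational eigenvectors, so there is no analogue of Lemma~\ref{lemma:4_variable_min_max} that would simplify the distribution of $\min\{X_1,X_2,X_3,X_4\}$ to a clean univariate form, and Theorem~\ref{theorem:dist_22alt} can only describe it implicitly via~$\Mn_F$.
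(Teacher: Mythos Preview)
Your proposal is correct and follows the same route as the paper's proof: apply Proposition~\ref{lemma:omega_diff_multivariable}, compute the relevant Dirichlet densities, and verify the eigenpairs by direct calculation. In fact you are more explicit than the paper on one genuinely important point: because $Q_4\subset Q_2$ forces $\den(Q_2\sdif Q_4)=\tfrac14$ rather than $\tfrac12$, the normalization in the lemma's fourth coordinate does not match the one in Proposition~\ref{lemma:omega_diff_multivariable}, and the resulting $\sqrt2$-rescaling of the fourth row and column is exactly what produces $M'_{44}=\tfrac12$ and the off-diagonal entries stated. The paper's proof buries this inside ``tedious but straightforward''; your making it explicit is an improvement.
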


\begin{proof}
As before, we can apply Proposition~\ref{lemma:omega_diff_multivariable} to determine the distribution~$F$, with the entries of~$M$ coming easily from the Dirichlet densities of the sets~$Q_i$ and their symmetric differences; the computations are tedious but straightforward. It is easy to verify the eigenvector--eigenvalue pairs above once written down.
\end{proof}

This concludes all the prior work needed for the proof of the invariant factor distribution theorems, which is the subject of the next section.

\subsection{Proofs of the theorems in Section~\ref{distribution intro section}} \label{proofs section}

In this section, we prove Theorems~\ref{theorem:dist_11}, \ref{theorem:dist_21}, \ref{theorem:dist_12}, \ref{theorem:dist_22}, \ref{theorem:dist_22alt}, \ref{theorem:dist_2}, and~\ref{theorem:dist_rare}. We begin by getting the rare invariant factor orders out of the way:

\begin{proof}[Proof of Theorem~\ref{theorem:dist_rare}]
Suppose $d$ is a rare invariant factor order. Let $y$ be a real number such that $d \le \iV(y)$.
By Proposition~\ref{prop:y_typical}\eqref{yt5}--\eqref{ytnew}, if~$m$ is a $y$-typical number then every invariant factor of $(\Z/m\Z)^\times$ not exceeding $\iV(y)$ must be an element of one of the $\sif_i$; consequently, the rare invariant factor order~$d$ cannot be such an invariant factor unless~$m$ is not $y$-typical.
In particular,
\[ P_n(\inv(m;d) > 0) \le P_n(m \text{ is not $y$-typical}) \ll_y \frac1{\log\log n} \]
by Proposition~\ref{prop:y_typical_is_typical}, and therefore $\lim_{n\to\infty} P_n(\inv(m;d) > 0) = 0$.
\end{proof}

The fact that non-$y$-typical numbers are rare will also matter greatly in the remaining proofs; the following lemma codifies this idea in a useful way.

\begin{lemma} \label{1/loglogn lemma}
Fix $y\ge2$. If $S_1$ and $S_2$ are two statements about~$m$ that are equivalent when~$m$ is $y$-typical, then $P_n(S_1) = P_n(S_2) + O_y(1/\log\log n)$.
\end{lemma}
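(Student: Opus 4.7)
The plan is to split the probability $P_n(S_1)$ according to whether or not $m$ is $y$-typical, and similarly for $P_n(S_2)$, then use the hypothesis to cancel the $y$-typical contributions and bound the remaining terms with Proposition~\ref{prop:y_typical_is_typical}.

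Concretely, I would write
\[
P_n(S_1) = P_n(S_1 \text{ and } m \text{ is } y\text{-typical}) + P_n(S_1 \text{ and } m \text{ is not } y\text{-typical})
\]
and similarly for $P_n(S_2)$. By hypothesis, on the set of $y$-typical $m \le n$ the assertions $S_1$ and $S_2$ hold on exactly the same integers, so the first summands on the right-hand sides agree. Subtracting the two expressions, the difference $P_n(S_1)-P_n(S_2)$ is the difference of the two ``not $y$-typical'' contributions, each of which is bounded in absolute value by $P_n(m \text{ is not } y\text{-typical})$.

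Applying Proposition~\ref{prop:y_typical_is_typical} with $r=1$ (recalling that $y$ is fixed), we obtain
\[
P_n(m \text{ is not } y\text{-typical}) \ll \frac{y^4}{\log\log n} \ll_y \frac{1}{\log\log n},
\]
which yields $|P_n(S_1)-P_n(S_2)| \ll_y 1/\log\log n$ as claimed. No step here is an obstacle; the lemma is essentially just a packaging of Proposition~\ref{prop:y_typical_is_typical} for convenient reuse throughout Section~\ref{sec:distributions}.
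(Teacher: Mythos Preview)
Your proof is correct and follows essentially the same approach as the paper: both split $P_n(S_i)$ according to $y$-typicality, observe that the $y$-typical parts coincide by hypothesis, and bound the non-$y$-typical parts via Proposition~\ref{prop:y_typical_is_typical}. The only cosmetic difference is that you explicitly note the choice $r=1$, which the paper leaves implicit.
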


\begin{proof}
Proposition~\ref{prop:y_typical_is_typical} implies that for $i\in\{1,2\}$,
\begin{align*}
P_n(S_i) &= P_n(S_i,\, m \text{ is $y$-typical}) + P_n(S_i,\, m \text{ is not $y$-typical}) \\
&= P_n(S_i,\, m \text{ is $y$-typical}) + O\bigl( P_n(m \text{ is not $y$-typical}) \bigr) \\
&= P_n(S_i,\, m \text{ is $y$-typical}) + O_y(1/\log\log n)
\end{align*}
Since $P_n(S_1,\, m \text{ is $y$-typical}) = P_n(S_2,\, m \text{ is $y$-typical})$ by assumption, we conclude that
\begin{align*}
P_n(S_1) &= P_n(S_1,\, m \text{ is $y$-typical}) + O_y(1/\log\log n) \\
&= P_n(S_2,\, m \text{ is $y$-typical}) + O_y(1/\log\log n) = P_n(S_1) + O_y(1/\log\log n).
\qedhere
\end{align*}
\end{proof}

We now set some notation and make some strategic comments that apply to the proofs of Theorems~\ref{theorem:dist_11}, \ref{theorem:dist_21}, \ref{theorem:dist_12}, \ref{theorem:dist_22}, and~\ref{theorem:dist_22alt}, after which we give each of those five proofs in turn. (The proof of Theorem~\ref{theorem:dist_2} will appear at the end of this section.)

In all five theorems, we are assuming that $\sif_i = \{d\}$ is a singleton set, which implies that $\varphi(\tq_i) < \varphi(\tq_{i+1})$. Recalling the quantities~$\mu_i$ and~$\sigma_i$ from Definition~\ref{defn:mu_sigma_sequences},
define the random variable
\begin{equation} \label{In def}
I_n = I_n(d) = \frac{\inv(m;d) - \mu_i\log\log n}{\sigma_i(\log\log n)^{1/2}},
\end{equation}
where~$m$ is chosen uniformly from $\{1,\ldots,n\}$;
to prove these five theorems, it suffices to prove that
$P_n(I_n \le x)$ has the given distributions.

Set $y = \varphi(\tq_{i+1})$ and $N=\iW(y)$. For any $y$-typical integer~$m$, let $q_1(m),\ldots,q_N(m)$ be the ordered sequence of prime powers that exists when
$m$ is $y$-typical (as in Proposition~\ref{prop:y_typical}), and define
\[
Q_j(m) = \bigl\{ p \colon p \equiv 1 \mod{q_j(m)} \bigr\} .
\]
Since $m$ is $y$-typical, we have $\inv(m;d) = \omega(m;Q_i(m)-Q_{i+1}(m)) + O(1)$ by Proposition~\ref{prop:y_typical}\eqref{yt6} and Lemma~\ref{lemma:prime_multiples}. Thus by Lemma~\ref{1/loglogn lemma},
\begin{align}
P_n(I_n \le x) &= P_n\biggl( \frac{\omega(m;Q_i(m)-Q_{i+1}(m)) + O(1) - \mu_i\log\log n}{\sigma_i(\log\log n)^{1/2}} \le x \biggr) + O_y\biggl( \frac1{\log\log n} \biggr) \notag \\
&= P_n\biggl( \frac{\omega(m;Q_i(m)-Q_{i+1}(m)) - \mu_i\log\log n}{\sigma_i(\log\log n)^{1/2}} \le x \biggr) + O_y\biggl( \frac1{\log\log n} \biggr) ,
\label{Pn QQ}
\end{align}
where the second equality is valid as long as the limiting distribution function is continuous in~$x$ (since $O(1)/\sigma_i(\log\log n)^{1/2} \to 0$), which will always be the case in the proofs to follow.

When $m$ is $y$-typical, we have $q_i(m) = \tq_i$ except possibly when $\varphi(\tq_i) = \varphi(\tq_{i-1})$, in which case we might have $q_i(m) = \tq_{i-1}$; similarly we have $q_{i+1}(m) = \tq_{i+1}$ except possibly when $\varphi(\tq_{i+1}) = \varphi(\tq_{i+2})$, in which case we might have $q_{i+1}(m) = \tq_{i+2}$.
Accordingly,
for $a\in\{0,1\}$ and $b\in\{1,2\}$ we define the random variable
\begin{equation} \label{Ynab def}
Y_n^{a,b} = \frac{\omega(m;\TQ_{i-a}-\TQ_{i+b}) - \mu_i\log\log n}{\sigma_i(\log\log n)^{1/2}}.
\end{equation}
where~$m$ is chosen uniformly from $\{1,\ldots,n\}$, with $\TQ_j$ as defined in equation~\eqref{Q_j def}.
Then the probability on the right-hand side of equation~\eqref{Pn QQ} can be expressed (again using Lemma~\ref{1/loglogn lemma}) in terms of the cumulative distribution functions of the~$Y_n^{a,b}$ that are relevant under the varying assumptions of the five theorems.
%

We dispose of the simplest case first:

\begin{proof}[Proof of Theorem~\ref{theorem:dist_11}]
The assumption $\#\sif_{i-1} = \#\sif_{i+1} = 1$ forces $q_i(m) = \tq_i$ and $q_{i+1}(m) = \tq_{i+1}$, so that equation~\eqref{Pn QQ} becomes
\[
P_n(I_n \le x) = P_n(Y_n^{0,1} \le x) + O_y\biggl( \frac1{\log\log n} \biggr).
\]
Thus by Lemma~\ref{lemma:type_phi(1{:}1)},
\begin{align*}
\lim_{n\to\infty} P_n(I_n \le x) = \lim_{n\to\infty} P_n(Y_n^{0,1} \le x) = \Phi(x)
\end{align*}
on noting that the quantities $\Delta_1-\Delta_2$ and $\Delta_1+\Delta_2-2\Delta_3$ in that lemma are precisely the quantities~$\mu_i$ and~$\sigma_i^2$ as per Definition~\ref{defn:mu_sigma_sequences}.
\end{proof}

The presence of a doubleton~$\sif_{i-1}$ to the left complicates the distribution in a way we can still analyze.

\begin{proof}[Proof of Theorem~\ref{theorem:dist_21}]
Under the assumptions $\#\sif_{i-1} = 2$ and $\#\sif_{i+1} = 1$, we know that
$q_i(m) \in \{\tq_{i-1},\tq_i\}$ and $q_{i+1}(m) = \tq_{i+1}$.
Moreover, according to the ordering on the $q_j(m)$ described in
Proposition~\ref{prop:y_typical}, the determining factor in whether
$q_i(m) = \tq_{i-1}$ or $q_i(m) = \tq_i$ is which $\prm(m,\tq_j)$ is smaller. In particular,
\[ \prm(m,q_i(m)) = \min\{\prm(m,\tq_{i-1}),\prm(m,\tq_i)\} . \]
Equation~\eqref{Pn QQ} then becomes
\[
P_n(I_n \le x) = P_n\bigl( \min\{ Y_n^{1,1}, Y_n^{0,1} \} \le x \bigr) + O_y\biggl( \frac1{\log\log n} \biggr).
\]
Consider the two-dimensional distribution with cumulative distribution function $F_n(x_1,x_2) = P_n(Y_n^{1,1} \le x_1,\, Y_n^{0,1} \le x_2)$. By Lemma~\ref{lemma:type_phi(2{:}1)}, 
$F(x_1,x_2) = \lim_{n\to\infty} F_n(x_1,x_2)$
exists and is a nonsingular bivarate normal distribution centred at the origin with covariance matrix
\[
M = \begin{bmatrix} 1 & a \\ a & 1 \end{bmatrix},
\quad\text{where}\quad
a = \frac{\den(\TQ_i)^2+\den(\TQ_{i+1})-2\den(\TQ_i)\den(\TQ_{i+1})}{\den(\TQ_i)+\den(\TQ_{i+1})-2\den(\TQ_i)\den(\TQ_{i+1})}  = \biggl( \frac{\sigma_{i;2,1}}{\sigma_i} \biggr)^2
\]
as in Definition~\ref{defn:nu}.
Thus by Proposition~\ref{prop:min_max_distributions} and Lemma~\ref{lemma:bivariate_min_max},
\begin{align*}
\lim_{n\to\infty} P_n(I_n \le x) &= \lim_{n\to\infty} P_n(\min\{Y_n^{1,1},Y_n^{0,1}\} \le x) \\
&= \lim_{n\to\infty} \Mn_{F_n}(x) = \Mn_F(x) = \Phi\biggl(x, -\sqrt{\frac{1-a}{1+a}}\biggr) .
\end{align*}
This equality establishes the theorem, once we confirm that $a=(\sigma_{i;2,1}/\sigma_i)^2$ does imply that $\sqrt{(1-a)/(1+a)}$ equals $\nu_i/\sqrt{\sigma_i^2+\sigma_{i;2,1}^2}$ (for which the identity $\nu_i^2 = \sigma_i^2 - \sigma_{i;2,1}^2$ is helpful), the formula for the characteristic function following from Definition~\ref{defn:skew-normal}.
\end{proof}

The next proof is extremely similar to the previous proof, other than the fact that the doubleton~$\sif_{i+1}$ lies to the right rather than to the left.

\begin{proof}[Proof of Theorem~\ref{theorem:dist_12}]
Under the assumptions $\#\sif_{i-1} = 1$ and $\#\sif_{i+1} = 2$, we know that
$q_i(m) = \tq_i$ and $q_{i+1}(m) \in \{\tq_{i+1}, \tq_{i+2}\}$; these facts hold when $i=1$ as well.
As in the previous proof, Proposition~\ref{prop:y_typical} implies that
\[
\prm(m,q_{i+1}(m)) = \max\{\prm(m,\tq_{i+1}),\prm(m,\tq_{i+2})\} .
\]
Equation~\eqref{Pn QQ} then becomes
\[
P_n(I_n \le x) = P_n\bigl( \min\{ Y_n^{0,1}, Y_n^{0,2} \} \le x \bigr) + O_y\biggl( \frac1{\log\log n} \biggr).
\]
Consider the two-dimensional distribution with cumulative distribution function $F_n(x_1,x_2) =  P_n(Y_n^{0,1} \le x_1, Y_n^{0,2} \le x_2)$.
By Lemma~\ref{lemma:type_phi(1{:}2)}, 
$F(x_1,x_2) = \lim_{n\to\infty} F_n(x_1,x_2)$
exists and is a nonsingular bivarate normal distribution centred at the origin with covariance matrix
\[
M = \begin{bmatrix} 1 & a \\ a & 1 \end{bmatrix},
\quad\text{where}\quad
a = \frac{\den(\TQ_i)+\den(\TQ_{i+1})^2-2\den(\TQ_i)\den(\TQ_{i+1})}{\den(\TQ_i)+\den(\TQ_{i+1})-2\den(\TQ_i)\den(\TQ_{i+1})}  = \biggl( \frac{\sigma_{i;1,2}}{\sigma_i} \biggr)^2.
\]
Thus by Proposition~\ref{prop:min_max_distributions} and Lemma~\ref{lemma:bivariate_min_max},
\begin{align*}
\lim_{n\to\infty} P_n(I_n \le x) &= \lim_{n\to\infty} P_n(\min\{Y_n^{0,1},Y_n^{0,2}\} \le x) \\
&= \lim_{n\to\infty} \Mn_{F_n}(x) = \Mn_F(x) = \Phi\biggl(x, -\sqrt{\frac{1-a}{1+a}}\biggr) .
\end{align*}
This equality establishes the theorem, again after an algebraic confirmation that the expression $\sqrt{(1-a)/(1+a)}$ equals $\nu_{i+1}/\sqrt{\sigma_i^2+\sigma_{i;1,2}^2}$.
\end{proof}

The most complicated situation is when both~$\sif_{i-1}$ and~$\sif_{i+1}$ are doubletons.

\begin{proof}[Proof of Theorem~\ref{theorem:dist_22}]
Under the assumptions $\#\sif_{i-1} = \#\sif_{i+1} = 2$, we know that
$q_i(m) \in \{\tq_{i-1},\tq_i\}$ and $q_{i+1}(m) \in \{\tq_{i+1}, \tq_{i+2}\}$.
As in the previous two proofs, Proposition~\ref{prop:y_typical} implies
\begin{align*}
\prm(m;q_i(m)) &= \min\{\prm(m;\tq_{i-1}),\prm(m;\tq_i)\} \\
\prm(m,q_{i+1}(m)) &= \max\{\prm(m;\tq_{i+1}),\prm(m;\tq_{i+2}\} . 
\end{align*}
Equation~\eqref{Pn QQ} then becomes
\[
P_n(I_n \le x) = P_n\bigl( \min\{ Y_n^{0,1}, Y_n^{0,2}, Y_n^{1,1}, Y_n^{1,2} \} \le x \bigr) + O_y\biggl( \frac1{\log\log n} \biggr).
\]
Consider the four-dimensional distribution with cumulative distribution function
\begin{equation} \label{still fine for 4}
F_n(x_1,x_2,x_3,x_4) =  P_n(Y_n^{0,1} \le x_1, Y_n^{0,2} \le x_2, Y_n^{1,1} \le x_3, Y_n^{1,2} \le x_4).
\end{equation}
Since $i>3$ by assumption, we do not have $(\tq_{i-1},\tq_i,\tq_{i+1},\tq_{i+2}) = (3,4,5,8)$; thus
by Lemma~\ref{lemma:type_phi(2{:}2)},
$F(x_1,x_2,x_3,x_4) = \lim_{n\to\infty} F_n(x_1,x_2,x_3,x_4)$
exists and is a singular multivarate normal distribution centred at the origin with covariance matrix
\[
M = \begin{bmatrix} 1 & a & b & c \\ a & 1 & c & b \\ b & c & 1 & a \\ c & b & a & 1 \end{bmatrix},
\]
where
\begin{align*}
a &= \frac{\den(\TQ_i)^2+\den(\TQ_{i+1})-2\den(\TQ_i)\den(\TQ_{i+1})}{\den(\TQ_i)+\den(\TQ_{i+1})-2\den(\TQ_i)\den(\TQ_{i+1})}  = \biggl( \frac{\sigma_{i;2,1}}{\sigma_i} \biggr)^2 \\
b &= \frac{\den(\TQ_i)+\den(\TQ_{i+1})^2-2\den(\TQ_i)\den(\TQ_{i+1})}{\den(\TQ_i)+\den(\TQ_{i+1})-2\den(\TQ_i)\den(\TQ_{i+1})}  = \biggl( \frac{\sigma_{i;1,2}}{\sigma_i} \biggr)^2 \\
c &= \frac{\den(\TQ_i)^2+\den(\TQ_{i+1})^2-2\den(\TQ_i)\den(\TQ_{i+1})}{\den(\TQ_i)+\den(\TQ_{i+1})-2\den(\TQ_i)\den(\TQ_{i+1})}  = \biggl( \frac{\sigma_{i;2,2}}{\sigma_i} \biggr)^2.
\end{align*}
Thus by Proposition~\ref{prop:min_max_distributions} and Lemma~\ref{lemma:4_variable_min_max},
\begin{align*}
\lim_{n\to\infty} P_n(I_n \le x) &= \lim_{n\to\infty} P_n( \min\{ Y_n^{0,1}, Y_n^{0,2}, Y_n^{1,1}, Y_n^{1,2} \} \le x \bigr) \\
&= \lim_{n\to\infty} \Mn_{F_n}(x) = \Mn_F(x) \\
&= \Phi(x) + 2\OT\bigg(x ; \sqrt{\frac{1+a-b-c}{2+2b}}\bigg) +2\OT\bigg(x ; \sqrt{\frac{1-a+b-c}{2+2a}}\bigg) \\
& \qquad{} + \SF\bigl(4x ; 2\sqrt{1+a+b+c}, 2\sqrt{1+a-b-c}, 2\sqrt{1-a+b-c}\bigr) .
\end{align*}
Theorem~\ref{theorem:dist_22} now follows from the (tedious) algebraic confirmation that the arguments of the functions on this right-hand side coincide with the arguments given in the statement of the theorem.
\end{proof}

The two-doubleton situation admits one special case which we now address.

\begin{proof}[Proof of Theorem~\ref{theorem:dist_22alt}]
When $i=3$ we have $(\tq_{i-1},\tq_i,\tq_{i+1},\tq_{i+2}) = (3,4,5,8)$. The previous proof remains valid through equation~\eqref{still fine for 4}, at which point we invoke
Lemma~\ref{lemma:type_phi(3,4,5,8)} to show that
$F(x_1,x_2,x_3,x_4) = \lim_{n\to\infty} F_n(x_1,x_2,x_3,x_4)$
exists and is a singular multivarate normal distribution centred at the origin with covariance matrix
\[
M = \begin{bmatrix} 1 & 5/8 & 1/2 & 1/8 \\ 5/8 & 1 & -1/8 & 1/4 \\ 1/2 & -1/8 & 1 & 3/8 \\ 1/8 & 1/4 & 3/8 & 1/2 \end{bmatrix},
\]
The theorem now follows from Proposition~\ref{prop:min_max_distributions}, since
\begin{equation*}
\lim_{n\to\infty} P_n(I_n \le x) = \lim_{n\to\infty} P_n( \min\{ Y_n^{0,1}, Y_n^{0,2}, Y_n^{1,1}, Y_n^{1,2} \} \le x \bigr) = \lim_{n\to\infty} \Mn_{F_n}(x) = \Mn_F(x).
\qedhere
\end{equation*}
\end{proof}

The last remaining theorem requiring proof deals with the case where the invariant factor in question is itself a member of a doubleton.

\begin{proof}[Proof of Theorem~\ref{theorem:dist_2}]
Write $\sif_i = \{d,d'\}$.
We use a variation of the setup for the previous proofs:
Set $y = \varphi(\tq_{i+2})$ and $N=\iW(y)$. For any $y$-typical integer~$m$, let $q_1(m),\ldots,q_N(m)$ be the ordered sequence of prime powers that exists when
$m$ is $y$-typical (as in Proposition~\ref{prop:y_typical}), and define
\[
Q_j(m) = \bigl\{ p \colon p \equiv 1 \mod{q_j(m)} \bigr\} .
\]

As $\varphi(\tq_i) = \varphi(\tq_{i+1})$, by Proposition~\ref{prop:y_typical}
we have that for any $y$-typical~$m$,
\begin{align} \label{jj'}
\inv(m;d) &= \begin{cases} \prm(m;\tq_i)-\prm(m;\tq_{i+1}), & \text{if } \prm(m;\tq_i) \ge \prm(m;\tq_{i+1}) , \\ 0, & \text{otherwise}  \end{cases} \\
\inv(m;d') &= \begin{cases} \prm(m;\tq_{i+1})-\prm(m;\tq_i), & \text{if } \prm(m;\tq_{i+1}) \ge \prm(m;\tq_i) , \\ 0, & \text{otherwise}, \end{cases} \notag
\end{align}
and thus by Proposition~\ref{prop:y_typical}\eqref{yt6},
\[
\inv(m;d) - \inv(m;d') = \prm(m;\tq_i)-\prm(m;\tq_{i+1}) = \omega(m;\TQ_i-\TQ_{i+1}) + O(1).
\]
Define the random variables
\begin{equation*}
I_n = \frac{\inv(m;d)}{\sigma_i(\log\log n)^{1/2}}, \quad
I_n' = \frac{\inv(m;d')}{\sigma_i(\log\log n)^{1/2}}, \quad
X_n = \frac{\omega(m;\TQ_i-\TQ_{i+1})}{\sigma_i(\log\log n)^{1/2}} .
\end{equation*}
By the same argument as for equation~\eqref{Pn QQ},
\[
P_n(I_n-I_n' \le x) = P_n(X_n \le x) + O_y \biggl( \frac1{\log\log n} \biggr),
\]
and so
\[
\lim_{n\to\infty} P_n(I_n-I_n' \le x) = \lim_{n\to\infty} P_n(X_n \le x) = \Phi(x)
\]
by Lemma~\ref{lemma:type_phi(1{:}1)}. In particular,
\[
\lim_{n\to\infty} P_n(I_n \le I_n') = \tfrac12 = \lim_{n\to\infty} P_n(I_n \ge I_n').
\]
It follows from equation~\eqref{jj'} that $I_n \le I_n'$ is equivalent to $I_n = 0$ and $I_n' \le I_n$ is equivalent to $I_n' = 0$ (for $y$-typical integers~$m$), which implies that
\begin{align*}
\lim_{n\to\infty} P_n(I_n = 0) = P_n(I_n' = 0) = \tfrac12.
\end{align*}
It follows for $x > 0$ that
\begin{align*}
\lim_{n\to\infty} P_n(I_n \le x) &= \lim_{n\to\infty} \bigl( P_n(I_n = 0) + P_n(0 < I_n \le x) \bigr) \\
&= \tfrac12 + \lim_{n\to\infty} P_n(0 < I_n-I_n' \le x) = \tfrac12 + \Phi(x) - \tfrac12 = \Phi(x) ,
\end{align*}
and so $\lim_{n\to\infty} P_n(I_n' \le x) = \frac12$ for $x>0$ by symmetry. Finally, since one of~$I_n$ and~$I_n'$ always equals~$0$ (for $y$-typical integers~$m$), we see for $x>0$ that $I_n+I_n' > x$ if and only if one of the two disjoint events $I_n > x$ or $I_n' >x$ occurs; therefore
\begin{align*}
\lim_{n\to\infty} P_n(I_n+I_n' > x) &= \lim_{n\to\infty} \bigl( P_n(I_n > x) + P_n(I_n' > x) \bigr) \\
&= \bigl( 1 - \Phi(x) \bigr) + \bigl( 1 - \Phi(x) \bigr) = 1 - \Phi_+(x)
\end{align*}
by Definition~\ref{defn:right normal}. In summary, we have shown that
\begin{align*}
\lim_{n\to\infty} P_n(I_n \le x) &= \begin{cases} \Phi(x), & \text{if } x \ge 0, \\ 0, & \text{otherwise} . \end{cases} \\
\lim_{n\to\infty} P_n(I_n' \le x) &= \begin{cases} \Phi(x), & \text{if } x \ge 0, \\ 0, & \text{otherwise} . \end{cases} \\
\lim_{n\to\infty} P_n(I_n+I_n' \le x) &= \Phi_+(x) .
\end{align*}
which completes the proof of the theorem.
\end{proof}

We conclude this section with the proof of our first theorem.

\begin{proof}[Proof of Theorem~\ref{theorem:order_restriction_finite}]
Set $y = \varphi(\tq_D)$ and $N=\iW(y)$, so that $N=D$ if $\varphi(\tq_D) < \varphi(\tq_{D+1})$ while $N=D+1$ if $\varphi(\tq_D) = \varphi(\tq_{D+1})$; it suffices to prove the theorem with~$N$ in place of~$D$. By Proposition~\ref{prop:y_typical_is_typical}, almost all numbers~$m$ are $y$-typical. For such numbers~$m$, Proposition~\ref{prop:y_typical}(\ref{yt5})--(\ref{ytnew}) shows that every invariant factor of $(\Z/m\Z)^\times$ is in one of $\sif_1,\ldots,\sif_N$.

On the other hand, the existence of the limiting distributions in Theorems~\ref{theorem:dist_11}, \ref{theorem:dist_21}, \ref{theorem:dist_12}, \ref{theorem:dist_22}, \ref{theorem:dist_22alt}, and~\ref{theorem:dist_2} implies that for each~$i\in\N$, almost all integers~$m$ have the property that $(\Z/m\Z)^\times$ has an invariant factor in~$\sif_i$. (The more precise reason is the fact that these limiting distributions have ``full mass'': the cumulative distribution function $\Phi_+(x)$ in Theorem~\ref{theorem:dist_2} satisfies $\Phi_+(0)=0$, while the cumulative distribution functions in the other theorems tend to~$0$ as $x\to-\infty$.) In particular, $(\Z/m\Z)^\times$ has invariant factors in each of $\sif_1,\ldots,\sif_N$ for almost all integers~$m$.

These two results together imply that for almost all integers~$m$, the first~$N$ invariant factors $d_1,\dots,d_N$ of $(\Z/m\Z)^\times$ satisfy $d_i \in \sif_i$ for each $1\le i\le N$, establishing the theorem. (The divisibility constraints of invariant factors, and the divisibility relationships among the~$\sif_i$, ensure that these invariant factors must appear in the correct order and that exactly one element of any doubleton~$\sif_i$ is an invariant factor.)

Note that this argument implicitly showed that $(\Z/m\Z)^\times$ has at least~$N$ invariant factors for almost all integers~$m$, as required by the theorem. We could argue this directly, however, from the lower bound $\inv(m) \ge \omega(m) - 1$ in Lemma~\ref{lemma:total_invariant_factors}, since it is well known that almost all integers have at least~$N+1$ distinct prime factors for any fixed~$N$.
\end{proof}

\section{Proofs of the theorems in Section~\ref{expectation intro section}}  \label{sec:implications}

In this section, we derive the limiting expectations in Section~\ref{expectation intro section} from the limiting distributions in Section~\ref{distribution intro section}. Morally speaking the distributional results are stronger, but we need to do a little work to ensure that atypical values do not perturb the expectations from what we presume them to be. Indeed we carry out this work for all the moments, not just the expectation, since the proof is essentially the same.

In this vein, we first we establish a lemma that relates the moments of the limiting distributions of the invariant
counting functions to the moments of the invariant counting functions sampled over finite intervals. (To be clear, we have not used the method of moments to derive those limiting distributions; we have already established the limiting distributions, and we will use them to help us evaluate the moments of the finite samples.) Already this lemma allows us to prove Theorem~\ref{theorem:ex_rare}.
Moreover, we can then use that lemma (and the results in Section~\ref{distribution intro section}) to prove Theorems~\ref{theorem:ex_11},
\ref{theorem:ex_21},
\ref{theorem:ex_12},
\ref{theorem:ex_22}, and
Theorem~\ref{theorem:ex_2}; it is easy to check that these results collectively imply Theorem~\ref{theorem:ex_general}.



Before this is done, we need a technical lemma to bridge the gap between expectations over the
integers $1,\ldots,n$ and expectations restricted to only $y$-typical values.

\begin{defn} \label{defn:indicator boole}
Given any assertion $A(m)$ about positive integers~$n$, define
\[
1[A(m)] = \begin{cases}
1, &\text{if $A(m)$ is true}, \\
0, &\text{if $A(m)$ is false}.
\end{cases}
\]
\end{defn}

\begin{lemma} \label{lemma:expectation_error}
Let $f \colon \N \to \R$ be a function. Suppose there exist nonnegative sequences $(U_n)$ and $(V_n)$ such that $|f(m)| \le U_n\omega(m)+V_n$ whenever $m\le n$.
Then for any $r\in\R$ and $s \in \N$ and $y \ge 1$,
\[
\Ex_n\bigl(f(m)^s\bigr) = \Ex_n\bigl( f(m)^s 1[m\text{ \rm is $y$-typical}] \bigr) + O_{r,s,y}\bigl( (U_n^s+V_n^s)/(\log\log n)^r \bigr).
\]
\end{lemma}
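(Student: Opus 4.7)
The plan is to note that the difference in question is exactly
\[
\Ex_n\bigl( f(m)^s \cdot 1[m \text{ is not } y\text{-typical}] \bigr),
\]
and to bound this quantity in absolute value by combining Hölder's inequality, a moment bound on $\omega(m)$, and Proposition~\ref{prop:y_typical_is_typical}. The hypothesis $|f(m)| \le U_n\omega(m)+V_n$ gives $|f(m)|^s \ll_s U_n^s\omega(m)^s + V_n^s$, so the problem reduces to bounding
\[
U_n^s\,\Ex_n\bigl(\omega(m)^s\,1[m \text{ not } y\text{-typical}]\bigr) + V_n^s\,P_n(m\text{ not } y\text{-typical}).
\]

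The second summand is immediate: Proposition~\ref{prop:y_typical_is_typical} applied with exponent $r$ in place of $r$ bounds $P_n(m\text{ not } y\text{-typical}) \ll_{r,y} (\log\log n)^{-r}$, giving the required $V_n^s (\log\log n)^{-r}$. For the first summand, I apply the Cauchy--Schwarz inequality:
\[
\Ex_n\bigl(\omega(m)^s\,1[m \text{ not } y\text{-typical}]\bigr) \le \Ex_n\bigl(\omega(m)^{2s}\bigr)^{1/2} \cdot P_n(m\text{ not } y\text{-typical})^{1/2}.
\]
For the first factor, I use the fact that $\omega(m)$ is strongly additive with $|\omega(p)|=1$, so Lemma~\ref{lemma:ek_sieve_big_O new} yields $\Ex_n|\omega(m)-A_\omega(n)|^{2s}\ll_s B_\omega(n)^{2s}\ll_s (\log\log n)^s$, and combining with $A_\omega(n)\ll\log\log n$ gives $\Ex_n(\omega(m)^{2s}) \ll_s (\log\log n)^{2s}$. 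For the second factor I again invoke Proposition~\ref{prop:y_typical_is_typical}, this time with the exponent chosen to be $2(r+s)$, yielding $P_n(m \text{ not } y\text{-typical})^{1/2} \ll_{r,s,y} (\log\log n)^{-(r+s)}$.

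Multiplying the two factors gives $\Ex_n\bigl(\omega(m)^s\,1[m \text{ not } y\text{-typical}]\bigr) \ll_{r,s,y} (\log\log n)^{s}\cdot(\log\log n)^{-(r+s)} = (\log\log n)^{-r}$, so the first summand is $\ll_{r,s,y} U_n^s/(\log\log n)^r$. Adding the two contributions produces the stated error term $(U_n^s+V_n^s)/(\log\log n)^r$. The main obstacle, such as it is, will be just bookkeeping the choice of exponent inside Proposition~\ref{prop:y_typical_is_typical} so that the loss of $(\log\log n)^s$ from the $\omega$-moment is absorbed; since that proposition allows any polynomial rate of decay in $\log\log n$, no substantive difficulty arises.
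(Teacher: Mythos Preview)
Your proof is correct. The key difference from the paper's argument is how you handle the term $\Ex_n\bigl(\omega(m)^s\,1[m\text{ not }y\text{-typical}]\bigr)$. You use Cauchy--Schwarz together with the moment bound $\Ex_n(\omega(m)^{2s})\ll_s(\log\log n)^{2s}$ (which follows from Lemma~\ref{lemma:ek_sieve_big_O new}), then apply Proposition~\ref{prop:y_typical_is_typical} with exponent $2(r+s)$ so that taking the square root leaves enough decay to absorb the $(\log\log n)^s$ loss. The paper instead splits according to whether $\omega(m)$ is below or above $3s\log\log n$: the small-$\omega$ range is handled trivially by Proposition~\ref{prop:y_typical_is_typical}, while the large-$\omega$ tail is disposed of via an external bound of Erd\H{o}s--Nicolas on $\#\{m\le n:\omega(m)\ge 3s\log\log n\}$.

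Your approach is arguably cleaner: it stays entirely within the tools already developed in the paper and avoids the extra citation. The paper's version, on the other hand, never needs the higher moments of $\omega$ and so is in principle slightly more self-contained on that front (though of course the moment bound is already proved in Section~\ref{sec:typicality}). Both routes exploit the same essential fact---that Proposition~\ref{prop:y_typical_is_typical} gives decay at any polynomial rate in $\log\log n$, so a fixed power loss is harmless.
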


\begin{proof}
Since $\Ex_n\big(f(m)^s\big) = \Ex_n\bigl( f(m)^s 1[m\text{ is $y$-typical}] \bigr) + \Ex_n\bigl( f(m)^s 1[m\text{ isn't $y$-typical}] \bigr)$ trivially, it suffices to bound the latter quantity. Since $\bigl( U_n\omega(m)+V_n \bigr)^s \ll_s \bigl( U_n\omega(m) \bigr)^s+V_n^s$,
\begin{align}
\Ex_n\bigl( f(m)^s & 1[m\text{ isn't $y$-typical}] \bigr) \notag \\
&\ll_s U_n^s \Ex_n\bigl( \omega(m)^s 1[m\text{ isn't $y$-typical}] \bigr) + V_n^s \Ex_n\bigl( 1[m\text{ isn't $y$-typical}] \bigr) \notag \\
	& \ll_{\rho,s,y} U_n^s \Ex_n\bigl( \omega(m)^s 1[m\text{ isn't $y$-typical}] \bigr) + \frac{V_n^s}{(\log\log n)^\rho} \label{pre en81}
\end{align}
for any $\rho\in\R$ by Proposition \ref{prop:y_typical_is_typical}. Using the same proposition,
\begin{align*}
\Ex_n\bigl( \omega(m)^s & 1[m\text{ isn't $y$-typical}] \bigr) \\
&= \Ex_n\bigl( \omega(m)^s 1[m\text{ isn't $y$-typical}] 1[\omega(m) < 3s\log\log n] \bigr) \\
&\qquad{}+ \Ex_n\bigl( \omega(m)^s 1[m\text{ isn't $y$-typical}] 1[\omega(m) \ge 3s\log\log n] \bigr) \\
&\ll_{\rho,s,y} (\log\log n)^s \frac1{(\log\log n)^\rho} + (\log n)^s \Ex_n\bigl( 1[\omega(m) \ge 3s\log\log n] \bigr)
\end{align*}
since $\omega(m) \ll \log n$ for $m\le n$.
On the other hand, we know~\cite[Proposition~4]{en81} that
\[
\#\{ m\le n\colon \omega(m) \ge 3s\log\log n \} \ll n/(\log n)^{3s\log2-1},
\]
and thus
\begin{align*}
\Ex_n\bigl( \omega(m)^s 1[m\text{ isn't $y$-typical}] \bigr) &\ll_{\rho,s,y} \frac{(\log\log n)^s}{(\log\log n)^\rho} +  \frac{(\log n)^s}{(\log n)^{3s\log2-1}} \ll_{\rho,s,y} \frac1{(\log\log n)^{\rho-s}}
\end{align*}
since $3s\log2-1>s$ for all $s\ge1$. Therefore the estimate~\eqref{pre en81} becomes
\[
\Ex_n\bigl( f(m)^s 1[m\text{ isn't $y$-typical}] \bigr) \ll_{\rho,s,y} U_n^s \frac1{(\log\log n)^{\rho-s}} + \frac{V_n^s}{(\log\log n)^\rho},
\]
and the lemma follows by choosing $\rho=r+s$.
\end{proof}

This lemma alone is enough to imply the first two results in Section~\ref{expectation intro section}. We will repeatedly use the crude upper bound $\inv(m;d) \le \inv(m) \le \omega(m) + 1$ that follows from Lemma~\ref{lemma:total_invariant_factors}.

\begin{proof}[Proof of Theorem~\ref{theorem:ex_rare}]
Choose~$y$ to be the smallest number such that $\iV(y) > d$.
Since~$d$ is not a universal factor order, Proposition~\ref{prop:y_typical}(e) implies that $\inv(m;d) = 0$ for $y$-typical $m$.
Since $\inv(m;d) \le \omega(m)+1$, Lemma~\ref{lemma:expectation_error} implies that for any real $r$ and $s \in \N$,
\[
\Ex_n \inv(m;d)^s = \Ex_n\bigl( \inv(m;d)^s 1[m\text{ \rm is $y$-typical}] \bigr) + O_{r,s,y}(1/(\log\log n)^r) \ll_{r,s,d} 1/(\log\log n)^r
\]
as required.
\end{proof}

\begin{proof}[Proof of Theorem~\ref{theorem:ex_11}]
Choose~$y$ to be the smallest number such that $\iV(y) > d$. Since $\inv(m;d) \le \omega(m) + 1$, Lemma~\ref{lemma:expectation_error} with $r=0$ and $s=1$ implies that
\[
\Ex_n \inv(m;d) = \Ex_n\bigl( \inv(m;d) 1[m\text{ is $y$-typical}] \bigr) + O_y(1).
\]
For any $y$-typical integer~$m$, let $q_1(m),\ldots,q_N(m)$ be the ordered sequence of prime powers that exists when
$m$ is $y$-typical (as in Proposition~\ref{prop:y_typical}), and define
\begin{equation} \label{Qjm def maybe again}
Q_j(m) = \bigl\{ p \colon p \equiv 1 \mod{q_j(m)} \bigr\},
\end{equation}
so that $\inv(m;d) = \omega(m;Q_i(m)-Q_{i+1}(m)) + O(1)$ by Proposition~\ref{prop:y_typical}\eqref{yt6} and Lemma~\ref{lemma:prime_multiples}.
Since $m$ is $y$-typical, and since the assumptions $\#\sif_{i-1}=\#\sif_{i+1}=1$ imply that $\varphi(\tq_{i-1}) < \varphi(\tq_i)$ and $\varphi(\tq_{i+1}) < \varphi(\tq_{i+2})$, we must have $q_i(m) = \tq_i$ and $q_{i+1}(m) = \tq_{i+1}$.
We can therefore write $\inv(m;d) = \omega(m;\TQ_i(m)-\TQ_{i+1}(m)) + O(1)$, with $\TQ_j$ as defined in equation~\eqref{Q_j def}. These observations imply that
\begin{align*}
\Ex_n \inv(m;d)
&= \Ex_n\bigl( \bigl( \omega(m;Q_i(m)-Q_{i+1}(m))+O(1) \bigr) 1[m\text{ is $y$-typical}] \bigr) + O_y(1) \\
&= \Ex_n\bigl( \omega(m;\TQ_i-\TQ_{i+1}) 1[m\text{ is $y$-typical}] \bigr) + O_y(1) \\
&= \Ex_n\bigl( \omega(m;\TQ_i-\TQ_{i+1}) \bigr) + O_y(1)
\end{align*}
by Lemma~\ref{lemma:expectation_error} again. Finally, Lemmas~\ref{lemma:omega_finite_mean_variance} and~\ref{lemma:sum_arithmetic_progression_pnt} imply that
\begin{align*}
\Ex_n \inv(m;d) = A_\omega(n;\TQ_i-\TQ_{i+1}) + O_y(1) = \mu_i \log\log n + O_y(1)
\end{align*}
as required.
\end{proof}

The remaining results in Section~\ref{expectation intro section} are more complicated since we no longer have the assumptions $\#\sif_{i-1}=\#\sif_i=\#\sif_{i+1}=1$ and thus cannot conclude that $q_i(m) = \tq_i$ and $q_{i+1}(m) = \tq_{i+1}$. This complication makes it very difficult for us to get asymptotics for the moments. Fortunately, using a tool from probability and the knowledge of the limiting distributions, we can get away with upper bounds for the (normalized) moments.

Recall the definition~\eqref{In def} of~$I_n$, and note that trivially
\begin{equation} \label{In triangle ineq}
|I_n| \le \frac{\inv(m;d)}{\sigma_i(\log\log n)^{1/2}} + \frac{\mu_i(\log\log n)^{1/2}}{\sigma_i}.
\end{equation}

\begin{lemma} \label{lem:exa}
Choose $i\in\N$ and $d\in\sif_i$. For any $\kappa\in\N$, we have $\Ex_n(I_n^\kappa) \ll_{d,\kappa} 1$.
\end{lemma}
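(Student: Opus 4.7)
The plan is to reduce to $y$-typical integers (where $\inv(m;d)$ is essentially an additive function), bound the contribution of non-$y$-typical integers using Lemma~\ref{lemma:expectation_error}, and then invoke the moment bounds of Lemma~\ref{lemma:ek_sieve_big_O new}. First I would dispense with odd exponents: the pointwise inequality $|x|^\kappa \le 1 + x^{2\kappa}$ gives $\Ex_n(I_n^\kappa) \le 1 + \Ex_n(I_n^{2\kappa})$, so it suffices to treat even $\kappa$.

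Fix $y = \varphi(\tq_{i+2})$ and, for each $y$-typical $m$, let $q_1(m),\dots,q_N(m)$ and $Q_j(m)$ be as in Proposition~\ref{prop:y_typical}. Combining parts~\eqref{yt6} of that proposition with Lemma~\ref{lemma:prime_multiples} gives $\inv(m;d) = \omega(m;Q_i(m)-Q_{i+1}(m)) + O(1)$. For $y$-typical $m$, $q_i(m)$ takes values in $\sif_{i-1}\cup\sif_i$ and $q_{i+1}(m)$ in $\sif_i\cup\sif_{i+1}$, so there is a finite set $S$ (of cardinality at most four, indexed by $(a,b)$ with $a\in\{i-1,i\}$, $b\in\{i+1,i+2\}$) of pairs such that $(Q_i(m),Q_{i+1}(m)) = (\TQ_{a(m)},\TQ_{b(m)})$ for some $(a(m),b(m))\in S$. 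The key observation, forced by the doubleton structure and Lemma~\ref{lemma:equal_totient}, is that for every $(a,b)\in S$ we have $\varphi(\tq_a) = \varphi(\tq_i)$ and $\varphi(\tq_b) = \varphi(\tq_{i+1})$; hence by Lemma~\ref{lemma:sum_arithmetic_progression_pnt},
\[
A_\omega(n;\TQ_a-\TQ_b) = \mu_i\log\log n + O_d(1),\qquad B_\omega(n;\TQ_a-\TQ_b)^2 \ll_d \log\log n,
\]
so the same centering constant $\mu_i\log\log n$ is correct for every $(a,b)\in S$.

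Next, I apply Lemma~\ref{lemma:expectation_error} with $f(m) = I_n$, observing the bound $|I_n|\le U_n\omega(m)+V_n$ with $U_n = 1/(\sigma_i(\log\log n)^{1/2})$ and $V_n \ll_d (\log\log n)^{1/2}$ coming from~\eqref{In triangle ineq}. Taking $r = \kappa/2+1$ yields
\[
\Ex_n(I_n^\kappa) = \Ex_n\bigl(I_n^\kappa\,1[m\text{ is $y$-typical}]\bigr) + O_{d,\kappa}(1).
\]
For $y$-typical $m$ and even $\kappa$, the pointwise identity $\inv(m;d) - \mu_i\log\log n = \omega(m;\TQ_{a(m)}-\TQ_{b(m)}) - \mu_i\log\log n + O(1)$ together with the union bound $x_{a(m),b(m)}^\kappa \le \sum_{(a,b)\in S} x_{a,b}^\kappa$ (valid for nonnegative terms) gives
\[
I_n^\kappa \ll_\kappa \frac{1}{\sigma_i^\kappa(\log\log n)^{\kappa/2}}\biggl( \sum_{(a,b)\in S}\bigl(\omega(m;\TQ_a-\TQ_b) - \mu_i\log\log n\bigr)^\kappa + O_d(1)\biggr).
\]

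Finally, for each $(a,b)\in S$, combining estimate~\eqref{Ex cheat} with Lemma~\ref{lemma:ek_sieve_big_O new} (applied to the strongly additive function $\omega(\,\cdot\,;\TQ_a-\TQ_b)$, which is bounded by $1$ on primes) yields
\[
\Ex_n\bigl(\omega(m;\TQ_a-\TQ_b) - \mu_i\log\log n\bigr)^\kappa \ll_\kappa B_\omega(n;\TQ_a-\TQ_b)^\kappa + O_d(1) \ll_{d,\kappa} (\log\log n)^{\kappa/2}.
\]
Summing over the finitely many $(a,b)\in S$ and dividing by $\sigma_i^\kappa(\log\log n)^{\kappa/2}$ gives $\Ex_n(I_n^\kappa\,1[m\text{ is $y$-typical}]) \ll_{d,\kappa} 1$, completing the proof. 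The main obstacle is the verification that every candidate pair $(a,b)\in S$ produces the same centering constant $\mu_i\log\log n$; this is exactly what lets us collapse the $m$-dependent indices $(a(m),b(m))$ into a uniform bound without losing a factor of $(\log\log n)^{\kappa/2}$ from the misaligned means.
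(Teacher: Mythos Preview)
Your proof is correct and follows essentially the same approach as the paper: reduce to $y$-typical integers via Lemma~\ref{lemma:expectation_error}, split into the finitely many possibilities for $(q_i(m),q_{i+1}(m))$, observe that each candidate pair has the same centering $\mu_i\log\log n$, and bound each centered moment by Lemma~\ref{lemma:ek_sieve_big_O new}. Two minor notes: the phrase ``$q_i(m)$ takes values in $\sif_{i-1}\cup\sif_i$'' is a slip (you mean $\{\tq_{i-1},\tq_i\}$, since the $\sif_j$ contain invariant factor orders, not prime powers), and the preliminary reduction to even~$\kappa$ is unnecessary since one can simply pass to $|I_n|^\kappa$ as the paper does.
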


\begin{proof}
Choose~$y$ to be the smallest number such that $\iV(y) > d$. Since $\inv(m;d) \le \omega(m) + 1$, the inequality~\eqref{In triangle ineq} implies
\[
|I_n| \le \frac{\omega(m)+1}{\sigma_i(\log\log n)^{1/2}} + \frac{\mu_i(\log\log n)^{1/2}}{\sigma_i}.
\]
We may therefore apply Lemma~\ref{lemma:expectation_error} with $r=\frac\kappa2$ and $s=\kappa$, and with
\[
U_n = \frac1{\sigma_i(\log\log n)^{1/2}} \quad \text{and} \quad V_n = \frac1{\sigma_i(\log\log n)^{1/2}} + \frac{\mu_i(\log\log n)^{1/2}}{\sigma_i},
\]
both of which are $\ll_d (\log\log n)^{1/2}$; the resulting estimate is
\[
\Ex_n(I_n^\kappa) = \Ex_n\bigl( I_n^\kappa 1[m\text{ is $y$-typical}] \bigr) + O_{d,\kappa}(1).
\]

For any $y$-typical integer~$m$, let $q_1(m),\ldots,q_N(m)$ be the ordered sequence of prime powers that exists when
$m$ is $y$-typical (as in Proposition~\ref{prop:y_typical}).
By Proposition~\ref{prop:y_typical}\eqref{yt6} and Lemma~\ref{lemma:prime_multiples}, it follows that $\inv(m;d) = \omega(m;Q_i(m)-Q_{i+1}(m)) + O(1)$, with $Q_j$ as defined in equation~\eqref{Qjm def maybe again}.

Since $m$ is $y$-typical, we have $q_i(m) = \tq_i$ except possibly when $\varphi(\tq_i) = \varphi(\tq_{i-1})$, in which case we might have $q_i(m) = \tq_{i-1}$; similarly we have $q_{i+1}(m) = \tq_{i+1}$ except possibly when $\varphi(\tq_{i+1}) = \varphi(\tq_{i+2})$, in which case we might have $q_{i+1}(m) = \tq_{i+2}$.
Defining $a_m\in\{0,1\}$ and $b_m\in\{1,2\}$ so that $q_i(m) = \tq_{i-a_m}$ and $q_{i+1}(m) = \tq_{i+b_m}$, we can therefore write $\inv(m;d) = \omega(m;\TQ_{i-a_m}(m)-\TQ_{i+b_m}(m)) + O(1)$, with $\TQ_j$ as defined in equation~\eqref{Q_j def}. It follows that
\begin{align*}
\Ex_n \bigl( I_n^\kappa & 1[m\text{ is $y$-typical}] \bigr) \\
&= \sum_{\substack{a\in\{0,1\} \\ \varphi(\tq_{i-a}) = \varphi(\tq_{i})}} \sum_{\substack{b\in\{1,2\} \\ \varphi(\tq_{i+b}) = \varphi(\tq_{i+1})}} \Ex_n\biggl(\frac{\omega(m;\TQ_{i-a}(m)-\TQ_{i+b}(m))+O(1)-\mu_i\log\log n}{\sigma_i(\log\log n)^{1/2}} \\
&\qquad{}\times 1[m\text{ is $y$-typical}] 1[a_m=a] 1[b_m=b] \biggr)^\kappa \\
&\ll \sum_{\substack{a\in\{0,1\} \\ \varphi(\tq_{i-a}) = \varphi(\tq_{i})}} \sum_{\substack{b\in\{1,2\} \\ \varphi(\tq_{i+b}) = \varphi(\tq_{i+1})}} \Ex_n \biggl| \frac{\omega(m;\TQ_{i-a}(m)-\TQ_{i+b}(m))+O(1)-\mu_i\log\log n}{\sigma_i(\log\log n)^{1/2}} \biggr|^\kappa.
\end{align*}
By the results in Section~\ref{omega S minus T section}, in all cases occurring in this double sum we have
\begin{align*}
A_\omega(n;\TQ_{i-a}(m)-\TQ_{i+b}(m)) &= \mu_i\log\log n + O(1) \\
B_\omega(n;\TQ_{i-a}(m)-\TQ_{i+b}(m))^2 &= \sigma_i^2\log\log n + O(1),
\end{align*}
and therefore
\begin{multline*}
\Ex_n \bigl( I_n^\kappa 1[m\text{ is $y$-typical}] \bigr) \\
\ll \sum_{\substack{a\in\{0,1\} \\ \varphi(\tq_{i-a}) = \varphi(\tq_{i})}} \sum_{\substack{b\in\{1,2\} \\ \varphi(\tq_{i+b}) = \varphi(\tq_{i+1})}} \Ex_n \biggl|\frac{\omega(m;\TQ_i-\TQ_{i+1})-A_\omega(n;\TQ_i-\TQ_{i+1}) + O(1)}{B_\omega(n;\TQ_i-\TQ_{i+1}) + O(1)}\biggr|^\kappa.
\end{multline*}
Since the estimate to be proved is trivial for any bounded range of~$n$, we may assume that~$n$ is large enough that each relevant $B_\omega(n;\TQ_i-\TQ_{i+1})$ is at least~$\frac12$. Consequently,
\begin{multline*}
\Ex_n \bigl( I_n^\kappa 1[m\text{ is $y$-typical}] \bigr) \\
\ll \sum_{\substack{a\in\{0,1\} \\ \varphi(\tq_{i-a}) = \varphi(\tq_{i})}} \sum_{\substack{b\in\{1,2\} \\ \varphi(\tq_{i+b}) = \varphi(\tq_{i+1})}} \Ex_n \biggl|\frac{\omega(m;\TQ_i-\TQ_{i+1})-A_\omega(n;\TQ_i-\TQ_{i+1}) + O(1)}{B_\omega(n;\TQ_i-\TQ_{i+1})}\biggr|^\kappa \ll 1
\end{multline*}
by the triangle inequality and Lemma~\ref{lemma:ek_sieve_big_O new}.
\end{proof}

With this observation now complete, we can now show that the moments of the finite samples in the theorems in Section~\ref{distribution intro section} approach their limiting distributions as $n \to \infty$. We continue to use the notation~$I_n$ from equation~\eqref{In def}.

\begin{prop} \label{prop:exa}
Choose $i\in\N$ and $d\in\sif_i$. For any $k\in\N$,
\[
\lim_{n\to\infty} \Ex_n \biggl(\frac{\inv(m;d)-\mu_i\log\log n}{\sigma_i(\log\log n)^{1/2}}\biggr)^k = \int_{-\infty}^\infty x^k \, dF(x)
\]
where $F(x)$ is the appropriate limiting distribution of $F_n = P_n(I_n \le x)$ chosen from among Theorems~\ref{theorem:dist_11}, \ref{theorem:dist_21}, \ref{theorem:dist_12}, \ref{theorem:dist_22}, \ref{theorem:dist_22alt}, and~\ref{theorem:dist_2}.
In other words, the limits of the moments of~$I_n$ are equal to the moments of the limiting distribution.
\end{prop}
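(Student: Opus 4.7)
The plan is to deduce convergence of the moments from convergence in distribution together with a uniform moment bound, following the standard probabilistic principle: if $I_n \to X$ in distribution and the family $\{|I_n|^k\}_n$ is uniformly integrable, then $\Ex_n I_n^k \to \Ex X^k$. By the theorems in Section~\ref{distribution intro section} (Theorems~\ref{theorem:dist_11}, \ref{theorem:dist_21}, \ref{theorem:dist_12}, \ref{theorem:dist_22}, \ref{theorem:dist_22alt}, and~\ref{theorem:dist_2}), we already know that the cumulative distribution function $F_n(x) = P_n(I_n \le x)$ converges pointwise (at every continuity point) to $F(x)$; for Theorem~\ref{theorem:dist_2} the normalization omits the mean but that does not affect the argument since $\mu_i = 0$ when $\#\sif_i = 2$. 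So the distributional half of the hypothesis is in hand.

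For the uniform integrability half, I would invoke Lemma~\ref{lem:exa} applied with $\kappa = 2k$, which gives the uniform bound $\Ex_n(I_n^{2k}) \ll_{d,k} 1$. A standard argument then shows that this uniform bound on the $(2k)$th moment implies uniform integrability of the family $\{|I_n|^k\}_n$: for any $R > 0$,
\[
\Ex_n\bigl( |I_n|^k \cdot 1[|I_n|^k > R] \bigr) \le R^{-1} \Ex_n(I_n^{2k}) \ll_{d,k} R^{-1},
\]
which tends to $0$ uniformly in $n$ as $R \to \infty$. (Here I use Definition~\ref{defn:indicator boole} and note that $|I_n|^k > R$ forces $|I_n|^k \le R^{-1} I_n^{2k}$.)

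Combining these two ingredients via the standard theorem (see, e.g., Billingsley, \emph{Probability and Measure}, Theorem~25.12, or Durrett, Theorem~3.2.5) yields
\[
\lim_{n\to\infty} \Ex_n(I_n^k) = \int_{-\infty}^\infty x^k \, dF(x),
\]
which is precisely the statement of the proposition after substituting in the definition~\eqref{In def} of~$I_n$. There is no serious obstacle here—the heavy lifting has already been done: Section~\ref{distribution intro section} provides the distributional convergence, and Lemma~\ref{lem:exa} provides moment bounds of all orders, which is far more than is needed to establish uniform integrability for any particular $k$. The only mild subtlety to check is that the limiting distributions $F$ are continuous (so that $F_n \to F$ pointwise everywhere, not merely at continuity points); this is immediate since all the distributions appearing in Theorems~\ref{theorem:dist_11}--\ref{theorem:dist_2} are absolutely continuous except for the atom at $0$ in Theorem~\ref{theorem:dist_2}, and that atom is harmless for the moment-convergence argument because the moment-generating functional $x \mapsto x^k$ is continuous there.
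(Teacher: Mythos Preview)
Your proposal is correct and follows essentially the same approach as the paper: both combine the distributional convergence from Section~\ref{distribution intro section} with the uniform moment bounds of Lemma~\ref{lem:exa} to deduce convergence of moments. The only cosmetic differences are that the paper passes through a Skorokhod-type almost-sure coupling and cites Durrett's Theorem~1.6.8 with the $(k+1)$th moment as the dominating function, whereas you phrase the argument directly via uniform integrability using the $(2k)$th moment; since Lemma~\ref{lem:exa} supplies bounds of every order, either choice works.
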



\begin{proof}
As $F_n$ converges weakly to $F$, there exists a sequence of random variables $X_n$ with corresponding distributions $F_n$
such that the $X_n$ converge to $X$ almost surely, where $X$ has the same distribution as $F$.
We then apply~\cite[Theorem~1.6.8]{durrett}, using $g(x) = x^{k+1}$ and $h(x) = x^k$ and noting that $\Ex X_n^{k+1} \ll_{d,k} 1$ by Lemma~\ref{lem:exa} applied with $\kappa=k+1$. We conclude that
\[
\lim_{n\to\infty} \Ex_n I_n^k = \lim_{n\to\infty} \Ex X_n^k = \Ex X^k = \int_{-\infty}^\infty x^k \, dF(x) .
\qedhere
\]
\end{proof}

We conclude this section with a derivation of Theorem~\ref{theorem:ex_21} from Theorem~\ref{theorem:dist_21}. In the same manner, one can derive Theorems~\ref{theorem:ex_12}, \ref{theorem:ex_22}, and~\ref{theorem:ex_2} from Theorems~\ref{theorem:dist_12}, \ref{theorem:dist_22}, and~\ref{theorem:dist_2}, respectively (with the caveat that Theorem~\ref{theorem:dist_22alt} is also required for one special case in the proof of Theorem~\ref{theorem:ex_22}); we omit the proofs as no new ideas are needed.

\begin{proof}[Proof of Theorem~\ref{theorem:ex_21}]
Under the hypotheses of Theorem~\ref{theorem:dist_21}, applying Proposition~\ref{prop:exa} with $k=1$ yields
\[
\Ex_n \biggl(\frac{\inv(m;d)-\mu_i\log\log n}{\sigma_i(\log\log n)^{1/2}}\biggr) = (1+o(1)) \int_{-\infty}^\infty x \, dF(x) = (1+o(1)) \Ex \Phi\biggl(x;-\frac{\nu_i}{\sqrt{\sigma_i^2+\sigma_{i;2,1}^2}}\biggr),
\]
where~$\Phi$ denotes
a skew-normal cumulative distribution function as in Definition~\ref{defn:skew-normal}. As noted in that definition, the right-hand side equals $\alpha\sqrt{2/(\alpha^2+1)\pi}$ where $\alpha = -{\nu_i}/{\sqrt{\sigma_i^2+\sigma_{i;2,1}^2}}$. By linearity of expectation,
\[
\Ex_n \inv(m;d) = \mu_i\log\log n + \sigma_i(\log\log n)^{1/2} (1+o(1)) \alpha \sqrt{\frac2{(\alpha^2+1)\pi}}.
\]
To finish the proof of Theorem~\ref{theorem:ex_21}, it remains only to check that
\[
\sigma_i \alpha \sqrt{\frac2{(\alpha^2+1)\pi}} = -\frac{\sigma_{i-1}}{\sqrt{2\pi}}
\]
using Definitions~\ref{defn:mu_sigma_sequences} and~\ref{defn:nu}---a tedious but straightforward calculation which uses the fact that $\varphi(\tq_i) = \varphi(\tq_{i-1})$ under the hypothesis $\#\sif_{i-1} = 2$.
\end{proof}

\appendix

\section{Consolidated notation}
\label{appendix:notation}

\subsection*{Totients and invariant factors}

\definitionphisequence
\definitiontwototient

\definitionUFOsets The first several universal factor order sets are
\begin{multline*}
\firsttenUFOsets.
\end{multline*}

\definitionmusigma
\definitionnu

\definitioncountingfunctionsinv
\definitioncountingfunctionspri
\definitioncountingfunctionsprm
\definitionVW

\definitiontypical

\subsection*{Additive functions and expectations}

\definitionABCD

\definitionDd

\Enotation
\definitionEk
\definitionPn

\definitionomegafunctions

\subsection*{Probability distributions}

Several functions related to common probability distributions are
\begin{align*}
\phi(x) &= \textstyle\frac{e^{-{x^2}/2}}{\sqrt{2\pi}} & \text{(normal probability density function)} , \\
\Phi(x) &= \textstyle\int_{-\infty}^x \phi(t) \, dt & \text{(normal cumulative distribution function)} , \\
\phi_+(x) &= 2\phi(x)1_{[0,\infty)}(x) & \text{(truncated right normal probability density function)} , \\
\Phi_+(x) &= \max\{\Phi(x)-\Phi(-x),0\} & \text{(truncated right normal cumulative distribution function)} , \\
\phi(x;\alpha) &= 2\phi(x)\Phi(\alpha x) & \text{(skew normal probability density function \cite{skew_normal_class})} , \\
\Phi(x;\alpha) &= \Phi(x) - 2\OT(x,\alpha) & \text{(skew normal cumulative distribution function \cite{skew_normal_class})}.
\end{align*}
\definitionS
We also define Owen's $T$-function \cite{owen_T}.
\[
\OT(h,a) = \frac1{2\pi} \int_0^a e^{-(1+t^2)h^2/2} \frac{dt}{1+t^2}.
\]
\definitionU

\definitionMnMx

\section{Consolidated theorems}
\label{appendix:theorems}

The introduction splits the main theorems of this paper into small pieces for the purpose of facilitating the exposition.
In this section, the theorems are consolidated for ease of use when being applied or referenced.

\begin{theorem}
\theoremoverall
\end{theorem}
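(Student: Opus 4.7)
The plan is to combine an upper bound and a lower bound. For the upper bound, I will argue that almost all $m$ have their (small) invariant factors forced to lie inside the universal factor order sets $\sif_i$. For the lower bound, I will argue that for each $i$, almost all $m$ actually realize an invariant factor in $\sif_i$. Concretely, I would set $y = \varphi(\tq_D)$ and $N = \iW(y) \ge D$; it suffices to prove the statement with $N$ in place of $D$, since $\sif_1,\ldots,\sif_N$ is an extension of $\sif_1,\ldots,\sif_D$ and the correct ordering of invariant factors is built into the sequence.

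For the upper bound, I would invoke Proposition~\ref{prop:y_typical_is_typical}, which tells us that almost all $m$ are $y$-typical. For such $m$, Proposition~\ref{prop:y_typical}(e) asserts that every invariant factor of $(\Z/m\Z)^\times$ not exceeding $\iV(y) = d_N$ must equal one of the integers $d_1,\ldots,d_N$, and parts (b,c) of the same proposition ensure that $d_i \in \sif_i$ for each $i$. This already shows that almost no $m$ has an invariant factor of order outside $\sif_1\cup\cdots\cup\sif_N$ below $\iV(y)$.

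For the lower bound, I would invoke the limiting distribution theorems of Section~\ref{distribution intro section}, namely Theorems~\ref{theorem:dist_11}, \ref{theorem:dist_21}, \ref{theorem:dist_12}, \ref{theorem:dist_22}, \ref{theorem:dist_22alt}, and~\ref{theorem:dist_2}. Each of these enjoys the crucial \emph{full mass} property: the limiting cumulative distribution functions tend to $0$ as $x \to -\infty$ in the singleton cases, and for Theorem~\ref{theorem:dist_2} the truncated distribution $\Phi_+$ satisfies $\Phi_+(0)=0$, which shows that $\inv(m;d) + \inv(m;d')$ is positive with probability tending to $1$ in the doubleton case. Together these imply that for every fixed $i \le N$, almost all $m$ have some invariant factor in $\sif_i$.

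The remaining step is to reconcile ordering and the exclusivity within doubletons. Since the elements of $\sif_i$ always divide the elements of $\sif_{i+1}$ (by Definition~\ref{defn:standard_invariant_sequence}), once both are present they must appear in the prescribed order; and since the two members of a doubleton $\sif_i$ do not divide each other, at most one can appear as an invariant factor, so exactly one does. Finally, to confirm that $(\Z/m\Z)^\times$ has at least $N$ distinct invariant factor orders for almost all $m$, I would cite the inequality $\inv(m) \ge \omega(m) - 1$ of Lemma~\ref{lemma:total_invariant_factors} together with the elementary fact that $\omega(m) \to \infty$ on a set of density one. No new obstacle arises here: the heavy lifting is done by the distributional theorems, and this final argument is a bookkeeping synthesis of those results with the $y$-typicality framework of Proposition~\ref{prop:y_typical}.
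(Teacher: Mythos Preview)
Your proposal is correct and follows essentially the same approach as the paper: set $y=\varphi(\tq_D)$ and $N=\iW(y)$, use Proposition~\ref{prop:y_typical_is_typical} together with Proposition~\ref{prop:y_typical} for the upper bound, invoke the full-mass limiting distributions of Theorems~\ref{theorem:dist_11}--\ref{theorem:dist_2} for the lower bound, and finish with the divisibility/ordering observations and the $\inv(m)\ge\omega(m)-1$ inequality. The only minor discrepancy is in which parts of Proposition~\ref{prop:y_typical} you cite (the paper uses parts~(\ref{yt5})--(\ref{ytnew}) rather than (b,c,e)), but the substance is identical.
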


The first theorem concerns asymptotic expectations of $\inv(m;d)$:

\begin{theorem}
Let $d \in \N$. If $d\in\siF$ is a universal factor order,
the limiting distribution of $\inv(m;d)$ under appropriate rescaling is
fully characterized by the number of elements in $\sif_{i-1}$, $\sif_i$, and $\sif_{i+1}$,
except for the special cases $d=2\in\sif_1$.
If $d\notin\siF$ is a rare factor order,
then the limiting distribution is fully characterized by this fact.

More precisely, any $d \in \N$, exactly one of the following cases applies:
\begin{enumerate}
\item \theoremexA
\item \theoremexB
\item \theoremexE
\item \theoremexD
\item \theoremexC
\item \theoremexrare
\end{enumerate}
\end{theorem}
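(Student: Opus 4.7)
The plan is to recognize that this consolidated theorem is essentially a repackaging of the six expectation theorems already stated in Section~\ref{expectation intro section} (namely Theorems~\ref{theorem:ex_11}, \ref{theorem:ex_21}, \ref{theorem:ex_12}, \ref{theorem:ex_22}, \ref{theorem:ex_2}, and~\ref{theorem:ex_rare}) together with a case analysis verifying that every positive integer~$d$ falls into exactly one of the six listed cases. Consequently the work splits cleanly into two pieces: (I) a combinatorial classification showing the six cases are mutually exclusive and exhaustive, and (II) an appeal to each of the six previously proved theorems.

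For part~(I), I would first dispose of case~(f): if $d\notin\siF$ then none of cases (a)--(e) apply by definition, so we reduce to $d\in\siF$, meaning $d\in\sif_i$ for a unique~$i$ (uniqueness follows from the fact, noted after Theorem~\ref{theorem:order_restriction_finite}, that consecutive sets $\sif_i$ are related by divisibility and no integer lies in two different $\sif_i$). If $\#\sif_i=2$, we are in case~(e). Otherwise $\#\sif_i=1$, and the classification then depends on the sizes of $\#\sif_{i-1}$ and $\#\sif_{i+1}$, each of which is $1$ or $2$ by Definition~\ref{defn:standard_invariant_sequence} and Lemma~\ref{lemma:equal_totient}. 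By Lemma~\ref{lemma:equal_totient2}, doubletons are never adjacent in the $\sif$-sequence, so when $\#\sif_i=1$ the pair $(\#\sif_{i-1},\#\sif_{i+1})$ can be any of $(1,1)$, $(2,1)$, $(1,2)$, or $(2,2)$, corresponding exactly to cases (a), (b), (c), and (d) respectively. This case split mirrors Proposition~\ref{prop:two-totient sequences}, which was expressly set up to classify precisely these configurations.

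For part~(II), I would simply observe that each case statement is already a theorem proved in Section~\ref{sec:implications}: case~(a) is Theorem~\ref{theorem:ex_11}, case~(b) is Theorem~\ref{theorem:ex_21}, case~(c) is Theorem~\ref{theorem:ex_12}, case~(d) is Theorem~\ref{theorem:ex_22}, case~(e) is Theorem~\ref{theorem:ex_2}, and case~(f) is Theorem~\ref{theorem:ex_rare}. The introductory clause about ``$d=2\in\sif_1$ being a special case'' needs a brief check, since Theorem~\ref{theorem:ex_12} is explicitly stated to include $i=1$ (so the case $d=2$ is already absorbed into case~(c) without separate treatment), so no new proof is needed---this clause is really a comment pointing out that although a special case occurs at $i=1$, the corresponding expectation formula is continuous with the general case.

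The only conceivable obstacle is verifying that the classification indeed produces the right $\sif$-set sizes for each~$d$; in particular, confirming that every $d\in\sif_i$ has well-defined $\sif_{i-1}$ and $\sif_{i+1}$ for $i\ge 2$. The boundary case $i=1$ is absorbed into case~(c) by the explicit ``same statement holds when $i=1$'' clause in Theorem~\ref{theorem:ex_12}. Since all substantive analytic work was carried out in Sections~\ref{sec:implications} and~\ref{sec:distributions}, the present proof is purely a matter of bookkeeping and reference, and can be presented in a few lines by listing the correspondences above.
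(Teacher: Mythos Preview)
Your proposal is correct and matches the paper's treatment: the consolidated statement in Appendix~\ref{appendix:theorems} is not given a separate proof in the paper but is simply a repackaging of the six individual expectation theorems proved in Section~\ref{sec:implications}, together with the observation (implicit in Proposition~\ref{prop:two-totient sequences} and Lemma~\ref{lemma:equal_totient2}) that the cases are exhaustive and mutually exclusive. Your handling of the boundary case $i=1$ via the explicit clause in Theorem~\ref{theorem:ex_12} is exactly right, and nothing further is needed.
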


The second theorem concerns the limiting distribution of $\inv(m;d)$:

\begin{theorem}
\theoremdistgeneral

More precisely, for any $d \in \N$, exactly one of the following cases applies:
\begin{enumerate}
\item \theoremdistA
\item \theoremdistB
\item \theoremdistE
\item \theoremdistD
\item \theoremdistDalt
\item \theoremdistC
\item \theoremdistrare
\end{enumerate}
\end{theorem}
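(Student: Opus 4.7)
The plan is to recognize that this consolidated theorem merely packages together the seven distribution theorems already stated in Section~\ref{distribution intro section}, so at the highest level the proof consists of invoking each constituent theorem for the case that applies to a given $d$. The first step is therefore to verify that the case analysis is exhaustive: given $d$ either $d \notin \siF$ (case~(g)), or $d \in \sif_i$ for a unique $i$, and by Proposition~\ref{prop:two-totient sequences} together with Lemma~\ref{lemma:equal_totient2}, the possible sizes of $(\sif_{i-1}, \sif_i, \sif_{i+1})$ restricted to the singleton-$\sif_i$ situations fall into exactly the four patterns covered by cases~(a)--(d), while case~(e) is the special sub-case $i=3$ of case~(d), and case~(f) handles $\#\sif_i = 2$.

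The uniform strategy underlying cases~(a)--(f) is the reduction developed in Sections~\ref{sec:invariant_to_additive}--\ref{sec:additive to probability}: choose $y$ large enough that $\iV(y) \ge d$, restrict attention to $y$-typical $m$ (which by Proposition~\ref{prop:y_typical_is_typical} costs only $O_y(1/\log\log n)$ in probability), and use Proposition~\ref{prop:y_typical} together with Lemma~\ref{lemma:prime_multiples} to write $\inv(m;d)$ as a linear combination (up to bounded error) of the additive functions $\omega(m;\TQ_{i-a} - \TQ_{i+b})$ where $(a,b) \in \{0,1\}\times\{1,2\}$, the choice of $(a,b)$ depending on which of $\sif_{i-1}$ and $\sif_{i+1}$ are doubletons and on which prime power in each doubleton has the larger $\prm$ value at $m$. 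This writes $\inv(m;d)$ (for $y$-typical $m$) as a minimum of either one, two, or four such additive functions, and Proposition~\ref{lemma:omega_diff_multivariable} supplies the joint limiting distribution of the involved additive functions as a (possibly singular) multivariate normal distribution whose covariance entries come from Dirichlet densities computed via the coprimality assertions of Proposition~\ref{prop:two-totient sequences}.

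The heart of the argument is then the min-distribution computations in Lemmas~\ref{lemma:bivariate_min_max} and~\ref{lemma:4_variable_min_max}, which take the joint normal distributions produced by Kubilius and extract the cumulative distribution function of the minimum. In the bivariate case this yields the skew-normal distributions of cases~(b) and~(c); in the four-variable singular case it yields the $\UF$-function distribution of case~(d), where the requisite symmetries in the covariance matrix come from the pairwise coprimality of $\tq_{i-1},\tq_i,\tq_{i+1},\tq_{i+2}$. For case~(g), Proposition~\ref{prop:y_typical}(e)--(f) shows that the invariant factors of $(\Z/m\Z)^\times$ below $\iV(y)$ all lie in $\siF$ when $m$ is $y$-typical, so $\inv(m;d) = 0$ for such $m$, and the density-$0$ conclusion follows from Proposition~\ref{prop:y_typical_is_typical}.

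The main obstacle is the exceptional case~(e), $i=3$ and $d=12$. Here the two coincidences $\varphi(3)=\varphi(4)$ and $\varphi(5)=\varphi(8)$ involve the common prime~$2$ (specifically, $\tq_i = 4$ and $\tq_{i+2} = 8$ are both powers of~$2$), which destroys the pairwise coprimality that provides the symmetry in the generic Lemma~\ref{lemma:4_variable_min_max}. The covariance matrix $M$ in Theorem~\ref{theorem:dist_22alt} is not of the symmetric form used in Lemma~\ref{lemma:4_variable_min_max}, so no closed-form $\UF$-type evaluation is available; instead, the limiting distribution has to be left in the abstract form $\Mn_F$ from Definition~\ref{defn:min_max_distributions} applied to the explicit covariance matrix. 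The necessary density-$1/2$ computations for case~(f), and the disjointness of the two events $\inv(m;d) > 0$ and $\inv(m;d') > 0$ for the two elements of $\sif_i$, follow from applying Lemma~\ref{lemma:type_phi(1{:}1)} to the difference $\prm(m;\tq_i) - \prm(m;\tq_{i+1})$ after noting that exactly one of these differences is positive for each $y$-typical $m$.
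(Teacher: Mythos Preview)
Your proposal is correct and follows essentially the same approach as the paper: the consolidated theorem is proved by invoking the individual distribution theorems, each of which is established via the reduction to $y$-typical integers (Proposition~\ref{prop:y_typical_is_typical} and Lemma~\ref{1/loglogn lemma}), the expression of $\inv(m;d)$ as a minimum of additive functions through Proposition~\ref{prop:y_typical}, the multivariate normal limits from Proposition~\ref{lemma:omega_diff_multivariable} (packaged in Lemmas~\ref{lemma:type_phi(1{:}1)}--\ref{lemma:type_phi(3,4,5,8)}), and the min-extraction Lemmas~\ref{lemma:bivariate_min_max} and~\ref{lemma:4_variable_min_max}. Your identification of the exceptional $i=3$ case and the reason the symmetry breaks there matches the paper's treatment exactly.
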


\section{Probability distributions}
\label{appendix:distributions}

The two lemmas in this section are proved in the dissertation of second author~\cite[Appendix~B]{simpsonthesis}.

First, the sum of a random variable with a normal distribution and a truncated normal distribution
yields a skew normal distribution as described in Definition~\ref{defn:skew-normal}.

\begin{lemma} 
\label{lemma:normal+trunc=skew}
Suppose that $X,Y$ are independent random variables with $P(X \le x) = \Phi(x/\sigma_1)$ and $P(Y \le x) = \Phi_+(x/\sigma_2)$, then
\begin{align*}
P(X+Y \le x) = \Phi\Big(\frac x{\sqrt{\sigma_1^2+\sigma_2^2}}\Big)
- 2\OT\Big(\frac x{\sqrt{\sigma_1^2+\sigma_2^2}}, \frac{\sigma_2}{\sigma_1}\Big) , \\
P(X-Y \le x) = \Phi\Big(\frac x{\sqrt{\sigma_1^2+\sigma_2^2}}\Big)
- 2\OT\Big(\frac x{\sqrt{\sigma_1^2+\sigma_2^2}}, -\frac{\sigma_2}{\sigma_1}\Big) ,
\end{align*}
with characteristic functions
\begin{align*}
\chi_{X+Y}(t) &= e^{-(\sigma_1^2+\sigma_2^2)t^2/2}\bigg(1+\eta\Big(\frac{\sigma_2 t}{\sqrt 2}\Big)\bigg) \\
\chi_{X-Y}(t) &= e^{-(\sigma_1^2+\sigma_2^2)t^2/2}\bigg(1-\eta\Big(\frac{\sigma_2 t}{\sqrt 2}\Big)\bigg).
\end{align*}
\end{lemma}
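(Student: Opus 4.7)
The plan is to compute the density of $X+Y$ by convolution, recognize the result as a scaled skew-normal density, and then read off the cumulative distribution function from Definition~\ref{defn:skew-normal}; the companion formula for $X-Y$ will follow from a symmetry argument, and the characteristic functions will drop out immediately from independence and Definition~\ref{defn:right normal}.

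First I would verify the characteristic function formulas, since these are nearly automatic. By Definition~\ref{defn:right normal}, the random variable $Y/\sigma_2$ has characteristic function $e^{-t^2/2}\bigl(1+\eta(t/\sqrt{2})\bigr)$, so $\chi_Y(t) = e^{-\sigma_2^2 t^2/2}\bigl(1+\eta(\sigma_2 t/\sqrt{2})\bigr)$; multiplying by $\chi_X(t) = e^{-\sigma_1^2 t^2/2}$ and invoking independence yields the claimed formula for $\chi_{X+Y}$. Since $\erf$ is odd and $\eta(x) = \erf(ix)$, the function $\eta$ is odd, and hence $\chi_{-Y}(t) = \chi_Y(-t) = e^{-\sigma_2^2 t^2/2}\bigl(1-\eta(\sigma_2 t/\sqrt{2})\bigr)$, which gives $\chi_{X-Y}$ after multiplying by $\chi_X(t)$.

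Next I would compute the density of $X+Y$ via
\[
f_{X+Y}(z) = \int_0^\infty \frac{1}{\sigma_1}\phi\biggl(\frac{z-y}{\sigma_1}\biggr) \cdot \frac{2}{\sigma_2}\phi\biggl(\frac{y}{\sigma_2}\biggr)\,dy.
\]
Setting $\sigma^2 = \sigma_1^2 + \sigma_2^2$ and completing the square in $y$ inside the exponent produces a factor of $e^{-z^2/(2\sigma^2)}$ times a one-sided Gaussian integral, which after a linear substitution evaluates to $(\sigma_1\sigma_2/\sigma)\sqrt{2\pi}\,\Phi(\sigma_2 z/(\sigma_1\sigma))$. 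Simplifying yields
\[
f_{X+Y}(z) = \frac{2}{\sigma}\phi\biggl(\frac{z}{\sigma}\biggr)\Phi\biggl(\frac{\sigma_2}{\sigma_1}\cdot\frac{z}{\sigma}\biggr),
\]
which is precisely $(1/\sigma)\phi(z/\sigma;\sigma_2/\sigma_1)$ in the notation of Definition~\ref{defn:skew-normal}. Integrating from $-\infty$ to $x$ and applying the identity $\Phi(u;\alpha) = \Phi(u) - 2\OT(u,\alpha)$ then gives the claimed formula for $P(X+Y \le x)$.

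Finally, for the $X-Y$ formula I would use that $X$ is symmetric about~$0$, so $X-Y$ is equal in distribution to $-(X+Y)$; hence $P(X-Y \le x) = 1 - \Phi(-x/\sigma;\sigma_2/\sigma_1)$. Applying $\Phi(-u) = 1-\Phi(u)$ together with the symmetries $\OT(-h,a) = \OT(h,a)$ and $\OT(h,-a) = -\OT(h,a)$ from Definition~\ref{Owen T def} converts this into $\Phi(x/\sigma) - 2\OT(x/\sigma,-\sigma_2/\sigma_1)$, matching the statement. The main obstacle is the bookkeeping in the completion-of-squares step: the algebra is routine but must be tracked carefully so that the cross term in $y$ produces exactly the skew parameter $\sigma_2/\sigma_1$ and the residual Gaussian in~$z$ has variance $\sigma^2$.
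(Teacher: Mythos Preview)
Your proof is correct. The paper itself does not provide a proof of this lemma but instead defers to the second author's dissertation~\cite[Appendix~B]{simpsonthesis}, so there is no in-paper argument to compare against; your convolution-and-complete-the-square computation followed by the symmetry $X-Y \stackrel{d}{=} -(X+Y)$ is the standard route and is carried out correctly.
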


We can similarly characterize sums of three independent random variables where one is has a normal distribution
and the other two have truncated normal distributions.

\begin{lemma}
\label{lemma:triple_distribution}
Suppose $X,Y,Z$ are independent random variables with distribution functions
\begin{align*}
P(X \le x) &= \Phi\bigg(\frac x{\sigma_1}\bigg), \\
P(Y \le x) &= \Phi_+\Big(\frac x{\sigma_2}\Big), \\
P(Z \le x) &= \Phi_+\Big(\frac x{\sigma_3}\Big).
\end{align*}

Let $\Sigma = \sqrt{\sigma_1^2+\sigma_2^2+\sigma_3^2}$. Then $X+Y+Z$ and $X-Y-Z$ have distributions
\begin{align*}
P(X+Y+Z \le x) &= \Phi\Big(\frac x\Sigma\Big) 
- 2\OT\Big(\frac x\Sigma, \frac{\sigma_2}{\sqrt{\sigma_1^2+\sigma_3^2}}\Big)
- 2\OT\Big(\frac x\Sigma, \frac{\sigma_3}{\sqrt{\sigma_1^2+\sigma_2^2}}\Big)
+ \SF(x;\sigma_1,\sigma_2,\sigma_3) , \\
P(X-Y-Z \le x) &= \Phi\Big(\frac x\Sigma\Big)
- 2\OT\Big(\frac x\Sigma, -\frac{\sigma_2}{\sqrt{\sigma_1^2+\sigma_3^2}}\Big)
- 2\OT\Big(\frac x\Sigma, -\frac{\sigma_3}{\sqrt{\sigma_1^2+\sigma_2^2}}\Big)
+ \SF(x;\sigma_1,\sigma_2,\sigma_3) ,
\end{align*}
with characteristic functions
\begin{align*}
\chi_{X+Y+Z}(t) &= e^{-\frac{\Sigma^2 t^2}2} \biggl(1+\eta\Big(\frac{\sigma_2t}{\sqrt 2}\Big) \biggr) \biggl( 1+\eta\Big(\frac{\sigma_3t}{\sqrt 2}\Big) \biggr) , \\
\chi_{X-Y-Z}(t) &= e^{-\frac{\Sigma^2 t^2}2} \bigg(1-\eta\Big(\frac{\sigma_2t}{\sqrt 2}\Big) \biggr) \biggl( 1-\eta\Big(\frac{\sigma_3t}{\sqrt 2}\Big) \biggr).
\end{align*}
\end{lemma}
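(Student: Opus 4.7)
The plan is to first compute the characteristic functions (which collapse under independence to a product) and then invert term by term to recover the cumulative distribution functions. The latter step will exploit the known inversion identities packaged in Lemma~\ref{lemma:normal+trunc=skew} together with the formula built into Definition~\ref{defn:S_function}.

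First I would record the individual characteristic functions. The variable $X$ is $N(0,\sigma_1^2)$, so $\chi_X(t)=e^{-\sigma_1^2 t^2/2}$. For $Y$, the density is $\sigma_2^{-1}\phi_+(y/\sigma_2)$; the characteristic function of the standard truncated normal given in Definition~\ref{defn:right normal} is $e^{-t^2/2}(1+\eta(t/\sqrt 2))$, so by scaling $\chi_Y(t)=e^{-\sigma_2^2 t^2/2}\bigl(1+\eta(\sigma_2 t/\sqrt 2)\bigr)$, and similarly for $Z$. By independence,
\[
\chi_{X+Y+Z}(t)=e^{-\Sigma^2 t^2/2}\bigl(1+\eta(\sigma_2 t/\sqrt 2)\bigr)\bigl(1+\eta(\sigma_3 t/\sqrt 2)\bigr),
\]
which is already the stated formula. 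For $X-Y-Z$ I would use $\chi_{X-Y-Z}(t)=\chi_X(t)\chi_Y(-t)\chi_Z(-t)$ together with the fact that $\eta(u)=\erf(iu)$ is odd, so each factor $1+\eta(\,\cdot\,)$ becomes $1-\eta(\,\cdot\,)$.

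Next I would invert the characteristic function. Expanding the product,
\[
\chi_{X+Y+Z}(t)=e^{-\Sigma^2 t^2/2}+e^{-\Sigma^2 t^2/2}\eta(\sigma_2 t/\sqrt 2)+e^{-\Sigma^2 t^2/2}\eta(\sigma_3 t/\sqrt 2)+e^{-\Sigma^2 t^2/2}\eta(\sigma_2 t/\sqrt 2)\eta(\sigma_3 t/\sqrt 2),
\]
and I would identify the CDF of each term. The first term inverts to $\Phi(x/\Sigma)$. For the second, observe that $e^{-\Sigma^2 t^2/2}\bigl(1+\eta(\sigma_2 t/\sqrt 2)\bigr)$ is the characteristic function of a random variable $X'+Y$, where $X'\sim N(0,\sigma_1^2+\sigma_3^2)$ is independent of $Y$; applying Lemma~\ref{lemma:normal+trunc=skew} with $\sigma_1^2$ replaced by $\sigma_1^2+\sigma_3^2$ shows that the CDF of $X'+Y$ is $\Phi(x/\Sigma)-2\OT\bigl(x/\Sigma,\sigma_2/\sqrt{\sigma_1^2+\sigma_3^2}\bigr)$. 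Subtracting off the leading $\Phi(x/\Sigma)$ contribution isolates the second term as $-2\OT\bigl(x/\Sigma,\sigma_2/\sqrt{\sigma_1^2+\sigma_3^2}\bigr)$. The third term is handled symmetrically. The fourth term is, by Lévy's inversion formula, precisely the definition of $\SF(x;\sigma_1,\sigma_2,\sigma_3)$ from Definition~\ref{defn:S_function}. Summing these four pieces gives the stated formula for $P(X+Y+Z\le x)$; the formula for $P(X-Y-Z\le x)$ follows identically using $\OT(h,-a)=-\OT(h,a)$ and the invariance of the product $\eta(\sigma_2 t/\sqrt 2)\eta(\sigma_3 t/\sqrt 2)$ under $t\mapsto -t$.

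The main technical obstacle will be justifying the term-by-term inversion, since $\eta(\sigma_j t/\sqrt 2)$ grows like $e^{\sigma_j^2 t^2/2}$ on the real line. Each isolated term $e^{-\Sigma^2 t^2/2}\eta(\sigma_j t/\sqrt 2)$ individually has super-Gaussian decay because $\Sigma^2-\sigma_j^2>0$, so its Fourier inverse is absolutely convergent, and the same is true for the cross term because $\Sigma^2-\sigma_2^2-\sigma_3^2=\sigma_1^2>0$; this is why the limits in the definition of $\SF$ can be taken in that order. I would handle this by working with the Lévy inversion formula applied to each summand separately (the individual integrands being in $L^1$ after regularization by $(e^{-ity}-e^{-itx})/(it)$), then taking $T\to\infty$ and $y\to-\infty$ to convert the inversion into cumulative distribution functions. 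Once this justification is in place, the computation above yields both formulas simultaneously, and the characteristic function statements have already been verified at the start.
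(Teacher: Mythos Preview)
The paper does not actually prove this lemma; it states in Appendix~\ref{appendix:distributions} that both lemmas there ``are proved in the dissertation of second author~\cite[Appendix~B]{simpsonthesis}.'' So there is no in-paper proof to compare against.

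That said, your approach is correct and is the natural one. The characteristic-function computation is immediate from independence and the scaling formula in Definition~\ref{defn:right normal}. Your decomposition of $\chi_{X+Y+Z}(t)$ into four summands and identification of each piece is sound: the Gaussian term inverts to $\Phi(x/\Sigma)$; each single-$\eta$ term is recovered by subtracting the Gaussian contribution from the skew-normal CDF supplied by Lemma~\ref{lemma:normal+trunc=skew} (applied with the ``normal'' variance set to $\sigma_1^2+\sigma_3^2$ or $\sigma_1^2+\sigma_2^2$ as appropriate); and the cross term is literally the defining integral in Definition~\ref{defn:S_function}. Your justification of term-by-term inversion is also correct: since $|\eta(u)|\asymp e^{u^2}/|u|$, each summand multiplied by the bounded kernel $(e^{-ity}-e^{-itx})/(it)$ lies in $L^1(\R)$ thanks to the strict inequalities $\Sigma^2>\sigma_2^2$, $\Sigma^2>\sigma_3^2$, and $\Sigma^2>\sigma_2^2+\sigma_3^2$, so the $T\to\infty$ limit may be taken termwise, and then the $y\to-\infty$ limit uses $\Phi(y/\Sigma)\to0$ and $\OT(y/\Sigma,\cdot)\to0$. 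The $X-Y-Z$ case follows from the oddness of~$\eta$, the evenness of the product $\eta(\sigma_2 t/\sqrt2)\eta(\sigma_3 t/\sqrt2)$, and the symmetry $\OT(h,-a)=-\OT(h,a)$, exactly as you indicate.
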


\section*{Acknowledgments}

The authors thank Paul P\'eringuey for helpful remarks.
The first author was supported in part by a Natural Sciences and Engineering Council of Canada Discovery Grant.

\addcontentsline{toc}{chapter}{Bibliography}
\bibliography{invariants}{}
\bibliographystyle{plain}

\end{document}